\title{Regular models of hyperelliptic curves}
\author{Simone Muselli}
\address{University of Bristol, Bristol, UK.}
\subjclass[2010]{11G20 (Primary), 14H45, 14M25 (Secondary). 
Keywords: Hyperelliptic curves, models of curves, MacLane valuations.}
\begin{document}

\maketitle

\begin{abstract}
    Let $K$ be a complete discretely valued field of residue characteristic not $2$ and $O_K$ its ring of integers. We explicitly construct a regular model over $O_K$ with strict normal crossings of any hyperelliptic curve $C/K:y^2=f(x)$. For this purpose, we introduce the new notion of \textit{MacLane cluster picture}, that aims to be a link between clusters and MacLane valuations. 
\end{abstract}

\section{Introduction}

% To describe the arithmetic of curves over global fields, for example in the study of the Birch \& Swinnerton-Dyer conjecture, it is essential to understand regular models over all primes, including those with very bad reduction. 
In this paper we construct regular models of hyperelliptic curves over discrete valuation rings with residue characteristic different from $2$. The understanding of regular models is essential to describe the arithmetic of curves and for example finds application in the study of the Birch \& Swinnerton-Dyer conjecture over global fields.

\subsection{Overview}
Let $K$ be a complete discretely valued field, with ring of integers $O_K$. 
Given a connected smooth projective curve $C/K$, a \textit{regular model} of $C$ over $O_K$ is an integral regular proper flat scheme $\m C\rightarrow O_K$ of dimension $2$ with generic fibre isomorphic to $C$. The main result of this work can be presented as follows: 

\emph{Suppose that the residue characteristic of $K$ is not $2$. Let $C/K:y^2=f(x)$ be a hyperelliptic curve. From the MacLane clusters for $f$ we determine a regular model of $C$ over $O_K$ with strict normal crossings.}
% In this paper we construct a regular model over $O_K$ of any hyperelliptic curve $C/K:y^2=f(x)$. Our approach is based on a new notion we introduce: \textit{MacLane clusters}. These objects have connections with other tools recently used for the study of regular models. 

The \textit{MacLane clusters} for a separable polynomial $f\in K[x]$ are a new notion we introduce in this paper (see \S\ref{subsec:MainResult} for more details). It has connections with other objects used for the study of regular models: clusters \cite{D2M2}, rational clusters \cite{Mus}, Newton polytopes \cite{Dok}, and MacLane valuations \cite{OW}. Like (rational) clusters, MacLane clusters define nice and clear invariants from which one can give a result in a closed form. In fact, one can see that rational clusters are MacLane clusters of degree $1$. On the other side, the construction of our model can be implemented from the algorithmic nature of the approaches based on Newton polytopes and MacLane valuations.

\subsection{Main result}\label{subsec:MainResult}
Let $K$ be a complete discretely valued field, with normalised discrete valuation $v_K$, ring of integers $O_K$, and residue field $k$. Let $\bar K$ be an algebraic closure of $K$, extend $v_K$ to $\bar K$. Assume $\mathrm{char}(k)\neq 2$. Let $C/K$ be a hyperelliptic curve, i.e. a geometrically connected smooth projective curve of genus $\geq 1$, double cover of $\P^1_K$. We can fix a Weierstrass equation $C:y^2=f(x)$ where \[f(x)=c_f\textstyle\prod_{r\in\roots}(x-r)\in K[x],\quad c_f\in K,\]
such that $v_K(r)>0$ for all $r\in\roots$.

\begin{defn}
Let $\hat\Q=\Q\cup\{\infty\}$. Given a monic irreducible polynomial $g\in K[x]$ and an element $\lambda\in\hat\Q$, the \textit{discoid} $D(g,\lambda)$ is the set
\[D=D(g,\lambda)=\{\alpha\in\bar K\mid v_K(g(\alpha))\geq \lambda\}\subset \bar K.\]
For any $r\in\roots$, denote by $D\wedge r$ the smallest discoid containing $D$ and $r$.

Define $\deg D=\min\{d\in\Z_+\mid D=D(g,\lambda), \,\deg g=d\}$.
\end{defn}

%To each discoid $D$, we can uniquely associate a pseudo-valuation $v_D:K[x]\rightarrow \hat\Q$ given by $g\mapsto \inf_{\alpha\in D}v_K(g(\alpha))$.

% \begin{defn}
% Define $\deg D$ as the minimum positive integer $d$ such that $D=D(g,\lambda)$ for some a monic irreducible polynomial $g\in K[x]$ of degree $d$ and some $\lambda\in\hat\Q$.
% \end{defn}

To each discoid we can associate a pseudo-valuation (Appendix \ref{appendix:pseudo-valuations}) $v_D:K[x]\rightarrow \hat \Q$ defined by
\[v_D(f)=\textstyle\inf_{\alpha\in D}v_K(f(\alpha)).\]
The map $D\mapsto v_D$ is injective. Therefore if $v=v_D$ denote $D_v=D$ and $d_v=\deg D$.

% Let $f\in K[x]$ be a separable polynomial and let $\roots$ be the set of roots of $f$ in $\bar K$ so that $f=c_f\cdot \prod_{r\in\roots}(x-r)$, for some $c_f\in K$. Suppose $\roots\subset O_K$.

\begin{defn}
A \textit{MacLane cluster} is a pair $(\s,v)$ where $\s\subseteq\roots$, and $v=v_D$ for some discoid $D$, such that
\begin{enumerate}
    \item $\s=D\cap\roots\neq \varnothing$;
    \item if $\s=D'\cap\roots$ for a discoid $D'\subseteq D$ then $\deg D'>\deg D$.
\end{enumerate}

The \textit{degree} of $(\s,v)$ is the quantity $d_v$.
%We denote by $\Sigma$ the set of all MacLane clusters for $f$.
\end{defn}

\begin{defn}
For any MacLane clusters $(\s,v),(\t,w)$ we say:
\begin{center}
\begin{tabular}{|l@{ if }l|}
\hline
$(\s,v)$ proper, & $|\s|>d_v$\cr
$(\t,w)\subseteq(\s,v)$, & $D_w\subseteq D_v$\cr
$(\t,w)$ is a child of $(\s,v)$, & $(\t,w)\subsetneq(\s,v)$ is a maximal subcluster \cr
$(\s,v)$ degree-minimal, & $(\s,v)$ has no proper children of degree $d_v$\cr
%$\s$ \"{u}bereven, & $\s=\bigcup_{\s'\text{ child of }\s}\s'$ and $|\s'|$ even for all children $\s'$ of $\s$\cr
\hline
\end{tabular}
\end{center}
We write $(\t,w)<(\s,v)$
%, and $(\s,v)=P(\t,w)$, 
for a child $(\t,w)$ of $(\s,v)$.
\end{defn}

For the remainder of this subsection we also assume $k$ algebraically closed. This additional condition is not necessary for the construction of the model but it simplifies the statement of Theorem \ref{thm:SNCModelIntroductionTheorem}.

Let $\Sigma$ be the set of proper MacLane clusters.

% \begin{defn}
% Let $(\s,v)\in\Sigma$ and $r\in\roots$. Denote by $D_v\wedge r$ the smallest discoid containing $D_v$ and $r$.
% \end{defn}

\begin{nt}
Let $\m P\subset K[x]$ be the subset of monic irreducible polynomials. 
For any $d\in\Z_+$, denote $\m P_{\leq d}=\{g\in \m P\mid\deg g\leq d\}$.
\end{nt}

% In the following definition we introduce most of the quantities, attached MacLane clusters needed in order to state our main result, describing the

\begin{defn}[\ref{defn:QuantitiesForTheoremsOnModelsDefinition}]
Let $(\s,v)\in\Sigma$. Define
the following quantities:
\begin{center}
% \small\begin{tabular}{cc}
\begin{tabular}{|l@{$=\>\,$}l|}
\hline
    $\lambda_v$       & $\max_{g\in\m P_{\leq d_v}}\min_{r\in\s}v_K(g(r))$, called \textit{radius}\cr
    $b_v$            & denominator of $\lambda_vd_v$\cr
    $e_{v}$       & $b_vd_v$\cr
    $\nu_v$     & $v_K(c_f)+\sum_{r\in\roots}\big(\lambda_{v_{D_v\wedge r}}/d_{v_{D_v\wedge r}}\big)$\cr
    $n_v$            & $1$ if $e_v\nu_v$ odd, $2$ if $e_v\nu_v$ even\cr
    $m_v$            & $2e_v/n_v$\cr
    $\i_v$ & $|\s|/d_v$\cr
    $p_v$            & $1$ if $\i_v$ is odd, $2$ if $\i_v$ is even\cr
    $s_v$            & $\frac 12(\i_v\lambda_v+p_v\lambda_v-\nu_v)$\cr
    $\gamma_v$       & $2$ if $\i_v$ is even and $\nu_vd_v\!-\!|\s|\lambda_v$ is odd, $1$ otherwise\cr
    $\delta_v$          & $1$ if $(\s,v)$ is degree-minimal, $0$ otherwise\cr
    $p_v^0$          & $1$ if $\delta_v=1$ and $d_v=\min_{r\in\s}[K(r):K]$, $2$ otherwise\cr
    $s_v^0$          & $-\nu_v/2+\lambda_v$\cr
    $\gamma_v^0$     & $2$ if $p_v^0=2$ and $\nu_vd_v$ is an odd integer, $1$ otherwise\cr
\hline
\end{tabular} 
\end{center}

Let $\ell_v\in\Z$, $0\leq \ell_v<b_v$ such that $\ell_v\lambda_vd_v-\frac{1}{b_v}\in\Z$. Define
\[\tilde v=\big\{(\t,w)\in\Sigma\mid (\t,w)<(\s,v)\text{ and }\tfrac{|\t|}{e_v}-\ell_v\nu_vd_w\notin 2\Z\big\}.\]
Let $c_v^0=1$ if 
%$\delta_vp_v^0=2$ and 
$\tfrac{2-p_v^0}{b_v}-\ell_v\nu_vd_v\notin 2\Z$, and $c_v^0=0$ otherwise. Define
\[u_v=\smaller{\dfrac{|\s|-\sum_{(\t,w)<(\s,v)}|\t|-d_v(2-p_v^0)}{e_v}}+|\tilde v|+ \delta_vc_v^0,\]
where the sum runs through the proper children $(\t,w)$ of $(\s,v)$.
The \textit{genus $g(v)$} of a MacLane cluster $(\s,v)\in\Sigma$ is defined as follows:
\begin{itemize}
    \item If $n_v=1$, then $g(v)=0$.
    \item If $n_v=2$, then $g(v)=\max\{\lfloor(u_v-1)/2\rfloor,0\}$.

% If $(\s,v)$ has two non-proper children $(\t,w), (\t',w')$ of degree $\deg v$, then $b_v=1$ and so $\ell_v=0$. In particular, (\t,w)\in\tilde v.
% For any non-proper children $(\t,w)$ of degree $>\deg v$, $\ell_v\nu_v\deg w\in 2\Z$ since $n_v=2$. But then $1/b_v\notin 2\Z$, so $(\t,w)\in\tilde v$.
\end{itemize}
%We say that $(\s,v)\in\Sigma$ is \textit{\"{u}bereven} if $u_v=0$.
\end{defn}

\begin{nt}
Let $\alpha,a,b\in\Z$, with $\alpha>0$, $a>b$, and fix $\frac{n_i}{d_i}\in\Q$ so that
    \[\alpha a=\sfrac{n_0}{d_0}>\sfrac{n_1}{d_1}>\ldots>\sfrac{n_r}{d_r}>\sfrac{n_{r+1}}{d_{r+1}}=\alpha b,\quad\text{with\scalebox{0.9}{$\quad\begin{vmatrix}n_i\!\!\!&n_{i+1}\cr d_i\!\!\!&d_{i+1}\cr\end{vmatrix}=1$}},\]
    and $r$ minimal. We write $\P^1(\alpha,a,b)$ for a chain of $\P^1$s of length $r$ and multiplicities $\alpha d_1,\dots,\alpha d_r$. Denote by $\P^1(\alpha,a)$ the chain $\P^1(\alpha,a,\lfloor\alpha a-1\rfloor/\alpha)$.
\end{nt}

The following theorem describes the special fibre of the regular model of a hyperelliptic curve $C/K$ with strict normal crossings we construct in \S\ref{sec:ModConstr}, when $k$ algebraically closed and $\mathrm{char}(k)\neq 2$. See Definition \ref{defn:QuantitiesForTheoremsOnModelsDefinition} and Theorem \ref{thm:SNCModel} for a more general statement which does not require $k$ algebraically closed.
%\footnote{In this paper a `normal crossings' divisor is not a `strict normal crossings' divisor in general (see e.g.\ \cite[Remark 9.1.7]{Liu}).} 

\begin{thm}[Regular SNC model]\label{thm:SNCModelIntroductionTheorem}
Assume $\mathrm{char}(k)\neq 2$. Suppose $k$ algebraically closed. Let $C/K$ be a hyperelliptic curve.
Then we can explicitly construct a regular model with strict normal crossings $\mathcal{C}/O_{K}$ of $C$ (\S\ref{sec:ModConstr}). Its special fibre $\mathcal{C}_s/k$ is given as follows.

\begin{enumerate}[label=(\arabic*), leftmargin=1cm]
\item Every $(\s,v)\in\Sigma$ gives a $1$-dimensional closed subscheme $\Gamma_v$ of multiplicity 
$m_v$. If $n_v=2$ and 
%$(\s,v)$ is \"{u}bereven, 
$u_v=0$,
then $\Gamma_v$ is the disjoint union of $\Gamma_v^{-}\simeq\P^1$ and $\Gamma_v^{+}\simeq\P^1$, otherwise $\Gamma_v$ is a smooth integral curve of genus $g(v)$ (write $\Gamma_v^{-}=\Gamma_v^{+}=\Gamma_v$ in this case). 

\item Every $(\s,v)\in\Sigma$ with $n_v=1$ gives 
\[\frac{1}{e_v}\bigg(|\s|-\textstyle\sum_{\substack{(\t,w)\in\Sigma\\(\t,w)<(\s,v)}}|\t|+d_v(p_v^0-2)\bigg)\] 
open-ended $\P^1$s of multiplicity $e_v$ from $\Gamma_v$.

\item Finally, for any $(\s,v)\in\Sigma$ draw the following chains of $\P^1$s:
\footnotesize
\cellspacetoplimit4pt
\cellspacebottomlimit4pt
\begin{center}
%\begin{adjustbox}{max width=\textwidth}
\begin{tabular}{|Sc|Sc|Sc|Sc|}
\hline
\!\!\!Conditions \!\!\!&\!\!\! Chain \!\!\!&\!\!\! From \!\!\!&\!\!\! To\!\!\!\cr
\Xhline{1 pt}
\!\!\!$\delta_v=1$ \!\!\!&\!\!\! $\P^1(d_v\gamma_v^0,-s_v^0)$ \!\!\!&\!\!\! $\Gamma_v^-$ \!\!\!&\!\!\!\!\!\! open-ended\!\!\!\cr 
\hline
\!\!\!$\delta_v=1$, $p_v^0/\gamma_v^0=2$ \!\!\!&\!\!\! $\P^1(d_v\gamma_v^0,-s_v^0)$ \!\!\!&\!\!\! $\Gamma_v^+$ \!\!\!&\!\!\!\!\!\! open-ended\!\!\!\cr
\hline
\!\!\!$(\s,v)<(\t,w)$ \!\!\!&\!\!\! $\P^1(d_v\gamma_v,s_v,s_v-\tfrac{p_v}{2}(\lambda_v-\tfrac{d_v}{d_w}\lambda_{w}))$ \!\!\!&\!\!\! $\Gamma_v^-$ \!\!\!&\!\!\!\!\!\! $\Gamma_{w}^-$\!\!\!\cr
\hline
\!\!\!$(\s,v)<(\t,w)$, $p_v/\gamma_v=2$ \!\!\!&\!\!\! $\P^1(d_v\gamma_v,s_v,s_v-\tfrac{p_v}{2}(\lambda_v-\tfrac{d_v}{d_w}\lambda_{w}))$ \!\!\!&\!\!\! $\Gamma_v^+$ \!\!\!&\!\!\!\!\!\! $\Gamma_{w}^+$\!\!\!\cr
\hline
\!\!\!$(\s,v)$ maximal \!\!\!&\!\!\! $\P^1(d_v\gamma_v,s_v)$ \!\!\!&\!\!\! $\Gamma_v^-$ \!\!\!&\!\!\!\!\!\! open-ended\!\!\!\cr
\hline
\!\!\!$(\s,v)$ maximal, $p_v/\gamma_v=2$ \!\!\!\!&\!\!\! $\P^1(d_v\gamma_v,s_v)$ \!\!\!&\!\!\! $\Gamma_v^+$ \!\!\!&\!\!\! open-ended \!\!\!\cr
\hline
\end{tabular}
%\end{adjustbox}
\end{center}\normalsize

\end{enumerate}
\end{thm}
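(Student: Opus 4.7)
The idea is to construct $\mathcal{C}$ in three stages. First, I would build an auxiliary model $\mathcal{Y}/O_K$ of $\P^1_K$ whose components are parametrised by $\Sigma$: to each proper MacLane cluster $(\s,v)\in\Sigma$ one attaches an affine chart on which $v$ becomes the valuation of the generic point of a component, using coordinates of the form $t=g(x)^{b_v}/\pi^{b_v\lambda_v d_v}$ for a key polynomial $g$ realising $\lambda_v$ and a uniformiser $\pi$. Gluing these charts along inclusions of discoids produces $\mathcal{Y}$, whose special fibre has one component $\bar{\Gamma}_v$ of multiplicity $e_v=b_vd_v$ per cluster, with adjacent clusters joined along chains indexed by the intermediate MacLane valuations. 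This is essentially the Obus--Wewers picture, repackaged through $\Sigma$.

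Second, I would take the normalisation $\widetilde{\mathcal{X}}$ of $\mathcal{Y}$ in $K(C)=K(x,y)$ and analyse the fibre above each $\bar{\Gamma}_v$. By construction $\nu_v$ satisfies $v(f)=|\s|\lambda_v-\nu_vd_v$, so the parity of $e_v\nu_v$ dictates whether $y$ must be rescaled by a half-integer power of $\pi$. When $e_v\nu_v$ is odd ($n_v=1$), the pullback of $y^2=f(x)$ is inseparable along $\bar{\Gamma}_v$ and $\widetilde{\mathcal{X}}$ has a single non-reduced component of multiplicity $2e_v$ above it; when $e_v\nu_v$ is even ($n_v=2$), the reduction is a genuine double cover of $\bar{\Gamma}_v\simeq\P^1_k$ of multiplicity $e_v$. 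A local Newton-polygon computation in the chart identifies its branch locus with exactly $u_v$ geometric points: roots of $f$ sitting in the open stratum of $\bar{\Gamma}_v$ (main term), odd-parity children (the set $\tilde{v}$), and the end contribution $\delta_v c_v^0$. The formula $g(v)=\lfloor(u_v-1)/2\rfloor$ is then Riemann--Hurwitz for a degree-$u_v$ hyperelliptic double cover of $\P^1_k$, with $u_v=0$ degenerating into two disjoint copies of $\P^1$.

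Third, I would desingularise. The singularities of $\widetilde{\mathcal{X}}$ are Hirzebruch--Jung cyclic quotients, occurring at nodes of $\mathcal{Y}$ between adjacent clusters $(\s,v)<(\t,w)$ and at the outer ends of each chart (the maximal and $\delta_v=1$ cases). Their minimal resolutions are precisely the chains $\P^1(\alpha,a,b)$: the determinant-$1$ condition written into the notation is the standard continued-fraction expansion of $\alpha a$ down to $\alpha b$, and the multiplicities $\alpha d_i$ are read off from it. The dichotomy $p_v/\gamma_v\in\{1,2\}$ (and analogously $p_v^0/\gamma_v^0$) records whether the degree-$2$ cover is inert or split over the chain: in the inert case the chain appears once with its cross-section doubled into $d_v\gamma_v$, while in the split case it appears twice, once out of $\Gamma_v^+$ and once out of $\Gamma_v^-$.

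The main obstacle will be the parity bookkeeping. Each of $n_v,p_v,\gamma_v,\delta_v,p_v^0,\gamma_v^0$ tracks whether the quadratic cover is inert, split, or ramified over a specific stratum, and tying these predictions to the congruences such as $\ell_v\nu_v d_w\pmod{2}$ baked into Definition~\ref{defn:QuantitiesForTheoremsOnModelsDefinition} requires a careful chart-by-chart analysis of the Newton polygon of $f$ with respect to $v$, and in particular of how $y$ lifts across each resolution chain. Once this is settled, the SNC property is essentially automatic: the determinant-$1$ condition built into $\P^1(\alpha,a,b)$ is precisely the transversality requirement for consecutive components, and each node meets the requisite integrality conditions by the same continued-fraction data.
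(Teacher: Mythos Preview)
Your outline is coherent but follows a genuinely different route from the paper. You propose the classical three-stage strategy: build a model $\mathcal{Y}$ of $\P^1_K$ indexed by $\Sigma$, normalise in $K(C)$, then resolve the resulting cyclic quotient singularities by Hirzebruch--Jung chains (in the spirit of \cite{OW} and \cite{FN}). The paper instead constructs $\mathcal{C}$ in one shot as the closure of a dense affine open of $C$ inside an explicit toric scheme $T_\Sigma/O_K$: for each degree-minimal cluster $(\s_h,\mu_h)$ one forms a Newton polytope $\tilde\Delta_h$ from the $\psi_h$-adic expansion of $y^2-f(x)$, and the faces and edges of these polytopes are glued via explicit matrices $M_{E,i}\in\SL_{n+2}(\Z)$ into a fan $\Sigma$. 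Regularity is checked chart-by-chart (Theorem~\ref{thm:regularmodel}), and the $\P^1$-chains arise directly from the edge subdivisions $s_1^E>\tfrac{n_1}{d_1}>\dots>s_2^E$, not from a separate resolution step. The payoff of the paper's method is explicit affine equations for every chart of $\mathcal{C}$; the payoff of yours would be a cleaner conceptual split between the base geometry and the double cover.

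Two points in your sketch need correction. First, the identity ``$v(f)=|\s|\lambda_v-\nu_v d_v$'' is wrong: by definition $\nu_v=v(f)$ (Lemma~\ref{lem:fvaluation}), so your parity analysis of $e_v\nu_v$ is right but the intermediate formula is not. Second, the chart coordinate ``$t=g(x)^{b_v}/\pi^{b_v\lambda_v d_v}$'' is too na\"ive for MacLane valuations of depth $>1$: even with $k$ algebraically closed, the model of $\P^1$ over such a $v$ requires the iterated units $\gamma_i=\phi_i^{e_i}\pi_{i-1}^{-h_i}$ and uniformisers $\pi_i$ of \S\ref{sec:Residualpoly}, not a single monomial in $g(x)$ and $\pi$. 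Without these, $\mathcal{Y}$ will not be regular at the nodes between clusters of different degree, so the normalisation will acquire singularities that are not cyclic quotients and the clean $\P^1(\alpha,a,b)$ description of the resolution chains will break down. This is more than the parity bookkeeping you flag at the end; it is the main technical content of \S\ref{sec:ModConstr}.
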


\subsection{Example}
Let $p\neq 2$ be a prime number and let $\Q_p^{nr}$ be the maximal unramified extension of $\Q_p$ in $\bar\Q_p$. Let $f=(x^2-p)^3-p^5\in \Q_p[x]$ and $C/\Q_p^{nr}:y^2=f(x)$ a genus $2$ hyperelliptic curve. We can represent the set of MacLane clusters as

\[
\scalebox{2}{\clusterpicture            % [[[1,2]_{n},3,4,5],6]
   \Root {1} {first} {r1};
   \Root {} {r1} {r2};
   \Root {} {r2} {r3};
   \Root {} {r3} {r4};
   \Root {} {r4} {r5};
   \Root {} {r5} {r6};
   \ClusterLDName w[2][\frac{5}{3}][(\roots,v_2)] = (r1)(r2)(r3)(r4)(r5)(r6);
   \ClusterLDName v[1][\frac{1}{2}][(\roots,v_1)] = (w)(wn);
   %\ClusterLDName R[][][\mathfrak{R}] = (r1)(r2)(r3)(r4)(r5)(r6);
\endclusterpicture}
\]
where the bullet points denote the roots of $f$, the circles are the proper MacLane clusters and the superscripts and the subscripts are respectively the degree and the radius of the corresponding cluster. In fact, there are two proper MacLane clusters: 
\begin{enumerate}[label=(\roman*)]
    \item $(\roots,v_1)$, where $D_{v_1}=D(x,1/2)$;
    \item $(\roots,v_2)$, where $D_{v_2}=D(x^2-p,5/3)$.
\end{enumerate}
Note that $\min_{r\in\roots}[K(r):K]=6$ since $f$ is irreducible. We have
\begin{center}
\small
\begin{tabular}{|l
%|c|c
|c|c|c|c|c|c|c|c|c|c|c|c|c|c|}
\hline
    %&$d_v$&$\lambda_v$ 
    & $b_v$ & $e_{v}$ & $\nu_v$& $n_v$ & $m_v$ & $\i_v$ &$p_v$&$s_v$
    &$\gamma_v$&$\delta_v$&$p_v^0$&$s_v^0$&$\gamma_v^0$&$g(v)$\cr
\hline
    $v_1$ 
    %& $1$ & $\nicefrac{1}{2}$ 
    & $2$ & $2$ & $3$ & $2$ & $2$ & $6$ & $2$ & $\nicefrac{1}{2}$ & $1$ & $1$ & $2$ & $-1$ & $2$ & $0$\cr
\hline
    $v_2$ 
    %& $2$ & $\nicefrac{5}{3}$ 
    & $3$ & $6$ & $10$ & $2$ & $6$ & $3$ & $1$ & $\nicefrac{-5}{3}$ & $1$ & $1$ & $2$ & $\nicefrac{-10}{3}$ & $1$ & $0$\cr
\hline
\end{tabular} 
\end{center}
By Theorem \ref{thm:SNCModelIntroductionTheorem}, the special fibre of the regular model $\m C$ we construct is

% \begin{center}
% \pbox[c]{20cm}{
% \begin{tikzpicture}[xscale=1,yscale=0.9,
%   l1/.style={shorten >=-1.3em,shorten <=-0.5em,thick},
%   l2/.style={shorten >=-0.3em,shorten <=-0.3em},
%   lfnt/.style={font=\tiny},
%   leftl/.style={left=-3pt,lfnt},
%   rightl/.style={right=-3pt,lfnt},
%   mainl/.style={scale=0.8,above left=-0.17em and -1.5em},
%   mainleftl/.style={scale=0.8,above right=-0.17em and -1.5em},
%   abovel/.style={above=-2.5pt,lfnt},
%   facel/.style={scale=0.7,blue,below right=-0.5pt and 6pt},
%   faceleftl/.style={scale=0.7,blue,below left=-0.5pt and 6pt},
%   redbull/.style={red,label={[red,scale=0.6,above=-0.17]#1}}]
% \draw[l1] (-.22,0.00)--(2.53,0.00) node[mainl] {2} node[facel] {$\Gamma_{v_1}$};
% \draw[l2] (0.50,0.00)--node[rightl] {1} (0.50,0.66);
% \draw[l1] (1.30,2.00)--(3.33,2.00) node[mainl] {6} node[facel] {$\Gamma_{v_2}$};
% \draw[l2] (1.96,0.00)--node[rightl] {2} (1.96,0.66);
% \draw[l2] (1.30,0.66)--node[abovel] {2} (1.96,0.66);
% \draw[l2] (1.30,0.66)--node[leftl] {2} (1.30,1.33);
% \draw[l2] (1.30,1.33)--node[abovel] {2} (1.96,1.33);
% \draw[l2] (1.96,1.33)--node[rightl] {4} (1.96,2.00);
% \draw[l2] (1.30,2.00)--node[rightl] {4} (1.30,2.66);
% \draw[l2] (0.64,2.66)--node[abovel] {2} (1.30,2.66);
% \draw[l2] (2.76,2.00)--node[rightl] {4} (2.76,2.66);
% \draw[l2] (2.10,2.66)--node[abovel] {2} (2.76,2.66);
% \draw[l2] (-.16,0.00)--node[rightl] {1} (-.16,0.66);
% \end{tikzpicture}
% }
% \end{center}

\begin{center}
\pbox[c]{20cm}{
\begin{tikzpicture}[xscale=1,yscale=0.9,
  l1/.style={shorten >=-1.3em,shorten <=-0.5em,thick},
  l2/.style={shorten >=-0.3em,shorten <=-0.3em},
  lfnt/.style={font=\tiny},
  leftl/.style={left=-3pt,lfnt},
  rightl/.style={right=-3pt,lfnt},
  mainl/.style={scale=0.8,above left=-0.17em and -1.5em},
  mainleftl/.style={scale=0.8,above right=-0.17em and -1.5em},
  abovel/.style={above=-2.5pt,lfnt},
  facel/.style={scale=0.7,blue,below right=-0.5pt and 6pt},
  faceleftl/.style={scale=0.7,blue,below left=-0.5pt and 6pt},
  redbull/.style={red,label={[red,scale=0.6,above=-0.17]#1}}]
\draw[l1] (-.22,0.00)--(2.53,0.00) node[mainl] {2} node[facel] {$\Gamma_{v_1}$};
\draw[l2] (0.50,0.00)--node[rightl] {1} (0.50,0.66);
\draw[l1] (1.30,0.66)--(3.33,0.66) node[mainl] {6} node[facel] {$\Gamma_{v_2}$};
\draw[l2] (1.96,0.00)--node[rightl] {4} (1.96,0.66);
\draw[l2] (1.30,0.66)--node[rightl] {4} (1.30,1.32);
\draw[l2] (0.64,1.32)--node[abovel] {2} (1.30,1.32);
\draw[l2] (2.76,0.66)--node[rightl] {4} (2.76,1.32);
\draw[l2] (2.10,1.32)--node[abovel] {2} (2.76,1.32);
\draw[l2] (-.16,0.00)--node[rightl] {1} (-.16,0.66);
\end{tikzpicture}
}
\end{center}
where all irreducible components have genus $0$. In fact, by computing the self-intersections of all irreducible components, we see that $\m C$ is the minimal regular model of $C$ (\cite[Theorem 9.3.8]{Liu}).

\subsection{Related works of other authors}\label{subsec:RelatedWorks}
Let $K$ be a
discretely valued field of residue characteristic not $2$ and let $C/K$ be a hyperelliptic curve.
In this subsection we want to present previous works studying regular models of $C$, possibly under some extra conditions. Note that some of the results cited below may apply to more general curves and fields.

In genus $1$ there is a complete characterisation of (minimal) regular models of $C$ (see for example \cite[IV.8.2]{Sil2} when the residue field of $K$ is perfect).
A description of all special fibre configurations is also given by Namikawa and Ueno \cite{NU} and Liu \cite{Liu94bis} for genus $2$ curves, when $K=\C(t)$. 

% Liu and Lorenzini show in \cite{LL} that regular models of $C$ can be seen as double cover of well-chosen regular models of $\P^1_K$. Since the latter can be found by using the MacLane valuations (\cite{Mac}) approach in \cite{OW}, this argument gives a way to describe regular models of hyperelliptic curves. At the moment there is no known closed form description of a regular model based on this approach.

If $C$ is semistable over some tamely ramified extension $L/K$, then \cite{FN} describes the special fibre of the minimal regular model of $C$ with strict normal crossings. If, in addition, $L=K$ is a local field, in \cite{D2M2} we can also see an explicit construction of the model itself.

T.\ Dokchitser in \cite{Dok} shows that a certain toric resolution of $C$ gives a regular model in case of $\Delta_v$-regularity (\cite[Definition 3.9]{Dok}). This condition is rephrased in terms of clusters in \cite[Corollary 3.25]{Mus}.

Finally, \cite{Mus} constructs the minimal regular model with normal crossings if $C$ has almost rational cluster picture. One can see that the latter condition is equivalent of requiring that all MacLane clusters have degree $1$.

% \subsubsection*{Conventions and notations}
% \begin{itemize}
%     \item Let $A$ be a ring and let $a_1,\dots,a_n\in A^\times$, for some $n\in\Z_+$. For any matrix $M=(m_{ij})\in\SL_n(\Z)$ denote by $(a_1,\dots,a_n)\bullet M$ the vector
% \[(a_1^{m_{11}}\cdots a_n^{m_{n1}},\dots,a_1^{m_{1n}}\cdots a_n^{m_{nn}})\]
% \end{itemize}
%\input{Alternative introduction}

% \section{Baker's theorem}
% \input{Baker's theorem}

% \section{Resolution of singularities via Newton polygons}
% \input{Completion}

\section{MacLane valuations}
In this section we summarise definitions and results on MacLane valuations. Our main references are \cite{KW}, \cite{Mac}, \cite{OS} and \cite{Rut}.

Let $K$ be a complete discretely valued field, with normalised discrete valuation $v_K$, ring of integers $O_K$ and residue field $k$. Let $\bar K$ be an algebraic closure of $K$ and let $K^s$ be the separable closure of $K$ in $\bar K$. Let $G_K=\Gal(K^s/K)$ be the absolute Galois group of $K$.

Let $\pV$ denote the set of the discrete pseudo-valuation\footnote{See Appendix \ref{appendix:pseudo-valuations} for more details.} $v: K[x]\rightarrow \hat\Q$ extending $v_K$ and satisfying $v(x)\geq 0$. Let $\V$ be the set of valuations in $\pV$. In other words, $\V$ consists of those pseudo-valuations $v\in\pV$ satisfying $v^{-1}(\infty)={0}$. 
%We say that a pseudo-valuation $v\in\pV$ is \textit{infinite} if it is not a valuation. 
We can equip $\pV$ with a natural partial order:
\[v\geq w\quad\text{ if and only if }\quad v(g)\geq w(g)\text{ for all }g\in K[x].\]
The partially ordered set $\pV$ has a least element $v_0$, called \textit{Gauss valuation}, defined by
\[v_0(a_mx^m+\cdots+a_1x+a_0)=\min_{i}v_K(a_i)\qquad (a_i\in K).\]
Note that $v_0$ is a valuation, i.e.\ $v_0\in \V$.

Every $v\in\V$ can be extended to a valuation $K(x)\rightarrow \hat\Q$, that will also be denoted by $v$.

\begin{defn}
For every $v\in\V$ define
\begin{center}
\begin{tabular}{ll}
     $\Gamma_v$ & the valuation group of $v$\\
     $e_{v}$ & the index $[\Gamma_v:\Z]$\\
     $\Av_v$ & the residue ring of $v$\\
     $\F_v$ & the residue field of $v$
\end{tabular}
\end{center}
\end{defn}

\begin{defn}
Let $v\in\V$. For any $g,h\in K(x)$ we say that 
\begin{itemize}
\item $g$ is \textit{$v$-equivalent} to $h$, denoted $g\veq_v h$, if $v(g-h)>v(g)$.
\item $g$ is \textit{$v$-divisible} by $h$, denoted $h\vdiv_v g$, if there exists $q\in K[x]$ such that $g\veq_v qh$.
\end{itemize}
\end{defn}

Let $v\in \V$. For any $\alpha\in\Gamma_v$, define
\[O_v(\alpha)=\{g\in K[x]\mid v(g)\geq \alpha\}, \qquad O_v^+(\alpha)=\{g\in K[x]\mid v(g)>\alpha\}.\]
The \textit{graded algebra of $v$} is the integral domain
\[Gr(v):=\bigoplus_{\alpha\in\Gamma_v}\Av_v(\alpha),\quad\text{ where } \Av_v(\alpha)=O_v(\alpha)/O_v^+(\alpha).\]
The canonical homomoprhism $k\rightarrow \Av_v$ equips $\Av_v$ and $Gr(v)$ with a $k$-algebra structure. 
There is a natural map $H_v: K[x]\rightarrow Gr(v)$ given by $H_v(0)=0$ and
\[H_v(g)=g+O_v^+(v(g))\in \Av_v(v(g)),\]
when $g\neq 0$. The map $H_v$ satisfies the following properties
\begin{enumerate}
    \item $f\veq_v g$ if and only if $H_v(f)=H_v(g)$,\label{item:Hvprop1}
    \item $H_v(fg)=H_v(f)H_v(g)$,\label{item:Hvprop2}
\end{enumerate}
for $f,g\in K[x]$.
Let $U_v\subseteq K[x]^\ast$ be the multiplicative set 
\[U_v=\{g\in K[x]\mid H_v(g)\text{ is a unit in } Gr(v)\}\]
and let $\loc{v}\subseteq K(x)$ be the localisation of $K[x]$ by $U_v$. We extend $H_v$ to a map $\loc{v}\rightarrow Gr(v)$ by taking $\nicefrac{g}{u}\mapsto H_v(g)H_v(u)^{-1}\in \Av_v(v(\nicefrac{g}{u}))$, for any $g\in K[x], u\in {U_v}$. With a little abuse of notation we denote the extended map again by $H_v$. The properties (\ref{item:Hvprop1}), (\ref{item:Hvprop2}) of $H_v$ hold for all $f,g\in \loc{v}$.

\begin{defn}
We call $H_v: \loc{v}\rightarrow Gr(v)$ the \textit{residue map of $v$}.
\end{defn}

For any $\alpha\in\Gamma_v$, let
\[\loc{v}(\alpha)=\{g\in \loc{v}\mid v(g)\geq \alpha\}, \qquad \loc{v}^+(\alpha)=\{g\in \loc{v}\mid v(g)>\alpha\}.\]
Note that $H_v$ induces a birational map $\loc{v}(\alpha)/\loc{v}^+(\alpha)\rightarrow\Av_v(\alpha)$.

\begin{defn}
Let $v\in\V$. A monic polynomial $\phi\in K[x]$ is a \textit{key polynomial over $v$} if 
\begin{enumerate}[label=(\arabic*)]
    \item $\phi$ is \textit{$v$-irreducible}, i.e.\ if $\phi\vdiv_v ab$ then $\phi\vdiv_v a$ or $\phi\vdiv_v b$, for all $a,b\in K[x]$;
    \item $\phi$ is \textit{$v$-minimal}, i.e.\ if $\phi\vdiv_v a$ then $\deg a\geq\deg\phi$, for all $a\in K[x]$.
\end{enumerate}
Denote by $\KP v$ the set of key polynomials over $v$.
\end{defn}

\begin{rem}
Let $v\in\V$. Then $\KP v\subseteq O_K[x]$ (\cite[Corollary 1.10]{FGMN}).
\end{rem}

\begin{defn}[{\cite[Theorem 4.2]{Mac}}]
Let $v\in\V$. Let $\phi\in \KP v$ and $\lambda\in\hat\Q$, $\lambda> v(\phi)$. 
%\marginpar{\footnotesize Is $\lambda\geq v(\phi)$ sufficient to have $w\geq v$? Compare comment after \cite[4.9]{Rut} with \cite[4.2, 5.1]{Mac}}
Define a pseudo-valuation $w\in \pV$, denoted $w=[v,v(\phi)=\lambda]$, by
\[w(a_m\phi^m+\cdots+a_1\phi+a_0)=\min_{i}(v(a_i)+i\lambda)\qquad a_i\in K[x], \,\deg a_i<\deg\phi.\]
We call $w$ the \textit{augmentation} of $v$ with respect to $(\phi,\lambda)$.
\end{defn}

\begin{rem}\label{rem:(i)augmentation>(ii)RadiusDegreeUniqueness}
%\marginpar{\footnotesize What about the $\lambda=\infty$ case?}
Let $w=[v,v(\phi)=\lambda]$ be an augmentation of $v$. Then
\begin{enumerate}[label=(\roman*)]
    \item $w> v$ by \cite[Propositions 1.7, 1.9]{FGMN}.\label{item:augmentation>}
    \item $\lambda$ and $\deg\phi$ are uniquely determined by $w$, but not the key polynomial $\phi$ itself in general (see \cite[Remark 2.7]{KW}).\label{item:RadiusDegreeUniqueness}
\end{enumerate}
\end{rem}

\begin{defn}
A pseudo-valuation $v\in\pV$ is \textit{MacLane} if it can be attained after a finite number of augmentations starting with $v_0$. Write
\[v=[v_0,v_1(\phi_1)=\lambda_1,\dots,v_m(\phi_m)=\lambda_m],\quad m\in\N,\]
where  $v_i=[v_{i-1},v_i(\phi_i)=\lambda_i]$ is an augmentation of $v_{i-1}$ for any $i=1,\dots,m$, and $v_m=v$. We will call $\phi_m$ a \textit{centre} of $v$ and $\lambda_m$ the \textit{radius} of $v$.\footnote{By convention, if $v=v_0$, then 
any monic integral polynomial of degree $1$ is a centre 
%$x$ is the centre 
of $v$ and $0$ is the radius of $v$.}

Let $\pM\subset\pV$ denote the set of MacLane pseudo-valuations and let $\M\subset\V$ denote the set of MacLane valuations.
\end{defn}

\begin{rem}
There are different equivalent characterisations for the sets $\M$ and $\pM$ (see \cite[\S2]{KW}). In fact,
\begin{enumerate}[label=(\roman*)]
\item $\M$ consists of those valuations $v\in \V$ with residue field $\F_v$ of transcendence degree $1$ over $k$;
\item all infinite pseudo-valuations $v\in\pV$ are Maclane.
%$\pM$ consists of those pseudo-valuations $v\in\pV$ that are MacLane valuations or that are not valuations,
%i.e.\  $\pM=\M\cup(\pV\setminus\V)$.
\end{enumerate}
\end{rem}

\begin{nt}
Let $v\in\pM$. Remark \ref{rem:(i)augmentation>(ii)RadiusDegreeUniqueness}\ref{item:RadiusDegreeUniqueness} implies that the radius of $v$ is uniquely determined by $v$. We will denote it by $\lambda_v$.
\end{nt}

\begin{defn}
Let $v\in\pM$. An \textit{augmentation chain (of length $m$)} for $v$ is a tuple
\begin{equation}\label{eqn:augm_chain}
((\phi_1,\lambda_1),\dots,(\phi_m,\lambda_m)),
\end{equation}
where $v=[v_0,v_1(\phi_1)=\lambda_1,\dots,v_m(\phi_m)=\lambda_m]$.
We say that (\ref{eqn:augm_chain}) is 
\begin{enumerate}
    \item a \textit{MacLane chain} if $\phi_{i+1}\not\veq_{v_i}\phi_i$ for any $i=1,\dots,m-1$.
    \item \textit{minimal} if $\deg\phi_{i+1}>\deg\phi_i$ for any $i=1,\dots,m-1$.
\end{enumerate}
\end{defn}

% \begin{lem}
% Let $v=[v_0,v_1(\phi_1)=\lambda_1,\dots, v_m(\phi_m)=\lambda_m]$ be a MacLane valuation and let $\phi\in\KP v$. Then $\deg\phi\geq\deg \phi_m$.
% \proof
% Suppose by contradiction that $\deg\phi<\deg\phi_m$. By definition of augmentation we have $v_m(\phi)=v_{m-1}(\phi)$. 
% Write the quotient with remainder $\phi_m=q\phi+r$. Then $\deg r<\deg\phi<\deg \phi_{m}$ and so $v_m(r)=v_{m-1}(r)$. From \cite[Lemma 4.12]{Rut} it follows that
% \[v_{m-1}(r)=v_m(r)\geq v_m(\phi_m)=\lambda_m>v_{m-1}(\phi_m),\]
% since $v_m=[v_{m-1},v_m(\phi_m)=\lambda_m]$ is an augmentation of $v_{m-1}$. Therefore $\phi_m\veq_{v_{m-1}}q\phi$. Since $\phi_m$ is $v_{m-1}$-irreducible, either $\phi_m\vdiv_{v_{m-1}}q$ or $\phi_m\vdiv_{v_{m-1}}\phi$. In either way, we get a contradiction of the $v_{m-1}$-minimality of $\phi_m$.
% \endproof
% \end{lem}

For any augmentation chain (\ref{eqn:augm_chain}) we have
\[\deg\phi_1\mid\deg\phi_2\mid\dots\mid\deg\phi_m,\]
by \cite[Lemma 2.10]{FGMN}. If it is a MacLane chain, then $v(\phi_i)=\lambda_i$ for any $i=1,\dots,m$. In particular, $\Gamma_v=\lambda_1\Z+\dots+\lambda_m\Z$.

\begin{rem}\label{rem:MacChainToMinimal}
Let $v\in\pM$.
\begin{enumerate}
\item A minimal augmentation chain is a Maclane chain. 
\item From any MacLane chain 
$((\phi_1,\lambda_1),\dots,(\phi_m,\lambda_m))$
for $v$, we can find a minimal augmentation chain for $v$ by removing the pairs $(\phi_i,\lambda_i)$ with $\deg\phi_i=\deg\phi_{i+1}$, for $i=1,\dots,m-1$ (\cite[Lemma 15.1]{Mac}, \cite[Lemma 3.4]{FGMN}).
%\item All minimal augmentation chains for $v$ have the same length.
\end{enumerate}
\end{rem}

\begin{nt}
We will denote an augmentation chain (\ref{eqn:augm_chain}) by
\[[v_0,v_1(\phi_1)=\lambda_1,\dots,v_m(\phi_m)=\lambda_m],\]
where $v_i=[v_{i-1}, v_i(\phi_i)=\lambda_i]$ for all $i=1,\dots,m$.
\end{nt}

\begin{defn}
Let $v\in\pM$ given by a MacLane chain
\begin{equation}\label{eqn:MacLaneChainfordegreedepth}
    [v_0,v_1(\phi_1)=\lambda_1,\dots,v_m(\phi_m)=\lambda_m].
\end{equation}
\begin{enumerate}[label=(\alph*)]
    \item The \textit{degree of $v$}, denoted $\deg v$, is the positive integer $\deg\phi_m$. 
    \item If (\ref{eqn:MacLaneChainfordegreedepth}) is minimal, then $m$ is said the \textit{depth of $v$}.
\end{enumerate}
The degree and the depth of $v$ are independent of the chosen MacLane chain (\ref{eqn:MacLaneChainfordegreedepth}) by \cite[Proposition 3.6]{FGMN}.
\end{defn}

Note that if $v\in\M$ then $\deg v\mid\deg\phi$ for any $\phi\in\KP v$.

\begin{defn}
Let $v\in \M$. A key polynomial $\phi\in\KP v$ is said 
\begin{enumerate}
    \item \textit{proper} if $v$ has a centre $\phi_v\not\veq_v\phi$.
    \item \textit{strong} if $v=v_0$ or $\deg\phi>\deg v$.
\end{enumerate}
\end{defn}

\begin{lem}\label{lem:centre}
Let $w\in\pM$. A polynomial $\phi\in K[x]$ is a centre of $w$ if and only if $\phi\in\KP w$ and $\deg w=\deg\phi$. Furthermore, if $w=[v,w(\phi)=\lambda]$, then any two centres of $w$ are $v$-equivalent.
\proof
Let $v\in\M$ such that $w=[v,w(\phi_w)=\lambda_w]$. If $\phi\in K[x]$ is a centre of $w$ then $\phi\in\KP w$ by \cite[Proposition 1.7(4)]{FGMN} and $\deg w=\deg\phi$ from Remark \ref{rem:(i)augmentation>(ii)RadiusDegreeUniqueness}\ref{item:RadiusDegreeUniqueness}. 
%In particular, $\phi_w\in\KP w$. 
Conversely, suppose $\phi\in\KP w$ and $\deg\phi=\deg w$. From the $w$-minimality of $\phi$ and $\phi_w$, one has $w(\phi)=\lambda_w$.
%and $\deg\phi=\deg w=\deg\phi_w$. 
Hence
\[v(\phi-\phi_w)=w(\phi-\phi_w)\geq \lambda_w>v(\phi_w).\]
Therefore $\phi\veq_v\phi_w$. In particular, $\phi\in\KP v$ as $\deg\phi=\deg\phi_w$, and so $w=[v,w(\phi)=\lambda_w]$.
Thus $\phi$ is a centre of $w$.
\endproof
\end{lem}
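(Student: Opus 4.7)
The plan is to argue both directions of the equivalence separately, then derive the \emph{furthermore} clause as a byproduct of the reverse direction.

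For the forward direction, suppose $\phi$ is a centre of $w$, so by definition there exists some MacLane chain realising $w=[v,w(\phi)=\lambda]$ with $\phi\in\KP v$ and $\lambda>v(\phi)$. The degree claim $\deg w=\deg\phi$ is then immediate from the definition of $\deg w$ via this very chain. The claim $\phi\in\KP w$ should follow from \cite[Proposition 1.7(4)]{FGMN}, which asserts that the key polynomial used in an augmentation remains a key polynomial for the augmented valuation.

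For the reverse direction, fix any realisation $w=[v,w(\phi_w)=\lambda_w]$ and assume $\phi\in\KP w$ with $\deg\phi=\deg w=\deg\phi_w$. Since $\phi$ is monic of degree $\deg\phi_w$, write $\phi=\phi_w+s$ with $\deg s<\deg\phi_w$; then the augmentation formula gives $w(\phi)=\min(v(s),\lambda_w)$. The key step is to exploit the $w$-minimality of $\phi$ to force $v(s)\geq\lambda_w$: if instead $v(s)<\lambda_w$, then $w(\phi-s)=w(\phi_w)=\lambda_w>v(s)=w(\phi)$ forces $\phi\veq_w s$, so $s\vdiv_w\phi$ with $\deg s<\deg\phi$, contradicting $w$-minimality. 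Hence $v(\phi-\phi_w)=v(s)\geq\lambda_w>v(\phi_w)$, so $\phi\veq_v\phi_w$. Since $v$-equivalence preserves the key-polynomial property among polynomials of the same degree, $\phi\in\KP v$, and one checks via the $\phi$-adic (resp.\ $\phi_w$-adic) expansion that $w=[v,w(\phi)=\lambda_w]$; thus $\phi$ is a centre of $w$.

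The \emph{furthermore} statement is then automatic: applying the reverse direction's analysis to any two centres $\phi_1,\phi_2$ of $w$ (using the fixed $\phi_w$ from the given realisation $w=[v,w(\phi)=\lambda]$) shows each is $v$-equivalent to $\phi_w$, hence to each other. The main delicate point I expect is the reverse direction — specifically, reading off $w(\phi)=\lambda_w$ from the $\phi_w$-adic expansion and using the $w$-minimality of $\phi$ in the correct way; the rest reduces to bookkeeping with the augmentation formula and the definition of $\veq_v$.
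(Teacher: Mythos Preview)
Your proof is correct and follows essentially the same approach as the paper's. The only difference is in how you establish $w(\phi)=\lambda_w$ (equivalently $v(s)\geq\lambda_w$): the paper invokes $w$-minimality of both $\phi$ and $\phi_w$ in one line (implicitly via \cite[Theorem 3.10]{FGMN}, which gives $w(\phi)/\deg\phi=\lambda_w/\deg w$ for $w$-minimal $\phi$), whereas you unpack this by a direct contradiction argument from the $\phi_w$-adic expansion; after that, both proofs proceed identically through $\phi\veq_v\phi_w$ to conclude.
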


% \begin{lem}
% Let $w\in\pM$ with centre $\phi_w$. If $\phi\in\KP w$ is a centre of $w$ then $\phi\veq_v\phi_w$ for any MacLane valuation $v<w$.
% \end{lem}

%\subsection{Discoids}

\begin{defn}
Given a monic irreducible polynomial $\phi\in K[x]$ and an element $\lambda\in\hat\Q$, the \textit{discoid} of centre $\phi$ and radius $\lambda$ is the set
\[D=D(\phi,\lambda)=\{\alpha\in\bar K\mid v_K(\phi(\alpha))\geq \lambda\}\subset \bar K.\]
Let $\D$ denote the set of discoids.
\end{defn}

\begin{rem}\label{rem:Galois1disjoint2discoids}
Let $D=D(\phi,\lambda)$ be a discoid.
\begin{enumerate} 
    \item $D$ is finite if $\lambda=\infty$, while equals the union of the Galois orbits of a disc centred at a root of $\phi$ if $\lambda<\infty$ (\cite[Lemma 4.43]{Rut}). 
    \item For any $D'\in\D$ such that $D\cap D'\neq \varnothing$ either $D\subseteq D'$ or $D\subseteq D'$ (\cite[Lemma 4.44]{Rut}).
\end{enumerate}
\end{rem}

\begin{defn}
Given a MacLane pseudo-valuation $v$, define
\[D_v=\{\alpha\in\bar K\mid v_K(g(\alpha))\geq v(g)\quad\text{for all }g\in K[x]\}.\]
It is a discoid by the following lemma.
\end{defn}

\begin{lem}
If $v=[v_0,v_1(\phi_1),\dots,v_{m-1}(\phi_{m-1})=\lambda_{m-1}, v_m(\phi_m)=\lambda_m]$ is a MacLane pseudo-valuation, then $D_v=D(\phi_m,\lambda_m)$.
\proof
If $v\in\M$, then the lemma follows from \cite[Lemma 4.55]{Rut}. Suppose $v$ is an infinite MacLane pseudo-valuation. Then $\lambda_m=\infty$. Clearly $D_v\subseteq D(\phi_m,\lambda_m)$. Let $r\in D(\phi_m,\lambda_m)$, i.e. $r$ is a root of $\phi_m$. Let $g\in K[x]$. We want to show that $v_K(g(r))\geq v(g)$. If $\phi_m\mid g$, then $g(r)=0$ and $v(g)=\infty$, so $v_K(g(r))=v(g)$. If $\phi_m\nmid g$, then there is a sufficiently large $\lambda\in\Q$ such that $w(g)=v(g)$, with $w=[v_{m-1},w(\phi_m)=\lambda]$. Since $w\in\M$, we have $D(\phi_m,\lambda)=D_w$. But $r\in D(\phi_m,\lambda)$, and so $v_K(g(r))\geq w(g)=v(g)$.
\endproof
\end{lem}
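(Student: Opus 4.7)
The plan is to split along whether $v$ is finite (i.e.\ $v\in\M$) or infinite (i.e.\ $\lambda_m=\infty$). If $v\in\M$, the result is essentially Lemma~4.55 of \cite{Rut}, which characterises the discoid attached to a MacLane valuation given by a minimal chain; after passing from the given MacLane chain to a minimal one via Remark~\ref{rem:MacChainToMinimal}, the centre and radius of the last augmentation are unchanged, so I would simply invoke this. This handles one half of the lemma.

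For the infinite case, I would argue both inclusions of $D_v=D(\phi_m,\lambda_m)$ directly. The inclusion $D_v\subseteq D(\phi_m,\infty)$ is immediate: any $\alpha\in D_v$ satisfies $v_K(\phi_m(\alpha))\geq v(\phi_m)=\infty$, which forces $\phi_m(\alpha)=0$. For the reverse inclusion, fix a root $r$ of $\phi_m$ and an arbitrary $g\in K[x]$, and show $v_K(g(r))\geq v(g)$. If $\phi_m\mid g$, both sides are $\infty$ and there is nothing to prove.

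The crux is the case $\phi_m\nmid g$. Here I would approximate $v$ by a genuine MacLane valuation: set $w_\lambda=[v_{m-1},w_\lambda(\phi_m)=\lambda]$ for $\lambda\in\Q$ with $\lambda>v_{m-1}(\phi_m)$. Writing $g=\sum_{i}a_i\phi_m^i$ with $\deg a_i<\deg\phi_m$, the augmentation formula gives $w_\lambda(g)=\min_i(v_{m-1}(a_i)+i\lambda)$ and $v(g)=\min_i(v_{m-1}(a_i)+i\infty)$; since $\phi_m\nmid g$ we have $a_0\neq 0$, so $v(g)=v_{m-1}(a_0)$ is finite, and for $\lambda$ sufficiently large only the $i=0$ term realises the minimum, giving $w_\lambda(g)=v(g)$. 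Since $w_\lambda\in\M$, the finite case already established yields $D_{w_\lambda}=D(\phi_m,\lambda)$; as $r$ is a root of $\phi_m$, certainly $r\in D(\phi_m,\lambda)=D_{w_\lambda}$, hence $v_K(g(r))\geq w_\lambda(g)=v(g)$, as required.

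The main obstacle is purely bookkeeping around the symbol $\infty$: one has to be comfortable viewing the infinite pseudo-valuation as a limit of the finite augmentations $w_\lambda$ as $\lambda\to\infty$, and make sure the step ``for $\lambda$ large, $w_\lambda(g)=v(g)$'' is justified coefficient by coefficient. Once this is set up, everything else is formal, and the proof reduces cleanly to the already-known finite case.
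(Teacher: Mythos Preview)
Your proposal is correct and follows essentially the same route as the paper's proof: the finite case is delegated to \cite[Lemma~4.55]{Rut}, and in the infinite case one shows both inclusions directly, handling $\phi_m\nmid g$ by approximating $v$ with the finite augmentation $w_\lambda=[v_{m-1},w_\lambda(\phi_m)=\lambda]$ for $\lambda$ large and invoking the finite case. Your write-up is slightly more explicit (expanding $g$ in $\phi_m$ to justify $w_\lambda(g)=v(g)$ and noting the passage to a minimal chain), but there is no substantive difference in strategy.
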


\begin{thm}\label{thm:valdiscoid}
The map $\pM\rightarrow \D$ taking 
$v\mapsto D_v$
is well-defined, bijective, and inverts partial orders, i.e. for any $v,w\in\pM$ we have
\[w\geq v\quad\text{if an only if}\quad D_w\subseteq D_v.\]
Given a discoid $D$, then $D=D_v$, where $v$ is the MacLane pseudo-valuation given by $v(g)=\inf_{r\in D}v_K(g(r))$ for all $g\in K[x]$.
\proof
The result follows from \cite[Theorem 4.56]{Rut},
%if $v\in\M$, 
\cite[Remark 2.3]{KW}. 
%otherwise.
\endproof
\end{thm}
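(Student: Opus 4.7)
The plan is to deduce the theorem from the preceding lemma together with the existing characterisation of pseudo-valuations attached to discs in \cite{Rut} and \cite{KW}, splitting the work into three pieces: well-definedness, the order-reversing property, and the explicit reconstruction of $v$ from $D_v$. Well-definedness is immediate: for $v\in\pM$, fix any MacLane chain $[v_0,v_1(\phi_1)=\lambda_1,\dots,v_m(\phi_m)=\lambda_m]$ expressing $v$; the preceding lemma gives $D_v=D(\phi_m,\lambda_m)\in\D$. The easy half of the order reversal also comes for free: if $w\geq v$, then for every $\alpha\in D_w$ and $g\in K[x]$ one has $v_K(g(\alpha))\geq w(g)\geq v(g)$, so $\alpha\in D_v$ and hence $D_w\subseteq D_v$.

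I would then prove the final sentence next, since it is the crux. Given $D=D(\phi,\lambda)\in\D$, define $v_D\colon K[x]\to\hat\Q$ by $v_D(g)=\inf_{r\in D}v_K(g(r))$, and check directly that this is a pseudo-valuation extending $v_K$ with $v_D(x)\geq 0$ (using $v_K(r)\geq 0$ for $r$ in any discoid $\subseteq\bar K$ of finite radius, and the infinite case separately). The essential point is that $v_D\in\pM$ with terminal key-polynomial/radius pair $(\phi,\lambda)$: this is the content of \cite[Theorem 4.56]{Rut}, supplemented by \cite[Remark 2.3]{KW} which treats the infinite pseudo-valuations ($\lambda=\infty$, $D$ a Galois orbit of roots of $\phi$). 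Applying the preceding lemma to the MacLane chain so produced yields $D_{v_D}=D(\phi,\lambda)=D$, giving surjectivity of $v\mapsto D_v$.

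The converse direction of the order reversal then falls out of the formula for $w$ recovered from its discoid. Indeed, writing $w=v_{D_w}$ and $v=v_{D_v}$ via the previous step, if $D_w\subseteq D_v$ then for any $g\in K[x]$,
\[w(g)=\inf_{\alpha\in D_w}v_K(g(\alpha))\geq\inf_{\alpha\in D_v}v_K(g(\alpha))=v(g),\]
so $w\geq v$. Injectivity is now automatic: $D_v=D_w$ forces $v\geq w$ and $w\geq v$, hence $v=w$.

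The main obstacle is isolated to one point: the claim that the pseudo-valuation defined by $g\mapsto\inf_{\alpha\in D}v_K(g(\alpha))$ is genuinely MacLane, with an augmentation chain terminating in the given $(\phi,\lambda)$. Everything else reduces to the definitions and the preceding lemma, but this step is precisely what the cited results from \cite{Rut} and \cite{KW} supply, and it is the piece the proof must offload onto the existing literature rather than reprove from scratch.
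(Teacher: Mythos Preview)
Your proposal is correct and takes essentially the same approach as the paper: both defer the substantive content to \cite[Theorem 4.56]{Rut} and \cite[Remark 2.3]{KW}, with your version supplying the scaffolding (well-definedness via the preceding lemma, the easy order direction, and the formula $v(g)=\inf_{r\in D_v}v_K(g(r))$) that the paper leaves implicit. One small point of ordering: your deduction of the hard direction $D_w\subseteq D_v\Rightarrow w\geq v$ invokes $w=v_{D_w}$, but the ``previous step'' only established $D_{v_D}=D$, not $v_{D_v}=v$; that identity must also be extracted from the cited results (which do provide it), otherwise your subsequent injectivity argument becomes circular.
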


\begin{lem}\label{lem:degdiscoid}
Let $v\in\pM$ and $D_v=D(g,\lambda)$ the associated discoid. Then $\deg v\leq\deg g$ and $v(g)\geq\lambda$.
\proof
Theorem \ref{thm:valdiscoid} implies that $\inf_{r\in D_v}v_K(g(r))=v(g)$. Then $v(g)\geq\lambda$. It follows that
\[D_v\subseteq D(g,v(g))\subseteq D(g,\lambda)=D_v.\]
Then $D_v=D(g,v(g))$. Suppose $\deg v>\deg g$ and let \[[v_0,v_1(\phi_1)=\lambda_1,\dots,v_m(\phi_m)=\lambda_m]\]
be a MacLane chain for $v$. Then $v_{m-1}<v$ but $v_{m-1}(g)=v(g)$. Therefore $D_v\subsetneq D_{v_{m-1}}\subseteq D(g,v(g))$, a contradiction. 
\endproof
\end{lem}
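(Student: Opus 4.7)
The plan is to establish the second inequality $v(g) \geq \lambda$ first, since it is essentially immediate, and then use it to reduce the degree inequality to a contradiction argument via a MacLane chain.

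For $v(g) \geq \lambda$, I invoke Theorem \ref{thm:valdiscoid}, which gives $v(g) = \inf_{r \in D_v} v_K(g(r))$. Since $D_v = D(g,\lambda)$, by definition every $r \in D_v$ satisfies $v_K(g(r)) \geq \lambda$, so taking the infimum yields $v(g) \geq \lambda$. Combining this with $v_K(g(r)) \geq v(g)$ for all $r \in D_v$ gives the chain of inclusions
\[D_v \subseteq D(g,v(g)) \subseteq D(g,\lambda) = D_v,\]
which forces $D_v = D(g,v(g))$. This equality is the key bridge to the degree statement.

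For $\deg v \leq \deg g$, I argue by contradiction. If $v = v_0$ then $\deg v = 1 \leq \deg g$ since $g$ is a nonzero monic polynomial, so I may assume $v$ has positive depth and fix a (minimal) MacLane chain
\[[v_0, v_1(\phi_1) = \lambda_1, \dots, v_m(\phi_m) = \lambda_m]\]
for $v = v_m$. Assume for contradiction that $\deg v > \deg g$. Then $\deg g < \deg \phi_m$, so the $\phi_m$-adic expansion of $g$ is just $g$ itself (the constant term with respect to $\phi_m$). Applying the augmentation formula defining $v_m$ yields $v(g) = v_m(g) = v_{m-1}(g)$. Now Theorem \ref{thm:valdiscoid} applied to $v_{m-1}$ gives $v_{m-1}(g) = \inf_{r \in D_{v_{m-1}}} v_K(g(r))$, hence every $r \in D_{v_{m-1}}$ lies in $D(g, v_{m-1}(g)) = D(g, v(g)) = D_v$. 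Thus $D_{v_{m-1}} \subseteq D_v$. But $v_{m-1} < v$ and the order-reversing bijection of Theorem \ref{thm:valdiscoid} give $D_v \subsetneq D_{v_{m-1}}$, a contradiction.

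The main obstacle is the slightly delicate algebraic step of recognising that when $\deg g < \deg \phi_m$, the $\phi_m$-adic expansion of $g$ collapses to a single term, so the augmentation formula forces $v(g) = v_{m-1}(g)$. Everything else is a straightforward unwinding of Theorem \ref{thm:valdiscoid} and the order-reversal between MacLane pseudo-valuations and discoids.
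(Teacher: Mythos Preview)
Your proof is correct and follows essentially the same route as the paper's: first establish $v(g)\geq\lambda$ via Theorem~\ref{thm:valdiscoid}, deduce $D_v=D(g,v(g))$, and then derive a contradiction from $\deg v>\deg g$ by observing $v_{m-1}(g)=v(g)$ and hence $D_{v_{m-1}}\subseteq D(g,v(g))=D_v\subsetneq D_{v_{m-1}}$. You simply spell out more explicitly why $v_{m-1}(g)=v(g)$ (the $\phi_m$-adic expansion collapses) and handle the trivial case $v=v_0$ separately, but the argument is the same.
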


\begin{rem}
Lemma \ref{lem:degdiscoid} shows that $\deg v$ is the lowest positive integer such that $D_v=D(g,\lambda)$ for some monic irreducible polynomial $g\in K[x]$ of degree $\deg g=\deg v$ and some $\lambda\in\hat\Q$.
\end{rem}

\begin{prop}\label{prop:ValuationsInequality}
Let $v,w\in\pM$, with $v_0<w\leq v$. Let
\[[v_0,v_1(\phi_1)=\lambda_1,\dots,v_n(\phi_n)=\lambda_n]\]
be a minimal MacLane chain for $v$. Then there exists $m\leq n$ such that $w=[v_{m-1}, w(\phi_m)=\lambda]$, for some $v_{m-1}(\phi_m)<\lambda\leq\lambda_m$.
\proof
Let $[v_0,w_1(\psi_1)=\mu_1,\dots, w_m(\psi_m)=\mu_m]$ be a minimal MacLane chain for $w$. Then $n\geq m$ by \cite[Proposition 4.35]{Rut} and $v_{m-1}=w_{m-1}$ by \cite[Corollary 4.37]{Rut}. Then $w=[v_{m-1}, w(\psi_m)=\mu_m]$. Since $v_m\leq v\geq w$, either $v_m< w$ or $w\leq v_m$ from
%Theorem \ref{thm:valdiscoid} and
Remark \ref{rem:Galois1disjoint2discoids}(2). 
%We want to show that $w\leq v_m$. 
Suppose by contradiction that $v_m<w$. Then $m<n$. Furthermore, $v_m=[v_{m-1}, v_m(\psi_m)=\lambda_m]$ and $\lambda_m< \mu_m$ by \cite[Lemma 7.6]{FGMN}. Let $r$ be a root of $\phi_n$. Then $r\in D_v$. Since $m<n$, one has $\deg\psi_m=\deg v_m<\deg v$. Therefore $v_K(\psi_m(r))=v(\psi_m)=\lambda_m$ by \cite[Corollary 2.8]{OS2}. But $D_v\subseteq D_w=D(\psi_m,\mu_m)$, so $\lambda_m\geq\mu_m$, a contradiction. Hence $w\leq v_m$. Thus \cite[Lemma 7.6]{FGMN} implies $w=[v_{m-1}, w(\phi_m)=\mu_m]$ and $\mu_m\leq\lambda_m$, as required.
\endproof
\end{prop}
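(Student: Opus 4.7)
The plan is to pit a minimal MacLane chain for $w$ against the given one for $v$, use the ``common prefix'' results of \cite{Rut} to align them up to some index $m-1$, and then compare only the last augmentation step.

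First I would pick a minimal MacLane chain $[v_0, w_1(\psi_1)=\mu_1,\dots,w_m(\psi_m)=\mu_m]$ for $w$. Since $w\leq v$, the monotonicity results of Rüth (\cite[Proposition 4.35]{Rut}) give $m\leq n$, and the ``common prefix lemma'' (\cite[Corollary 4.37]{Rut}) guarantees that the $(m-1)$-th truncations agree, i.e.\ $w_{m-1}=v_{m-1}$. Hence we can rewrite $w=[v_{m-1},w(\psi_m)=\mu_m]$ and the problem reduces to comparing two augmentations of the same valuation $v_{m-1}$, namely $w$ and $v_m$.

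Next I would observe that via Theorem \ref{thm:valdiscoid} the condition $w_{m-1}\leq v_m, w$ translates into $D_{v_m}, D_w \subseteq D_{v_{m-1}}$. By Remark \ref{rem:Galois1disjoint2discoids}(2) two discoids are either nested or disjoint, so either $w\leq v_m$ or $v_m<w$ (strict inequality in the latter case because the two discoids are both contained in $D_{v_{m-1}}$ hence meet). The goal is to rule out $v_m<w$, and this is the step I expect to be the main obstacle. The argument would go as follows: assume $v_m<w$. Then in particular $m<n$, and both $v_m$ and $w$ are augmentations of $v_{m-1}$ over centres $\psi_m$ and $\phi_m$; by \cite[Lemma 7.6]{FGMN} (which classifies augmentations of a fixed valuation ordered by $\leq$) this forces $v_m=[v_{m-1},v_m(\psi_m)=\lambda_m]$ (possibly after replacing $\phi_m$ by $\psi_m$, since they are $v_{m-1}$-equivalent centres) with $\lambda_m<\mu_m$. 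I would then evaluate $v(\psi_m)$ in two ways: on one hand it equals $\lambda_m$ by construction of $v_m$ and the fact that $\deg \psi_m < \deg v$ combined with \cite[Corollary 2.8]{OS2}, which equates $v(g)$ with $v_K(g(r))$ on polynomials of degree less than $\deg v$ for $r$ a root of the top key polynomial $\phi_n$; on the other hand such an $r\in D_v\subseteq D_w=D(\psi_m,\mu_m)$ must satisfy $v_K(\psi_m(r))\geq\mu_m$. Combining these forces $\lambda_m\geq\mu_m$, contradicting $\lambda_m<\mu_m$.

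With $w\leq v_m$ established, a second application of \cite[Lemma 7.6]{FGMN}, together with Lemma \ref{lem:centre} identifying centres of $w$ up to $v_{m-1}$-equivalence, lets us rewrite $w$ with centre $\phi_m$ in place of $\psi_m$ and yields $w=[v_{m-1},w(\phi_m)=\mu_m]$ with $v_{m-1}(\phi_m)<\mu_m\leq\lambda_m$, as required. The only subtle point throughout is the invariance under the (non-unique) choice of key polynomial at each level, which is handled exactly by Remark \ref{rem:(i)augmentation>(ii)RadiusDegreeUniqueness}\ref{item:RadiusDegreeUniqueness} and Lemma \ref{lem:centre}; everything else is a careful bookkeeping of MacLane-chain data.
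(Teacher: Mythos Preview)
Your argument is essentially identical to the paper's: the same chain comparison via \cite[Proposition 4.35, Corollary 4.37]{Rut}, the same dichotomy from the nesting of discoids, the same contradiction using \cite[Lemma 7.6]{FGMN} and \cite[Corollary 2.8]{OS2}, and the same final rewriting of the centre. One small correction: the fact that $D_{v_m}$ and $D_w$ intersect (so that Remark~\ref{rem:Galois1disjoint2discoids}(2) applies) follows from both \emph{containing} $D_v$ (since $v_m\leq v$ and $w\leq v$), not from both being contained in $D_{v_{m-1}}$, which by itself does not force them to meet.
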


\begin{lem}\label{lem:centreparent}
Let $v,w\in\pM$. Suppose $w<v$. Then $\lambda_w<\lambda_v$ and $\deg w\leq\deg v$. Moreover, if $\deg w=\deg v$, any centre $\phi$ of $v$ is also a centre of $w$.
\proof
The statement is trivial when $w=v_0$. Suppose $w>v_0$. Let $\phi$ be a centre of $v$. Consider a minimal augmentation chain $[v_0,\dots,v_n(\phi_n)=\lambda_n]$ for $v$, with $\phi_n=\phi$. By Proposition \ref{prop:ValuationsInequality} there exist $m\leq n$ and $\mu_m<\lambda_m$ such that $w=[v_{m-1}, w(\phi_m)=\mu_m]$. Then $\deg w\leq\deg v$ and $\lambda_w<\lambda_v$ by \cite[Lemma 4.21]{Rut}. Furthermore, if $\deg w=\deg v$ then $n=m$, since the key polynomials $\phi_i$ have strictly increasing degrees. This concludes the proof as $\phi_m=\phi_n=\phi$ could be any centre of $v$.
\endproof
\end{lem}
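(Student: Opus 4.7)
The plan is to reduce everything to a single application of Proposition \ref{prop:ValuationsInequality}, which converts a chain of augmentations for $v$ into a chain of augmentations for $w$ sharing a common prefix. First I would dispense with the trivial case $w=v_0$: by convention $v_0$ has radius $0$ and degree $1$, and any monic linear polynomial is a centre of $v_0$, so the three conclusions hold automatically (the last one vacuously, since $\deg v = 1$ forces $v=v_0$, contradicting $w<v$).

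Assuming $w>v_0$, I would fix an arbitrary centre $\phi$ of $v$. Using Remark \ref{rem:MacChainToMinimal} together with Lemma \ref{lem:centre}, one can choose a minimal MacLane chain
\[[v_0,v_1(\phi_1)=\lambda_1,\dots,v_n(\phi_n)=\lambda_n]\]
for $v$ whose last key polynomial is $\phi_n=\phi$, so that $\lambda_v=\lambda_n$ and $\deg v=\deg\phi$. Applying Proposition \ref{prop:ValuationsInequality} to the strict inequality $w<v$ produces an index $m\leq n$ and a rational $\mu_m\leq\lambda_m$ with $w=[v_{m-1},w(\phi_m)=\mu_m]$; strictness of $w<v$ forces $\mu_m<\lambda_m$ when $m=n$, and forces $m<n$ otherwise. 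In either case $\lambda_w=\mu_m$ and $\deg w=\deg\phi_m$.

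The degree bound $\deg w\leq \deg v$ is then immediate, because in a minimal chain the degrees $\deg\phi_1,\dots,\deg\phi_n$ are strictly increasing, so $\deg\phi_m\leq\deg\phi_n$. For the radius inequality $\lambda_w<\lambda_v$, I would invoke \cite[Lemma 4.21]{Rut}, which is precisely the statement that along the lattice of MacLane valuations a strict comparison forces a strict comparison of radii; this is the step I expect to be the main obstacle if one had to prove it from scratch, since when $m<n$ the radii $\mu_m$ and $\lambda_n$ are attached to different key polynomials and the inequality is not formal. Finally, if $\deg w=\deg v$ then the strict growth of degrees in the minimal chain forces $m=n$, so $\phi_m=\phi_n=\phi$ is itself a centre of $w$ by construction; since $\phi$ was an arbitrary centre of $v$, this yields the last assertion.
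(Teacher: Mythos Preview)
Your proof is correct and follows essentially the same route as the paper: reduce to Proposition \ref{prop:ValuationsInequality} applied to a minimal chain for $v$ ending in the chosen centre $\phi$, then read off $\deg w\leq\deg v$ from the increasing degrees, $\lambda_w<\lambda_v$ from \cite[Lemma 4.21]{Rut}, and the final assertion from $m=n$. If anything, you are slightly more careful than the paper in justifying the existence of a minimal chain with $\phi_n=\phi$ and in handling the case split $\mu_m\leq\lambda_m$ versus strict inequality.
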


\begin{lem}\label{lem:v(lowerdegree)}
Let $v\in\M$. For any monic non-constant $g\in K[x]$ of degree $\deg g\leq\deg v$ we have $v(g)\leq\lambda_v$, with $v(g)=\lambda_v$ only if $\deg g=\deg v$.
\proof
We prove the lemma by induction on $\deg v$. Let 
\[[v_0,\dots,v_{m-1}(\phi_{m-1})=\lambda_{m-1},v_m(\phi_m)=\lambda_m]\] 
be a minimal MacLane chain for $v$. Recall $\lambda_v=\lambda_m$. If $\deg v=1$, then $\deg g=\deg v$. By definition $v(g)=\min\{v(\phi_m),v(g-\phi_m)\}\leq \lambda_v$.
Suppose $\deg v>1$. If $\deg g=\deg v$ then $v(g)\leq \lambda_v$ as above. If $\deg g<\deg v$ then $v(g)=v_{m-1}(g)\leq \lambda_{m-1}<\lambda_m$. 
%
% Expand
% \[g=\sum_{i=0}^da_i\phi_{m-1}^i,\quad a_i\in K[x],\,\deg a_i<\deg \phi_{m-1}.\]
% Note that $d\leq(\deg\phi_m/\deg\phi_{m-1})-1$ and $a_d$ monic of degree $\deg a_d<\deg v_{m-1}$. By inductive hypothesis $v_{m-1}(a_d)<\lambda_{m-1}$. Thus
% \[v(g)=\min_i(v_{m-1}(a_i)+i\lambda_{m-1})< \lambda_{m-1}(d+1)=\lambda_m,\]
% by \cite[Lemma 4.19]{Rut}.
\endproof
\end{lem}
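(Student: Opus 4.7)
My plan is to prove the lemma by induction on the depth of a minimal MacLane chain for $v$, and more precisely to prove the slightly stronger inequality $v(g) \leq (\deg g / \deg v)\,\lambda_v$ for every monic $g \in K[x]$. The stated lemma is an immediate corollary: when $\deg g \leq \deg v$ the factor is $\leq 1$, and it is strictly less than $1$ exactly when $\deg g < \deg v$, matching the ``equality only if'' clause.

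The base case $v = v_0$ is immediate, since $\lambda_{v_0} = 0$ and a monic polynomial has leading coefficient $1 \in O_K$, giving $v_0(g) = \min_i v_K(a_i) \leq 0$. For the inductive step with $v = v_m = [v_{m-1}, v_m(\phi_m) = \lambda_m]$, I would expand $g$ in its $\phi_m$-adic form $g = \sum_{i=0}^{k} a_i \phi_m^i$ (with $\deg a_i < \deg\phi_m$ and $a_k$ monic of degree $\deg g - k\deg\phi_m$), read off from the definition of the augmentation that $v_m(g) \leq v_{m-1}(a_k) + k\lambda_m$, and then apply the inductive hypothesis to $v_{m-1}$ and the monic polynomial $a_k$ to bound $v_{m-1}(a_k) \leq (\deg a_k/\deg\phi_{m-1})\lambda_{m-1}$.

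Combining these estimates produces the desired $v_m(g) \leq (\deg g/\deg\phi_m)\lambda_m$ once one has the \emph{strict slope inequality} $\lambda_{m-1}/\deg\phi_{m-1} < \lambda_m/\deg\phi_m$, and the final inequality is strict whenever $\deg a_k > 0$, i.e.\ whenever $\deg g$ is not a multiple of $\deg\phi_m$. The slope inequality itself follows from the augmentation requirement $\lambda_m > v_{m-1}(\phi_m)$ together with the identity $v_{m-1}(\phi_m) = (\deg\phi_m/\deg\phi_{m-1})\lambda_{m-1}$, which is a standard property of key polynomials in a minimal MacLane chain and reflects the fact that $H_{v_{m-1}}(\phi_m)$ is an irreducible non-unit element of $Gr(v_{m-1})$ at the $v_{m-1}$-value of its leading $\phi_{m-1}$-adic term.

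The main obstacle, and the reason for strengthening the inductive hypothesis, is the range $\deg\phi_{m-1} < \deg g < \deg\phi_m$. A direct induction on $\deg v$ handles this case via $v_m(g) = v_{m-1}(g)$ but then hits a wall: the inductive bound $v_{m-1}(g) \leq \lambda_{m-1}$ requires $\deg g \leq \deg v_{m-1} = \deg\phi_{m-1}$, which is violated. The slope formulation removes this gap, because the $\phi_m$-adic leading coefficient $a_k$ of a monic $g$ is always itself monic of degree strictly less than $\deg\phi_m$, so the recursion always descends cleanly to a smaller-depth valuation with a smaller-degree monic input.
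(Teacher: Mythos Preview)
Your argument is correct. You strengthen the statement to $v(g)\le(\deg g/\deg v)\,\lambda_v$ for all monic $g$ and induct on depth; this is precisely the inequality of Theorem~\ref{thm:minimality}, which the paper cites from \cite{FGMN} a few lines later. The paper instead inducts on $\deg v$ directly on the lemma as stated: when $\deg g=\deg v$ one writes $v(g)=\min\{v(\phi_m),v(g-\phi_m)\}\le\lambda_v$, and when $\deg g<\deg v$ one uses $v(g)=v_{m-1}(g)\le\lambda_{m-1}<\lambda_m$ via the inductive hypothesis for $v_{m-1}$.

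The gap you flag in your last paragraph is genuine and is present in the paper's own proof as written: in the range $\deg\phi_{m-1}<\deg g<\deg\phi_m$ the inductive hypothesis for $v_{m-1}$ (which only covers polynomials of degree at most $\deg v_{m-1}=\deg\phi_{m-1}$) does not apply to $g$. Your strengthened hypothesis repairs this cleanly, and the slope identity $v_{m-1}(\phi_m)=(\deg\phi_m/\deg\phi_{m-1})\lambda_{m-1}$ you invoke is indeed available---it is the equality case of Theorem~\ref{thm:minimality} for the $v_{m-1}$-minimal polynomial $\phi_m$. An alternative one-line repair is simply to cite Theorem~\ref{thm:minimality} outright, which makes the lemma an immediate corollary.
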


Recall the following result from \cite{FGMN}.
\begin{thm}[{\cite[Theorem 3.10]{FGMN}}]\label{thm:minimality}
Let $v\in\M$. For any monic non-constant $g\in K[x]$ one has
\[\frac{v(g)}{\deg g}\leq\frac{\lambda_v}{\deg v},\]
and the equality holds if and only if $g$ is $v$-minimal.
\end{thm}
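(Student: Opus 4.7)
I would prove the inequality by induction on the depth $m$ of $v$, with the base case $v=v_0$ being immediate: for monic $g=x^n+\sum_{i<n}a_ix^i$ one has $v_0(g)=\min_i v_K(a_i)\le 0=\lambda_{v_0}/\deg v_0$, with equality exactly when $g\in O_K[x]$, which matches $v_0$-minimality.

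For the inductive step, take a minimal MacLane chain
\[[v_0,v_1(\phi_1)=\lambda_1,\dots,v_m(\phi_m)=\lambda_m]\]
for $v$, write $d_i=\deg\phi_i$, and expand $g$ in $\phi_m$-adic form
\[g=\sum_{i=0}^{N}a_i\phi_m^{\,i},\qquad \deg a_i<d_m,\;a_N\neq 0.\]
Then $\deg g=Nd_m+\deg a_N$, and since each $a_i$ has degree strictly less than $d_m=\deg v$, one has $v(a_i)=v_{m-1}(a_i)$, so
\[v(g)=\min_{0\le i\le N}\bigl(v_{m-1}(a_i)+i\lambda_m\bigr).\]
The key observation is that the inductive hypothesis applied to $v_{m-1}$ and to the $v_{m-1}$-minimal key polynomial $\phi_m$ forces the equality $v_{m-1}(\phi_m)/d_m=\lambda_{m-1}/d_{m-1}$, and combining this with $v_{m-1}(\phi_m)<\lambda_m$ gives the crucial chain
\[\frac{\lambda_{m-1}}{d_{m-1}}=\frac{v_{m-1}(\phi_m)}{d_m}<\frac{\lambda_m}{d_m}=\frac{\lambda_v}{\deg v}.\]

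Now I would bound $v(g)$ via the top term $i=N$. Matching leading coefficients of the monic $g$ with the expansion shows $a_N$ is monic. If $\deg a_N=0$ then $a_N=1$, $v(a_N)=0$, and $v(g)\le N\lambda_m$, yielding $v(g)/\deg g\le\lambda_m/d_m$. If $\deg a_N>0$, the inductive hypothesis applied to $a_N$ gives $v_{m-1}(a_N)/\deg a_N\le\lambda_{m-1}/d_{m-1}<\lambda_m/d_m$, which after combining with $N\lambda_m$ and dividing by $\deg g=\deg a_N+Nd_m$ produces a strict inequality. The case $N=0$, where $\deg g<d_m$, is handled by a single application of the inductive hypothesis to $v_{m-1}$ together with the displayed strict inequality above, again giving the strict bound.

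\textbf{Main obstacle.} The substantive point is the equality case. From the argument sketched above, equality can only occur when $a_N=1$, $\deg g=Nd_m$, and the minimum $\min_i(v_{m-1}(a_i)+i\lambda_m)$ is attained at $i=N$. I would match this with the definition of $v$-minimality by proving the standard characterization in MacLane's theory: $g$ is $v$-minimal if and only if its $\phi_m$-adic expansion satisfies $v(g)=v(a_N\phi_m^N)$ with $a_N$ a $v$-unit of degree $0$. Going one way, $\phi_m\nvdiv_v$-obstruction at lower indices is equivalent to the minimum being achieved at the top, by a direct application of the definitions together with the graded-ring multiplicativity $H_v(a\phi_m^i)=H_v(a)H_v(\phi_m)^i$. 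The converse follows because any $v$-divisor of $g$ of smaller degree would have to alter the leading $\phi_m$-adic coefficient, contradicting the uniqueness of the minimum. This characterization, combined with the numeric analysis above, pins down the equality case exactly as $g$ being $v$-minimal.
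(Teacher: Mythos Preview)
The paper does not prove this statement; it is quoted verbatim from \cite[Theorem~3.10]{FGMN} and used as a black box. So there is no ``paper's own proof'' to compare against.

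That said, your approach is the natural one and the inequality part is essentially correct: induction on depth, bounding $v(g)$ by the top $\phi_m$-adic term, and using that $a_N$ is monic of degree $<d_m$ so the inductive hypothesis (or $a_N=1$) controls it. This is how the result is proved in the literature.

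The equality case, however, is where you are vague, and it is the nontrivial direction. You assert a characterisation of $v$-minimality (``min attained at the top term with $a_N$ a $v$-unit of degree $0$'') and then sketch both implications in a sentence each. The forward direction---that $v$-minimality of $g$ forces the minimum to occur at $i=N$---is not just ``$\phi_m\nvdiv_v$-obstruction at lower indices''; one typically argues via the graded algebra that $H_v(g)$ has $\phi_m$-degree exactly $N$ and then uses that any $h$ with $g\vdiv_v h$ must have $H_v(\phi_m)^N\mid H_v(h)$ in $Gr(v)$, hence $\deg h\ge N d_m$. The converse you state (``any $v$-divisor of $g$ of smaller degree would have to alter the leading $\phi_m$-adic coefficient'') is not the right mechanism: $v$-divisibility is about $g\vdiv_v a$, not about divisors of $g$, and the argument really goes through the prime element $H_v(\phi_m)$ in the graded algebra. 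You would need to invoke or reprove that $H_v(\phi_m)$ is a prime homogeneous element of $Gr(v)$ and that $\phi_m$-adic degree is detected by $H_v$; these are exactly the ingredients developed in \cite[\S2--3]{FGMN} leading up to their Theorem~3.10.
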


\begin{lem}\label{lem:vminimalityproduct}
Let $g_1,g_2\in K[x]$ monic and non-constant. Then $g_1\cdot g_2$ is $v$-minimal if and only if both $g_1$ and $g_2$ are $v$-minimal.
\proof
Suppose $g_1$ is not $v$-minimal. Then there exists $a\in K[x]$, $\deg a<\deg g_1$ such that $g_1\vdiv_v a$. Hence $g_1g_2\vdiv_vag_2$ and $\deg(ag_2)<\deg(g_1g_2)$. So $g_1\cdot g_2$ is not $v$-minimal. Similarly for $g_2$. Suppose both $g_1$ and $g_2$ are $v$-minimal. Theorem \ref{thm:minimality} implies that
\[v(g_1\cdot g_2)\deg v=(v(g_1)+v(g_2))\deg v=\lambda_v(\deg g_1+\deg g_2)=\lambda_v\deg(g_1\cdot g_2),\]
and so $g_1\cdot g_2$ is $v$-minimal. 
\endproof
\end{lem}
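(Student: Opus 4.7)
The plan is to handle the two directions separately, with the heavy lifting done by Theorem \ref{thm:minimality}.

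For the implication $(\Rightarrow)$, I would argue by contrapositive. Suppose without loss of generality that $g_1$ fails to be $v$-minimal, so there exists $a\in K[x]$ with $\deg a<\deg g_1$ and $g_1\vdiv_v a$, i.e.\ $a\veq_v g_1 q$ for some $q\in K[x]$. Multiplying by $g_2$ and using property (2) of the residue map $H_v$ (so that $v$-equivalence survives multiplication by $g_2$), we get $a g_2\veq_v g_1 g_2\cdot q$. Hence $g_1g_2\vdiv_v ag_2$, and since $\deg(ag_2)=\deg a+\deg g_2<\deg g_1+\deg g_2=\deg(g_1g_2)$, we conclude that $g_1g_2$ is not $v$-minimal. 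The same argument applied to $g_2$ completes this direction.

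For the implication $(\Leftarrow)$, assume both $g_1$ and $g_2$ are $v$-minimal. By Theorem \ref{thm:minimality}, each satisfies the equality case
\[
\frac{v(g_i)}{\deg g_i}=\frac{\lambda_v}{\deg v}\qquad (i=1,2).
\]
Since $v$ is a valuation it is additive on products, so $v(g_1g_2)=v(g_1)+v(g_2)$, and of course $\deg(g_1g_2)=\deg g_1+\deg g_2$. Adding the two equalities above (after clearing denominators) gives
\[
v(g_1g_2)\deg v=\lambda_v\deg(g_1g_2),
\]
which is precisely the equality case of Theorem \ref{thm:minimality} for $g_1g_2$. Hence $g_1g_2$ is $v$-minimal.

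I do not expect any significant obstacle here: the forward direction is a direct manipulation of the definition of $v$-divisibility using the multiplicativity of $H_v$, and the backward direction is essentially a reformulation of Theorem \ref{thm:minimality}. The only point that deserves minor care is the forward direction, where one must remember that $\veq_v$ is preserved by multiplication precisely because $H_v$ is a homomorphism on $\loc{v}$; without this multiplicativity the claim $g_1g_2\vdiv_v ag_2$ would not be immediate.
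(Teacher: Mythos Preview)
Your proof is correct and follows essentially the same approach as the paper: the contrapositive argument for the forward direction via $v$-divisibility, and the equality case of Theorem~\ref{thm:minimality} for the converse. The only difference is that you spell out the justification for why $v$-equivalence survives multiplication by $g_2$ (via multiplicativity of $H_v$), which the paper leaves implicit.
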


\begin{lem}\label{lem:vminimal}
Let $v,w\in\M$ satisfying $w\geq v$. Let $g\in O_K[x]$ monic and non-constant. Suppose $g$ is $w$-minimal. Then $g$ is $v$-minimal.
\proof
By \cite[Remark 4.36]{Rut} we can write
\[w=[v_0,v_1(\phi_1)=\lambda_1,\dots,v_m(\phi_m)=\lambda_m,\dots,v_n(\phi_n)=\lambda_n],\]
with $v=v_m$.
Let $i=m,\dots,n-1$. By recursion it suffices to show that $g$ is $v_i$-minimal if it is $v_{i+1}$-minimal. We can suppose $g$ irreducible by Lemma \ref{lem:vminimalityproduct}. Since $\phi_{i+1}$ is $v_i$-minimal, by Theorem \ref{thm:minimality} we have
\[\frac{v_i(g)}{\deg g}\leq \frac{\lambda_i}{\deg \phi_i}=\frac{v_{i}(\phi_{i+1})}{\deg \phi_{i+1}}<\frac{\lambda_{i+1}}{\deg \phi_{i+1}}=\frac{v_{i+1}(g)}{\deg g}.\]
Therefore $v_{i+1}(g)>v_i(g)$ that is equivalent to $\phi_{i+1}\vdiv_{v_i} g$ by \cite[Lemma 4.13]{Rut}. \cite[Theorem 6.2]{FGMN} implies that $v_i(g)=\deg g\cdot \frac{v_i(\phi_{i+1})}{\deg \phi_{i+1}}$. But then Theorem \ref{thm:minimality} shows that $g$ is $v_i$-minimal.
\endproof
\end{lem}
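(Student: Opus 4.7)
The plan is to reduce the statement to the case where $w$ is obtained from $v$ by a single augmentation, and then to compare the normalised values $v_i(g)/\deg g$ and $v_{i+1}(g)/\deg g$ through Theorem \ref{thm:minimality}. To set up the reduction I would first invoke the fact (a version of \cite[Remark 4.36]{Rut}) that any $w\geq v$ in $\M$ admits a MacLane chain that passes through $v$, i.e.\ can be written as
\[w=[v_0,v_1(\phi_1)=\lambda_1,\dots,v_n(\phi_n)=\lambda_n]\quad\text{with}\quad v=v_m\text{ for some }m\leq n.\]
By induction on $n-m$ it then suffices to treat the one-step case $v=v_i$, $w=v_{i+1}$. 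Using the multiplicativity of $v$-minimality given by Lemma \ref{lem:vminimalityproduct}, I would also factor $g$ into irreducibles and assume $g$ itself irreducible.

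For the one-step case, the key observation is that the key polynomial $\phi_{i+1}$ is by definition $v_i$-minimal, so Theorem \ref{thm:minimality} applies with equality to it, while for $g$ I use the inequality for $v_i$ and the equality for $v_{i+1}$ (because $g$ is $v_{i+1}$-minimal). Combined with the strict jump $v_i(\phi_{i+1})<\lambda_{i+1}$ built into the augmentation, this gives the chain
\[\frac{v_i(g)}{\deg g}\leq\frac{\lambda_i}{\deg\phi_i}=\frac{v_i(\phi_{i+1})}{\deg\phi_{i+1}}<\frac{\lambda_{i+1}}{\deg\phi_{i+1}}=\frac{v_{i+1}(g)}{\deg g},\]
so $v_i(g)<v_{i+1}(g)$. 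By the standard characterisation of $\phi$-divisibility in augmentations (\cite[Lemma 4.13]{Rut}), this strict jump is equivalent to $\phi_{i+1}\vdiv_{v_i} g$.

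The final step is to upgrade $\phi_{i+1}\vdiv_{v_i} g$ for the irreducible $g$ into the sharp formula $v_i(g)=\deg g\cdot v_i(\phi_{i+1})/\deg\phi_{i+1}$; for this I would appeal to an external structural result such as \cite[Theorem 6.2]{FGMN}. Once this equality is in place, it is precisely the equality case in Theorem \ref{thm:minimality} for $v_i$, so $g$ is $v_i$-minimal, closing the induction step.

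The hardest step is the last invocation: passing from the qualitative divisibility $\phi_{i+1}\vdiv_{v_i} g$ to the quantitative valuation formula is not purely combinatorial and depends on a non-trivial input about how an augmented valuation acts on multiples of its centre. Everything else is a bookkeeping of slopes along the augmentation chain together with two applications of Theorem \ref{thm:minimality} (once for $\phi_{i+1}$ at $v_i$, once for $g$ at $v_{i+1}$).
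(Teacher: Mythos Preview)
Your proposal is correct and follows essentially the same approach as the paper: the same reduction via a MacLane chain through $v$ (\cite[Remark 4.36]{Rut}), the same reduction to irreducible $g$ via Lemma \ref{lem:vminimalityproduct}, the same chain of inequalities from Theorem \ref{thm:minimality}, and the same finish via \cite[Lemma 4.13]{Rut} and \cite[Theorem 6.2]{FGMN}. Your identification of the final step as the one requiring genuine external input is also accurate.
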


\begin{lem}\label{lem:minimalityequivalence}
Let $v\in\M$ and let $\phi$ be a centre of $v$. Let $g\in K[x]$ monic, non-constant and $v$-minimal. Then \begin{enumerate}[label=(\roman*)]
    \item $\deg v\mid\deg g$.\label{item:minimal3.10}
    \item $g\veq_w\phi^{\deg g/\deg v}$ for any $w\in\M$, $w<v$.\label{item:minimalequivalent}
\end{enumerate}
\proof
\ref{item:minimal3.10} follows from \cite[Lemma 2.10]{FGMN}. For proving \ref{item:minimalequivalent} we can suppose without loss of generality that $\phi\in\KP w$ by Proposition \ref{prop:ValuationsInequality} and Lemma \ref{lem:vminimal}. Equivalently, $v=[w,v(\phi)=\lambda]$ for some $\lambda\in\Q$, $\lambda>w(\phi)$. Let $d=\deg g/\deg \phi$ and expand
\[g=\sum_{j= 0}^d a_j \phi^j,\qquad\text{where }a_j\in K[x],\,\deg a_j<\deg \phi,\]
and $v(a_d)=w(a_d)=0$. Note that $v(g)=v(\phi^d)$ by Theorem \ref{thm:minimality}. Therefore 
\begin{align*}
    w(\phi^d)&=v(g)-d(\lambda-w(\phi))\leq v(a_j\phi^j)-d(\lambda-w(\phi))\\
    &<v(a_j\phi^j)-j(\lambda-w(\phi))=w(a_j \phi^j),
\end{align*}
for all $j<d$. Thus $g\veq_{w}\phi^d$ as required.
\endproof
\end{lem}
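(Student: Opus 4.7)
Part (i) is immediate from \cite[Lemma 2.10]{FGMN}: that lemma forces the degree of any $v$-minimal polynomial to be a multiple of $\deg v$, and it is already used earlier in the excerpt to establish the divisibility of key polynomial degrees along an augmentation chain.

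For (ii), my strategy is a reduction followed by a direct $\phi$-adic computation. First, I would reduce to the case where $v$ is obtained from $w$ by a single augmentation using the chosen centre $\phi$, that is, $v = [w, v(\phi) = \lambda]$ with $\lambda = v(\phi) > w(\phi)$. This is where Proposition \ref{prop:ValuationsInequality} enters: for $w < v$, a minimal MacLane chain for $v$ factors through an intermediate $v_{m-1}$ with $w = [v_{m-1}, w(\phi_m) = \mu]$, and the remaining augmentations can be rearranged to express $v$ itself as an augmentation of $w$ along $\phi$. Lemma \ref{lem:vminimal} then transfers $v$-minimality of $g$ to any intermediate valuation that appears in the rearrangement. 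This is the only delicate point, because one must engineer the specified centre $\phi$ (rather than some other centre of $v$) to end up as a key polynomial of $w$.

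Once we are in the reduced situation, set $d = \deg g / \deg v$ and expand $g$ uniquely in base $\phi$ as
\[g = \sum_{j=0}^{d} a_j \phi^j, \qquad \deg a_j < \deg \phi, \quad a_d = 1.\]
By $v$-minimality of $g$ together with Theorem \ref{thm:minimality}, $v(g) = (\lambda_v/\deg v)\,\deg g = d\lambda = v(\phi^d)$, so $v(a_j \phi^j) \geq d\lambda$ for every $j$. Since $\deg a_j < \deg \phi = \deg v$, the coefficients $a_j$ carry the same $v$- and $w$-valuation, hence
\[w(a_j \phi^j) = v(a_j) + j\,w(\phi) = v(a_j \phi^j) - j(\lambda - w(\phi)) \geq d\lambda - j(\lambda - w(\phi)).\]
For $j < d$ this right-hand side strictly exceeds $d\,w(\phi) = w(\phi^d)$ because $\lambda > w(\phi)$. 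Therefore $\phi^d$ strictly dominates every other term of the $\phi$-adic expansion of $g$ with respect to $w$, giving $w(g - \phi^d) > w(\phi^d) = w(g)$, i.e.\ $g \veq_w \phi^d$. The only real obstacle is the reduction to $v = [w, v(\phi) = \lambda]$; once achieved, the rest is the standard ``pure leading term'' feature of $v$-minimal polynomials.
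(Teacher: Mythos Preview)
Your proposal is correct and follows essentially the same route as the paper: part (i) via \cite[Lemma 2.10]{FGMN}, and for part (ii) the same reduction (using Proposition \ref{prop:ValuationsInequality} and Lemma \ref{lem:vminimal}) to the case $v=[w,v(\phi)=\lambda]$, followed by the identical $\phi$-adic expansion and the inequality chain deduced from $v(g)=d\lambda$ via Theorem \ref{thm:minimality}. Your remark that the reduction is ``the only delicate point'' is apt; the paper also leaves this WLOG step compressed.
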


The following two results come from \cite{OS2}. 

\begin{prop}[{\cite[Proposition 2.5]{OS2}}]\label{prop:uniqueMacLanevalStrongPoly}
Let $\phi\in O_K[x]$ be a monic irreducible polynomial. There exists a unique MacLane valuation $v_{\phi}$ over which $\phi$ is a strong key polynomial.
\end{prop}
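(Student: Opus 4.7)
The plan is to attach to $\phi$ the infinite MacLane pseudo-valuation supported on its roots and read off $v_\phi$ as the penultimate valuation of a minimal MacLane chain for it.

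First I would define $w_\phi \colon K[x] \to \hat\Q$ by $w_\phi(g) = v_K(g(r))$, where $r \in \bar K$ is any root of $\phi$. This is well-defined because $K$ is complete, so $v_K$ extends uniquely to $\bar K$ in a Galois-invariant manner. Its discoid is $D_{w_\phi} = D(\phi,\infty) = \{\alpha \in \bar K : \phi(\alpha) = 0\}$, so Theorem \ref{thm:valdiscoid} yields $w_\phi \in \pM$, and $w_\phi \neq v_0$ since $w_\phi$ is infinite. By Lemma \ref{lem:degdiscoid} one has $\deg w_\phi = \deg \phi$, since any monic irreducible $g \in K[x]$ with $D(g,\infty) = \{\text{roots of }\phi\}$ must coincide with $\phi$.

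Next, fix any minimal MacLane chain $[v_0,v_1(\phi_1)=\lambda_1,\ldots,v_m(\phi_m)=\infty]$ for $w_\phi$; since $w_\phi \neq v_0$ we have $m \geq 1$. The centre $\phi_m$ satisfies $\deg \phi_m = \deg w_\phi = \deg \phi$, while $w_\phi(\phi_m)=\infty$ forces $\phi_m(r) = 0$ and hence $\phi \mid \phi_m$ in $K[x]$; monicity of both polynomials then gives $\phi_m = \phi$. Consequently $\phi \in \KP{v_{m-1}}$ and $w_\phi = [v_{m-1},w_\phi(\phi)=\infty]$. The strict degree increase along the minimal chain gives $\deg v_{m-1} = \deg \phi_{m-1} < \deg \phi_m = \deg \phi$ when $m \geq 2$, whereas $m=1$ gives $v_{m-1} = v_0$. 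In either case $v_\phi := v_{m-1}$ makes $\phi$ a strong key polynomial, proving existence.

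For uniqueness, suppose $v' \in \M$ also makes $\phi$ a strong key polynomial. Then the augmentation $w' := [v',w'(\phi)=\infty]$ satisfies $D_{w'} = D(\phi,\infty) = D_{w_\phi}$, so $w' = w_\phi$ by Theorem \ref{thm:valdiscoid}. A minimal MacLane chain for $v'$, followed by the pair $(\phi,\infty)$, is an augmentation chain for $w_\phi$; the strong hypothesis $\deg v' < \deg \phi$ (or the edge case $v' = v_0$) guarantees that the last degree jump is strict, so the concatenated chain is itself a minimal MacLane chain for $w_\phi$. The argument inside the proof of Proposition \ref{prop:ValuationsInequality} (applying \cite[Corollary~4.37]{Rut} iteratively) shows that the underlying valuations of a minimal MacLane chain are intrinsic to the pseudo-valuation at the top; in particular the penultimate one is, so $v' = v_{m-1} = v_\phi$. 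The main subtlety is the clean identification $\phi_m = \phi$ (rather than merely $v_{m-1}$-equivalence) and the verification that the concatenated chain remains minimal; both hinge on the strong-key hypothesis, which is exactly what translates the degree condition on $v$ into the strict degree increase required at the top of the augmentation chain.
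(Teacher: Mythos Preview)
The paper does not supply its own proof of this proposition; it simply imports the result from \cite[Proposition~2.5]{OS2}. So there is no in-paper argument to compare against, and your proposal should be judged on its own merits.

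Your argument is correct. Attaching to $\phi$ the infinite pseudo-valuation $w_\phi$ with discoid $D(\phi,\infty)$, taking a minimal MacLane chain for it, and reading off $v_\phi$ as the penultimate step is the natural way to manufacture the desired valuation, and the identification $\phi_m=\phi$ goes through exactly as you say: $w_\phi(\phi_m)=\infty$ forces $\phi\mid\phi_m$, and degree equality finishes it. For uniqueness, concatenating a minimal chain for $v'$ with $(\phi,\infty)$ does yield another minimal chain for $w_\phi$ (the strong-key hypothesis is precisely what guarantees the final degree jump), and the invariance of the intermediate valuations in a minimal chain---which you correctly locate in \cite[Corollary~4.37]{Rut}, as used inside Proposition~\ref{prop:ValuationsInequality}---then pins down $v'=v_{m-1}$. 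One cosmetic point: you might cite \cite[Proposition~3.6]{FGMN} alongside Rüth, since the paper already invokes it for the well-definedness of depth, and it gives the same conclusion more directly.
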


\begin{prop}[{\cite[Proposition 2.7]{OS2}}]\label{prop:2.7OS2}
Let $v\in\M$ and $\phi$ a proper key polynomial over $v$. Let $w=[v,w(\phi)=\lambda]$, for some $\lambda>v(\phi)$ and let
$r\in D_w$. For any $g\in K[x]$ such that $v(g)=w(g)$, we have $v_K(g(r))=v(g)$.
\end{prop}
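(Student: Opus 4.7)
The plan is to exploit the $\phi$-adic expansion of $g$. Write $g=\sum_{j=0}^d a_j\phi^j$ with $\deg a_j<\deg\phi$. Since $\phi$ is a key polynomial over $v$, one has $v(g)=\min_j(v(a_j)+jv(\phi))$, while by definition of augmentation $w(g)=\min_j(v(a_j)+j\lambda)$. Because $\lambda>v(\phi)$, the term at index $j$ satisfies $v(a_j)+jv(\phi)\leq v(a_j)+j\lambda$ with strict inequality for $j\geq 1$; comparing the two minima then shows that the hypothesis $v(g)=w(g)$ forces the common minimum to be attained at $j=0$, so $v(g)=v(a_0)$ and $v(a_j)+j\lambda>v(g)$ for every $j\geq 1$.

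Transferring this to the evaluation at $r$: since $r\in D_w\subseteq D_v$, we have $v_K(a_j(r))\geq v(a_j)$ and $v_K(\phi(r))\geq\lambda$, so $v_K(a_j(r)\phi(r)^j)\geq v(a_j)+j\lambda>v(g)$ for each $j\geq 1$; thus $v_K((g-a_0)(r))>v(g)$. By the ultrametric inequality, the proposition reduces to the single-polynomial claim $v_K(a_0(r))=v(a_0)$ for a polynomial $a_0$ of degree strictly less than $\deg\phi$.

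To establish this reduced claim I would introduce the pseudo-valuation $v_r\colon K[x]\to\hat\Q$, $v_r(h)=v_K(h(r))$. By Galois-invariance of $v_K$, $v_r$ coincides with the pseudo-valuation attached to the finite discoid equal to the Galois orbit of $r$, so $v_r\in\pM$ by Theorem \ref{thm:valdiscoid}, and $r\in D_w$ becomes $v_r\geq w$. Applying Proposition \ref{prop:ValuationsInequality} to $w\leq v_r$, one can write $v_r$ as a sequence of augmentations starting from $w$; every such augmentation uses a key polynomial $\phi^*$ over some valuation $v'$ with $w\leq v'\leq v_r$, hence of degree $\geq\deg v'\geq\deg w=\deg\phi$ (using Lemma \ref{lem:centreparent} together with the divisibility $\deg v'\mid\deg\phi^*$ recorded after Lemma \ref{lem:centre}). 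Since $\deg a_0<\deg\phi$, the $\phi^*$-adic expansion of $a_0$ is trivial at every stage, so the valuation of $a_0$ is preserved along the chain: $v_r(a_0)=w(a_0)=v(a_0)$, the last equality because $a_0$'s own $\phi$-adic expansion is trivial. Thus $v_K(a_0(r))=v(a_0)$, and combined with the bound on $g-a_0$ from the previous paragraph, $v_K(g(r))=v(g)$.

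The main obstacle is the degree control in the third paragraph: one must verify that the continuation of the chain from $w$ up to $v_r$ never re-introduces a key polynomial of degree $<\deg\phi$. This is where the properness hypothesis on $\phi$ enters, forcing $\deg\phi>\deg v$ (when $v\neq v_0$) and thereby fixing the minimal degree threshold for every subsequent key polynomial. A minor subtlety to handle is when $w$ does not appear literally as a term of the minimal MacLane chain of $v_r$ but sits strictly between two consecutive terms sharing a key polynomial of degree $\deg\phi$; one resolves this by inserting $w$ into the chain explicitly and re-expressing the continuation in terms of key polynomials over $w$ of degree $\geq\deg w$.
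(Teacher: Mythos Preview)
The paper does not prove this proposition; it quotes \cite[Proposition~2.7]{OS2} as a black box, so there is no in-paper argument to compare against. Your first three paragraphs constitute a correct proof. The identity $v(g)=\min_j\bigl(v(a_j)+jv(\phi)\bigr)$ holds for any $v$-minimal $\phi$ (hence any key polynomial), and the comparison with $w(g)=\min_j\bigl(v(a_j)+j\lambda\bigr)$ indeed forces $v(g)=v(a_0)$ and $v(a_j)+j\lambda>v(g)$ for $j\geq1$. The reduction to $v_K(a_0(r))=v(a_0)$ with $\deg a_0<\deg\phi$, and its resolution via $v_r\geq w$ and Proposition~\ref{prop:ValuationsInequality}, are both sound: writing $w=[v_{m-1},w(\phi_m)=\mu]$ in the minimal chain for $v_r$ gives $\deg\phi_m=\deg w=\deg\phi$, and every subsequent key polynomial in that chain has strictly larger degree, so $v_r(a_0)=v_{m-1}(a_0)=w(a_0)=v(a_0)$.

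Your fourth paragraph, however, misidentifies where properness enters. Properness does \emph{not} force $\deg\phi>\deg v$; that is the definition of a \emph{strong} key polynomial, and a proper key polynomial may well have $\deg\phi=\deg v$. Fortunately this misconception is harmless here: the only degree bound your argument actually uses is $\deg\phi^*\geq\deg w=\deg\phi$ for key polynomials $\phi^*$ appearing between $w$ and $v_r$, and this follows directly from $\deg w=\deg\phi$ (built into the augmentation $w=[v,w(\phi)=\lambda]$) together with Lemma~\ref{lem:centreparent}, with no reference to how $\deg\phi$ compares to $\deg v$. In fact your proof never uses the properness hypothesis at all, and the ``main obstacle'' you describe is not an obstacle; the subtlety about inserting $w$ into the minimal chain of $v_r$ is already absorbed by Proposition~\ref{prop:ValuationsInequality}.
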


\begin{lem}\label{lem:epsilonv}
Let $v\in\pM$ given by a MacLane chain
\[[v_0,v_1(\phi_1)=\lambda_1,\dots, v_{m-1}(\phi_{m-1})=\lambda_{m-1}, v_m(\phi_m)=\lambda_m].\]
Suppose $m>0$. The ramification index $e_{v_{m-1}}$ equals $[\Gamma_{\phi_m}(v):\Z]$, where
\[\Gamma_{\phi_m}(v)=\{v(a) \mid a\in K[x],\, a\neq 0,\, \deg a < \deg \phi_m\}.\]
In particular, it is independent of the chosen MacLane chain.
\proof
First note that if we restrict to minimal MacLane chains, the result is trivial. By Remark \ref{rem:MacChainToMinimal}(2) it suffices to prove that if $m>1$ and $\deg\phi_{m-1}=\deg\phi_{m}$, then $e_{v_{m-2}}=e_{v_{m-1}}$. We have \[v_{m-1}(\phi_m-\phi_{m-1})=\lambda_{m-1}.\] 
since $\phi_{m-1}\not\veq_{v_{m-1}}\phi_m$. But 
$\deg(\phi_m-\phi_{m-1})<\deg \phi_{m-1}$,
so \[\lambda_{m-1}=v_{m-1}(\phi_m-\phi_{m-1})=v_{m-2}(\phi_m-\phi_{m-1})\in \Gamma_{v_{m-2}}.\]
Thus $\Gamma_{v_{m-2}}=\Gamma_{v_{m-1}}$, as required.
\endproof
\end{lem}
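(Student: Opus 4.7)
The plan is to reduce to the case of a minimal MacLane chain, where the identity is nearly tautological, and then use the removal-of-redundant-pairs procedure of Remark~\ref{rem:MacChainToMinimal}(2) to descend to the minimal case from an arbitrary chain. The concluding statement on independence of the chain will then be automatic, since the right-hand side $[\Gamma_{\phi_m}(v):\Z]$ is computed directly from $v$ and $\phi_m$.

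If the chain is minimal, $\deg\phi_i<\deg\phi_m$ for every $i<m$, so by the augmentation formula $v(a)=v_{m-1}(a)$ whenever $\deg a<\deg\phi_m$; this gives $\Gamma_{\phi_m}(v)\subseteq\Gamma_{v_{m-1}}$. For the reverse inclusion I use the identity $\Gamma_{v_{m-1}}=\Z+\Z\lambda_1+\dots+\Z\lambda_{m-1}$ recorded earlier: each $\lambda_i=v_{m-1}(\phi_i)$ is realised by a polynomial of degree strictly less than $\deg\phi_m$, hence lies in $\Gamma_{\phi_m}(v)$. Thus $\Gamma_{\phi_m}(v)=\Gamma_{v_{m-1}}$ and the equality of indices follows.

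For a general MacLane chain I pass to a minimal one by successively removing pairs $(\phi_i,\lambda_i)$ with $\deg\phi_i=\deg\phi_{i+1}$, always keeping $(\phi_m,\lambda_m)$ intact. Removals at positions $i\leq m-2$ leave $v_{m-1}$ unchanged, so the only substantive case is when $\deg\phi_{m-1}=\deg\phi_m$, where I must show $e_{v_{m-1}}=e_{v_{m-2}}$. Since $\phi_{m-1}$ and $\phi_m$ are monic of the same degree and both $v_{m-1}$-minimal, one has $v_{m-1}(\phi_m)=v_{m-1}(\phi_{m-1})=\lambda_{m-1}$, and the MacLane-chain condition $\phi_m\not\veq_{v_{m-1}}\phi_{m-1}$ forces $v_{m-1}(\phi_m-\phi_{m-1})=\lambda_{m-1}$ exactly. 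Because $\deg(\phi_m-\phi_{m-1})<\deg\phi_{m-1}$, the augmentation formula for $v_{m-1}=[v_{m-2},\,v_{m-1}(\phi_{m-1})=\lambda_{m-1}]$ identifies this value with $v_{m-2}(\phi_m-\phi_{m-1})$, so $\lambda_{m-1}\in\Gamma_{v_{m-2}}$. Combining this with $\Gamma_{v_{m-1}}=\Gamma_{v_{m-2}}+\Z\lambda_{m-1}$ yields $\Gamma_{v_{m-1}}=\Gamma_{v_{m-2}}$, whence $e_{v_{m-1}}=e_{v_{m-2}}$.

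The hard part is this last reduction: one must extract the exact value $v_{m-1}(\phi_m-\phi_{m-1})=\lambda_{m-1}$ (not merely $\geq\lambda_{m-1}$) from the non-equivalence condition, and then apply the augmentation formula in the correct direction so that $\lambda_{m-1}$ lands in the earlier value group. Beyond this small piece of bookkeeping, the argument is a direct unwinding of the definitions and of Remark~\ref{rem:MacChainToMinimal}.
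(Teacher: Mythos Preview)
Your proof is correct and follows essentially the same approach as the paper's: reduce via Remark~\ref{rem:MacChainToMinimal}(2) to the case $\deg\phi_{m-1}=\deg\phi_m$, use the MacLane-chain condition $\phi_m\not\veq_{v_{m-1}}\phi_{m-1}$ to get $v_{m-1}(\phi_m-\phi_{m-1})=\lambda_{m-1}$, and then observe that the low degree of $\phi_m-\phi_{m-1}$ places $\lambda_{m-1}$ in $\Gamma_{v_{m-2}}$. You are simply more explicit in spelling out the minimal-chain case and the reason $v_{m-1}(\phi_m)=\lambda_{m-1}$, but the argument is the same.
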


\begin{defn}
Let $v\in\pM$ given by a MacLane chain 
\[[v_0,\dots, v_{m-1}(\phi_{m-1})=\lambda_{m-1}, v_m(\phi_m)=\lambda_m].\] 
Define $\epsilon_v=e_{v_{m-1}}$ if $m>0$, and $\epsilon_v=1$ otherwise.
\end{defn}

For any monic irreducible polynomial $\phi\in K[x]$, define $K_\phi=K[x]/(\phi)$, finite extension of $K$. Let $O_\phi$ be the ring of integers of $K_\phi$ and $k_\phi$ the residue field. Recall $\deg \phi=e_\phi f_\phi$, where $e_\phi$ and $f_\phi$ are respectively the ramification index and the residual degree of the extension $K_\phi/K$.

Let $v\in\pM$ with centre $\phi$. Then \cite[Proposition 1.9(2)]{FGMN} shows that $e_\phi=[\Gamma_\phi(v):\Z]$, and so $e_\phi=\epsilon_v$ by Lemma \ref{lem:epsilonv}.
It follows that $f_\phi=\deg v/e_\phi$ is independent of the choice of the centre $\phi$.

\begin{nt}
Given $v\in\pM$ with centre $\phi$, denote $f_v=f_\phi$.
\end{nt}

% \begin{defn}
% Let $v\in \V$. For any $\alpha\in\Gamma_v$, define
% \[O_\alpha=\{g\in K[x]\mid v(g)\geq 0\}, \qquad O_\alpha^+=\{g\in K[x]\mid v(g)>0\}.\]
% The \textit{graded algebra of $v$} is the integral domain
% \[Gr(v):=\bigoplus_{\alpha\in\Gamma_v}O_\alpha/O_\alpha^+.\]
% There is a natural map $H_v: K[x]\rightarrow Gr(v)$, called the \textit{residue map of $v$} given by $H_v(0)=0$ and
% \[H_v(g)=g+O_{v(g)}^+\in O_{v(g)}/O_{v(g)}^+,\]
% when $g\neq 0$. 
% \end{defn}

%\subsection{Newton polygons}

Let $f\in K[x]$, $v\in\M$ and $\phi\in\KP v$. Write
\[f=\sum_{\i=0}^d a_\i\phi^\i,\qquad\text{where }\deg a_\i<\deg \phi.\]
The \textit{Newton polygon}, $N_{v,\phi}(f)$ of $f$ is
\[N_{v,\phi}(f) = \mbox{lower convex hull}\,(\{(\i,v(a_\i)) \mid a_\i\neq0\})\subset \R^2.\]

\begin{nt}\label{nt:NewtonPolygon}
Let $\lambda\in\Q$, $\lambda>v(\phi)$ and $w=[v,w(\phi)=\lambda]$. We denote by $L_w(f)$ the intersection of $N_{v,\phi}(f)$ with the line of slope $-\lambda$ which first touches it from below:
\[L_w(f):=\{(\i,u)\in N_{v,\phi}(f)\mid u+\lambda \i\text{ is minimal}\}.\]
Therefore if $N_{v,\phi}(f)$ has an edge $L$ of slope $-\lambda$ then $L_w(f)=L$, otherwise $L_w(f)$ is one of the vertices of $N_{v,\phi}(f)$. 
\end{nt}

\begin{nt}\label{nt:i}
Let $\lambda\in\hat\Q$, $\lambda>v(\phi)$ and $w=[v,w(\phi)=\lambda]$. If $\lambda<\infty$ denote by $(\io_w,u^0_w),(\i_w,u_w)$ the two endpoints of $L_w(f)$ (equal if $L_w(f)$ is a vertex), where $\io_w\leq \i_w$. If $\lambda=\infty$, set $\io_w=0$, $u^0_w=\infty$, and denote by $(\i_w,u_w)$ the left-most vertex of $N_{v,\phi}(f)$.
%The \textit{length} of $L_w(f)$ will be $\i_w-\io_w$.
\end{nt}

\section{MacLane chains invariants and residual polynomials}\label{sec:Residualpoly}
Part of the current section can be found in \cite[\S3]{FGMN}.

Let $v\in \M$ given by a MacLane chain
\begin{equation}\label{eqn:MacLaneChain}
    [v_0,v_1(\phi_1)=\lambda_1,\dots,v_n(\phi_n)=\lambda_n].
\end{equation}
We attach to (\ref{eqn:MacLaneChain}) the following data.

\begin{defn}
Set $v_{-1}=v_0$, $\pi_{-1}=\pi$, $\phi_0=x$, $\lambda_0=0$ and for all $0\leq i\leq n$
%fix a root $r_i$ of $\phi_i$ and 
define
\[e_i=e_{v_i}/e_{v_{i-1}},\qquad h_i=e_{v_i}\lambda_i, \qquad f_{i-1}=f_{v_i}/f_{v_{i-1}}
%,\qquad h_{g,i}=e_{v_{i}}v_i(g),
.\]
%for any $g\in O_K[x]$. 
Fix $\ell_i,\ell_i'$, such that $\ell_ih_i+\ell_i'e_i=1$, with $0\leq\ell_i<e_i$. Then inductively define
 \[\gamma_i=\phi_i^{e_i}\pi_{i-1}^{-h_i},
 %\qquad\gamma_{g,i}=g\pi_i^{-h_{g,i}},
 \qquad\pi_{i}=\phi_i^{\ell_i}\pi_{i-1}^{\ell_i'}.\]
%and if $i<n$ define $z_i=\gamma_i(r_{i+1})\mod \pi$. It may not be immediately clear what we mean with this last definition as $\gamma_i\in K(x)$. There exists a polynomial $\Gamma_i\in K[x]$\footnote{This non-trivial fact holds for $\phi_l$ for all $l< i$ and so for $\gamma_i$ (see \cite[\S 3.3]{FGMN}} such that $v_i(\gamma_i-\Gamma_i)>v_i(\gamma_i)$ ($=0$, see lemma below). Then $z_i=\Gamma_i(r_{i+1})\mod \pi$.
\end{defn}

\begin{rem}\label{rem:degreeramificationindexChain}
Let $0\leq i<n$. Then $\deg v_{i+1}=e_if_i\deg v_i$ and $v_i(\phi_n)\in\Gamma_{v_{i-1}}$.
\end{rem}

\begin{lem}\label{lem:gammapival}
For any $0\leq i\leq n$ and any $j> i$, we have
\begin{itemize}
    \item $v_{j}(\gamma_i)=v_{i}(\gamma_i)=0$;
    \item $v_{j}(\pi_{i})=v_{i}(\pi_{i})=\frac{1}{e_{v_i}}$. So $\pi_i$ is a uniformiser for $v_i$.
\end{itemize}
\proof
The lemma follows by induction and the equality $v_j(\phi_i)=v_i(\phi_i)$.
\endproof
\end{lem}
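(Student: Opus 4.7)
The lemma bundles two statements—the equalities $v_i(\gamma_i)=0$ and $v_i(\pi_i)=1/e_{v_i}$, together with their stability on going up the chain to any $j>i$—and both propagate together by a single induction on $i$. The real input is the preliminary equality $v_j(\phi_i)=v_i(\phi_i)=\lambda_i$ for $j\geq i$, which the statement itself flags, so I would establish this first.

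For the preliminary claim I induct on $j-i$. The base $j=i$ is built into the augmentation $v_i=[v_{i-1},v_i(\phi_i)=\lambda_i]$. For the inductive step I expand $\phi_i$ in powers of $\phi_{j+1}$. If $\deg\phi_i<\deg\phi_{j+1}$, the expansion is trivial and the augmentation formula gives $v_{j+1}(\phi_i)=v_j(\phi_i)=\lambda_i$ directly. The subtle case is $\deg\phi_i=\deg\phi_{j+1}$: degree divisibility along the chain forces $\deg\phi_k$ to be constant for $i\leq k\leq j+1$, so $\phi_{j+1}$ is itself a centre of $v_j$ by Lemma \ref{lem:centre}; the MacLane chain axiom $\phi_{j+1}\not\veq_{v_j}\phi_j$ combined with $v_j(\phi_j)=v_j(\phi_{j+1})=\lambda_j$ pins down $v_j(\phi_i-\phi_{j+1})=\lambda_i$, and writing $\phi_i=\phi_{j+1}+(\phi_i-\phi_{j+1})$ together with $\lambda_{j+1}>\lambda_i$ (radii strictly increase along a MacLane chain by Lemma \ref{lem:centreparent}) yields $v_{j+1}(\phi_i)=\lambda_i$.

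With the preliminary claim in hand, the main induction on $i$ is a mechanical computation. The base $i=0$ comes out immediately once the conventions $\pi_{-1}=\pi$, $\phi_0=x$, $\lambda_0=0$, $h_0=0$, $e_0=1$ are unpacked. For the step $i\mapsto i+1$ and any $j\geq i+1$, the preliminary claim gives $v_j(\phi_{i+1})=\lambda_{i+1}$ and the inductive hypothesis (applied to $\pi_i$, valid since $j>i$) gives $v_j(\pi_i)=1/e_{v_i}$, so
\[v_j(\gamma_{i+1})=e_{i+1}\lambda_{i+1}-\frac{h_{i+1}}{e_{v_i}}=0\]
after substituting $e_{i+1}=e_{v_{i+1}}/e_{v_i}$ and $h_{i+1}=e_{v_{i+1}}\lambda_{i+1}$. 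The analogous calculation for $\pi_{i+1}$ collapses, via the defining relation $\ell_{i+1}h_{i+1}+\ell_{i+1}'e_{i+1}=1$, to $v_j(\pi_{i+1})=1/e_{v_{i+1}}$, which is exactly the assertion that $\pi_{i+1}$ is a uniformiser for $v_{i+1}$.

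The main obstacle is the equal-degree subcase of the preliminary claim: this is the only place where the bare MacLane chain axiom (rather than minimality) is essential, and where one has to argue via $v$-equivalence of centres to recover a definite value rather than just a lower bound. Everything else amounts to pushing the definitions of $e_i$, $h_i$, $\ell_i$, $\ell_i'$ through the valuations.
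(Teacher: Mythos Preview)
Your proof is correct and follows the same approach as the paper's one-line argument: induction on $i$ using the equality $v_j(\phi_i)=v_i(\phi_i)$. The paper asserts that equality as a known fact (it appears as an unproved remark just after the definition of MacLane chain), whereas you supply a detailed argument for it; the equal-degree subcase you isolate is indeed precisely where the MacLane-chain axiom $\phi_{j+1}\not\veq_{v_j}\phi_j$ is needed.
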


\begin{nt}\label{nt:bvhvlv}
We will denote by $b_v,h_v,\ell_v,\ell_v'$ the quantities $e_n,h_n,\ell_n,\ell_n'$ respectively. They are independent of the chosen MacLain chain for $v$.
\end{nt}

\begin{lem}
For any $0\leq i\leq n-1$ there exists a polynomial $\uni i\in K[x]$ such that $\uni i\veq_{v_{i}}\pi_i$.
Furthermore, there exists a
polynomial $\uni i'\in K[x]$ such that $v_{i}(\uni {i}')=-v_{i}(\pi_{i})$ and $(\uni i')^{-1}\veq_{v_{i+1}}\pi_i$.
\proof
First note that if $\uni i$ exists, then $\uni i\veq_{v_{i+1}}\pi_i$ as $v_{i}(\pi_i)=v_{i+1}(\pi_i)$ by Lemma \ref{lem:gammapival}. Now we prove the lemma by induction on $i$. When $i=0$, we can choose $\uni i=\pi=\pi_i$ and $\uni i'=\pi^{-1}$. Suppose $i>0$. Define $\uni i\in K[x]$ by
\[\uni i=\begin{cases}\phi_i^{\ell_i}\uni{i-1}^{\ell_i'}&\text{if }\ell_i'\geq 0,\\
\phi_i^{\ell_i}(\uni{i-1}')^{-\ell_i'}&\text{if }\ell_i'< 0.\end{cases}\]
By inductive hypothesis, $\uni{i-1}\veq_{v_{i}}\pi_{i-1}$ and $(\uni {i-1}')^{-1}\veq_{v_{i}}\pi_{i-1}$. Therefore $\uni i\veq_{v_{i}}\pi_i$. Finally, \cite[Lemma 4.24]{Rut} shows the existence of $\uni i'$.
\endproof
\end{lem}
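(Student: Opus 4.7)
I would prove both assertions simultaneously by induction on $i$, because the construction of $\uni i$ in the inductive step will need to use both $\uni{i-1}$ and $\uni{i-1}'$ as polynomial substitutes for $\pi_{i-1}$ and $\pi_{i-1}^{-1}$ respectively.

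\textbf{Base case $i=0$.} The definitions force $h_0 = e_{v_0}\lambda_0 = 0$ and $e_0 = 1$, so the Bezout condition $\ell_0 h_0 + \ell_0' e_0 = 1$ with $0\le \ell_0 < e_0$ gives $\ell_0 = 0$, $\ell_0' = 1$. Hence $\pi_0 = \phi_0^{0}\,\pi^{1} = \pi$. Take $\uni 0 = \pi$ and $\uni 0' = \pi^{-1}$; both lie in $K \subset K[x]$, and the conditions $\uni 0 \veq_{v_0}\pi_0$, $v_0(\uni 0') = -v_0(\pi)$, and $(\uni 0')^{-1} = \pi_0 \veq_{v_1}\pi_0$ are immediate.

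\textbf{Inductive step for $\uni i$.} First I would upgrade the inductive hypotheses from $v_{i-1}$ to $v_i$: by Lemma \ref{lem:gammapival}, $v_{i-1}(\pi_{i-1}) = v_i(\pi_{i-1})$, and so $\uni{i-1}\veq_{v_i}\pi_{i-1}$; similarly $(\uni{i-1}')^{-1}\veq_{v_i}\pi_{i-1}$ by hypothesis. The definition $\pi_i = \phi_i^{\ell_i}\pi_{i-1}^{\ell_i'}$ already involves the polynomial $\phi_i$, so the only obstruction to $\pi_i$ being a polynomial is the sign of $\ell_i'$. I would therefore set
\[
\uni i = \begin{cases}
\phi_i^{\ell_i}\, \uni{i-1}^{\ell_i'} & \text{if } \ell_i' \ge 0, \\
\phi_i^{\ell_i}\, (\uni{i-1}')^{-\ell_i'} & \text{if } \ell_i' < 0,
\end{cases}
\]
which lies in $K[x]$ in either case since all exponents are non-negative. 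The equivalence $\uni i \veq_{v_i} \pi_i$ then follows from multiplicativity of $H_{v_i}$ on the localisation $\loc{v_i}$ applied to the product decomposition.

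\textbf{Inductive step for $\uni i'$ and main obstacle.} This is the harder half. We need a polynomial of \emph{negative} $v_i$-valuation $-v_i(\pi_i) = -1/e_{v_i}$ (so necessarily involving coefficients outside $O_K$), whose inverse matches $\pi_i$ at the strictly finer valuation $v_{i+1}$, not just at $v_i$. A naive attempt such as $\uni i' = \pi^{-1}g$ with $g \in O_K[x]$ chosen so that $v_i(g) = 1 - 1/e_{v_i}$ gives the correct $v_i$-valuation, but verifying the $v_{i+1}$-equivalence requires controlling how $\pi_i$ is refined by the augmentation $v_{i+1} = [v_i, v_{i+1}(\phi_{i+1}) = \lambda_{i+1}]$, which is not automatic from the $v_i$-level data. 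Rather than construct $\uni i'$ by hand, I would appeal to a general existence theorem for polynomials of prescribed valuation in MacLane theory (for instance \cite[Lemma 4.24]{Rut}), which supplies an element of $K[x]$ with the required valuation and graded image, completing the induction.
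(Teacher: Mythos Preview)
Your proposal is correct and follows essentially the same approach as the paper: the same induction, the same case-split construction of $\uni i$ depending on the sign of $\ell_i'$, the same use of Lemma~\ref{lem:gammapival} to pass equivalences from $v_{i-1}$ to $v_i$, and the same appeal to \cite[Lemma 4.24]{Rut} for $\uni i'$. The only differences are cosmetic---you spell out the base-case constants $\ell_0,\ell_0',e_0,h_0$ and phrase the valuation upgrade more explicitly.
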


\begin{lem}\label{lem:inversegammapi}
For any $0\leq i\leq n$, we have $\phi_i=\gamma_i^{\ell_i'}\pi_i^{h_i}$ and $\pi_{i-1}=\gamma_i^{-\ell_i}\pi_{i}^{e_i}$.
\proof
The lemma follows from direct computation.
% Then
% \begin{align*}
% \phi_i&=\phi_i^{\ell_ih_i+\ell_i'e_i}=(\phi_i^{e_i}\pi_{i-1}^{-h_i})^{\ell_i'}(\phi_i^{\ell_i}\pi_{i-1}^{\ell_i'})^{h_i}=\gamma_i^{\ell_i'}\pi_i^{h_i};\\
% \pi_{i-1}&=\pi_{i-1}^{\ell_ih_i+\ell_i'e_i}=(\phi_i^{e_i}\pi_{i-1}^{-h_i})^{-\ell_i}(\phi_i^{\ell_i}\pi_{i-1}^{\ell_i'})^{e_i}=\gamma_i^{-\ell_i}\pi_i^{e_i}.
% \end{align*}
\endproof
\end{lem}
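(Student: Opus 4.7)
The plan is to verify both equalities by direct substitution, using only the definitions of $\gamma_i$ and $\pi_i$ together with the Bézout relation $\ell_i h_i+\ell_i' e_i=1$ from the definition of $\ell_i,\ell_i'$. Since $\phi_i$ and $\pi_{i-1}$ are elements of $K(x)$ which may be manipulated as ordinary invertible symbols at the level of exponents (here we work formally; there is no issue of convergence or of $\pi_{i-1}$ being zero since the $\pi_i$ are uniformisers by Lemma \ref{lem:gammapival}), the whole proof reduces to tracking exponents.

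For the first identity I would compute
\[\gamma_i^{\ell_i'}\pi_i^{h_i}=\bigl(\phi_i^{e_i}\pi_{i-1}^{-h_i}\bigr)^{\ell_i'}\bigl(\phi_i^{\ell_i}\pi_{i-1}^{\ell_i'}\bigr)^{h_i}=\phi_i^{e_i\ell_i'+\ell_ih_i}\,\pi_{i-1}^{-h_i\ell_i'+\ell_i'h_i}=\phi_i^{1}\pi_{i-1}^{0}=\phi_i,\]
where the penultimate equality uses $\ell_ih_i+\ell_i'e_i=1$ in the $\phi_i$-exponent and the cancellation $-h_i\ell_i'+\ell_i'h_i=0$ in the $\pi_{i-1}$-exponent.

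For the second identity I would similarly compute
\[\gamma_i^{-\ell_i}\pi_i^{e_i}=\bigl(\phi_i^{e_i}\pi_{i-1}^{-h_i}\bigr)^{-\ell_i}\bigl(\phi_i^{\ell_i}\pi_{i-1}^{\ell_i'}\bigr)^{e_i}=\phi_i^{-e_i\ell_i+\ell_ie_i}\,\pi_{i-1}^{h_i\ell_i+\ell_i'e_i}=\phi_i^{0}\pi_{i-1}^{1}=\pi_{i-1},\]
where again the Bézout relation takes care of the $\pi_{i-1}$-exponent and the $\phi_i$-exponent vanishes identically.

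There is no real obstacle here: the lemma is a purely formal consequence of the definitions of $\gamma_i$, $\pi_i$, and the choice of $(\ell_i,\ell_i')$ as a Bézout pair for $(h_i,e_i)$. The only minor caveat worth acknowledging is that the manipulations take place inside $K(x)^\times$ rather than $K[x]$, which is legitimate since $\phi_i$ and $\pi_{i-1}$ are nonzero and the previous lemma ensures $\pi_i$ is a uniformiser for $v_i$, so all symbols being inverted are indeed units in the appropriate localisation $\loc{v_i}$.
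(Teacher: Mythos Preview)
Your proof is correct and is exactly the direct computation the paper alludes to: you substitute the definitions $\gamma_i=\phi_i^{e_i}\pi_{i-1}^{-h_i}$ and $\pi_i=\phi_i^{\ell_i}\pi_{i-1}^{\ell_i'}$ and use the B\'ezout relation $\ell_i h_i+\ell_i' e_i=1$ to simplify the exponents. There is nothing to add; the paper simply omits writing out these two lines.
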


\begin{lem}\label{lem:gammadecomposition}
For any $0\leq i\leq n$, we have
    \[\pi_{i}=\phi_{i}^{m'_{i}}\cdots \phi_1^{m'_1}\cdot \pi^{m'_0}\quad \text{and}\quad\pi_{i}^{e_{v_{i}}}=\gamma_{i}^{m_{i}}\cdots \gamma_1^{m_1}\cdot \pi^{m_0},\]
where
\[m_j'=\begin{cases}
\ell_1'\cdot\ell_{i}'&\text{if }j=0,\\
\ell_j\ell_{j+1}'\cdots\ell_{i}'&\text{if }j>0.
\end{cases}\quad \text{and}\quad m_j=\begin{cases}
1&\text{if }j=0,\\
e_1\cdots e_{j-1}\ell_j&\text{if }j>0.
\end{cases}\]
Note that
\[\gamma_i=\phi_i^{e_i}\cdot\phi_{i-1}^{-h_{i}m'_{i-1}}\cdots \phi_1^{-h_{i}m'_1}\pi^{-h_{i}m'_0},
\quad 
%\gamma_{g,i}=g \phi_i^{-h_{g,i}\ell_i}\phi_{i-1}^{-h_{g,i}\ell_i'n_{i-1}}\cdots \phi_1^{-h_{g,i}\ell_i'n_1}\pi^{-h_{ij}\ell_i'n_0}\]\[
\phi_i^{e_{v_{i}}}=\gamma_i^{e_{v_{i-1}}}\cdot\gamma_{i-1}^{h_im_{i-1}}\cdots\gamma_1^{h_im_1}\pi^{h_i}
%,\quad g^{e_{v_{i}}}=\gamma_{g,i}^{e_{v_{i}}+h_{g,i}m_i}\gamma_{i-1}^{h_{g,i}m_{i-1}}\cdots\gamma_1^{h_{g,i}m_1}\pi^{h_{g,i}}
.\]
%for any $g\in O_K[x]$.
\proof
The proof follows by mathematical induction and Lemma \ref{lem:inversegammapi}.
\endproof
\end{lem}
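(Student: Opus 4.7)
The plan is to prove both identities simultaneously by induction on $i$, using the recursive definitions $\pi_i = \phi_i^{\ell_i}\pi_{i-1}^{\ell_i'}$ and $\gamma_i = \phi_i^{e_i}\pi_{i-1}^{-h_i}$. The base case $i = 0$ is immediate: the conventions $\pi_{-1} = \pi$, $\lambda_0 = 0$, $e_0 = 1$, together with the Bezout constraint $\ell_0 h_0 + \ell_0' e_0 = 1$ and $0 \leq \ell_0 < e_0$, force $\ell_0 = 0$ and $\ell_0' = 1$, so $\pi_0 = \pi$; both identities then reduce to $\pi = \pi^1$, matching the empty products on the right-hand sides.

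For the inductive step on the first identity, I would substitute the stage-$(i-1)$ expression for $\pi_{i-1}$ into $\pi_i = \phi_i^{\ell_i}\pi_{i-1}^{\ell_i'}$ and read off the exponents. The new exponent of $\phi_i$ equals $\ell_i$, and each previous exponent gets multiplied by $\ell_i'$, which is precisely the recursion satisfied by the displayed $m'_j$ (the case $j=0$ is checked identically).

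For the second identity the slick route is to avoid raising the first identity to the $e_{v_i}$-th power (which would require an appeal to $\ell_i h_i + \ell_i' e_i = 1$ to collapse the exponent of $\pi$ back to $1$) and instead invoke Lemma \ref{lem:inversegammapi}, which yields $\pi_i^{e_i} = \gamma_i^{\ell_i}\pi_{i-1}$. Raising to the $e_{v_{i-1}}$-th power gives
\[\pi_i^{e_{v_i}} = \gamma_i^{\ell_i e_{v_{i-1}}}\,\pi_{i-1}^{e_{v_{i-1}}},\]
and substituting the inductive hypothesis for $\pi_{i-1}^{e_{v_{i-1}}}$ produces the claimed formula, with $m_i = \ell_i e_{v_{i-1}} = e_1\cdots e_{i-1}\ell_i$ and the lower exponents $m_j$ inherited unchanged from stage $i-1$. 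The two supplementary ``Note that'' expressions then follow by back-substitution: the first by plugging the already-proved first identity for $\pi_{i-1}$ into $\gamma_i = \phi_i^{e_i}\pi_{i-1}^{-h_i}$, and the second by raising $\phi_i^{e_i} = \gamma_i\pi_{i-1}^{h_i}$ to the $e_{v_{i-1}}$-th power and substituting the second identity for $\pi_{i-1}^{e_{v_{i-1}}}$ (using $m_0 = 1$ to simplify $\pi^{h_i m_0}$ to $\pi^{h_i}$). There is no conceptual difficulty here: the main thing to get right is the index bookkeeping, i.e.\ tracking how each exponent $m'_j$ or $m_j$ evolves from stage $i-1$ to stage $i$ and checking it agrees with the closed-form expression in the statement.
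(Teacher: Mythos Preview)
Your proof is correct and is exactly the approach the paper has in mind: the paper's own proof is the single line ``The proof follows by mathematical induction and Lemma~\ref{lem:inversegammapi},'' and you have simply carried out that induction in detail, using $\pi_i^{e_i}=\gamma_i^{\ell_i}\pi_{i-1}$ from Lemma~\ref{lem:inversegammapi} for the second identity and direct substitution of the recursive definitions for the first. There is nothing to add.
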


Let $i=0,\dots,n$. Recall the definition of the residue map $H_{v_i}$ of $v_i$. From \cite[Lemma 2.9]{FGMN} a polynomial $f\in K[x]$ belongs to $U_{v_i}$ if $v_{i-1}(f)=v_i(f)$. Therefore $\phi_j,\pi_j,\gamma_j$ are units of $\loc{v_i}$, for all $j=0,\dots,i-1$. It follows that $\phi_i,\pi_i,\gamma_i\in \loc{v_i}$, domain of $H_{v_i}$. Denote 
\[x_i=H_{v_i}(\phi_i),\quad p_i=H_{v_i}(\pi_{i-1}),\quad y_i=H_{v_i}(\gamma_i).\]
Note that by \cite[Lemma 2.9]{FGMN} the set of units $\Av_{v_i}^\times$ of $\Av_{v_i}$ coincides with the image of the canonical homomorphism $\Av_{v_{i-1}}\rightarrow \Av_{v_i}$.

We recall the following from \cite[\S4.1.3]{Rut} and \cite[\S3.4]{FGMN}.
There exist a sequence of simple field extensions 
\[k=k_0\subseteq k_1\subseteq k_2\subseteq\dots\subseteq k_n,\]
with $k_i\simeq k_{\phi_i}$, such that for all $i=0,\dots, n$ there are isomorphisms of $k$-algebras $\bar H_i: \Av_{v_i}\rightarrow k_i[X_i]$. One can see that $\bar H_i$ is the unique homomorphism satisfying:
\begin{enumerate}[label=(\roman*)]
    \item $\bar H_i(y_i)=X_i$; \label{item:Ri0onyi}
    \item $\bar H_i(u)=\bar H_{i-1}(u)$ when $i>0$ and $u\in \Av_{v_{i-1}}$, where we canonically see $u\in \Av_{v_i}$ via $\Av_{v_{i-1}}\rightarrow \Av_{v_i}$ and $\bar H_{i-1}(u)\in k_{i}$ via the natural map $k_{i-1}[X_{i-1}]\rightarrow k_i$ taking $X_{i-1}$ to the generator of $k_i$ over $k_{i-1}$. \label{item:Ri0onunits}
\end{enumerate}
% For any $i<h$, let $\Phi_i\in k_i[X_i]$ such that $k_{i+1}\simeq k_i[X_i]/(\Phi_i)$.
By \cite[Proposition 3.9]{FGMN}, the canonical embedding $\Av_{v_i}\hookrightarrow \F_{v_i}$ induces an isomorphism between the field of fractions of $\Av_{v_i}$ and $\F_{v_i}$.
Therefore we can consider the largest subring $\Loc{v_i}\subset K(x)$ such that the isomorphism $\bar H_i$ lifts to a surjective homomorphism 
\[H_i:\Loc{v_i}\rightarrow k_i[X_i^{\pm 1}],\] satisfying $H_i(f)=H_i(g)$ if $f\veq_{v_i} g$.
In particular, $\loc{v_i}(0)\subseteq\Loc{v_i}$ and  $H_i=\bar H_i\circ H_{v_i}$ on $\loc{v_i}(0)$. Furthermore, note that $\gamma_i^{-1}\in \Loc{v_i}$ from \ref{item:Ri0onyi}.

\begin{defn}\label{defn:Loc(i)H(ii)}
Let $\alpha\in\Gamma_{v_i}$. Define
\begin{enumerate}[label=(\roman*)]
    \item $\Loc{v_i}(\alpha)=F_{v_i}\cdot\loc{v_i}(\alpha)\subset K(x)$.
    \item $H_{i,\alpha}:\Loc{v_i}(\alpha)\rightarrow k_i[X_i^{\pm1}]$ given by $H_{i,\alpha}(f)=H_i\big(f/\pi_i^{e_{v_i}\alpha}\big)$.\label{item:Hialpha}
\end{enumerate}
The map $H_{i,\alpha}$ in \ref{item:Hialpha} is well-defined since $\pi_i^{-1}\in \Loc{v_i}(-\alpha)$.
\end{defn}

\begin{defn}
For $0\leq i\leq n$ and $\alpha\in\Gamma_{v_i}$, let $\i_i(\alpha), u_i(\alpha)\in \Z$ such that $u_i(\alpha)e_i+\i_i(\alpha)h_i=e_{v_i}\alpha$, with $0\leq \i_i(\alpha)<e_i$.
Define 
\begin{enumerate}[label=(\roman*)]
    \item $\varphi_i(\alpha)=x_i^{\i_i(\alpha)}p_{i}^{u_i(\alpha)}\in \Av_{v_i}(\alpha);$
    \item $c_i(\alpha)=\ell_i'\i_i(\alpha)-\ell_i u_i(\alpha)\in\Z.$
\end{enumerate}
\end{defn}

Let $\alpha\in\Gamma_{v_i}$. Let $R_{i,\alpha}:O_{v_i}(\alpha)\rightarrow k_i[X_i]$ be the map defined in \cite[Definition 3.13]{FGMN}, where we replaced the variable $y$ with $X_i$.
By \cite[Theorem 4.1]{FGMN}, we have $\Av_{v_i}(\alpha)=\varphi_i(\alpha)\Av_{v_i}$ and 
$R_{i,\alpha}$ is the lift of the map 
\[\bar R_{i,\alpha}:\Av_{v_i}(\alpha)\rightarrow k_i[X_i]\] 
given by $\bar R_{i,\alpha}(\varphi_i(\alpha)\cdot a)=\bar H_i(a)$. 
Since $e_{v_i}\alpha=u_i(\alpha)e_i+\i_i(\alpha)h_i$, by Lemma \ref{lem:inversegammapi}, we have
\[\pi_i^{e_{v_i}\alpha}\gamma_i^{c_i(\alpha)}=\phi_i^{\i_i(\alpha)}\pi_{i-1}^{u_i(\alpha)}.\]
Therefore for any $f\in \Av_{v_i}(\alpha)$ we have
\begin{equation}\label{HandR}
H_{i,\alpha}(f)=X_i^{c_i(\alpha)}\cdot R_{i,\alpha}(f).
\end{equation}
We extend $R_{i,\alpha}$ through (\ref{HandR}).

% \begin{lem}\label{lem:epsiloniRialpha}
% For any $0\leq i\leq n$ and $\alpha\in\Gamma_{v_i}$ we have
% \[\epsilon_i(\alpha)\cdot R_{i,\alpha}(\pi_{i}^{e_{v_i}\alpha})=1 \qquad \text{in }k_i[X_i^{\pm 1}].\]
% \proof
% Since $e_{v_i}\alpha=u_i(\alpha)e_i+\i_i(\alpha)h_i$, by Lemma \ref{lem:inversegammapi}, we have
% \[\pi_i^{e_{v_i}\alpha}\gamma_i^{\ell_i'\i_i(\alpha)-\ell_iu_i(\alpha)}=\phi_i^{\i_i(\alpha)}\pi_{i-1}^{u_i(\alpha)}.\]
% The lemma follows from the definitions of $\epsilon_i(\alpha)$ and $R_{i,\alpha}$.
% \endproof
% \end{lem}

\begin{defn}\label{defn:redialphamap}
Let $\alpha\in\Gamma_{v_i}$. The \textit{residual polynomial operator} $R_{i,\alpha}$ is the map $\Loc{v_i}(\alpha)\rightarrow k_i[X_i^{\pm 1}]$ given by $R_{i,\alpha}(f)=X_i^{-c_i(\alpha)}\cdot H_{i,\alpha}(f)$.
\end{defn}

\begin{rem}\label{rem:ki}
%[{\cite[Corollary 5.3]{FGMN}}]
%Let $i=0,\dots,n-1$. 
%Then $\alpha_i=v_i(\phi_{i+1})=f_ie_i\lambda_i$. Hence $\varphi_i(\alpha_i)=p_i^{f_ih_i}$. In particular, the 
Let $0\leq i<n$ and $\alpha_i=v_i(\phi_{i+1})=f_ie_i\lambda_i$. The field $k_{i+1}$ is isomorphic to $k_i[X_i]/(R_{i,\alpha_i}(\phi_{i+1}))$ by \cite[Corollary 5.5(2)]{FGMN}. Furthermore, $k_{i+1}\simeq k_i[X_i^{\pm 1}]/(H_{i,\alpha_i}(\phi_{i+1}))$ by definition.
\end{rem}

\begin{nt}
We denote by $k_v$ the field $k_n$. In fact, it does not depend on the radius of $v$. 
%Up to isomorphism, it is independent of the chosen MacLane chain for $v$.
\end{nt}

\begin{defn}
Let $\alpha\in\Gamma_v$. For any $f\in \Loc{v}(\alpha)$, define $f|_{v,\alpha}\in k_v[X]$ by $f|_{v,\alpha}(X)=R_{n,\alpha}(f)(X)$. 
\end{defn}

% Note that the map $R_{n,\alpha}$ coincide on $O_v(\alpha)$ with the homonymous operator given in \cite[\S3.4]{FGMN} by \cite[Theorems 4.1, 4.3]{FGMN}.

Let $f\in K[x]$. Let $\alpha=v(f)$. Denote by $N_n(f)$ the Newton polygon $N_{v_{n-1},\phi_n}$. If $n>0$, consider the edge $L_v(f)$ of $N_n(f)$. Let $(\io_v,u^0_v), (\i_v,u_v)$ be the two endpoints of $L_v(f)$, with $\io_v\leq \i_v$. Note that $\io_v-\i_n(\alpha)=e_n\cdot\lfloor \io_v/e_n\rfloor$.

% \begin{defn}
% For $0\leq i<n$ define
% \[\epsilon_i(\alpha)=(z_i)^{\ell_i'\i_i(\alpha)-\ell_iu_i(\alpha)}\in k,\]
% where $\epsilon_0(\alpha)=(z_0)^0=1$ for all $\alpha\in\Z$, by convention.
% \end{defn}

% \begin{defn}[Reduction]\label{defn:reduction}
% For any $\alpha\leq v_n(f)$, define recursively
% \[f|_{v,\alpha}=\begin{cases}f(X)/\pi^\alpha\mod \pi&\mbox{if }n=0,\\\sum_{j\geq 0}\epsilon_{n-1}(\alpha_j)\cdot a_{\i_j}|_{v_{n-1},\alpha_j}(z_{n-1})\cdot X^j&\mbox{if }n>0,\end{cases}\]
% where $\i_j=\i_n(\alpha)+je_n$ and $\alpha_j=\alpha-\i_j\lambda_n\in\frac{1}{e_{v_{n-1}}}\Z$.
\begin{defn}[{\cite[Definition 3.15]{FGMN}}]
The \textit{reduction of $f$ with respect to $v$} is 
\[f|_v=\begin{cases}f|_{v,\alpha}&\mbox{if }n=0,\\f|_{v,\alpha}/X^{\lfloor \io_v/e_n\rfloor}&\mbox{if }n>0.\end{cases}\]
\end{defn}

\begin{rem}
Note that $f|_{v,\alpha}$ and $f|_v$ do depend on the chosen MacLane chain for $v$. 
\end{rem}

Note that 
\begin{equation}\label{eqn:Hvto|v}
H_{n,\alpha}(f)(X)=X^{\lfloor \io_v/e_n\rfloor+c(\alpha)}f|_v=X^{\io_v/e_n-\ell_n e_{v_{n-1}}\alpha}f|_v.
\end{equation}

\begin{lem}\label{lem:reduction}
Expand $f=\sum_{\i} a_\i \phi_n^\i$, $\deg a_\i<\deg \phi_n$. If $n>0$, then
\[f|_v=\sum_{j\geq 0}H_{n-1,\alpha_j}(a_{\i_j})X^j,\]
where $\i_j=\io_v+je_n$ and $\alpha_j=\alpha-\i_j\lambda_n$.
\proof
There exists $f'\in K[x]$ such that $f\veq_v f'$ and $f'=\sum_\i a'_\i \phi_n^\i$, where either $a'_\i=0$ or $a'_\i=a_\i$ and $v(a'_\i)=\alpha-\i\lambda_n$. If $a'_\i\neq 0$, then $(\i, v(a_\i))\in L_v(f)$. Since
\[
L_v(f)\cap\Big(\Z\times\tfrac{1}{e_{v_{n-1}}}\Z\Big)=(\io_v,\alpha-\io_v\lambda_n)+(e_n,-\lambda_n)\Z,
\]
we have $f'=\sum_{j\geq 0} a_{\i_j}'\phi_n^{\i_j}$. It follows that
\[f'=\phi_n^{\i_n(\alpha)}\pi_{n-1}^{u_n(\alpha)}\gamma_n^{\lfloor \io_v/e_n\rfloor}\sum_{j\geq 0} \frac{a_{\i_j}'}{\pi_{n-1}^{e_{v_{n-1}}\alpha_j}}\gamma_n^j.\]
Therefore
\begin{equation}\label{eqn:f|valpha}
    f|_{v,\alpha}=f'|_{v,\alpha}=X^{\lfloor \io_v/e_n\rfloor}\sum_{j\geq 0} H_{n-1,\alpha_j}\big(a_{\i_j}'\big)X^j.
\end{equation}
Finally, note that $a_{\i_j}'=0$ if and only if $v(a_{\i_j})>e_{v_{n-1}}\alpha_j$. Thus in (\ref{eqn:f|valpha}) we can replace $H_{n-1,\alpha_j}\big(a_{\i_j}'\big)$ with $H_{n-1,\alpha_j}(a_{\i_j})$.
\endproof
\end{lem}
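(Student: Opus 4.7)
The plan is to reduce $f$ to a $v$-equivalent polynomial $f'$ supported only on the edge $L_v(f)$ of its Newton polygon, and then compute $R_{n,\alpha}$ on an explicit factored form of $f'$.

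First I would replace $f$ by $f' = \sum_\i a_\i' \phi_n^\i$, where $a_\i' = a_\i$ when $(\i, v(a_\i))$ lies on $L_v(f)$ and $a_\i' = 0$ otherwise. Since the deleted monomials have $v$-valuation strictly greater than $\alpha = v(f)$, we have $f \veq_v f'$, and hence $f|_{v,\alpha} = f'|_{v,\alpha}$ by the defining property of $H_v$. The key geometric observation is that, because $L_v(f)$ has slope $-\lambda_n = -h_n/e_{v_n}$ and lies in the lattice $\Z \times \tfrac{1}{e_{v_{n-1}}}\Z$, the indices with $a_\i' \neq 0$ form the arithmetic progression $\i_j = \io_v + j e_n$ for $0 \leq j \leq (\i_v - \io_v)/e_n$, with $v(a_{\i_j}) = \alpha_j$ whenever $a_{\i_j} \neq 0$.

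Next, using the identity $\i_j = \i_n(\alpha) + e_n\bigl(\lfloor \io_v/e_n \rfloor + j\bigr)$ together with $\phi_n^{e_n} = \gamma_n \pi_{n-1}^{h_n}$ from Lemma \ref{lem:inversegammapi}, I would factor
\[
f' = \phi_n^{\i_n(\alpha)} \pi_{n-1}^{u_n(\alpha)} \gamma_n^{\lfloor \io_v/e_n \rfloor} \sum_{j \geq 0} \frac{a_{\i_j}'}{\pi_{n-1}^{e_{v_{n-1}}\alpha_j}} \gamma_n^j.
\]
The prefactor equals $\pi_n^{e_{v_n}\alpha}\gamma_n^{c_n(\alpha) + \lfloor \io_v/e_n \rfloor}$ by the identity $\pi_n^{e_{v_n}\alpha}\gamma_n^{c_n(\alpha)} = \phi_n^{\i_n(\alpha)}\pi_{n-1}^{u_n(\alpha)}$ already noted in the paper. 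Applying $R_{n,\alpha} = X^{-c_n(\alpha)} H_{n,\alpha}$ therefore normalises away $\pi_n^{e_{v_n}\alpha}$, strips the $X^{c_n(\alpha)}$ twist, and turns each $\gamma_n^{\lfloor \io_v/e_n \rfloor + j}$ into $X^{\lfloor \io_v/e_n \rfloor + j}$. Property (ii) of $\bar H_n$ and the compatibility with the embedding $\Av_{v_{n-1}} \hookrightarrow \Av_{v_n}$ then identify the image of each normalised coefficient $a_{\i_j}'/\pi_{n-1}^{e_{v_{n-1}}\alpha_j}$ with $H_{n-1,\alpha_j}(a_{\i_j})$; the vanishing indices $a_{\i_j}' = 0$ contribute nothing and can be replaced by $a_{\i_j}$ since any such term satisfies $v(a_{\i_j}) > \alpha_j$ and so reduces to $0$. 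Dividing by $X^{\lfloor \io_v/e_n \rfloor}$ per the definition of $f|_v$ yields the claimed formula.

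The main obstacle I foresee is the bookkeeping between the three normalisations composing into $f|_v$: the translation from $H_{v_n}$ to $H_{n,\alpha}$ via $\pi_n^{e_{v_n}\alpha}$, the twist $X^{c_n(\alpha)}$ separating $R_{n,\alpha}$ from $H_{n,\alpha}$, and the shift $X^{\lfloor \io_v/e_n \rfloor}$ between $f|_{v,\alpha}$ and $f|_v$. One must carefully verify that the defining relation $e_{v_n}\alpha = u_n(\alpha) e_n + \i_n(\alpha) h_n$ lines up with the factorisation so that the exponents of $\gamma_n$ and $X$ match, and that the passage from level $n-1$ to level $n$ preserves the residue $H_{n-1,\alpha_j}(a_{\i_j})$ without spurious extra powers of $X$.
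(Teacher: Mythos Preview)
Your proposal is correct and follows essentially the same approach as the paper: truncate $f$ to the terms lying on $L_v(f)$, use the lattice structure of the edge to index the surviving terms by $\i_j=\io_v+je_n$, factor out $\phi_n^{\i_n(\alpha)}\pi_{n-1}^{u_n(\alpha)}\gamma_n^{\lfloor\io_v/e_n\rfloor}$, and then apply $R_{n,\alpha}$ using the compatibility of $H_{n-1}$ with $H_n$ on units. Your explicit identification $\i_j=\i_n(\alpha)+e_n(\lfloor\io_v/e_n\rfloor+j)$ and your tracking of the three normalisation layers are exactly the bookkeeping the paper's proof relies on implicitly; the only minor slip is the phrase ``$v(a_{\i_j})=\alpha_j$ whenever $a_{\i_j}\neq 0$'', which should read $a'_{\i_j}\neq 0$, but you correctly handle the remaining case later.
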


\begin{exa}
Let $f=(x^3-2p)^2-px^2(x^3-2p)\in\Q_p[x]$ ($p\neq 2$) and \[v=v_2=[v_0,v_1(x)=1/3,v_2(x^3-2p)=5/3]\]
The Newton polygon $N_2(f)$ is
\[\begin{tikzpicture}[scale=1]
     \draw[very thin,color=gray] (0,1.67) -- (1,1.67);
     \draw[very thin,color=gray] (1,0) -- (1,1.67);
    \draw[->] (-0.2,0) -- (2.5,0) node[right] {$i$};
    \draw[->] (0,-0.2) -- (0,2.2) node[above] {$v_1(a_i)$};
    \tkzDefPoint(1,1.67){A}
    \tkzDefPoint(2,0){B}
    \tkzDefPoint(0,1.67){C}
    \tkzDefPoint(1,0){D}
    \tkzLabelPoint[left,above](A){$\left(1,\tfrac{5}{3}\right)$}
    \tkzLabelPoint[right,below](B){$(2,0)$}
    \tkzLabelPoint[left](C){$\frac{5}{3}$}
    \tkzLabelPoint[below](D){$1$}
    \foreach \n in {A,B}
    \node at (\n)[circle,fill,inner sep=1.5pt]{};
    \tkzDrawSegment[black!60!black](A,B)
\end{tikzpicture}\]
Then $\pi_0=p$, $\pi_1=x$, $\pi_2=x$,$\gamma_1=x^3p^{-1}$ and $k_1=k_0=\F_p$. Since $x^3-2p=p^{-1}(\gamma_1-2)$, then $R_{1,1}(x^3-2p)=X_1-2$. It follows that $k_2=\F_p[X_1]/(X_1-2)\simeq \F_p$.
Via Lemma \ref{lem:reduction} compute
\[f|_v=X+H_{1,5/3}(-px^2)=X+H_1\left(\frac{-px^2}{x^5}\right)=X+\bar H_1(-y_1^{-1})=X-2^{-1}.\]

% Looking at the natural chart of the model of $y^2=f(x)$ constructed in the next section we have
% \[M=\begin{pmatrix}
% 3 & -5 & -5 & 1 \\
% 0 & 1 & 0 & 0 \\
% 0 & 0 & 1 & 0 \\
% -1 & 0 & 0 & 0 \\
% \end{pmatrix}\qquad M^{-1}=\begin{pmatrix}
% 0 & 0 & 0 & -1 \\
% 0 & 1 & 0 & 0 \\
% 0 & 0 & 1 & 0 \\
% 1 & 5 & 5 & 3 \\
% \end{pmatrix} \]
% Therefore
% \[\begin{cases}\mathcal{G}_1=Y^2-X_2^2+X_1^{-1}X_2=0\\
% \mathcal{G}_2=Z^2X_2-1+2X_1^{-1}=0\end{cases}\quad\stackrel{Z=0}{\Rightarrow}\quad Y^2=X_2\left(X_2-2^{-1}\right).\]
% Note that this is $Y^2=X_2f|_v(X_2)$.
\end{exa}

\begin{prop}[{\cite[Corollary 4.9, Corollary 4.11]{FGMN}}]\label{prop:ReductionDegreeProduct}
Suppose $n>0$. Following the notation above, we have:
\begin{enumerate}[label=(\roman*)]
    \item the $j$-th coefficient of $f|_{v,\alpha}$ is non-zero if and only if $v_{n-1}(a_{\i_j})=\alpha_j$;
    \item $\deg f|_{v,\alpha}=\lfloor \i_v/b_v\rfloor$ and $\mathrm{ord}_{X}(f|_{v,\alpha})=\lfloor \io_v/b_v\rfloor$;
    \item $\deg f|_{v}=(\i_v-\io_v)/b_v$ and $f|_{v}(0)\neq 0$;\label{item:ReductionDegree}
    \item $fh|_{v}=f|_{v}h|_{v}$ for all $h\in K[x]$.\label{item:ReductionProduct}
\end{enumerate}
\end{prop}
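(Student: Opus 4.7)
The plan is to deduce all four statements from Lemma \ref{lem:reduction}, which gives $f|_v=\sum_{j\geq 0}H_{n-1,\alpha_j}(a_{\i_j})X^j$ with $\i_j=\io_v+je_n$ and $\alpha_j=\alpha-\i_j\lambda_n$, together with the facts that $b_v=e_n$ and $\gcd(h_n,e_n)=1$ (immediate from $\ell_nh_n+\ell_n'e_n=1$). For \textbf{(i)}, the coefficient $H_{n-1,\alpha_j}(a_{\i_j})$ is by Definition \ref{defn:Loc(i)H(ii)}\ref{item:Hialpha} the image of $a_{\i_j}/\pi_{n-1}^{e_{v_{n-1}}\alpha_j}$ under $H_{n-1}$, so it is nonzero if and only if $v_{n-1}(a_{\i_j})=\alpha_j$. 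Since $\alpha=v(f)=\min_\i(v_{n-1}(a_\i)+\i\lambda_n)$, every lattice point $(\i,v_{n-1}(a_\i))$ of $N_n(f)$ satisfies $v_{n-1}(a_\i)\geq\alpha-\i\lambda_n$, with equality iff it lies on $L_v(f)$; specialising to $\i=\i_j$ gives (i).

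For \textbf{(ii)} and \textbf{(iii)}, since $\gcd(h_n,b_v)=1$ the lattice $\Z\times\tfrac{1}{e_{v_{n-1}}}\Z$ meets the line through $L_v(f)$, of slope $-\lambda_n=-h_n/(e_{v_{n-1}}b_v)$, at points spaced exactly $b_v$ apart in the $\i$-direction. Hence $b_v\mid\i_v-\io_v$, the possible nonzero coefficients of $f|_v$ are indexed by $j=0,\dots,(\i_v-\io_v)/b_v$, and the extreme indices $j=0$ and $j=(\i_v-\io_v)/b_v$ give nonzero coefficients by (i), since the endpoints $(\io_v,u_v^0)$ and $(\i_v,u_v)$ lie on $N_n(f)$ by definition of $L_v(f)$. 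This establishes (iii): $\deg f|_v=(\i_v-\io_v)/b_v$ and $f|_v(0)\neq 0$. Then (ii) follows by combining $f|_{v,\alpha}=X^{\lfloor\io_v/b_v\rfloor}f|_v$ with $\i_v\equiv\io_v\pmod{b_v}$, giving $\mathrm{ord}_X(f|_{v,\alpha})=\lfloor\io_v/b_v\rfloor$ and $\deg f|_{v,\alpha}=\lfloor\io_v/b_v\rfloor+(\i_v-\io_v)/b_v=\lfloor\i_v/b_v\rfloor$.

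Finally, for \textbf{(iv)} I would apply multiplicativity of the residue map $H_n$ (a surjective ring homomorphism on $\Loc{v_n}$): setting $\beta=v(h)$ one has $H_{n,\alpha+\beta}(fh)=H_{n,\alpha}(f)H_{n,\beta}(h)$ directly from $H_n(fh/\pi_n^{e_{v_n}(\alpha+\beta)})=H_n(f/\pi_n^{e_{v_n}\alpha})H_n(h/\pi_n^{e_{v_n}\beta})$. Via (\ref{eqn:Hvto|v}) this forces $(fh)|_v=X^N\cdot f|_v\cdot h|_v$ for some $N\in\Z$; but by (iii) both $(fh)|_v$ and $f|_v\cdot h|_v$ have nonzero constant term, so $N=0$ and we conclude $(fh)|_v=f|_v\, h|_v$. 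The main obstacle I anticipate is bookkeeping rather than substance: one must carefully reconcile the floor-function exponents of $X$ appearing in (\ref{eqn:Hvto|v}) with the divisibility $b_v\mid\i_v-\io_v$ and with the definition of $f|_v$; the mathematical content is just the lattice structure of $L_v(f)$, ensured by $\gcd(h_n,e_n)=1$, together with the multiplicativity of $H_n$ and the nonvanishing of endpoint coefficients.
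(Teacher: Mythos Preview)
Your argument is correct. The paper itself gives no proof of this proposition: it simply records the result with the attribution \cite[Corollary 4.9, Corollary 4.11]{FGMN} and moves on. Your derivation is therefore not a different route from the paper's proof but rather a self-contained substitute for the external citation, and it works: Lemma~\ref{lem:reduction} together with $b_v=e_n$ and $\gcd(h_n,e_n)=1$ gives exactly the lattice spacing on $L_v(f)$ needed for (ii)--(iii), the vanishing criterion for $H_{n-1,\alpha_j}$ gives (i), and the multiplicativity of $H_n$ plus the nonzero-constant-term observation from (iii) cleanly forces $N=0$ in (iv). The only caution is the one you already flag: in (i) the statement concerns the coefficients of $f|_{v,\alpha}$ while Lemma~\ref{lem:reduction} computes those of $f|_v$, so strictly speaking the index $j$ is shifted by $\lfloor\io_v/e_n\rfloor$; this is harmless once noted, since $f|_{v,\alpha}=X^{\lfloor\io_v/e_n\rfloor}f|_v$ just relabels the nonzero coefficients.
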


\begin{prop}[{\cite[Corollary 4.10]{FGMN}}]\label{prop:v-equivalentResidual}
For non-zero $f,h\in K[x]$, the following conditions are equivalent:
\begin{enumerate}[label=(\roman*)]
    \item $f\veq_v h$,
    \item $v(f)=v(h)$ and $f|_{v}=h|_{v}$,
    \item $L_v(f)=L_v(h)$ and $f|_{v}=h|_{v}$.
\end{enumerate}
\end{prop}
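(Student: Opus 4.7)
The plan is to prove the implications (i) $\Rightarrow$ (iii) $\Rightarrow$ (ii) $\Rightarrow$ (i), with the last one carrying the substantive content.

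For (i) $\Rightarrow$ (iii), I would expand $f=\sum_\i a_\i\phi_n^\i$ and $h=\sum_\i b_\i\phi_n^\i$ with $\deg a_\i,\deg b_\i<\deg\phi_n$, and exploit the ultrametric consequence of $v(f-h)>v(f)$ coefficient by coefficient. Any $\i$ with $(\i,v_{n-1}(a_\i))$ on $L_v(f)$ would force $v_{n-1}(b_\i)=v_{n-1}(a_\i)$, placing the same point on $L_v(h)$; a symmetric argument yields $L_v(f)=L_v(h)$. For the reduction equality, the strategy is to apply the residue map $H_n$, which is constant on $v$-equivalence classes by construction: $f\veq_v h$ gives $H_{n,\alpha}(f)=H_{n,\alpha}(h)$ with $\alpha=v(f)$, hence $f|_{v,\alpha}=h|_{v,\alpha}$, and the normalization exponent $\lfloor\io_v/e_n\rfloor$, now known to agree from the previous step, produces $f|_v=h|_v$.

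The implication (iii) $\Rightarrow$ (ii) is immediate: the common edge $L_v(f)=L_v(h)$ sits on a line $u+\lambda_n\i=c$, and the definition of $L_v$ via the Newton polygon gives $v(f)=v(h)=c$.

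For (ii) $\Rightarrow$ (i), I would first recover the full edge information $L_v(f)=L_v(h)$ from the pair $v(f)=v(h)$ and $f|_v=h|_v$, appealing to Proposition \ref{prop:ReductionDegreeProduct}(ii)--(iii), which ties the endpoints of $L_v$ to the degree and $X$-order of $f|_{v,\alpha}$. With the edge identified, Lemma \ref{lem:reduction} expresses the coefficients of $f|_v$ and $h|_v$ as $H_{n-1,\alpha_j}(a_{\i_j})$ and $H_{n-1,\alpha_j}(b_{\i_j})$ respectively, so equality of reductions forces $a_{\i_j}\veq_{v_{n-1}}b_{\i_j}$ for each $j$, i.e.\ $v_{n-1}(a_{\i_j}-b_{\i_j})+\i_j\lambda_n>v(f)$ along the edge. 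For indices $\i$ outside $[\io_v,\i_v]$, both $(\i,v_{n-1}(a_\i))$ and $(\i,v_{n-1}(b_\i))$ lie strictly above the line of support, so the same strict inequality persists, and taking the minimum over $\i$ delivers $v(f-h)>v(f)$.

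The main obstacle I expect is exactly the first step of (ii) $\Rightarrow$ (i): recovering $L_v(f)=L_v(h)$ — not just the common line of support — from the normalized reduction $f|_v$ together with the value $v(f)$. Since $f|_v$ has $X$-order zero by Proposition \ref{prop:ReductionDegreeProduct}(iii), the horizontal placement of the edge is not directly visible in $f|_v$ alone, and the precise interplay between $f|_v$ and $f|_{v,\alpha}$ encoded in Proposition \ref{prop:ReductionDegreeProduct} must be leveraged to pin it down before the coefficient-matching argument can proceed.
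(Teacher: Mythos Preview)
The paper offers no proof here; the proposition is simply quoted from \cite[Corollary~4.10]{FGMN}, so there is no in-paper argument to compare against.

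Your route (i)$\Rightarrow$(iii)$\Rightarrow$(ii) is sound, and in fact (iii)$\Rightarrow$(i) also goes through by the coefficient-matching you sketch: once $L_v(f)=L_v(h)$ one has $\io_v(f)=\io_v(h)$, hence $f|_{v,\alpha}=h|_{v,\alpha}$, and the equivalence with (i) is immediate from the fact that $H_v$ separates $v$-equivalence classes. The obstacle you flag for (ii)$\Rightarrow$(i), however, is not a difficulty to be ``leveraged'' away via Proposition~\ref{prop:ReductionDegreeProduct}---it is fatal. With the paper's normalization $f|_v=f|_{v,\alpha}/X^{\lfloor\io_v/e_n\rfloor}$, the pair $(v(f),f|_v)$ genuinely fails to determine $\io_v$, and hence $L_v(f)$. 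Concretely, take $K=\Q_p$, $v=[v_0,v(x)=1]$ (so $b_v=1$), $f=p^3+px^2$, $h=p^2x+x^3$: then $v(f)=v(h)=3$ and $f|_v=h|_v=1+X^2$, yet $L_v(f)$ has endpoints $(0,3),(2,1)$ while $L_v(h)$ has endpoints $(1,2),(3,0)$, and one checks $v(f-h)=3$, so $f\not\veq_v h$.

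The upshot is that (i)$\Leftrightarrow$(iii) and (i)$\Rightarrow$(ii) hold, but (ii)$\Rightarrow$(i) fails under the paper's definition of $f|_v$. The condition that \emph{is} equivalent to (i) is ``$v(f)=v(h)$ and $f|_{v,\alpha}=h|_{v,\alpha}$'' with $\alpha=v(f)$; dividing out $X^{\lfloor\io_v/e_n\rfloor}$ discards exactly the datum needed. This is presumably a harmless transcription discrepancy with the conventions of \cite{FGMN}, and indeed the paper only ever invokes this proposition in the direction (i)$\Rightarrow$(ii), which is unaffected.
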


\begin{lem}[{\cite[Lemma 5.1]{FGMN}}]\label{lem:v-irreducreduction}
A polynomial $f\in K[x]$ is $v$-irreducible if and only if either
\begin{itemize}
    \item $\io_v=\i_v=1$ or 
    \item $\io_v=0$ and $f|_{v}$ is irreducible in $k_n[X]$.
\end{itemize}
\end{lem}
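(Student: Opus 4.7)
The plan is to combine the multiplicativity of the reduction operator (Proposition~\ref{prop:ReductionDegreeProduct}\ref{item:ReductionProduct}) with the characterisation of $v$-equivalence via $L_v$ and $f|_v$ (Proposition~\ref{prop:v-equivalentResidual}), and a Hensel-type lifting lemma turning any factorisation of $f|_v$ in $k_n[X]$ into a $v$-equivalence factorisation $f\veq_v gh$ in $K[x]$ with prescribed reductions of $g$ and $h$.

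For the $(\Leftarrow)$ direction I would treat the two cases separately. When $\io_v=\i_v=1$, the $\phi_n$-expansion gives $f\veq_v a_1\phi_n$ with $\deg a_1<\deg\phi_n$ and $v_{n-1}(a_1)=v(a_1)$; by \cite[Lemma 2.9]{FGMN} this puts $a_1\in U_v$, so there is $t\in K[x]$ with $ta_1\veq_v 1$, whence $tf\veq_v\phi_n$ and $f$ and $\phi_n$ are $v$-associates. If $f\vdiv_v gh$, transitivity of $\vdiv_v$ gives $\phi_n\vdiv_v gh$, and since the key polynomial $\phi_n$ is $v$-irreducible we conclude (WLOG) $\phi_n\vdiv_v g$, hence $f\vdiv_v g$. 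When $\io_v=0$ and $f|_v$ is irreducible in $k_n[X]$, assume $f\vdiv_v gh$, so $gh\veq_v rf$ for some $r$; multiplicativity of the reduction gives $g|_v\cdot h|_v=r|_v\cdot f|_v$ in the UFD $k_n[X]$, which forces, say, $f|_v\mid g|_v$; the lifting lemma upgrades this to $f\vdiv_v g$.

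For the $(\Rightarrow)$ direction, suppose $f$ is $v$-irreducible. If $\io_v\geq 1$, the shape of the Newton polygon shows $f\veq_v \phi_n\cdot g$ for some $g\in K[x]$, so $v$-irreducibility forces $f\vdiv_v\phi_n$ or $f\vdiv_v g$. The latter, combined with $f\veq_v\phi_n g$, yields $\phi_n s\veq_v 1$ for some $s$, making $\phi_n$ a $v$-unit; this contradicts the characterisation $U_v=\{h:v_{n-1}(h)=v(h)\}$ since $v_{n-1}(\phi_n)<\lambda_n=v(\phi_n)$. Hence $f\vdiv_v\phi_n$, and combined with $\phi_n\vdiv_v f$ we get $f\veq_v u\phi_n$ for some $u\in U_v$; a direct computation of $L_v(u\phi_n)=L_v(u)+(1,0)=\{(1,v(u))\}$ gives $\io_v=\i_v=1$. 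If instead $\io_v=0$, any non-trivial factorisation $f|_v=pq$ in $k_n[X]$ would lift, via the lifting lemma, to $f\veq_v gh$ with neither factor a $v$-unit, contradicting $v$-irreducibility; hence $f|_v$ is irreducible.

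The main obstacle is the lifting lemma itself: producing $g,h\in K[x]$ from a factorisation $f|_v=pq$ in $k_n[X]$ with simultaneous control of both $L_v$ and the reduction. This requires a Hensel-type argument in the graded algebra $Gr(v)$, leveraging the explicit generators $\varphi_i(\alpha)$ of the graded pieces $\Av_{v_i}(\alpha)$ and the compatibility of the maps $\bar H_i$ along the MacLane chain.
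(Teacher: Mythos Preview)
The paper does not prove this lemma: it is quoted directly from \cite[Lemma~5.1]{FGMN} and used as a black box, so there is no in-paper argument to compare your proposal against.

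Your outline is nonetheless sound and aligns with how \cite{FGMN} proceeds: the core is the dictionary between $v$-divisibility in $K[x]$ and divisibility in the graded algebra $Gr(v)$, which via the isomorphism $\Av_v\simeq k_n[X]$ reduces to divisibility of residual polynomials. The ``lifting lemma'' you flag as the obstacle is exactly the substance of \cite[\S4--5]{FGMN}, where the explicit generators $\varphi_i(\alpha)$ of the graded pieces are used to show that any factorisation in $k_n[X]$ lifts to a $v$-equivalence factorisation with prescribed reductions. One small correction: your justification that $\phi_n\notin U_v$ via ``$U_v=\{h:v_{n-1}(h)=v(h)\}$'' overstates what the paper records (only the implication $v_{n-1}(h)=v(h)\Rightarrow h\in U_v$ is cited); the cleaner argument is that $\phi_n s\veq_v 1$ would give $\phi_n\vdiv_v 1$, contradicting $v$-minimality. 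Your computation $L_v(u)=\{(0,v(u))\}$ for a $v$-unit $u$ is correct and follows from additivity of $L_v$ applied to $u\cdot u'\veq_v 1$.
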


\begin{lem}[{\cite[Lemma 5.2]{FGMN}}]\label{lem:keypolyreduction}
Suppose $n>0$. A monic $f\in K[x]$ is a key polynomial over $v$ if and only if one of the two following conditions is satisfied:
\begin{enumerate}[label=(\arabic*)]
    \item $\deg f=\deg v$ and $f\veq_v \phi_n$;
    \item $\io_v=0$, $\deg f=\i_v\deg v$ and $f|_{v}$ is irreducible. \label{item:properkeypoly}
\end{enumerate}
In case \ref{item:properkeypoly}, $\deg f=b_v\deg v\cdot \deg f|_v$, $N_n(f)=L_v(f)$ and $f|_v$ is monic.
\end{lem}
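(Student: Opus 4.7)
My plan is to prove both directions using three tools: the Newton-polygon characterisation of $v$-irreducibility (Lemma \ref{lem:v-irreducreduction}), the equality case of $v$-minimality in Theorem \ref{thm:minimality}, and the divisibility constraint Lemma \ref{lem:minimalityequivalence}\ref{item:minimal3.10}.

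For the backward direction under (1): since $f$ and $\phi_n$ are monic of equal degree, $f-\phi_n$ has degree $<\deg \phi_n$, so $v_{n-1}(f-\phi_n)=v(f-\phi_n)>\lambda_n>v_{n-1}(\phi_n)$ gives $f\veq_{v_{n-1}}\phi_n$. Since $v_{n-1}$-equivalence preserves the key-polynomial property among polynomials of the same degree, $f\in\KP{v_{n-1}}$, and the augmentation $[v_{n-1},w(f)=\lambda_n]$ coincides with $v$ (both agree on polynomials of degree $<\deg f$ and send $f\mapsto\lambda_n$), exhibiting $f$ as a centre of $v$; Lemma \ref{lem:centre} then gives $f\in\KP v$. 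Under (2), $v$-irreducibility is immediate from Lemma \ref{lem:v-irreducreduction}. Monicity together with $\deg f=\i_v\deg v$ forces $a_{\i_v}=1$, so $(\i_v,0)\in N_n(f)$; combined with $\io_v=0$, the edge $L_v(f)$ runs from $(0,\i_v\lambda_n)$ to $(\i_v,0)$, giving $v(f)=\i_v\lambda_n$ and therefore $v(f)/\deg f=\lambda_v/\deg v$, the equality case of Theorem \ref{thm:minimality}.

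For the forward direction, Theorem \ref{thm:minimality} first gives $v(f)=\lambda_n\deg f/\deg v$. Apply Lemma \ref{lem:v-irreducreduction} to split into two sub-cases, write $f=\sum_i a_i\phi_n^i$ with $\deg a_i<\deg v$, and let $m$ be the top index (so $a_m$ is monic of degree $\deg f-m\deg v$). In the sub-case $\io_v=\i_v=1$: if $m\geq 2$, the point $(m,v(a_m))$ lies strictly above $L_v(f)$; substituting the minimality equality into this inequality yields $v(a_m)>\lambda_n\deg a_m/\deg v$, contradicting Theorem \ref{thm:minimality} applied to $a_m$ (or the bound $v(a_m)=0$ when $\deg a_m=0$). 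Hence $m=1$; an analogous computation combined with Lemma \ref{lem:minimalityequivalence}\ref{item:minimal3.10} rules out $0<\deg a_1<\deg v$, so $a_1=1$, $\deg f=\deg v$, and $v(a_0)>\lambda_n$, i.e.\ $f\veq_v\phi_n$. The sub-case $\io_v=0$ with $f|_v$ irreducible runs the same bookkeeping to eliminate $m>\i_v$ and then $\deg a_{\i_v}>0$, landing on $\deg f=\i_v\deg v$, i.e.\ condition (2).

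The three concluding assertions in case (2) then fall out quickly. The identity $\deg f=b_v\deg v\cdot\deg f|_v$ combines $\deg f=\i_v\deg v$ with $\deg f|_v=\i_v/b_v$ from Proposition \ref{prop:ReductionDegreeProduct}\ref{item:ReductionDegree}. Since every point $(i,v(a_i))$ with $a_i\neq 0$ lies on or above $L_v(f)$ by the defining property of that edge, and both endpoints of $L_v(f)$ are actual such points, the lower convex hull collapses to the single edge, giving $N_n(f)=L_v(f)$. Finally, Lemma \ref{lem:reduction} evaluated at the top index produces leading coefficient $H_{n-1,0}(a_{\i_v})=H_{n-1,0}(1)=1$, so $f|_v$ is monic. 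The principal obstacle is the forward-direction bookkeeping: translating ``the point $(m,v(a_m))$ lies strictly above $L_v(f)$'' into numerical inequalities that combine cleanly with Theorem \ref{thm:minimality} and Lemma \ref{lem:minimalityequivalence}\ref{item:minimal3.10} to yield contradictions in all sub-cases.
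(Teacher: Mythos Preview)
The paper does not prove this lemma; it simply cites \cite[Lemma 5.2]{FGMN}. Your argument is a correct self-contained proof, and the strategy---combining Lemma~\ref{lem:v-irreducreduction} for $v$-irreducibility, the equality case of Theorem~\ref{thm:minimality} for $v$-minimality, and Lemma~\ref{lem:minimalityequivalence}\ref{item:minimal3.10} for the degree divisibility obstruction---is exactly the natural one.

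One small simplification in your backward direction under~(1): you detour through $v_{n-1}$, showing $f\in\KP{v_{n-1}}$ and then identifying the two augmentations. It is shorter to stay at $v$: by Lemma~\ref{lem:centre}, $\phi_n\in\KP v$; since $f\veq_v\phi_n$ and $\deg f=\deg\phi_n$, the $v$-divisibility relations $f\vdiv_v\,\cdot$ and $\phi_n\vdiv_v\,\cdot$ coincide, so $v$-irreducibility and $v$-minimality transfer directly to $f$. This avoids the (correct but slightly delicate) claim that the two augmentations $[v_{n-1},\,\cdot(f)=\lambda_n]$ and $[v_{n-1},\,\cdot(\phi_n)=\lambda_n]$ agree.

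In the forward direction your bookkeeping is sound. The key inequality $v_{n-1}(a_m)+m\lambda_n>v(f)$ for $m>\i_v$ follows because $(m,v_{n-1}(a_m))$ is the rightmost vertex of $N_n(f)$ and the slopes of $N_n(f)$ strictly exceed $-\lambda_n$ to the right of $\i_v$; you use this correctly. The three concluding assertions are handled cleanly.
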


%\begin{defn}[Restriction and reduction]\label{RestrictionReductionDefinition}
%Consider an edge $L$ of the Newton polygon $\NPv f$. Consider the two endpoints of $L$ $(i_1,v_{n-1}(a_{i_1}(x))), (i_2,v_{n-1}(a_{i_2}(x)))$, $i_1<i_2$. Denote by $-\rho$ the slope of $L$ and by $b$ the denominator of $\rho$.
%Define the \textit{restriction} of $f$ to $L$ to be
%\[f|_L=\sum_{i\geq 0}a_{bi+i_1}(x)\phi_n(x)^i.\]
%
%Moreover we define the \textit{reduction} of $f$ (with respect to $L$) to be the polynomial
%\[\ch{f|_L}=\pi^{-c}f|_L(\pi^{b\rho} x)\mbox{ mod }\pi\in  k[x],\]
%where $c=v_{n-1}(a_{i_1}(x))=v_{n-1}(a_{i_2}(x))+(i_1-i_2)\rho.$
%\end{defn}

\begin{lem}\label{lem:phiunit}
%Let $w$ be a MacLane valuation with centre $\psi_h$. 
Let $\mu_1,\mu_2\in\M$ such that $\mu_1\geq v\not>\mu_2$. Suppose $\phi_n$ is a centre of $\mu_1$ and let $\phi\in K[x]$ be a centre of $\mu_2$. If $v\neq \mu_1\wedge\mu_2$ then $\phi|_v$ is a unit. 
\proof
Let $w=\mu_1\wedge \mu_2$. Then $\mu_1(\phi)=w(\phi)$ by Proposition \ref{prop:appendix}. Since $w\leq \mu_1$ and $v\leq \mu_1$, either $w<v$ or $w\geq v$ by Theorem \ref{thm:valdiscoid} and Remark \ref{rem:Galois1disjoint2discoids}(2).

Suppose $w<v$. Proposition \ref{prop:ValuationsInequality} implies that there exists $w'\geq w$ such that
$v=[w',v(\phi_n)=\lambda_n]$.
In particular, $w'(\phi)=v(\phi)$.
From \cite[Lemma 2.9]{FGMN} it follows that $\phi$ is $v$-equivalent to some polynomial of degree $<\deg v$. Hence $\phi|_v$ is a unit by Proposition \ref{prop:v-equivalentResidual}. 

Suppose $w>v$. Then the polynomial $\phi_n$ is a centre of $w$ and so
\[\phi\veq_v\phi_n^{\deg \phi/\deg\phi_n}\]
by Lemmas \ref{lem:vminimal} and \ref{lem:minimalityequivalence}. Then $\phi|_v$ is a unit by Proposition \ref{prop:v-equivalentResidual}.
\endproof
\end{lem}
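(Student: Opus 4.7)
The plan is to first show that $w := \mu_1 \wedge \mu_2$ and $v$ are comparable in $\M$, and then to treat each of the two possible strict inequalities separately, reducing in each case to a polynomial whose reduction is visibly a unit.

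For comparability, both $w$ and $v$ lie below $\mu_1$, so $D_w$ and $D_v$ both contain $D_{\mu_1}$; by Remark \ref{rem:Galois1disjoint2discoids}(2) they are nested, and Theorem \ref{thm:valdiscoid} translates this into $w\leq v$ or $v\leq w$. The hypothesis $v\neq w$ makes one of these strict.

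In the case $w<v$, I would use Proposition \ref{prop:ValuationsInequality} together with the MacLane chain $[v_0,\dots,v_n(\phi_n)=\lambda_n]$ for $v$ to write $v=[w', v(\phi_n)=\lambda_n]$ with $w'\geq w$. The key step is then the equality $w'(\phi)=v(\phi)$: Proposition \ref{prop:appendix} in the appendix gives $\mu_1(\phi)=w(\phi)$ (as $\phi$ is a centre of $\mu_2$ and $w=\mu_1\wedge\mu_2$), and monotonicity along $w\leq w'\leq v\leq \mu_1$ then collapses all four valuations at $\phi$ to the same value. With $w'(\phi)=v(\phi)$ in hand, \cite[Lemma 2.9]{FGMN} produces a polynomial of degree $<\deg v$ that is $v$-equivalent to $\phi$. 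Expanding such a polynomial in powers of $\phi_n$ yields a Newton polygon supported only at $i=0$, so $\i_v=\io_v=0$, and Proposition \ref{prop:ReductionDegreeProduct}\ref{item:ReductionDegree} shows its reduction is a nonzero element of $k_v$. Proposition \ref{prop:v-equivalentResidual} transfers unit-ness to $\phi|_v$.

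In the case $w>v$, I would first align the degrees: $\phi_n$ is a centre both of $v$ (last step of its chain, by Lemma \ref{lem:centre}) and of $\mu_1$, so $\deg v=\deg\mu_1=\deg\phi_n$; then $v<w\leq\mu_1$ forces $\deg w=\deg\phi_n$ by squeeze. Lemma \ref{lem:centreparent} applied to $w\leq\mu_1$ at matching degrees shows that $\phi_n$ is also a centre of $w$. Since $\phi$ is a centre of $\mu_2$ and $w\leq\mu_2$, Lemma \ref{lem:centre} makes $\phi$ a $\mu_2$-minimal polynomial and Lemma \ref{lem:vminimal} pushes minimality down to $w$. Applying Lemma \ref{lem:minimalityequivalence}\ref{item:minimalequivalent} to the valuation $w$ (with centre $\phi_n$) and to the $w$-minimal polynomial $\phi$ at the lower valuation $v<w$ yields $\phi\veq_v\phi_n^{\deg\phi/\deg\phi_n}$. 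The Newton polygon of $\phi_n^d$ in powers of $\phi_n$ is a single vertex, so Proposition \ref{prop:ReductionDegreeProduct}\ref{item:ReductionDegree} makes its reduction a nonzero constant, and Proposition \ref{prop:v-equivalentResidual} once again delivers the unit claim for $\phi|_v$.

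The step I expect to be most delicate is the equality $w'(\phi)=v(\phi)$ in the first case: it rests on the nontrivial appendix input describing how the meet operation on MacLane pseudo-valuations interacts with a centre of $\mu_2$, a fact not visible from the chain data of $v$ alone. Once that sandwich is established, the two cases dualise nicely—reducing $\phi$ either to something of small degree (case $w<v$) or to a pure power of $\phi_n$ (case $w>v$)—and in both situations the Newton polygon is degenerate enough for Proposition \ref{prop:ReductionDegreeProduct} to conclude.
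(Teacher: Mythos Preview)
Your proposal is correct and follows essentially the same route as the paper's proof: define $w=\mu_1\wedge\mu_2$, use comparability of $w$ and $v$ below $\mu_1$, and split into the cases $w<v$ and $w>v$, invoking respectively \cite[Lemma 2.9]{FGMN} and Lemma~\ref{lem:minimalityequivalence}. Your sandwich argument $w(\phi)\leq w'(\phi)\leq v(\phi)\leq\mu_1(\phi)=w(\phi)$ is exactly the justification the paper elides in its ``In particular, $w'(\phi)=v(\phi)$'', and your degree squeeze $\deg v\leq\deg w\leq\deg\mu_1$ makes explicit why $\phi_n$ is a centre of $w$; one tiny point is that Lemma~\ref{lem:centreparent} needs a strict inequality, so you should note separately that if $w=\mu_1$ the conclusion is immediate from the hypothesis on $\phi_n$.
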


\section{MacLane clusters}\label{sec:MacLaneClusters}
Let $f\in K[x]$ be a separable polynomial and let $c_f\in K$ be its leading term. Assume $f/c_f\in O_K[x]$ and write $\roots$ for the sets of roots of $f$ in $\bar K$. 
If $C/K$ is a hyperelliptic curve, it is always given by an equation $y^2=f(x)$, where $f\in K[x]$ is as above.

\begin{defn}\label{defn:Clusters}
A \textit{MacLane cluster} (for $f$) is a pair $(\s,v)$ where $\s\subseteq\roots$, and $v$ is a MacLane pseudo-valuation such that
\begin{enumerate}
    \item $\s=D_v\cap\roots\neq \varnothing$;
    \item if $\s=D_w\cap\roots$ for a MacLane valuation $w>v$ then $\deg w>\deg v$.\label{item:Clustersdefn(2)}
\end{enumerate}
If $v$ is a MacLane valuation then $(\s,v)$ is said \textit{proper MacLane cluster}. The degree of $(\s,v)$ is $\deg v$. The \textit{degree}, a \textit{centre} and the \textit{radius} of a MacLane cluster $(\s,v)$ are the degree, a centre and the radius of $v$, respectively. 
\end{defn}

\begin{rem}\label{rem:clustersfirst}
Let $(\s,v)$ be a MacLane cluster. Note that by definition
\begin{enumerate}[label=(\roman*)]
    \item $\s$ is $G_K$-invariant, \label{item:sGaloisinvariant}%Moreover, $|\s|>\deg v$ if and only if the cluster is proper.
    \item $v$ determines $\s$.\label{item:vdeterminess}
\end{enumerate}
\end{rem}

\begin{defn}\label{defn:ClusterPicture}
The \textit{MacLane cluster picture} of $f$ is the combinatorial data consisting of the collection of all MacLane clusters for $f$ together with their radii. We will denote by $\Sigma_f^M$ the set of all MacLane clusters for $f$.
\end{defn}

\begin{defn}
We say that a MacLane pseudo-valuation $v\in\pM$ \textit{defines a MacLane cluster} $(\s,w)\in\Sigma_f^M$, if $w=v$ (and $\s=D_v\cap\roots$).
\end{defn}

\begin{defn}\label{defn:child,minimal}
We write $(\t,w)\subseteq(\s,v)$ if $w\geq v$. 
%and $(\t,w)=(\s,v)$ if $w=v$. 
If $(\t,w)\subsetneq(\s,v)$ is maximal, we write $(\t,w)<(\s,v)$ and $v=P(w)$, and refer to $(\t,w)$ as \textit{a child} of $(\s,v)$, and to $(\s,v)$ as \textit{the parent} of $(\t,w)$. A proper MacLane cluster $(\s,v)$ with no proper child of degree $\deg v$ is said \textit{degree-minimal}.
\end{defn}

\begin{lem}\label{lem:ClusterInclusion}
Let $(\s,v), (\t,w)\in\Sigma_f^M$ such that $\s\subsetneq\t$. Then $(\s,v)\subsetneq(\t,w)$.
\proof
Since $\s\subseteq D_{v}\cap D_w$ either $D_v\subsetneq D_{w}$ or $D_{w}\subseteq D_v$. But
\[D_v\cap\roots=\s\subsetneq\t=D_{w}\cap\roots,\]
so $D_v\subsetneq D_{w}$. Thus $w> v$.
\endproof
\end{lem}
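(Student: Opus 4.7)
The plan is to reduce the statement to a comparison of the underlying discoids $D_v$ and $D_w$, and then invoke the nesting dichotomy for discoids together with Theorem \ref{thm:valdiscoid}. All the real content is already packaged in the earlier results; the lemma is essentially a dictionary translation between inclusions of root-sets and inclusions of discoids.

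First I would observe that $\s \subseteq D_v$ by part (1) of Definition \ref{defn:Clusters}, and $\s \subseteq \t \subseteq D_w$ by hypothesis and the same definition applied to $(\t,w)$. Hence $\s$ is a nonempty subset of $D_v \cap D_w$, so in particular $D_v \cap D_w \neq \varnothing$. By Remark \ref{rem:Galois1disjoint2discoids}(2), two discoids with nonempty intersection are comparable, so either $D_v \subseteq D_w$ or $D_w \subseteq D_v$.

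Next I would rule out the second alternative. If $D_w \subseteq D_v$, then intersecting with $\roots$ gives $\t = D_w \cap \roots \subseteq D_v \cap \roots = \s$, contradicting the hypothesis $\s \subsetneq \t$. Therefore $D_v \subseteq D_w$, and this inclusion is strict because any root in $\t \setminus \s$ lies in $D_w \setminus D_v$. By Theorem \ref{thm:valdiscoid}, the strict inclusion $D_v \subsetneq D_w$ is equivalent to $v > w$ in $\pM$, which is exactly the statement $(\s,v) \subsetneq (\t,w)$ in the sense of Definition \ref{defn:child,minimal}.

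There is no genuine obstacle here: the nontrivial input is the nesting dichotomy for discoids coming from \cite[Lemma 4.44]{Rut}; everything else is bookkeeping using the defining properties of MacLane clusters and the order-reversing bijection between $\pM$ and $\D$.
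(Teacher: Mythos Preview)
Your proof is correct and follows essentially the same approach as the paper: use the nonemptiness of $D_v\cap D_w$ together with Remark \ref{rem:Galois1disjoint2discoids}(2) to get comparability of the discoids, rule out $D_w\subseteq D_v$ via $\s\subsetneq\t$, and then translate $D_v\subsetneq D_w$ into $v>w$ using Theorem \ref{thm:valdiscoid}. Your version is slightly more explicit about why the inclusion $D_v\subseteq D_w$ is strict, but the argument is the same.
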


% \begin{defn}
% Let $(\s,v)$ be a cluster. If $D_v=D(g,\lambda)$ for some $g\in O_K[x]$ monic irreducible and $\lambda\in\Q\cup\{\infty\}$, we say that $g$ is \textit{a centre} and $\lambda$ and is the \textit{radius} of $(\s,v)$. We denote the radius by $\lambda_\s$.
% \end{defn}

% \begin{rem}
% The fact that $\lambda_\s$ does not depend on the choice of a centre and is uniquely determined by $v$ follows from the next lemma.
% \end{rem}

\cite[Proposition 2.26]{KW} shows that the meet of any two MacLane pseudo-valuations $v$ and $w$ exists; it will be denoted by $v\wedge w$. Hence $v\wedge w$ is the maximal MacLane pseudo-valuation $\leq v$ and $\leq w$. In other words, $\pM$ with $\leq$ forms a meet-semilattice. 

\begin{lem}\label{lem:wedgecluster}
Let $(\s,v)$, $(\t,w)\in\Sigma_f^M$, and $\s\wedge\t=D_{v\wedge w}\cap \roots$. Then $(\s\wedge\t,v\wedge w)$ is the smallest MacLane cluster containing $(\s,v)$ and $(\t,w)$.
\proof
We only need to show that $(\s\wedge\t,v\wedge w)$ is a MacLane cluster. Suppose not. Then there exists a MacLane valuation $v'>v\wedge w$, with $\s\wedge \t=D_{v'}\cap\roots$ and $\deg v'\leq\deg(v\wedge w)$. Then $v'\not\leq v$ or $v'\not\leq w$, from the definition of $v\wedge w$. Without loss of generality we can assume that $v'\not\leq v$.

If $v\not< v'$, then $D_{v'}\not\subset D_v$ and $D_v\not\subseteq D_{v'}$ so $D_{v'}\cap D_v=\varnothing$ by Remark \ref{rem:Galois1disjoint2discoids}(2). But this contradicts
\[D_v\cap\roots=\s\subseteq\s\wedge\t=D_{v'}\cap \roots.\]

If $v< v'$, then
\[\s\subseteq\s\cup\t\subseteq\s\wedge\t=D_{v'}\cap \roots\subseteq D_v\cap\roots=\s.\]
But then $\s=D_{v'}\cap \roots$, $v'>v$ and $\deg v'\leq \deg (v\wedge w)\leq \deg v$ by Lemma \ref{lem:centreparent}, which contradicts the definition of MacLane cluster for $(\s,v)$.
\endproof
\end{lem}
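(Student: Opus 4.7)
The plan is to reduce the claim to showing that $(\s\wedge\t, v\wedge w)$ is itself a MacLane cluster, since the minimality is then essentially formal. Indeed, if $(\u, u)\in\Sigma_f^M$ contains both $(\s,v)$ and $(\t,w)$, then by the definition of $\subseteq$ on MacLane clusters we have $u\leq v$ and $u\leq w$, and so $u\leq v\wedge w$ by the universal property of the meet in $\pM$. Translating back, this says $(\s\wedge\t, v\wedge w)\subseteq (\u,u)$; hence once $(\s\wedge\t, v\wedge w)$ is shown to be a MacLane cluster, it is automatically the smallest one containing the two given clusters.

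Next I would verify the two conditions of Definition \ref{defn:Clusters}. Condition (1) is bookkeeping: the identity $\s\wedge\t=D_{v\wedge w}\cap\roots$ is part of the hypothesis, and $\s\wedge\t\neq\varnothing$ since $v\wedge w\leq v$ forces $D_v\subseteq D_{v\wedge w}$ by Theorem \ref{thm:valdiscoid}, and hence $\varnothing\neq\s\subseteq\s\wedge\t$.

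The main obstacle, and the heart of the proof, is Condition (2). I would argue by contradiction: suppose there exists a MacLane valuation $v'>v\wedge w$ with $D_{v'}\cap\roots=\s\wedge\t$ and $\deg v'\leq\deg(v\wedge w)$. The definition of the meet rules out $v'\leq v$ and $v'\leq w$ holding simultaneously, so without loss of generality $v'\not\leq v$. Now invoke Theorem \ref{thm:valdiscoid} together with Remark \ref{rem:Galois1disjoint2discoids}(2): the discoids $D_v$ and $D_{v'}$ are either nested or disjoint. Disjointness is impossible because $\varnothing\neq\s\subseteq D_v\cap D_{v'}$, so we must have $v<v'$.

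In this nested case, the chain of inclusions $\s\subseteq\s\wedge\t=D_{v'}\cap\roots\subsetneq D_v\cap\roots=\s$ collapses to $\s=D_{v'}\cap\roots$ with $v'>v$. The MacLane cluster axiom (2) applied to $(\s,v)$ then forces $\deg v'>\deg v$, while Lemma \ref{lem:centreparent} gives $\deg(v\wedge w)\leq\deg v$, contradicting $\deg v'\leq\deg(v\wedge w)$. The crux of the argument is this discoid dichotomy, which cleanly separates the analysis into a disjoint case (eliminated by non-emptiness of $\s$) and a nested case (eliminated by the degree-minimality axiom for $(\s,v)$).
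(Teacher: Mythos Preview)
Your argument is correct and follows essentially the same route as the paper's: reduce to showing $(\s\wedge\t,v\wedge w)$ is a MacLane cluster, assume a violating $v'$ exists, use the discoid dichotomy (Remark \ref{rem:Galois1disjoint2discoids}(2)) to force $v<v'$, and then derive a contradiction with axiom (2) for $(\s,v)$ via the degree bound from Lemma \ref{lem:centreparent}. One small slip: in your displayed chain you wrote $D_{v'}\cap\roots\subsetneq D_v\cap\roots$, but from $v<v'$ you only get $D_{v'}\subsetneq D_v$ and hence $D_{v'}\cap\roots\subseteq D_v\cap\roots$; the strict inclusion is not justified (and would in fact make the chain self-contradictory before you even invoke the cluster axiom). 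With $\subseteq$ in place the collapse to $\s=D_{v'}\cap\roots$ goes through exactly as you intend.
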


Let $F\in K[x]$ be a monic irreducible factor of $f$. Let $v_F$ be the MacLane pseudo-valuation with $D_{v_F}=D(F,\infty)$ (Theorem \ref{thm:valdiscoid}). We also denote $v_F$ by $v_r$ where $r\in\roots$ is any root of $F$. For any non-empty $G_K$-invariant subset $\s\subseteq\roots$, define $g_\s=\prod_{r\in\s}(x-r)\in K[x]$. Then $g_\s\mid f$. Let $F_1,\dots,F_m$ be the irreducible monic factors of $g_\s$. Define $v_\s\in\pM$ by
\[v_\s=v_{F_1}\wedge\dots\wedge v_{F_m}.\]

\begin{lem}\label{lem:vs}
Let $v\in\pM$ and let $\s=D_v\cap\roots\neq\varnothing$.
Then $v\leq v_{\s}$. In particular, $(\s,v_{\s})$ is a MacLane cluster.
\proof
The set $\s$ is $G_K$-invariant, as so are $D$ and $\roots$. Let $F_1,\dots,F_m$ be the irreducible factors of $g_\s$ as above. Let $\s_i$ be the set of roots of $F_i$. Note that $D_{v_{F_i}}=\s_i$ for all $i$. Then $D_{v_{\s}}\supseteq\bigcup_{i=1}^m D_{v_{F_i}}=\s$. Suppose $w\in\pM$ with $\s= D_w\cap\roots$. Then $D_{v_{F_i}}\subseteq D_w$, so $w\leq v_{F_i}$ for all $i$. By definition of $v_{\s}$ we have $w\leq v_\s$. Since $w\leq v_{\s}$ for any $w$ with $\s=D_w\cap\roots$, it only remains to show that $\s=D_{v_\s}\cap\roots$. Since $v\leq v_{\s}$ from above, we have
\[\s\subseteq D_{v_\s}\cap\roots\subseteq D_v\cap\roots=\s,\]
that implies $D_{v_\s}\cap\roots=\s$. Thus $(\s,v_{\s})$ is a MacLane cluster.
\endproof
\end{lem}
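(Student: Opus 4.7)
The plan is to exploit the order-reversing bijection between MacLane pseudo-valuations and discoids (Theorem \ref{thm:valdiscoid}) together with the universal property of the meet. The argument splits naturally into the inequality $v\leq v_\s$ and the subsequent verification of the two MacLane cluster axioms.

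First I would check that $\s$ is $G_K$-invariant: both $D_v$ (by Remark \ref{rem:Galois1disjoint2discoids}(1), as a discoid is a finite set of Galois conjugate roots or a union of Galois orbits of a disc) and $\roots$ are $G_K$-invariant, so their intersection is too. This guarantees $g_\s=\prod_{r\in\s}(x-r)\in K[x]$, and the factorisation $g_\s=F_1\cdots F_m$ into monic irreducibles over $K$ makes sense. Writing $\s_i$ for the set of roots of $F_i$, each $\s_i$ is a full Galois orbit contained in $\s$, and by definition $D_{v_{F_i}}=D(F_i,\infty)=\s_i$.

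For the inequality $v\leq v_\s$, I would argue that since $\s_i\subseteq \s\subseteq D_v$ for every $i$, we have $D_{v_{F_i}}\subseteq D_v$, whence $v\leq v_{F_i}$ by Theorem \ref{thm:valdiscoid}. Then the defining property of the meet $v_\s=v_{F_1}\wedge\cdots\wedge v_{F_m}$ (which exists by \cite[Proposition 2.26]{KW}) yields $v\leq v_\s$, as required.

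To conclude that $(\s,v_\s)$ is a MacLane cluster, I would verify the two conditions of Definition \ref{defn:Clusters}. For (1): the inequality $v_\s\leq v_{F_i}$ gives $\s_i=D_{v_{F_i}}\subseteq D_{v_\s}$ for each $i$, hence $\s=\bigcup_i\s_i\subseteq D_{v_\s}\cap\roots$; conversely, $v\leq v_\s$ gives $D_{v_\s}\subseteq D_v$, so $D_{v_\s}\cap\roots\subseteq D_v\cap\roots=\s$. Thus $D_{v_\s}\cap\roots=\s\neq\varnothing$. For (2): the first half of the argument shows that \emph{any} MacLane pseudo-valuation $w$ satisfying $\s=D_w\cap\roots$ must satisfy $w\leq v_\s$; in particular no MacLane valuation $w>v_\s$ can have $D_w\cap\roots=\s$, so condition (2) holds vacuously. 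There is no real obstacle here — the proof is essentially an unfolding of the definitions through the discoid dictionary; the only point one must be careful about is recording that $\s$ is Galois-stable so that the factorisation of $g_\s$ used to define $v_\s$ is legitimate.
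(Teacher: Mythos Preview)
Your proof is correct and follows essentially the same route as the paper's: both use the order-reversing bijection of Theorem~\ref{thm:valdiscoid} to show that any $w$ with $\s=D_w\cap\roots$ lies below each $v_{F_i}$ and hence below their meet $v_\s$, then squeeze $D_{v_\s}\cap\roots$ between $\s$ and $D_v\cap\roots=\s$. The only cosmetic difference is the order of presentation---you first handle $v\leq v_\s$ as a special case and later invoke the same argument for general $w$ to verify axiom~(2), whereas the paper proves the general statement once and derives $v\leq v_\s$ from it.
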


\begin{lem}\label{lem:uniqueclustervs}
Let $\s=D_v\cap\roots\neq\varnothing$, for some $v\in\pM$. Let
\[[v_0,v_1(\phi_1)=\lambda_1,\dots, v_n(\phi_n)=\lambda_n]\]
be a minimal MacLane chain for $v_\s$. Then there exists $i=0,\dots,n$ such that $v\leq v_i$, $\deg v=\deg v_i$ and $(\s,v_i)$ is a cluster. In particular, if $(\s,v)$ is a MacLane cluster, then $v=v_i$.
\proof
Let $w\in\pM$ such that $D_w\cap\roots=\s$. Then $w\leq v_\s$ by Lemma \ref{lem:vs}. Proposition \ref{prop:ValuationsInequality} implies that $\deg w=\deg v_m$ for some $i=0,\dots,m$. 

The argument above holds in particular when $w=v$.
It only remains to show that $\s=D_{v_i}\cap\roots$. We have
\[\s=D_{v_\s}\cap\roots\subseteq D_{v_i}\cap\roots\subseteq D_{v}\cap\roots=\s,\]
that implies $\s=D_{v_i}\cap\roots$, as required.
\endproof
\end{lem}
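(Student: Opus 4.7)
The plan is to leverage Lemma \ref{lem:vs} together with Proposition \ref{prop:ValuationsInequality} to locate the desired $v_i$ inside the given minimal MacLane chain, and then verify the cluster axioms by sandwich inclusions of discoids.

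First, Lemma \ref{lem:vs} applied to $v$ yields $v \leq v_\s$, so both $v$ and $v_\s$ sit above $v_0$ in $\pM$. If $v > v_0$, Proposition \ref{prop:ValuationsInequality} (applied to $v \leq v_\s$ along the minimal MacLane chain for $v_\s$) produces an index $i \in \{1,\dots,n\}$ and a rational $\mu \leq \lambda_i$ with $v = [v_{i-1}, v(\phi_i) = \mu]$; consequently $v \leq v_i$ and $\deg v = \deg \phi_i = \deg v_i$. The edge case $v = v_0$ forces $\s = \roots$ (since all roots of $f$ lie in $D_{v_0}$), and we take $i=0$, or $i=1$ if $\deg \phi_1 = 1$, so that $\deg v = \deg v_i = 1$.

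Next I would verify $\s = D_{v_i} \cap \roots$ by the inclusions $v \leq v_i \leq v_\s$: by Theorem \ref{thm:valdiscoid} these give $D_{v_\s} \subseteq D_{v_i} \subseteq D_v$, and intersecting with $\roots$ sandwiches $D_{v_i}\cap\roots$ between $\s$ and $\s$. To establish axiom \ref{item:Clustersdefn(2)} of Definition \ref{defn:Clusters} for $(\s,v_i)$, suppose $w > v_i$ is a MacLane valuation with $D_w\cap\roots = \s$. Lemma \ref{lem:vs} gives $w \leq v_\s$, so Proposition \ref{prop:ValuationsInequality} writes $w = [v_{j-1}, w(\phi_j)=\mu_j]$ for some $j \leq n$, and $w > v_i$ forces $j > i$; minimality of the chain then yields $\deg w = \deg \phi_j > \deg \phi_i = \deg v_i$, exactly what is needed. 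Thus $(\s, v_i)$ is a MacLane cluster.

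For the final assertion, suppose $(\s,v)$ is itself a MacLane cluster. We have $v \leq v_i$ with $D_{v_i}\cap\roots=\s$ and $\deg v_i = \deg v$. If $v_i > v$, axiom \ref{item:Clustersdefn(2)} applied to $(\s,v)$ would demand $\deg v_i > \deg v$, a contradiction; hence $v = v_i$. The only delicate point I anticipate is keeping the strict inequality $\deg \phi_j > \deg \phi_i$ honest at the bottom of the chain (the interaction of $\phi_0 = x$ with a possible $\deg\phi_1 = 1$), which is why the choice of $i$ for $v = v_0$ must be made with a little care; otherwise the argument is a direct deployment of Proposition \ref{prop:ValuationsInequality} and Theorem \ref{thm:valdiscoid}.
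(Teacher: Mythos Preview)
Your proposal is correct and follows the same route as the paper: invoke Lemma~\ref{lem:vs} to get $v\leq v_\s$, use Proposition~\ref{prop:ValuationsInequality} along the minimal chain to locate the index $i$, and verify $\s=D_{v_i}\cap\roots$ by the sandwich $D_{v_\s}\subseteq D_{v_i}\subseteq D_v$. The paper's proof is terser---it states the general claim for an arbitrary $w$ with $D_w\cap\roots=\s$ and leaves the deduction of axiom~(2) of Definition~\ref{defn:Clusters} implicit---whereas you spell out that step explicitly and also flag the $v=v_0$ boundary case (which the paper's appeal to Proposition~\ref{prop:ValuationsInequality} likewise glosses over); your handling of that case via the choice $i=1$ when $\deg\phi_1=1$ is sound.
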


\begin{prop}\label{prop:SigmaTree}
The set $\Sigma_f^M$ under the partial order $\supseteq$ forms a rooted tree.
\proof
Let $V_f^M=\{v\in\pM\mid (D_v\cap\roots,v)\in\Sigma_f^M\}$. By Remark \ref{rem:clustersfirst}\ref{item:vdeterminess} there is a natural bijection from $V_f^M$ to $\Sigma_f^M$ taking $v\mapsto (D_v\cap\roots,v)$ inverting partial orders by definition. Hence it suffices to show that $V_f^M$ is a rooted tree. First note that $V_f^M\neq \varnothing$ since $v_F\in V_f^M$ for any monic irreducible factor $F$ of $f$. Then $V_f^M$ is a rooted tree by \cite[Corollary 2.8]{KW} and Lemma \ref{lem:wedgecluster}.
\endproof
\end{prop}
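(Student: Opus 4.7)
The plan is to translate the claim about $\Sigma_f^M$ into a claim about the underlying set of pseudo-valuations, and then invoke the known tree structure of $\pM$ together with the meet-closure coming from Lemma \ref{lem:wedgecluster}.

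First, I would set $V_f^M := \{v \in \pM \mid (D_v \cap \roots, v) \in \Sigma_f^M\}$ and observe that the assignment $v \mapsto (D_v \cap \roots, v)$ is a bijection $V_f^M \to \Sigma_f^M$ by Remark \ref{rem:clustersfirst}\ref{item:vdeterminess}; by Theorem \ref{thm:valdiscoid} this bijection sends $\leq$ on $V_f^M$ to $\supseteq$ on $\Sigma_f^M$. It therefore suffices to prove that $(V_f^M, \leq)$ is a non-empty rooted tree. Non-emptiness follows immediately: for any monic irreducible factor $F$ of $f$, the infinite pseudo-valuation $v_F$ lies in $V_f^M$, because $D_{v_F} \cap \roots$ is the non-empty Galois orbit of a root of $F$, and condition (2) of Definition \ref{defn:Clusters} is vacuous since no MacLane valuation strictly exceeds $v_F$.

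It remains to check two properties: (i) the downset $\{w \in V_f^M : w \leq v\}$ of any $v \in V_f^M$ is totally ordered; and (ii) $V_f^M$ has a unique minimum. For (i), if $w_1, w_2 \leq v$ in $V_f^M$, then both $D_{w_1}$ and $D_{w_2}$ contain $D_v$; since $D_v \cap \roots \neq \varnothing$ we have $D_{w_1} \cap D_{w_2} \neq \varnothing$, and Remark \ref{rem:Galois1disjoint2discoids}(2) forces one discoid to contain the other, making $w_1$ and $w_2$ comparable under $\leq$. This is essentially the tree structure of $\pM$ recorded in \cite[Corollary 2.8]{KW}. For (ii), Lemma \ref{lem:wedgecluster} shows that $V_f^M$ is closed under the binary meet $\wedge$ of $\pM$; moreover $V_f^M$ is finite, since Remark \ref{rem:clustersfirst}\ref{item:vdeterminess} says that $v$ is determined by $\s = D_v \cap \roots$, and Lemma \ref{lem:uniqueclustervs} confines each such $v$ to finitely many choices (the valuations in a minimal MacLane chain of $v_\s$). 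Hence the meet of all finitely many elements of $V_f^M$ exists in $V_f^M$ and is the desired root.

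The main obstacle is simply bookkeeping: keeping straight how the partial orders flip between $\pM$, the set of discoids $\D$, and $\Sigma_f^M$, and verifying that the three ingredients (non-emptiness, linearity of ancestry, and meet-closure of $V_f^M$) really do combine to yield a rooted tree rather than merely a meet-semilattice.
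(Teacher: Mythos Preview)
Your proof is correct and follows the paper's approach exactly: reduce to $V_f^M$, check non-emptiness via $v_F$, and invoke the tree structure of $\pM$ from \cite[Corollary 2.8]{KW} together with meet-closure from Lemma~\ref{lem:wedgecluster}. One slip in your finiteness argument: Remark~\ref{rem:clustersfirst}\ref{item:vdeterminess} says $v$ determines $\s$, not the converse; to get finiteness of $V_f^M$ you should instead observe that there are only finitely many subsets $\s\subseteq\roots$, and then apply Lemma~\ref{lem:uniqueclustervs} as you already do.
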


\begin{lem}\label{lem:nonproperclusters}
Let $(\s,v)$ be a MacLane cluster. Then $|\s|\geq \deg v$. Furthermore, $|\s|>\deg v$ if and only if $(\s,v)$ is proper.
\proof
First note that $v\leq v_\s$ by Lemma \ref{lem:vs}. Then Lemma \ref{lem:centreparent} implies 
\[\deg v\leq\deg v_\s\leq\min_{r\in\s}\deg v_r=\min_{r\in\s}|G_K\cdot r|\leq|\s|.\]

If $|\s|=\deg v$, then $\s=G_K\cdot r$ for some (any) $r\in\s$, and $\deg v=\deg v_\s$. It follows from Lemma \ref{lem:uniqueclustervs} that $v=v_\s=v_r$. Hence $(\s,v)$ is not proper. 

If $(\s,v)$ is not proper, that is $v\notin\M$, then $v=v_\s=v_r$ for some (any) $r\in\s$. In particular, $\s=\deg v_r=\deg v$.
% Suppose $|\s|\leq\deg v$ and let $r\in\s$. Consider the minimal polynomial $F\in O_K[x]$ of $r$. Since $\s$ is Galois-invariant the roots of $F$ are all in $\s$. Let $v_F$ be the unique Maclane valuation over which $F$ is a proper key polynomial. Consider the pseudo-MacLane valuation $w=[v_F,w(F)=\infty]$. Then $D_w\subseteq D_v$, that implies $w\geq v$ and so $\deg w\geq \deg v$. On the other hand, $\deg v\geq|\s|\geq\deg F=\deg w$. Then $\s$ consists of the roots of $F$ and so $\s=D_w\cap\roots$. By the definition of cluster, we have $v=w$ and $|\s|=\deg v$ as required.
%
% Conversely, if $(\s,v)$ is a non-proper cluster of centre $F\in O_K[x]$, then $D_v$ consists of the roots of $F$. Then $\s=D_v$ and in particular $|\s|=\deg F=\deg v$.
\endproof
\end{lem}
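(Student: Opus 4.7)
The plan is to prove the inequality $|\s|\geq\deg v$ first, and then characterise the equality case as the non-proper one. For the inequality, I would leverage the auxiliary pseudo-valuation $v_\s=v_{F_1}\wedge\cdots\wedge v_{F_m}$, where $F_1,\dots,F_m$ are the monic irreducible factors of $g_\s=\prod_{r\in\s}(x-r)$. Lemma \ref{lem:vs} gives $v\leq v_\s$, and by construction $v_\s\leq v_{F_i}=v_r$ for any root $r$ of $F_i$. Applying Lemma \ref{lem:centreparent} to each comparison yields
\[\deg v\leq\deg v_\s\leq\min_{r\in\s}\deg v_r=\min_{r\in\s}|G_K\cdot r|\leq|\s|,\]
where the penultimate equality uses that $D_{v_r}$ is the Galois orbit of $r$, and the last inequality follows from $\s$ being $G_K$-invariant.

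Next I would deduce the equivalence via the contrapositive statement: $|\s|=\deg v$ if and only if $(\s,v)$ is not proper. Assume first $|\s|=\deg v$: then every inequality above is an equality, so $|G_K\cdot r|=|\s|$ forces $\s=G_K\cdot r$ to be a single Galois orbit, $g_\s$ is a single irreducible polynomial and $v_\s=v_r$ is infinite. Moreover $\deg v=\deg v_\s$, so Lemma \ref{lem:uniqueclustervs} applied to the minimal MacLane chain of $v_\s$ pins $v$ to the terminal entry $v_\s$, giving $v=v_r\notin\M$ and showing that $(\s,v)$ is not proper.

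Conversely, if $(\s,v)$ is not proper then $v$ is an infinite MacLane pseudo-valuation, of the form $[v_{m-1},v_m(\phi_m)=\infty]$ for some key polynomial $\phi_m$, so $D_v$ consists precisely of the roots of $\phi_m$. The main delicate point is to show that $|\s|=\deg\phi_m$: since $\phi_m$ is a key polynomial it is irreducible over $K$ (by combining $v$-irreducibility and $v$-minimality), hence its roots form a single $G_K$-orbit; because $\s=D_v\cap\roots$ is non-empty and $\roots$ is $G_K$-invariant, this entire orbit must lie in $\roots$. Therefore $\s=D_v$, giving $|\s|=\deg\phi_m=\deg v$ as required.
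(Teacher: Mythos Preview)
Your proof is correct and follows essentially the same approach as the paper: the inequality chain via $v\leq v_\s\leq v_r$ and Lemma~\ref{lem:centreparent}, then Lemma~\ref{lem:uniqueclustervs} for the forward direction of the equivalence. For the converse the paper simply asserts $v=v_\s=v_r$ (implicitly using that an infinite pseudo-valuation is maximal in $\pM$, since its discoid is a single Galois orbit), whereas you unpack this by showing directly that $D_v$ equals the Galois orbit of any $r\in\s$ and hence coincides with $\s$; these are the same argument at different levels of detail.
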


\begin{rem}[Alternative definition for MacLane clusters]
Let $\Sigma$ be the set of pairs $(\s,n)$, where $n\in\Z_+$ and $\s=D_v\cap \roots\neq\varnothing$ for some MacLane pseudo-valuation $v$ of degree $n$. It follows from Remark \ref{rem:Galois1disjoint2discoids}(2) and Theorem \ref{thm:valdiscoid} that the map $\Sigma_f^M\rightarrow \Sigma$, taking $(\s,v)\mapsto(\s,\deg v)$ is bijective.
\end{rem}

\begin{lem}\label{lem:lambdav}
Let $(\s,v)$ be a MacLane cluster. Then $\lambda_v=\min_{r\in\s}v_K(\phi(r))$ for any centre $\phi$ of $v$.
\proof
Let $\lambda=\min_{r\in\s}v_K(\phi(r))$. Since $\s\subset D_v=D(\phi,\lambda_v)$, we have $\lambda\geq \lambda_v$. Suppose $\lambda> \lambda_v$. Let $w=[v,w(\phi)=\lambda]$. Then $w>v$ and $\deg w=\deg \phi=\deg v$. But $\s=\roots\cap D_w$
%\s\subseteq\roots\cap D_w\subseteq\roots\cap D_v=\s
for our choice of $\lambda$. This contradicts the fact that $(\s,v)$ is a MacLane cluster (Definition \ref{defn:Clusters}\ref{item:Clustersdefn(2)}).
\endproof
\end{lem}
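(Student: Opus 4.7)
The plan is to establish the two inequalities $\lambda \geq \lambda_v$ and $\lambda \leq \lambda_v$ for $\lambda:=\min_{r\in\s}v_K(\phi(r))$, the second by contradiction via constructing a forbidden augmentation.

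First I would observe that since $\phi$ is a centre of $v$, Theorem \ref{thm:valdiscoid} together with the explicit form of the discoid attached to $v$ (see the lemma computing $D_v=D(\phi_m,\lambda_m)$ in \S2) gives $D_v=D(\phi,\lambda_v)$. Because $\s\subseteq D_v$ by Definition \ref{defn:Clusters}, every $r\in\s$ satisfies $v_K(\phi(r))\geq \lambda_v$, whence $\lambda\geq\lambda_v$.

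For the reverse inequality, I would argue by contradiction assuming $\lambda>\lambda_v$. Since $\phi$ is a centre of $v$, Lemma \ref{lem:centre} yields $\phi\in\KP v$, so the augmentation $w=[v,w(\phi)=\lambda]$ is well defined. By Remark \ref{rem:(i)augmentation>(ii)RadiusDegreeUniqueness}\ref{item:augmentation>} we have $w>v$ and clearly $\deg w=\deg\phi=\deg v$. Moreover $D_w=D(\phi,\lambda)\subseteq D(\phi,\lambda_v)=D_v$. The choice of $\lambda$ forces $\s\subseteq D_w$, while $D_w\subseteq D_v$ combined with $D_v\cap\roots=\s$ gives $D_w\cap\roots\subseteq\s$; hence $\s=D_w\cap\roots$. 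This produces a MacLane valuation $w>v$ with the same set of roots and $\deg w=\deg v$, directly contradicting condition \ref{item:Clustersdefn(2)} in Definition \ref{defn:Clusters}. Therefore $\lambda=\lambda_v$.

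No serious obstacle is expected here: the argument is a short application of the correspondence between MacLane pseudo-valuations and discoids, plus the definition of a MacLane cluster. The only point needing a little care is verifying that $\phi$ is actually a key polynomial over $v$ (so that the augmentation $w=[v,w(\phi)=\lambda]$ exists), which is exactly Lemma \ref{lem:centre}.
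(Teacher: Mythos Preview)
Your proposal is correct and follows essentially the same argument as the paper: establish $\lambda\geq\lambda_v$ from $\s\subseteq D_v=D(\phi,\lambda_v)$, then assume $\lambda>\lambda_v$, form the augmentation $w=[v,w(\phi)=\lambda]$, and contradict condition~\ref{item:Clustersdefn(2)} of Definition~\ref{defn:Clusters}. Your write-up is slightly more detailed than the paper's (you make explicit why $\phi\in\KP v$ via Lemma~\ref{lem:centre}, and you spell out both inclusions needed for $\s=D_w\cap\roots$), but the logical structure is identical.
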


\begin{nt}
Let $\m P\subset K[x]$ to be the subset of monic irreducible polynomials. For any $d\in\Z$, denote by $\m P_{\leq d}$ the set $\{g\in\m P\mid \deg g\leq d\}$.
\end{nt}

\begin{lem}\label{lem:lambdav2}
Let $(\s,v)$ be a proper MacLane cluster. Then 
\[\lambda_v=\max_{g\in \m P_{\leq \deg v}}\min_{r\in\s}v_K(g(r)).\]
\proof
Let $d=\deg v$. By Lemma \ref{lem:lambdav} we only need to show that $\lambda_v\geq\max_{g\in\m P_{\leq d}}\min_{r\in\s}v_K(g(r))$. Suppose not. Then there exists a polynomial $g\in\m P_{\leq d}$ such that $\lambda:=\min_{r\in\s}v_K(g(r))>\lambda_v$. Let $w\in\pM$ such that $D_w=D(g,\lambda)$ (Theorem \ref{thm:valdiscoid}). Then $\s\subseteq D_w\cap\roots$. By Lemma \ref{lem:degdiscoid} we have $\deg w\leq\deg g\leq\deg v$ and $w(g)\geq \lambda$. Since $\s\subset D_w\cap D_v$, either $D_v\subseteq D_w$ or $D_w\subsetneq D_v$ by Remark \ref{rem:Galois1disjoint2discoids}(2). If $D_w\subsetneq D_v$, then $w>v$ and $\s=D_w\cap\roots$, a contradiction, since $(\s,v)$ is a MacLane cluster. So $D_v\subseteq D_w$, that is $v\geq w$. Hence $v(g)\geq w(g)\geq\lambda>\lambda_v$. This gives a contradiction since $v(g)\leq\lambda_v$ by Lemma \ref{lem:v(lowerdegree)}.
\endproof
\end{lem}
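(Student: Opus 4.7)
The plan is to prove the two inequalities separately, with the nontrivial direction being a contradiction argument that splits into two cases according to how the discoid associated to a hypothetical competitor compares with $D_v$.

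First I would dispatch the inequality $\lambda_v \leq \max_{g\in\m P_{\leq \deg v}}\min_{r\in\s}v_K(g(r))$. A centre $\phi$ of $v$ is by definition a monic key polynomial of degree $\deg v$, in particular $\phi\in \m P_{\leq \deg v}$, and Lemma \ref{lem:lambdav} gives $\lambda_v=\min_{r\in\s}v_K(\phi(r))$, so $\phi$ is a witness for the maximum.

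For the reverse inequality I would assume, for a contradiction, that there is a polynomial $g\in\m P_{\leq \deg v}$ with $\lambda:=\min_{r\in\s}v_K(g(r))>\lambda_v$. Applying Theorem \ref{thm:valdiscoid} to the discoid $D(g,\lambda)$ yields a MacLane pseudo-valuation $w\in\pM$ with $D_w=D(g,\lambda)$, and by Lemma \ref{lem:degdiscoid} we get $\deg w\leq\deg g\leq\deg v$ and $w(g)\geq\lambda$. By construction $\s\subseteq D_w\cap\roots$, hence $D_w\cap D_v\neq\varnothing$ (as $(\s,v)$ is proper, so $\s\neq\varnothing$), and Remark \ref{rem:Galois1disjoint2discoids}(2) forces either $D_w\subseteq D_v$ or $D_v\subseteq D_w$.

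In the first case, the inclusions $\s\subseteq D_w\cap\roots\subseteq D_v\cap\roots=\s$ yield $\s=D_w\cap\roots$, while $w\geq v$ with $\deg w\leq\deg v$; this violates condition~(2) in the definition of MacLane cluster for $(\s,v)$ (unless $w=v$, but then $\lambda_v\geq w(g)\geq\lambda>\lambda_v$, again a contradiction, using Lemma \ref{lem:v(lowerdegree)} if one prefers a uniform argument). In the second case, $v\geq w$ gives $v(g)\geq w(g)\geq\lambda>\lambda_v$, contradicting Lemma \ref{lem:v(lowerdegree)} since $\deg g\leq\deg v$. The main (minor) obstacle is just to ensure the case split is exhaustive and that the use of Lemma \ref{lem:v(lowerdegree)} covers the equal-degree case as well, which is precisely the content of its ``$v(g)=\lambda_v$ only if $\deg g=\deg v$'' clause, yielding $v(g)\leq\lambda_v$ throughout $\m P_{\leq \deg v}$.
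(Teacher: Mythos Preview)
Your proposal is correct and follows essentially the same argument as the paper: reduce one inequality to Lemma \ref{lem:lambdav}, then for the other inequality take a hypothetical $g$ with $\lambda>\lambda_v$, pass to the pseudo-valuation $w$ with $D_w=D(g,\lambda)$ via Theorem \ref{thm:valdiscoid}, use Lemma \ref{lem:degdiscoid} to bound $\deg w$ and $w(g)$, split into cases via Remark \ref{rem:Galois1disjoint2discoids}(2), and derive a contradiction either from the cluster axiom or from Lemma \ref{lem:v(lowerdegree)}. The only cosmetic difference is that the paper writes the dichotomy as $D_w\subsetneq D_v$ versus $D_v\subseteq D_w$ (so the boundary case $w=v$ lands in the second branch), whereas you place it in the first branch and dispatch it separately; both are fine.
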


\begin{lem}\label{lem:ClusterEquivCond}
Let $v\in\pM$ and $\s=D_v\cap\roots$. Then $(\s,v)\in\Sigma_f^M$ if and only if
\[\lambda_v=\max_{g\in\m P_{\leq\deg v}}\min_{r\in\s}v_K(g(r)).\]
\proof
One implication follows from Lemma \ref{lem:lambdav2}. Suppose 
\[\textstyle\lambda_v=\max_{g\in\m P_{\leq\deg v}}\min_{r\in\s}v_K(g(r)).\]
By Lemma \ref{lem:uniqueclustervs}, there exists a MacLane pseudo-valuation $w\geq v$ with $\deg w= \deg v$ such that $(\s,w)\in\Sigma_f^M$. Let $\lambda_w$ be the radius of $w$. Then Lemma \ref{lem:lambdav2} implies $\lambda_w=\lambda_v$. But this is possible only if $w=v$, by Lemma \ref{lem:centreparent}.
\endproof
\end{lem}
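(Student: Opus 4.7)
The forward direction $(\Rightarrow)$ is essentially already done by Lemma~\ref{lem:lambdav2} in the proper case. The only extra point to address is when $(\s,v)$ is a non-proper MacLane cluster: by Lemma~\ref{lem:nonproperclusters} this forces $v=v_r$ for some (any) $r\in\s$, so $v$ is an infinite pseudo-valuation and $\lambda_v=\infty$. Taking $g$ to be the minimal polynomial of $r$ over $K$, which lies in $\m P_{\leq\deg v}$, one has $v_K(g(r'))=\infty$ for every $r'\in\s$, matching $\lambda_v$.

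For the reverse direction $(\Leftarrow)$, assume the max--min identity holds for $v$. Apply Lemma~\ref{lem:uniqueclustervs} to $\s=D_v\cap\roots$ to produce a MacLane pseudo-valuation $w\geq v$ with $\deg w=\deg v$ such that $(\s,w)\in\Sigma_f^M$. The $(\Rightarrow)$ direction already established, applied to $(\s,w)$, yields
\[
\lambda_w \;=\; \max_{g\in\m P_{\leq\deg v}}\min_{r\in\s}v_K(g(r)) \;=\; \lambda_v.
\]
Now invoke Lemma~\ref{lem:centreparent}: since $w\geq v$, a strict inequality $w>v$ would force $\lambda_w>\lambda_v$, contradicting the displayed equality. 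Therefore $w=v$, so $(\s,v)\in\Sigma_f^M$.

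The argument is essentially bookkeeping around the meet-semilattice structure on $\pM$ and the strict monotonicity of the radius under $<$. I do not anticipate a genuine obstacle: the only subtlety is the non-proper case of the forward direction, which Lemma~\ref{lem:lambdav2} does not cover literally but which collapses to a one-line verification using the defining polynomial of $\s$.
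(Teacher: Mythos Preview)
Your proof is correct and follows essentially the same route as the paper's: invoke Lemma~\ref{lem:uniqueclustervs} to obtain $w\geq v$ with $\deg w=\deg v$ and $(\s,w)\in\Sigma_f^M$, use the forward direction to get $\lambda_w=\lambda_v$, and conclude $w=v$ via Lemma~\ref{lem:centreparent}. You are in fact slightly more careful than the paper, which cites Lemma~\ref{lem:lambdav2} for the forward direction without addressing the non-proper case; your one-line verification via the minimal polynomial of $r\in\s$ fills that gap.
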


\begin{lem}\label{lem:Clusterdescent}
Let $v\neq v_0$ be a MacLane valuation, $\phi$ a strong key polynomial over $v$ and $\lambda\in\hat\Q$, $\lambda>v(\phi)$. Set $w=[v,w(\phi)=\lambda]$, $\s=D_v\cap\roots$, $\t=D_w\cap\roots$. If $\t\neq \varnothing$, then $(\s,v)$ is a MacLane cluster. 
%In particular, this is the case when $(\t,w)$ is a MacLane cluster.
\proof
First note that $\t\subseteq\s$. Let $g\in K[x]$ be any monic irreducible polynomial of degree $\deg g\leq\deg v$. Then $\deg g<\deg \phi$ and so $w(g)=v(g)$. Hence Proposition \ref{prop:2.7OS2} implies that \[v(g)=w(g)=\min_{r\in\t}v_K(g(r))\geq\min_{r\in\s}v_K(g(r))\geq v(g).\]
As $g$ was arbitrary, $(\s,v)$ is a MacLane cluster by Lemmas \ref{lem:v(lowerdegree)} and \ref{lem:ClusterEquivCond}.
\endproof
\end{lem}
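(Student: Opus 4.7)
The plan is to verify the equivalent condition for a MacLane cluster given in Lemma \ref{lem:ClusterEquivCond}, namely
\[\lambda_v=\max_{g\in\m P_{\leq\deg v}}\min_{r\in\s}v_K(g(r)).\]
First note that $\t\subseteq\s$ follows from $w\geq v$ (equivalently $D_w\subseteq D_v$), so in particular $\s\neq\varnothing$. The key step will be to show that for every monic irreducible $g$ with $\deg g\leq\deg v$ one has $\min_{r\in\s}v_K(g(r))=v(g)$; the displayed equality then drops out by maximising over $g$ and invoking Lemma \ref{lem:v(lowerdegree)} together with the fact that some centre $\phi_v$ of $v$ (which exists since $v\neq v_0$) achieves $v(\phi_v)=\lambda_v$.

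To carry this out, I first need $\phi$ to be a \emph{proper} key polynomial, which is the hypothesis required by Proposition \ref{prop:2.7OS2}. Since $v\neq v_0$, $v$ has a centre $\phi_v$ of degree $\deg v<\deg\phi$, and Theorem \ref{thm:minimality} applied to the $v$-minimal polynomial $\phi$ gives
\[v(\phi)=\frac{\lambda_v\deg\phi}{\deg v}>\lambda_v=v(\phi_v),\]
which forces $\phi\not\veq_v\phi_v$ and so $\phi$ is proper. Now for $g\in\m P_{\leq\deg v}$, the bound $\deg g\leq\deg v<\deg\phi$ implies $w(g)=v(g)$ directly from the augmentation formula. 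Proposition \ref{prop:2.7OS2} then yields $v_K(g(r))=v(g)$ for every $r\in D_w$, while $\s\subseteq D_v$ forces $v_K(g(r))\geq v(g)$ for every $r\in\s$. Combining with $\varnothing\neq\t\subseteq\s$ produces
\[v(g)\leq\min_{r\in\s}v_K(g(r))\leq\min_{r\in\t}v_K(g(r))=v(g),\]
forcing equality throughout.

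The remaining step is routine: Lemma \ref{lem:v(lowerdegree)} bounds $v(g)\leq\lambda_v$ for all admissible $g$, and equality is attained at $g=\phi_v$, so
\[\max_{g\in\m P_{\leq\deg v}}\min_{r\in\s}v_K(g(r))=\max_{g\in\m P_{\leq\deg v}}v(g)=\lambda_v,\]
and Lemma \ref{lem:ClusterEquivCond} concludes that $(\s,v)\in\Sigma_f^M$. I expect the only genuine subtlety to be the strong-implies-proper verification; once that is in place the rest reduces to a short chain of inequalities driven by the augmentation formula and Proposition \ref{prop:2.7OS2}.
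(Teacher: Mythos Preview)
Your proof is correct and follows essentially the same approach as the paper's: establish $\min_{r\in\s}v_K(g(r))=v(g)$ for all $g\in\m P_{\leq\deg v}$ via Proposition~\ref{prop:2.7OS2} and the augmentation formula, then invoke Lemmas~\ref{lem:v(lowerdegree)} and~\ref{lem:ClusterEquivCond}. You add the explicit check that strong implies proper (needed to apply Proposition~\ref{prop:2.7OS2}) and spell out the final $\max$, both of which the paper leaves implicit.
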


\begin{lem}\label{lem:parentchildvaluation}
Let $(\s,v)$ be a MacLane cluster and let $(\t,w)$ be its parent. Then $v=[w,v(\phi)=\lambda_v]$ for any centre $\phi$ of $v$.
\proof
The lemma follows from Proposition \ref{prop:ValuationsInequality} and Lemma \ref{lem:Clusterdescent}.
\endproof
\end{lem}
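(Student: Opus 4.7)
The plan is to pin down the shape of $w$ using Proposition \ref{prop:ValuationsInequality}, and then rule out all but the desired augmentation structure via the maximality built into the parent--child relation. Fix a centre $\phi$ of $v$ and a minimal MacLane chain
\[[v_0,v_1(\phi_1)=\lambda_1,\dots,v_n(\phi_n)=\lambda_n]\]
for $v$ with $\phi_n=\phi$. Since $w<v$, Proposition \ref{prop:ValuationsInequality} forces $w=[v_{m-1},w(\phi_m)=\mu]$ for some $m\leq n$ and some $v_{m-1}(\phi_m)<\mu\leq\lambda_m$.

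The first (and main) step is to show that $m=n$. Otherwise $m\leq n-1$, and since the degrees $\deg v_i=\deg\phi_i$ are strictly increasing along a minimal chain, one obtains $w\leq v_m\leq v_{n-1}<v$, the inequalities involving $v_{n-1}$ being strict. Setting $\s'=D_{v_{n-1}}\cap\roots$, I would apply Lemma \ref{lem:Clusterdescent} to $v_{n-1}$, the strong key polynomial $\phi_n$ and the radius $\lambda_n$: since $\s=D_v\cap\roots$ is non-empty (as $(\s,v)$ is a MacLane cluster), the lemma asserts that $(\s',v_{n-1})$ is itself a MacLane cluster. This yields a strict chain $(\s,v)\subsetneq(\s',v_{n-1})\subsetneq(\t,w)$, contradicting the maximality of $(\s,v)$ as a proper subcluster of $(\t,w)$. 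Hence $m=n$.

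With $w=[v_{n-1},w(\phi_n)=\mu]$ and $v_{n-1}(\phi_n)<\mu<\lambda_v$, I would verify $v=[w,v(\phi_n)=\lambda_v]$ directly from the definition of augmentation: every $f\in K[x]$ expands as $\sum a_i\phi_n^i$ with $\deg a_i<\deg\phi_n$, so $w(a_i)=v_{n-1}(a_i)$ and the augmentations $[v_{n-1},\cdot(\phi_n)=\lambda_v]$ and $[w,\cdot(\phi_n)=\lambda_v]$ compute the same value. This establishes the claim when the chosen centre is $\phi_n$.

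Finally, to handle an arbitrary centre $\phi$ of $v$: since $\phi$ and $\phi_n$ are both monic of degree $\deg v$ with $v(\phi)=v(\phi_n)=\lambda_v$, we get $v(\phi-\phi_n)\geq\lambda_v>\mu$; as $\deg(\phi-\phi_n)<\deg\phi_n$, this common value equals $v_{n-1}(\phi-\phi_n)=w(\phi-\phi_n)$, so $\phi\veq_w\phi_n$. Standard MacLane theory then gives $\phi\in\KP w$ (being a key polynomial is preserved under $w$-equivalence of monic polynomials of the same degree), and augmentations by $w$-equivalent key polynomials coincide, yielding $v=[w,v(\phi)=\lambda_v]$ as required. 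The main obstacle is the first step: constructing the intermediate cluster $(\s',v_{n-1})$ to eliminate $m<n$, which is exactly where Lemma \ref{lem:Clusterdescent} plays its role.
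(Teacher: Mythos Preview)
Your approach is exactly the one the paper has in mind: locate $w$ along a minimal chain for $v$ via Proposition \ref{prop:ValuationsInequality}, then use Lemma \ref{lem:Clusterdescent} to manufacture an intermediate cluster $(\s',v_{n-1})$ contradicting parenthood when $m<n$. There is one small slip: your strict chain $(\s,v)\subsetneq(\s',v_{n-1})\subsetneq(\t,w)$ need not be strict at the second step, since Proposition \ref{prop:ValuationsInequality} allows $\mu=\lambda_m$, and when $m=n-1$ this yields $w=v_{n-1}$. In that case no contradiction arises, but none is needed: $v=v_n=[v_{n-1},v(\phi_n)=\lambda_n]=[w,v(\phi_n)=\lambda_v]$ directly. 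Separating this case out makes the argument airtight; the rest, including the passage to an arbitrary centre via $w$-equivalence, is correct.
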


\begin{prop}\label{prop:appendix}
Let $F\in O_K[x]$ monic and irreducible. Let $v,w\in\pM$ such that $v\leq v_F$, e.g.\ when $v\in\M$ and $F\in\KP v$. Then 
\[(v\wedge w)(F)=\min\{v(F),w(F)\}\]
In particular, if $v\not< w$, then $w(F)=(v\wedge w)(F)$.
\proof
The first part of the statement follows from the proof of \cite[Proposition 2.26]{KW}, defining $w\wedge v$. Suppose $v\not< w$. If $v=w$, then $(v\wedge w)(F)=w(F)$. If $v\not\leq w$, then $v\wedge w<v\leq v_F$. This implies $v(F)>(v\wedge w)(F)$ by \cite[Lemma 2.22]{KW}. Thus $(v\wedge w)(F)=w(F)$.
\endproof
\end{prop}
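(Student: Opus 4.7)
The plan is to combine two facts about the tree structure on $\pM$ from \cite{KW}: the explicit construction of the meet $v\wedge w$ given in the proof of \cite[Proposition 2.26]{KW}, and the strict-monotonicity principle \cite[Lemma 2.22]{KW}, which roughly says that strictly ascending in the tree strictly increases the value on a polynomial $G$ provided one stays below $v_G$.

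For the first equality $(v\wedge w)(F)=\min\{v(F),w(F)\}$, the inequality $\leq$ is immediate from $v\wedge w\leq v$ and $v\wedge w\leq w$. For the reverse direction, I would appeal to the explicit description of $v\wedge w$ in \cite{KW}: it is the largest MacLane pseudo-valuation sitting below both $v$ and $w$, and its value on any test polynomial can be read off the common initial segment of MacLane chains for $v$ and $w$. The hypothesis $v\leq v_F$ is exactly what is needed to guarantee that $F$ lies in the ``key-polynomial direction'' throughout this segment, so that the value of $v\wedge w$ on $F$ is forced to equal $\min\{v(F),w(F)\}$.

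For the second equality, a short case analysis on $v\not<w$ suffices. If $v=w$ there is nothing to prove. Otherwise $v\not\leq w$, so $v\wedge w<v$ strictly; combined with $v\leq v_F$, \cite[Lemma 2.22]{KW} yields $v(F)>(v\wedge w)(F)$. The first equality then forces the minimum $\min\{v(F),w(F)\}$ to be achieved by $w(F)$, giving $w(F)=(v\wedge w)(F)$.

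I expect the main obstacle to be the reverse inequality in the first part: making it precise requires unpacking the construction of the meet from \cite[Proposition 2.26]{KW}, and in particular seeing why $v\leq v_F$ (equivalently $F\in\KP v$ in the canonical case) is the right hypothesis to exclude pathologies coming from augmentations that happen to leave $F$-values unchanged. Once that step is granted as a black box, the rest of the argument is a routine two-case analysis.
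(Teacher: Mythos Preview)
Your proposal is correct and follows essentially the same route as the paper: the first equality is deferred to the construction of $v\wedge w$ in \cite[Proposition 2.26]{KW}, and the second is handled by the same two-case split ($v=w$ versus $v\not\leq w$) using \cite[Lemma 2.22]{KW} to force $v(F)>(v\wedge w)(F)$ in the strict case. Your write-up is slightly more explicit about the easy inequality $(v\wedge w)(F)\leq\min\{v(F),w(F)\}$ and about why the minimum must be $w(F)$, but the argument is the same.
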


\begin{lem}\label{lem:fvaluation}
Let $v\in \pM$. Then
\[v(f)=v_K(c_f)+\sum_{F\in\m P,\,F\mid f}\deg F\cdot\frac{\lambda_{v\wedge v_{F}}}{\deg(v\wedge v_{F})}=v_K(c_f)+\sum_{r\in\roots}\frac{\lambda_{v\wedge v_r}}{\deg(v\wedge v_r)}.\]
\proof
Recall $f/c_f\in O_K[x]$. Then $f=c_f\cdot\prod_{F\in\m P,\,F\mid f} F$ and the factors $F$ in the product belong to $O_K[x]$. It suffices to show that $v(F)=\deg F\cdot\frac{\lambda_{v\wedge v_{F}}}{\deg(v\wedge v_{F})}$ for all $F\in\m P\cap O_K[x]$. Let $F\in\m P\cap O_K[x]$. By Lemma \ref{lem:vminimal}, the polynomial $F$ is $w$-minimal, for any MacLane valuation $w<v_F$. In particular, $F$ is $(v\wedge v_F)$-minimal. Hence 
\[\frac{(v\wedge v_F)(F)}{\deg F}=\frac{\lambda_{v\wedge v_F}}{\deg (v\wedge v_F)}.\]
by Theorem \ref{thm:minimality}. Since $v_F\not < v$, Proposition \ref{prop:appendix} shows $v(F)=(v\wedge v_F)(F)$ and so concludes the proof.
\endproof
\end{lem}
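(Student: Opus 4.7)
The plan is to reduce everything to the multiplicative behaviour of $v$ on the monic irreducible factorisation of $f$, and then evaluate $v$ on each irreducible factor using the meet $v \wedge v_F$ together with Theorem \ref{thm:minimality}.

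First, since $f/c_f \in O_K[x]$, I would write $f = c_f \prod_{F} F$ with $F$ ranging over the monic irreducible factors of $f$ in $K[x]$, all lying in $O_K[x]$. Because any pseudo-valuation is multiplicative, $v(f) = v_K(c_f) + \sum_F v(F)$. So the whole lemma reduces to the claim
\[
v(F) \;=\; \deg F \cdot \frac{\lambda_{v \wedge v_F}}{\deg(v \wedge v_F)}
\]
for each monic irreducible $F \mid f$ in $O_K[x]$.

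To establish this, I would first use Proposition \ref{prop:appendix} to replace $v(F)$ with $(v \wedge v_F)(F)$. The hypothesis needed is $v_F \not< v$, which holds because $v_F$ is an infinite MacLane pseudo-valuation and therefore maximal in the tree $\pM$, so no proper $v \in \pM$ can sit strictly above it. Next, I would argue that $F$ is $(v \wedge v_F)$-minimal: $F$ is $v_F$-minimal (it is a centre of $v_F$, hence a key polynomial over it by Lemma \ref{lem:centre}, and key polynomials are minimal by definition), so by Lemma \ref{lem:vminimal} applied with $w = v_F$ it remains minimal for every MacLane valuation below $v_F$, in particular for $v \wedge v_F$. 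Once $(v \wedge v_F)$-minimality is in hand, the equality case of Theorem \ref{thm:minimality} gives $(v \wedge v_F)(F)/\deg F = \lambda_{v \wedge v_F}/\deg(v \wedge v_F)$, which is exactly the claim.

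The second equality, passing from the sum over irreducible factors to the sum over roots, is then purely formal: for any root $r$ of $F$ we have $v_r = v_F$ by definition, and $F$ has exactly $\deg F$ roots, so grouping roots by the factor of $f$ whose root they are converts $\sum_F \deg F \cdot (\cdots)$ into $\sum_{r \in \roots} (\cdots)$.

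The only place requiring some care, and which I would consider the main technical point, is checking that Lemma \ref{lem:vminimal} is actually applicable with $w = v_F$: that lemma is stated for $w \in \M$, whereas $v_F$ is infinite. However, since $v \wedge v_F$ is a finite MacLane valuation strictly below $v_F$, one can either insert a suitable intermediate proper MacLane valuation $w_0$ with $v \wedge v_F \leq w_0 < v_F$ on a minimal MacLane chain of $v_F$ and transfer minimality through $w_0$, or else recall that $F$ remains $w_0$-minimal for every $w_0 < v_F$ (essentially what the graded-algebra computation underlying Theorem \ref{thm:minimality} shows). Either way, the minimality of $F$ propagates down to $v \wedge v_F$, closing the argument.
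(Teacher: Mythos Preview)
Your proposal is correct and follows essentially the same route as the paper: factor $f$, reduce to computing $v(F)$ for each monic irreducible $F$, use Proposition~\ref{prop:appendix} to replace $v(F)$ by $(v\wedge v_F)(F)$, establish $(v\wedge v_F)$-minimality of $F$, and finish with Theorem~\ref{thm:minimality}. You were in fact slightly more careful than the paper in flagging that Lemma~\ref{lem:vminimal} is stated for $w\in\M$ rather than for the infinite pseudo-valuation $v_F$, and your suggested fix via an intermediate valuation on a minimal MacLane chain for $v_F$ is exactly what is needed.
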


% \begin{lem}\label{lem:Clusterdescent}
% Let \[v=v_n=[v_0,v_1(g_1)=\lambda_1,\dots,v_{n-1}(g_{n-1})=\lambda_{n-1},v_n(g_n)=\lambda_n]\] be an inductive MacLane valuation. Set $\s=D_{v_{n-1}}\cap\roots$, $\t=D_v\cap\roots$. If $(\t,v)$ is a cluster, then $(\s,v_{n-1})$ is a cluster.
% \proof
% Suppose by contradiction that $(\s,v_{n-1})$ is not a cluster. By Lemma \ref{lem:ClusterEquivCond}, let $w>v$ be the unique pseudo-MacLane valuation of degree $\deg v_{n-1}$ such that $(\s,w)$ is a cluster. Consider the cluster $(\s\wedge\t,w\wedge v)$. First, $\s\wedge\t=\s$ as $\t\subseteq\s$. Second, since $v_{n-1}<w$ and $v_{n-1}<v$, we have $v_{n-1}\leq w\wedge v$. Third, $\deg w\wedge v=\deg v_{n-1}$ as $v_{n-1}\leq w\wedge v\leq w$ and $\deg w=\deg v_{n-1}$. But then $(\s\wedge\t,w\wedge v)=(\s,w)$ for the uniqueness of $w$. Therefore $(\t,v)\subseteq(\s,w)$. Suppose $\deg v>\deg w$. Then for any centre $g_\s$ of $(\s,w)$, $v_{n-1}(g_\s)=v(g_\s)\geq w(g_\s)$, which is impossible since $v_{n-1}<w$. Suppose now $\deg v=\deg w$.
% From Lemma \ref{lem:centreparent} it follows that $g_n$ is a centre of $(\s,w)$.

% First note that $\t\subseteq\s$. Then $v(g_{n-1})=\lambda_{n-1}$ as $v$ is inductive. Hence \cite[Proposition 4.11]{OS} implies that \[\lambda_{n-1}=v(g_{n-1})=\min_{r\in\t}v_K(g_{n-1}(r))\geq\min_{r\in\s}v_K(g_{n-1}(r))\geq \lambda_{n-1}.\]
% As $g_v$ was arbitrary, $(\s,v)$ is a cluster from Lemma \ref{lem:ClusterEquivCond}.
% \endproof
% \end{lem}

\subsection{Newton polygons}
Let $v$ be a MacLane valuation and $\phi\in\KP v$. Recall the definition of the Newton polygon $N_{v,\phi}(f)$.

\begin{defn}
The \textit{principal Newton polygon $N_{v,\phi}^-(f)$} is formed by the edges of $N_{v,\phi}(f)$ with slope $<-v(\phi)$.
\end{defn}

For any edge $L$ of $N_{v,\phi}^-(f)$ with slope $-\lambda$, define the MacLane valuation $v_L=[v,v_L(\phi)=\lambda]$. Then $L=L_{v_L}(f)$ (Notation \ref{nt:NewtonPolygon}). Denote by $\lambda_L$ the radius of $v_L$.

The aim of this subsection is proving the following theorem.

\begin{thm}\label{thm:NewtonClusters}
Let $v\in \M$ and $\phi\in\KP v$ as above.
\begin{enumerate}[label=(\roman*),leftmargin=1cm]
    \item If $(\t,w')$ is a MacLane cluster with centre $\phi'\veq_v\phi$ satisfying $w'(\phi)<\infty$, then $N_{v,\phi}^-(f)$ has an edge $L$ of slope $-w'(\phi)$ and $\i_{v_L}=|\t|/\deg \phi$. \label{item:ClustersToNewton}
    \item \label{item:NewtonToClusters}
 Conversely, for every edge $L$ of $N_{v,\phi}^-(f)$ there is a MacLane cluster $(\t,w_L)$ with $w_L\geq v_L$, $\deg w_L=\deg \phi$, $w_L(\phi)=\lambda_L$ and $|\t|=\i_{v_L}\deg \phi$.
% Conversely, if $N_{v,\phi}(f)$ has an edge $L$ of slope $-\lambda<-v(\phi)$, then there exists a proper cluster $(\s,w)$ with $w=[v,w(\phi')=\lambda']$ where $|\s|=x_1(L)\deg \phi$, $\phi'\in O_K[x]$ of degree $\deg \phi'=\deg \phi$, $\lambda'>v(\phi')$, and $w(\phi)=\lambda$. 
\end{enumerate}
    In case \ref{item:NewtonToClusters}, if there exists a proper $(\s,w)\in\Sigma_f^M$ with $w=[v,w(\phi)=\lambda]$, $\lambda\geq \lambda_L$, then $w_L=v_L$. 
\end{thm}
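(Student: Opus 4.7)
I will prove (i), (ii), and the final assertion in sequence, with (i) supplying the dictionary between edges and clusters that (ii) inverts. For (i), let $(\t,w')$ be a MacLane cluster with centre $\phi'\veq_v\phi$ and $\lambda:=w'(\phi)<\infty$. The first step is to see $\phi$ itself is a centre of $w'$: using $\phi\in\KP v$ with $\deg\phi=\deg v=\deg w'$ together with $\phi\veq_v\phi'$, Lemma~\ref{lem:centre} promotes $\phi$ to a centre of $w'$. Consequently $\lambda=\lambda_{w'}>v(\phi)$ and $D_{w'}=D(\phi,\lambda)=D_{v_L}$, so $\t=D_{v_L}\cap\roots$. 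Expanding $f=\sum_i a_i\phi^i$ with $\deg a_i<\deg\phi$, $w'(f)=\min_i(v(a_i)+i\lambda)$ locates $L_{w'}(f)\subseteq N_{v,\phi}^-(f)$. To pin down $\i_{v_L}$, I interpret it as a root count: the right endpoint $\i_{v_L}\deg\phi$ equals $|D(\phi,\lambda)\cap\roots|=|\t|$. This MacLane-valuation generalization of the classical Newton polygon theorem follows from Lemma~\ref{lem:fvaluation} applied to both $w'$ and the valuation agreeing with $v$ on polynomials of degree $<\deg\phi$: only roots in $\t$ contribute to the difference of $f$-values, each yielding $(\lambda-v(\phi))/\deg\phi$, while the Newton polygon interpretation gives the same difference as $\i_{v_L}(\lambda-v(\phi))$.

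For (ii), given an edge $L$ of slope $-\lambda_L$ in $N_{v,\phi}^-(f)$, put $v_L=[v,v_L(\phi)=\lambda_L]$ and $\s_L=D_{v_L}\cap\roots$. The edge having positive horizontal length forces $\s_L\neq\varnothing$ by (i) contrapositively. Lemma~\ref{lem:uniqueclustervs} applied with $v_L$ in place of $v$ then yields some $w_L\geq v_L$ in the minimal MacLane chain for $v_{\s_L}$ with $\deg w_L=\deg v_L=\deg\phi$ and $(\s_L,w_L)\in\Sigma_f^M$; set $\t=\s_L$. To verify $w_L(\phi)=\lambda_L$: we have $w_L(\phi)\geq\lambda_L$ from $w_L\geq v_L$; if strict, let $v_L'=[v,v_L'(\phi)=w_L(\phi)]$, so $w_L\geq v_L'$ and $D_{w_L}\subseteq D_{v_L'}\subsetneq D_{v_L}$, whence the equality $D_{w_L}\cap\roots=D_{v_L}\cap\roots$ forces every root in $\t$ to satisfy $v_K(\phi(r))\geq w_L(\phi)>\lambda_L$, contradicting the positive horizontal length of $L$ (which requires a root with $v_K(\phi(r))=\lambda_L$). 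Hence $w_L(\phi)=\lambda_L$. The identity $|\t|=\i_{v_L}\deg\phi$ then follows by applying (i) to $(\t,w_L)$, whose centre is $v$-equivalent to $\phi$ and whose $\phi$-value is $\lambda_L$.

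For the final assertion, suppose $(\s,w)\in\Sigma_f^M$ is proper with $w=[v,w(\phi)=\lambda]$ and $\lambda\geq\lambda_L$, and suppose for contradiction $w_L>v_L$. By Proposition~\ref{prop:ValuationsInequality}, $w_L=[v,w_L(\phi^{**})=\mu]$ for some $\phi^{**}\veq_v\phi$ and $\mu>\lambda_L$; the constraint $w_L(\phi)=\lambda_L$ from (ii) combined with the augmentation formula $w_L(\phi)=\min\{\mu,v(\phi-\phi^{**})\}$ pins $v(\phi-\phi^{**})=\lambda_L$. Since $\lambda\geq\lambda_L$, $w\geq v_L$ and $\s\subseteq\t$. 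If $\s=\t$, Lemma~\ref{lem:uniqueclustervs} (uniqueness) forces $w=w_L$; but the description $w=[v,w(\phi)=\lambda]$ makes $\phi$ a centre of $w_L$, so $w_L(\phi)=\lambda_{w_L}=\lambda>\lambda_L$, contradicting $w_L(\phi)=\lambda_L$. If $\s\subsetneq\t$, Lemma~\ref{lem:ClusterInclusion} forces $w>w_L$, hence $w(\phi^{**})\geq\mu$; but $w(\phi^{**})=\min\{w(\phi),v(\phi-\phi^{**})\}=\min\{\lambda,\lambda_L\}=\lambda_L<\mu$ gives a contradiction. Hence $w_L=v_L$.

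The main technical hurdle is the root-count identity $\i_{v_L}\deg\phi=|D(\phi,\lambda)\cap\roots|$ in (i); it is the MacLane-valuation generalization of the classical Newton polygon theorem, and requires careful bookkeeping via Lemma~\ref{lem:fvaluation} to separate contributions from roots inside and outside the relevant discoid. Everything else reduces to manipulating augmentation formulas and invoking the uniqueness and inclusion lemmas already in hand.
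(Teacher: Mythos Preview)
Your overall strategy mirrors the paper's, but there are real gaps in the execution.

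In (i), the assertion $\deg\phi=\deg v$ is false in general: $\phi\in\KP v$ only gives $\deg v\mid\deg\phi$, and the theorem must cover the case $\deg\phi>\deg v$ (e.g.\ $v=v_0$ and $\phi$ an irreducible quadratic). Consequently your appeal to Lemma~\ref{lem:centre} to make $\phi$ a centre of $w'$ is unjustified. The paper does not argue this at all; instead it sets $\lambda_\t=\min_{r\in\t}v_K(\phi(r))$, forms $w_\t=[v,w_\t(\phi)=\lambda_\t]$, and uses the cluster axiom (Definition~\ref{defn:Clusters}(2)) with the discoid trichotomy to force $\lambda_\t=w'(\phi)$ and $\t=D_{w_\t}\cap\roots$, sidestepping the centre question entirely. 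The root-count identity $\i_{v_L}\deg\phi=|\t|$ is indeed the crux, but your sketch via Lemma~\ref{lem:fvaluation} is not a proof: you would need to show that for $r\notin\t$ the contribution $\lambda_{w_\lambda\wedge v_r}/\deg(w_\lambda\wedge v_r)$ is locally constant in $\lambda$, which is precisely the content the paper isolates as Lemma~\ref{lem:EdgesRoots} and Proposition~\ref{prop:EdgesRoots} (both resting on Theorem~\ref{thm:monicirredpoly}, the one-sidedness of the Newton polygon of an irreducible factor). With those auxiliary results in hand, both (i) and (ii) become short; your ``(i) contrapositively'' in (ii) is not valid logic, but the nonemptiness and the equality $\min_{r\in\t}v_K(\phi(r))=\lambda_L$ fall out of these lemmas directly.

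In the final assertion, Proposition~\ref{prop:ValuationsInequality} does not give what you claim: it expresses a \emph{smaller} valuation in terms of the minimal chain of a \emph{larger} one, so from $v_L\leq w_L$ you obtain $v_L=[v'_{n-1},v_L(\phi'_n)=\lambda']$ with $v'_{n-1}$ taken from $w_L$'s minimal chain, not $w_L=[v,\ldots]$. There is no reason $v'_{n-1}=v$; when $\deg\phi=\deg v$ it will typically be the predecessor of $v$. The paper's route is shorter: in the case $\s\subsetneq\t$, Lemma~\ref{lem:ClusterInclusion} gives $w>w_L$ with $\deg w=\deg w_L$, so Lemma~\ref{lem:centreparent} transfers the centre $\phi$ of $w$ down to $w_L$; then $\lambda_{w_L}=w_L(\phi)=\lambda_L$ forces $D_{w_L}=D(\phi,\lambda_L)=D_{v_L}$, hence $w_L=v_L$.
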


We first recall the following result from \cite{FGMN}.

\begin{thm}[{\cite[Theorem 6.2]{FGMN}}]\label{thm:monicirredpoly}
Let $F\in O_K[x]$ be a monic irreducible polynomial and $r\in\bar K$ a root of $F$. Then $\phi\vdiv_v F$ if and only if $v_K(\phi(r))>v(\phi)$.
Moreover, if this condition holds, one also has:
\begin{enumerate}
    \item Either $F=\phi$, or $N_{v,\phi}(F)$ consists of one edge of slope $-v_K(\phi(r))$.\label{item:monicirredpoly1}
    \item $d:=\deg F/\deg \phi\in\Z_+$ and $F\veq_v\phi^d$. \label{item:monicirredpoly2}
    %In particular, we get $F|_v=(\phi|_v)^l$.
\end{enumerate}
\end{thm}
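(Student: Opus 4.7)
The starting point is the $\phi$-adic expansion
\[F = \sum_{i=0}^{N} a_i \phi^i, \qquad \deg a_i<\deg\phi,\]
with $N=\lfloor \deg F/\deg \phi\rfloor$. Since $\phi$ is $v$-minimal and $v$-irreducible, the augmentation formula gives $v(F)=\min_i\bigl(v(a_i)+iv(\phi)\bigr)$, and unwinding the definition of $v$-divisibility shows $\phi\vdiv_v F\iff v(a_0)>v(F)$. Hence the first equivalence reduces to analysing when the leftmost vertex $(0,v(a_0))$ of $N_{v,\phi}(F)$ sits strictly above the supporting line of slope $-v(\phi)$; the moreover assertions reduce to describing the shape of $N_{v,\phi}(F)$.

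For $(\Leftarrow)$, set $\lambda:=v_K(\phi(r))>v(\phi)$ and form $w=[v,w(\phi)=\lambda]$. This is a legitimate augmentation since $\phi\in\KP v$ and $\lambda>v(\phi)$, and the associated discoid is $D_w=D(\phi,\lambda)\ni r$. Every $a_i$ has $\deg a_i<\deg\phi$, so $v(a_i)=w(a_i)$; Proposition \ref{prop:2.7OS2} then yields $v_K(a_i(r))=v(a_i)$ for every $i$ (if $\phi$ is not proper over $v$ one first descends to the predecessor $v'$ in a minimal MacLane chain for $v$, over which $\phi$ is automatically a proper key polynomial because $\deg\phi=\deg v>\deg v'$). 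From $F(r)=0$ we rewrite $a_0(r)=-\sum_{i\geq 1}a_i(r)\phi(r)^i$ and obtain
\[v(a_0)=v_K(a_0(r))\geq\min_{i\geq 1}\bigl(v(a_i)+i\lambda\bigr)>\min_{i\geq 1}\bigl(v(a_i)+iv(\phi)\bigr)\geq v(F),\]
which gives $\phi\vdiv_v F$.

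For $(\Rightarrow)$, assume $v(a_0)>v(F)$, so that $N_{v,\phi}^-(F)$ has at least one edge; let $-\lambda_0$ be its leftmost slope, necessarily with $\lambda_0>v(\phi)$. By the Ore--Montes factorisation principle (extractable from the residual polynomial machinery of \S\ref{sec:Residualpoly}, or equivalently from the \ref{item:NewtonToClusters} half of Theorem \ref{thm:NewtonClusters} applied to $F$), at least one root of $F$ lies in $D(\phi,\lambda_0)$. Irreducibility of $F$ makes its roots a single $G_K$-orbit, which forces $v_K(\phi(\cdot))$ to take the common value $\lambda:=v_K(\phi(r))$ on all of them; so all roots lie in $D(\phi,\lambda_0)$, giving $\lambda\geq\lambda_0>v(\phi)$. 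The same Galois observation forces every non-trivial slope of $N_{v,\phi}(F)$ to equal $-\lambda$, yielding (1) (the case $F=\phi$ being the one in which there is no room outside the edge). For (2), a single-edge Newton polygon of slope $-\lambda$ combined with irreducibility of $F$ implies via Lemma \ref{lem:keypolyreduction} that $F$ is a key polynomial over $w=[v,w(\phi)=\lambda]$; hence $\deg\phi\mid\deg F$, and writing $d=\deg F/\deg\phi$, the $w$-minimality of $F$ together with Lemma \ref{lem:minimalityequivalence}\ref{item:minimalequivalent} gives $F\veq_v\phi^d$.

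The main obstacle is justifying the Ore--Montes bridge in the $(\Rightarrow)$ direction: that each edge of $N_{v,\phi}^-(F)$ actually produces roots of $F$ in the corresponding discoid, without circularly invoking Theorem \ref{thm:NewtonClusters}. The cleanest route is induction on the depth of $v$: the depth-$0$ case is the classical Newton polygon theorem over the Gauss valuation together with a Hensel lift, and the inductive step passes from $v_{m-1}$ to $v_m$ by reducing via the residual polynomial operator $R_{m,\alpha}$ of \S\ref{sec:Residualpoly} and applying Hensel's lemma to the factorisation of $F|_{v_m}$ in $k_{v_m}[X]$.
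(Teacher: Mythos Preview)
The paper does not prove this statement: Theorem~\ref{thm:monicirredpoly} is quoted verbatim from \cite[Theorem 6.2]{FGMN} with no argument supplied, so there is no in-paper proof to compare against. Your proposal is therefore not a reproduction of the paper's approach but an independent attempt at proving the cited result.

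On the substance of your attempt: the $(\Leftarrow)$ direction is clean and correct, and the reduction of $\phi\vdiv_v F$ to $v(a_0)>v(F)$ is the right first move. The problems are all in $(\Rightarrow)$ and in the ``moreover'' clauses. You yourself flag the main one: the step ``at least one root of $F$ lies in $D(\phi,\lambda_0)$'' is exactly the Ore--Montes theorem you are trying to prove, and invoking Theorem~\ref{thm:NewtonClusters} is circular since its proof in this paper uses Theorem~\ref{thm:monicirredpoly}. Sketching ``induction on depth plus Hensel'' is the correct outline of how \cite{FGMN} actually does it, but you have not carried it out, so as written the $(\Rightarrow)$ direction is not proved.

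There is a second, smaller gap in your argument for item~(2). You claim that a one-sided Newton polygon together with irreducibility of $F$ makes $F$ a key polynomial over $w=[v,w(\phi)=\lambda]$ via Lemma~\ref{lem:keypolyreduction}. But that lemma characterises key polynomials over $v$, not over $w$, and its hypothesis is that $F|_v$ be irreducible, which you have not established (irreducibility in $K[x]$ does not a priori give irreducibility of the residual polynomial without further input). The conclusion $F\veq_v\phi^d$ actually follows more directly once the one-sided polygon is known: the polygon forces $v(a_i\phi^i)>v(a_d\phi^d)=v(F)$ for all $i<d$, hence $F\veq_v a_d\phi^d$, and $a_d$ is a $v$-unit since $F$ is monic. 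The divisibility $\deg\phi\mid\deg F$ then comes from $F\veq_v\phi^d$ together with $v$-minimality of $\phi$, without passing through $w$.
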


\begin{lem}\label{lem:EdgesRoots}
Let $w=[v,w(\phi)=\lambda]$ be an augmentation of $v$. Let $\s_\lambda$ be the set of roots $r$ of $f$ satisfying $v_K(\phi(r))=\lambda$. Then $|\s_\lambda|/\deg \phi=\i_w-\io_w$.
\proof
Without loss of generality we can suppose $f$ monic. If $\lambda=\infty$, then $|\s_\lambda|=\ord_\phi(f)$ and the equality $|\s_\lambda|/\deg \phi=\i_w-\io_w$ follows from the definition of $\io_w,\i_w$. Hence suppose $\lambda<\infty$. 

We first show the statement for $f=F$ irreducible. In this case either $\s_\lambda=\varnothing$ or $\s_\lambda=\roots$. Suppose $\s_\lambda=\roots$, which means $v_K(\phi(r))=\lambda>v(\phi)$ for any (some) $r\in\roots$. Since $F\neq \phi$ (otherwise $\phi(r)=0$), Theorem \ref{thm:monicirredpoly} implies that $L_w(F)=N_{v,\phi}(F)$, $\io_w=0$ and $\i_w=\deg F/\deg \phi=|\s_\lambda|/\deg \phi$. Now suppose that $L_w(F)$ is an edge of $N_{v,\phi}(F)$. So $\i_w\geq 1$. We want to show $\s_\lambda\neq\varnothing$. Let $\i=\i_w$. Expand
\[F=\sum_{j=0}^da_j\phi^j,\qquad a_j\in K[x],\,\deg a_j<\deg \phi,\, a_d\neq 0.\]
By definition of $L_w(f)$ we have $w(a_j\phi^j)\geq w(a_\i\phi^\i)$ for all $j$. Therefore 
\[v(a_\i\phi^\i)=w(a_\i\phi^\i)-\i(\lambda-v(\phi))<w(a_j\phi^j)-j(\lambda-v(\phi))=v(a_j\phi^j)\]
for all $j<\i$. In particular, $v(a_0)>v(F)$, so $\phi\vdiv_v F$. Theorem \ref{thm:monicirredpoly} then implies that $-\lambda=-v_K(\phi(r))$ for any $r\in\roots$. Therefore $\s_\lambda\neq\varnothing$.

Let $f\in O_K[x]$ be any monic separable polynomial. Write $f=F_0\cdots F_t$, with $F_j\in O_K[x]$ monic irreducible. Denote by $\roots_j$ the set of roots of $F_j$ and by $\s_{\lambda,j}$ the elements $r\in\roots_j$ satisfying $v_K(\phi(r))=\lambda$. Clearly 
%$\roots=\bigsqcup_j\roots_j$ and 
$\s_\lambda=\bigsqcup_j \s_{\lambda,j}$. Moreover, from \cite[Corollary 2.7]{FGMN}, we have \[L_w(f)=L_w(F_0)+\dots+L_w(F_t)\]
(see before \cite[Corollary 2.7]{FGMN} for a definition of $+$).
The lemma then follows from the first part of the proof.
\endproof
\end{lem}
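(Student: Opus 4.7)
The plan is to reduce to the case where $f=F$ is monic irreducible and then reassemble via additivity of Newton polygons. First I would assume $f$ is monic, since the leading coefficient affects neither $\roots$ nor $N_{v,\phi}(f)$. The case $\lambda=\infty$ is immediate from the conventions of Notation \ref{nt:i}: $\io_w=0$ and $\i_w=\operatorname{ord}_\phi(f)$, while $|\s_\infty|=\deg\phi\cdot\operatorname{ord}_\phi(f)$. For $\lambda<\infty$ and $F$ monic irreducible, $\s_\lambda$ is $G_K$-stable and $G_K$ acts transitively on the roots of $F$, so $\s_\lambda\in\{\varnothing,\roots\}$. If $\s_\lambda=\roots$, then $F\neq\phi$ (else some root would make $\phi$ vanish) and $v_K(\phi(r))=\lambda>v(\phi)$ for a root $r$; Theorem \ref{thm:monicirredpoly} then gives $\phi\vdiv_v F$, that $N_{v,\phi}(F)$ is a single edge of slope $-\lambda$, and $\i_w-\io_w=\deg F/\deg\phi=|\s_\lambda|/\deg\phi$.

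The remaining sub-case in the irreducible setting is the converse: if $L_w(F)$ is a proper edge (so $\i_w\geq 1$), show $\s_\lambda\neq\varnothing$. Expand $F=\sum_{j=0}^{d}a_j\phi^j$ with $\deg a_j<\deg\phi$. By the defining property of the endpoint $(\i_w,u_w)$ of $L_w(F)$, the points $(j,v(a_j))$ lie on or above the line of slope $-\lambda$ through this endpoint, with strict inequality for $j<\i_w$. Translating the inequality $w(a_j\phi^j)\geq w(a_{\i_w}\phi^{\i_w})$ back to $v$-values by subtracting $j(\lambda-v(\phi))$ and $\i_w(\lambda-v(\phi))$ respectively, one finds $v(a_j\phi^j)>v(a_{\i_w}\phi^{\i_w})=v(F)$ for every $j<\i_w$; in particular $v(a_0)>v(F)$, i.e.\ $\phi\vdiv_v F$. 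Applying Theorem \ref{thm:monicirredpoly} once more, $v_K(\phi(r))$ equals the common slope of $N_{v,\phi}(F)$, namely $\lambda$, so $\s_\lambda=\roots\neq\varnothing$, and the identity follows from the previous sub-case.

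For general monic $f$, factor $f=F_0\cdots F_t$ into monic irreducibles in $O_K[x]$, and write $\s_{\lambda,j}$ for the corresponding sets of roots of $F_j$ satisfying $v_K(\phi(r))=\lambda$. Both sides of the identity are additive over this factorisation: tautologically $\s_\lambda=\bigsqcup_j\s_{\lambda,j}$, while the Minkowski-sum property of Newton polygons under multiplication (the cited Corollary 2.7 of FGMN) gives $L_w(f)=L_w(F_0)+\cdots+L_w(F_t)$, hence $\i_w-\io_w=\sum_j(\i_{w,j}-\io_{w,j})$. Summing the irreducible identities closes the argument.

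The main obstacle, and the only non-formal input, is the implication \emph{$L_w(F)$ is a proper edge} $\Rightarrow$ \emph{$\phi\vdiv_v F$} in the irreducible case: it converts a combinatorial statement about the lower convex hull of the $\phi$-adic coefficients into a valuation-theoretic divisibility, and it is precisely the hypothesis needed to unlock Theorem \ref{thm:monicirredpoly}. Everything else is either a Galois transitivity remark, a conventions check for $\lambda=\infty$, or the Minkowski-sum additivity of Newton polygons used to glue the irreducible pieces.
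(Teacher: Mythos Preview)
Your proposal is correct and follows essentially the same route as the paper's proof: reduce to monic $f$, dispatch $\lambda=\infty$ by the conventions of Notation~\ref{nt:i}, handle the irreducible case by the two sub-cases ($\s_\lambda=\roots$ directly via Theorem~\ref{thm:monicirredpoly}, and the converse by showing $\phi\vdiv_v F$ from $v(a_0)>v(F)$), then glue via $L_w(f)=\sum_j L_w(F_j)$ from \cite[Corollary~2.7]{FGMN}. One phrasing to tighten: the claim ``with strict inequality for $j<\i_w$'' about the points $(j,v(a_j))$ relative to the supporting line is not literally true (points on the edge lie \emph{on} the line), but your next sentence uses only $w(a_j\phi^j)\geq w(a_{\i_w}\phi^{\i_w})$ and obtains strictness in $v$ from $j<\i_w$ and $\lambda>v(\phi)$, exactly as the paper does, so the argument itself is sound.
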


\begin{prop}\label{prop:EdgesRoots}
Let $w=[v,w(\phi)=\lambda]$ be an augmentation of $v$ and let $\s=D_w\cap\roots$. Then $\i_w=|\s|/\deg \phi$.
\proof
By definition $\i_w=\sum_{\lambda'\geq \lambda}(\i_{w'}-\io_{w'})$, where $w'=[v,w'(\phi)=\lambda']$. Lemma \ref{lem:EdgesRoots} implies
\[\i_w\deg \phi=\sum_{\lambda'\geq\lambda}\left|\s_{\lambda'}\right|=\bigg|\bigcup_{\lambda'\geq\lambda}\s_{\lambda'}\bigg|=|\s|,\]
where $\s_{\lambda'}\subseteq\roots$ is the set of roots $r$ of $f$ satisfying $v_K(\phi(r))=\lambda'$.
\endproof
\end{prop}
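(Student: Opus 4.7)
The plan is to reduce the identity $\i_w=|\s|/\deg\phi$ to a counting argument on the Newton polygon $N_{v,\phi}(f)$ by combining three ingredients: (a) a cumulative-sum description of $\i_w$ in terms of the edges of $N_{v,\phi}(f)$, (b) the already-established edgewise count in Lemma \ref{lem:EdgesRoots}, and (c) the explicit description of $D_w$ as $D(\phi,\lambda)=\{\alpha\in\bar K\mid v_K(\phi(\alpha))\geq\lambda\}$.

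First I would unpack the definition of $\i_w$ from Notation \ref{nt:i}. Writing $w'=[v,w'(\phi)=\lambda']$, I claim
\[
\i_w=\sum_{\lambda'\geq\lambda}\bigl(\i_{w'}-\io_{w'}\bigr),
\]
where the sum only has finitely many nonzero terms. The contributions split as follows. The term $\lambda'=\infty$ contributes $\i_{w_\infty}-0$, the $\i$-coordinate of the left-most vertex of $N_{v,\phi}(f)$ (equivalently $\ord_\phi f$). For finite $\lambda'$, if $-\lambda'$ is the slope of an actual edge of the polygon then $\i_{w'}-\io_{w'}$ is the horizontal length of that edge, while if no edge has slope $-\lambda'$ then $L_{w'}(f)$ is a single vertex so $\i_{w'}=\io_{w'}$ and the term vanishes. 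Summing over $\lambda'\geq\lambda$ therefore telescopes to the $\i$-coordinate of the right endpoint of $L_w(f)$, which is $\i_w$ by definition. This holds equally in the two cases of Notation \ref{nt:i} (whether $L_w(f)$ is an edge or a vertex), and also when $\lambda=\infty$ (in which case the sum reduces to the single $\lambda'=\infty$ term and matches the convention $\io_w=0$).

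Second, Lemma \ref{lem:EdgesRoots} translates each summand into a root count: $\i_{w'}-\io_{w'}=|\s_{\lambda'}|/\deg\phi$, where $\s_{\lambda'}=\{r\in\roots\mid v_K(\phi(r))=\lambda'\}$. Third, from the description of the discoid
\[
D_w=D(\phi,\lambda)=\{\alpha\in\bar K\mid v_K(\phi(\alpha))\geq\lambda\}
\]
I get $\s=D_w\cap\roots=\bigsqcup_{\lambda'\geq\lambda}\s_{\lambda'}$ as a disjoint union. Combining the three steps yields
\[
\i_w\cdot\deg\phi=\sum_{\lambda'\geq\lambda}|\s_{\lambda'}|=|\s|,
\]
and dividing by $\deg\phi$ finishes the proof.

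The main obstacle is really just the first step: one has to be careful to handle both possibilities in Notation \ref{nt:i} (edge versus vertex) uniformly, and to verify the $\lambda'=\infty$ boundary contribution matches the convention $\io_{w_\infty}=0$. Once the cumulative-sum identity is set up cleanly, steps two and three are an immediate appeal to Lemma \ref{lem:EdgesRoots} and the definition of $D_w$.
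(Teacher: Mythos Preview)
Your proof is correct and follows essentially the same route as the paper's: the paper asserts the cumulative-sum identity $\i_w=\sum_{\lambda'\geq\lambda}(\i_{w'}-\io_{w'})$ ``by definition'' and then applies Lemma~\ref{lem:EdgesRoots} together with the disjoint decomposition of $\s$ exactly as you do. Your write-up is in fact more careful than the paper's in justifying the telescoping step and the $\lambda'=\infty$ boundary contribution.
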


Now we are ready to prove Theorem \ref{thm:NewtonClusters}.
\begin{proof}[Proof of Theorem \ref{thm:NewtonClusters}]
%Let $(\s,w)$ be a proper cluster for $F$ with $w=[v,w(\phi)=\lambda]$. From Remark \ref{rem:clustersfirst} the subset $\s\subseteq\roots$ is Galois invariant, so $\s=\roots$, as $F$ is irreducible. For any root $r\in\s$ we have $v_K(\phi(r))=\lambda>v(\phi)$, so by Theorem \ref{thm:monicirredpoly} it only remains to show that $\phi\nmid F$.Suppose $\phi\mid F$. Then $F=\phi$ since $F$ is irreducible. Fix a root $r$ of $\phi$ and consider $w'=[v,w'(\phi)=\lambda+1]$. Then $r\in D_{w'}$, and so $\roots\cap D_{w'}=\s$. But $w'>w$ with $\deg w'=\deg w$, contradicting that $(\s,w)$ is a cluster.

%Now suppose $N_{v,\phi}(F)$ has an edge of slope $-\lambda<-v(\phi)$ and consider the MacLane valuation $w=[v,w(\phi)=\lambda]$. We want to show that for any $r\in\roots$ we have $v_K(\phi(r))=\lambda$. Let $r\in\roots$. Expand
%\[F=\sum_{j=0}^da_jg^j,\qquad\text{where }\deg a_j<\deg \phi.\]
%For our assumption, $w(a_jg^j)\geq w(a_0)=w(a_d \phi^d)$ for all $j$. Therefore, since $w(a_j)=v(a_j)$, we have 
%\[v(a_dg^d)=w(a_dg^d)-d(\lambda-v(\phi))<w(a_jg^j)-j(\lambda-v(\phi))=v(a_jg^j)\]
%for all $j<d$. So $\phi\vdiv_v F$. Theorem \ref{thm:monicirredpoly} then implies that $-\lambda=-v_K(\phi(r))$, as required.

%Let $f\in O_K[x]$ be any monic separable polynomial. We can decompose 
%\[f=f_0g^{\ord_g(f)}\prod_{\lambda,\alpha} f_{\lambda,\alpha},\]
% in Theorem \ref{thm:Newtondecomposition}. Note that $\ord_g(f)\leq 1$ as $f$ is separable. Suppose $(\s,w)$ is a proper cluster for $f$.
\ref{item:ClustersToNewton}. Let $(\t,w)$ be a cluster with centre $\phi'\veq_v\phi$ and $w(\phi)<\infty$. In particular, $\deg\phi=\deg\phi'$. 
Let $\lambda_\t=\min_{r\in\t}v_K(\phi(r))\geq w(\phi)$. Consider the MacLane valuation $w_\t=[v,w_\t(\phi)=\lambda_\t]$. Then $\t\subseteq D_{w_\t}\cap\roots$. By Remark \ref{rem:Galois1disjoint2discoids}(2) and Theorem \ref{thm:valdiscoid}, either $w_\t>w$ or $w_\t\leq w$. By definition of MacLane cluster we have $w_\t\leq w$. But then $\lambda_\t\leq w(\phi)$. Thus $\lambda_\t=w(\phi)$. Furthermore,
\[\t\subseteq D_{w_\t}\cap\roots\subseteq D_{w}\cap\roots=\t,\]
and so $\t=D_{w_\t}\cap\roots$.
Then Lemma \ref{lem:EdgesRoots} implies that $L_{w_\t}(f)$ is an edge of $N_{v,\phi}(f)$. The equality $\i_{w_\t}\deg \phi=|\t|$ follows from Proposition \ref{prop:EdgesRoots}.

% From Lemma \ref{lem:lambdav} there exists $r\in\s$ such that $v_K(\phi(r))=\lambda$. 
% Then Lemma \ref{lem:EdgesRoots} implies that $L_w(f)$ is an edge of $N_{v,\phi}(f)$. The equality $\i_w\deg \phi=|\s|$ follows from Proposition \ref{prop:EdgesRoots}.

\ref{item:NewtonToClusters}. Let $L$ be an edge of $N_{v,\phi}^-(f)$. Let $\t=D_{v_L}\cap\roots$. From Lemma \ref{lem:EdgesRoots} and Proposition \ref{prop:EdgesRoots} it follows that 
\[|\t|=\i_{v_L}\cdot\deg \phi\quad\text{ and }\quad\min_{r\in\t}v_K(\phi(r))=\lambda_L.\]
%there exists a root $r\in\roots$ such that $v_K(\phi(r))=\lambda$ and 
By Lemma \ref{lem:uniqueclustervs} there exists a unique MacLane pseudo-valuation $w_L\geq v_L$ such that $\deg w_L=\deg v_L=\deg\phi$ and $(\t,w_L)$ is a cluster. In particular, $w_L(\phi)=\lambda_L$ as
\[\lambda_L=v_L(\phi)\leq w_L(\phi)\leq\min_{r\in\t}v_K(\phi(r))=\lambda_L.\]

%Let $w=[v,w(\phi)=\lambda]$ and $\s=D_w\cap\roots$ such that $(\s,w)$ is a proper cluster for $f$. 
there exists a proper MacLane cluster $(\s,w)$ with $w=[v,w(\phi)=\lambda]$, $\lambda\geq \lambda_L$. Then $w\geq v_L$ and so $\s\subseteq\t$. Furthermore, $\deg w_L=\deg v_L=\deg w$; hence, by definition of cluster, if $\s=\t$ then $w=v_L=w_L$. So suppose $\s\subsetneq\t$. It follows from Lemma \ref{lem:ClusterInclusion} that $(\s,w)\subsetneq(\t,w_L)$.
Since $\phi$ is centre of $w$, Lemma \ref{lem:centreparent} implies that $\phi$ is also a centre of $w_L$. But we have already showed $w_L(\phi)=\lambda_L$, so $w_L=v_L$ as required.
\endproof
\end{proof}

\subsection{Residual polynomials}
%Let $(\s,v)$ be a cluster for $f$.
In this subsection we will see that there is a close relationship between certain children $(\t,w)<(\s,v)$ and multiple irreducible factors of $f|_v$.
We will need the following result.

\begin{thm}[{\cite[Theorem 6.4]{FGMN}}]\label{thm:Newtondecomposition}
Let $v\in \M$ and let $\phi\in K[x]$ be a proper key polynomial over $v$. Every monic $g\in O_K[x]$ factorises into a product of monic polynomials in $O_K[x]$
\[g=g_0\cdot \phi^{\ord_\phi(g)}\prod_{\lambda,h} g_{\lambda,h},\]
where $-\lambda$ runs on the slopes of $N_{v,\phi}^-(g)$ and $h\in k_{w_\lambda}[X]$ runs on the monic irreducible factors of $g|_{w_\lambda}$, where $w_\lambda=[v,w_\lambda(\phi)=\lambda]$. 
Let $g=F_1,\dots,F_s$ be the factorisation of $g$ in monic irreducible polynomials $F_j\in O_K[x]$. Then $g_0$ is the product of all $F_j$ such that $\phi\notvdiv_v F_j$, while $g_{\lambda,h}$ is the product of all $F_j$ with 
%$\phi\vdiv_v F_j$, 
$N_{v,\phi}(F_j)$ one-sided of slope $-\lambda$ and $F_j|_{w_\lambda}=h^l$ for some $l$. In particular,
\[\deg g_0=\deg g-l(N_{v,\phi}^-(g))\deg \phi, \qquad\deg g_{\lambda,h}=b_{w_\lambda}\cdot \ord_h(g|_{w_\lambda})\cdot \deg h\cdot \deg \phi,\]
where $b_{w_\lambda}$ (Notation \ref{nt:bvhvlv}) equals the denominator of $e_v\lambda$.
\end{thm}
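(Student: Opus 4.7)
My plan is to derive the factorisation from the unique factorisation of $g$ into monic irreducibles over $O_K[x]$, partitioning those irreducible factors according to how each relates to $\phi$. Write $g=F_1\cdots F_s$ with $F_j\in O_K[x]$ monic irreducible. By Theorem \ref{thm:monicirredpoly}, each $F_j$ falls into exactly one of three classes: (A) $\phi\notvdiv_v F_j$, so $v_K(\phi(r_j))\leq v(\phi)$ for any root $r_j$, and hence $N_{v,\phi}(F_j)$ carries no edge of slope strictly below $-v(\phi)$; (B) $F_j=\phi$; or (C) $F_j\neq\phi$ with $\phi\vdiv_v F_j$, in which case $N_{v,\phi}(F_j)$ is one-sided of slope $-\lambda_j$, where $\lambda_j=v_K(\phi(r_j))>v(\phi)$, and $F_j\veq_v\phi^{d_j}$ with $d_j=\deg F_j/\deg\phi$. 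The type-(A) factors will form $g_0$, the type-(B) factors collect into $\phi^{\ord_\phi(g)}$, and the type-(C) factors still need to be grouped further.

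\textbf{Grouping Case (C).} The next step is to show that for each type-(C) factor $F_j$ the reduction $F_j|_{w_{\lambda_j}}$ is a power of a single monic irreducible $h_j\in k_{w_{\lambda_j}}[X]$; granting this, set $g_{\lambda,h}=\prod_{\lambda_j=\lambda,\,h_j=h}F_j$. By Minkowski additivity of Newton polygons, the slopes appearing in $N_{v,\phi}^-(g)$ are precisely the $-\lambda_j$ arising from type-(C) factors. The multiplicativity of reduction (Proposition \ref{prop:ReductionDegreeProduct}\ref{item:ReductionProduct}) together with the observation that type-(A), type-(B), and type-(C)-with-$\lambda_j\neq\lambda$ factors do not contribute the irreducible $h$ to $g|_{w_\lambda}$ then shows the monic irreducible factors $h$ of $g|_{w_\lambda}$ are exactly the $h_j$ with $\lambda_j=\lambda$, giving the indexing by pairs $(\lambda,h)$ described in the statement.

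\textbf{Degree identities.} The identity $\deg g_0=\deg g-l(N_{v,\phi}^-(g))\deg\phi$ follows because each type-(C) factor contributes horizontal length $\deg F_j/\deg\phi$ to $N_{v,\phi}^-(g)$ whereas types (A) and (B) contribute none, combined with $\deg g=\deg g_0+\ord_\phi(g)\deg\phi+\sum_{(C)}\deg F_j$. For the second identity, each type-(C) factor contributing to $g_{\lambda,h}$ satisfies $\io_{w_\lambda}=0$ and $\deg F_j=\i_{w_\lambda}\deg\phi$, so Proposition \ref{prop:ReductionDegreeProduct}\ref{item:ReductionDegree} yields $\deg F_j=b_{w_\lambda}\cdot\deg F_j|_{w_\lambda}\cdot\deg\phi=b_{w_\lambda}\cdot l_j\deg h\cdot\deg\phi$, where $F_j|_{w_\lambda}=h^{l_j}$. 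Summing over $j$ and identifying $\sum_j l_j$ with $\ord_h(g|_{w_\lambda})$ produces the claimed $\deg g_{\lambda,h}$.

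\textbf{Main obstacle.} The hardest step will be showing that $F_j|_{w_{\lambda_j}}$ is a power of a single irreducible. This is a graded-ring analogue of Hensel's lemma: any coprime factorisation of $F_j|_{w_{\lambda_j}}$ in $k_{w_{\lambda_j}}[X]$ should lift, via the residue map $H_{w_{\lambda_j}}$ and a successive approximation inside the graded algebra $Gr(w_{\lambda_j})$, to a non-trivial factorisation of $F_j$ itself in $O_K[x]$, contradicting its irreducibility. Once this key lemma is secured, the rest of the argument is careful bookkeeping with Newton polygons, the multiplicativity of $|_v$, and the degree-reduction dictionary of Proposition \ref{prop:ReductionDegreeProduct}.
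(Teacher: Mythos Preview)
The paper does not prove this theorem at all; it is quoted from \cite[Theorem~6.4]{FGMN} and used as a black box throughout \S\ref{sec:MacLaneClusters}--\ref{sec:ModConstr}. So there is no proof in the paper to compare against.

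That said, your outline is a faithful reconstruction of how such results are proved in the Montes--Nart framework. The partitioning into types (A)/(B)/(C) via Theorem~\ref{thm:monicirredpoly}, the Minkowski additivity of Newton polygons, and the degree bookkeeping via Proposition~\ref{prop:ReductionDegreeProduct} are all correct and are exactly the ingredients \cite{FGMN} uses. One remark on your ``main obstacle'': you do not need to build a graded Hensel lemma from scratch. The fact that an irreducible $F_j$ of type (C) has $F_j|_{w_{\lambda_j}}$ equal to a power of a single monic irreducible is, in the language of this paper, the statement that $F_j$ is $w_{\lambda_j}$-irreducible, which by Lemma~\ref{lem:v-irreducreduction} (applied with $v$ replaced by $w_{\lambda_j}$) forces $F_j|_{w_{\lambda_j}}$ to be either a prime power or have $\io=\i=1$; the case analysis in \cite{FGMN} (their \S5--6) handles this via the correspondence between prime polynomials in the graded algebra and key polynomials, rather than by an explicit lifting argument. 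Either route works, but invoking $w_{\lambda_j}$-irreducibility of $F_j$ is the cleaner path and is what the cited reference does.
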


Consider a MacLane valuation $v$. Assume $v\neq v_0$. Let $\phi_v$ be a centre of $v$. By Proposition \ref{prop:uniqueMacLanevalStrongPoly} there exists a unique MacLane valuation $v'$ over which $\phi_v$ is a strong key polynomial. Then $v=[v',v(\phi_v)=\lambda_v]$. Let $\s=D_v\cap\roots$.
%Moreover, let $v_{g_\s}$ be the unique MacLane valuation over which $g_\s$ is a proper key polynomial. 
We decompose
\begin{equation}\label{eqn:ffactorisation}
    f/c_f=f_0\phi_v^{\ord_{\phi_v}(f)}\prod_{\lambda,h} f_{\lambda,h},
\end{equation}
as in Theorem \ref{thm:Newtondecomposition} with respect to the principal Newton polygon $N_{v',\phi_v}^-(f)$. Recall $\epsilon_v=e_{v'}$ and $b_v$ equals the denominator of $\epsilon_v\lambda_v$.

% \begin{lem}
% Let $\phi$ be a centre of a proper child $(\t,w)<(\s,v)$. Then $\phi\in\KP v$ and $w=[v,w(\phi)=\lambda]$, for some $\lambda>v(\phi)$. Furthermore, $\phi|_v$ is irreducible if and only if $\deg w>\deg v$.
% \proof
% The first part follows from Proposition \ref{prop:ValuationsInequality} and Lemma \ref{lem:Clusterdescent}. Now suppose $\deg v=\deg w$. Then $\phi$ is also a centre of $v$. Let $v_{\phi}$ be the unique MacLane valuation over which $\phi$ is a strong polynomial. Then $N_{v_{\phi},\phi}(\phi)$ consists of one point. Therefore $\deg\phi|_v=0$ by Proposition \ref{prop:ReductionDegreeProduct}\ref{item:ReductionDegree}. Thus $\phi|_v$ is not irreducible. Suppose $\deg v<\deg w$. Then $\phi|_v$ is irreducible by Lemma \ref{lem:keypolyreduction}.
% \endproof
% \end{lem}

\begin{lem}\label{lem:reductionNewtonpolygon}
If $\phi\in\KP v$ such that $\phi|_v$ is a multiple irreducible factor of $f|_v$, then $N_{v,\phi}^-(f)$ has an edge.
\proof
By Theorem \ref{thm:Newtondecomposition} it suffices to show that $f$ has a monic irreducible factor $F\neq\phi$ that $v$-divisible by $\phi$. Let $h=\phi|_v$. Since $f_{\lambda_v,h}|_v=h^{\ord_h(f|_v)}$, one has $f_{\lambda_v,h}\neq \phi$. As $f$ is separable, there exists a monic irreducible factor $F$ of $f_{\lambda_v,h}$ different from $\phi$. Thus $\phi\vdiv_v F$ by \cite[Theorem 5.3]{FGMN}.
\endproof
\end{lem}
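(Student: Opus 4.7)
My plan is to apply Theorem~\ref{thm:Newtondecomposition} to $f/c_f$ and reduce the lemma to exhibiting a single irreducible factor. By that theorem,
\[ f/c_f \;=\; f_0 \cdot \phi^{\ord_\phi(f)} \cdot \prod_{\lambda,h'} f_{\lambda,h'}, \]
with the product indexed by pairs $(\lambda,h')$ where $-\lambda$ ranges over the slopes of $N_{v,\phi}^-(f)$; moreover every monic irreducible factor of any $f_{\lambda,h'}$ is $v$-divisible by $\phi$. So $N_{v,\phi}^-(f)$ will have an edge as soon as I can exhibit a monic irreducible factor $F\neq\phi$ of $f$ with $\phi\vdiv_v F$.

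To produce such an $F$, I would set $h=\phi|_v$. Since $\phi\in\KP v$, the polynomial $h$ is irreducible in $k_v[X]$ (Lemma~\ref{lem:keypolyreduction}), and by hypothesis $\ord_h(f|_v)\geq 2$. I will focus on the factor $f_{\lambda_v,h}$ from the decomposition above, whose $v$-reduction, by Theorem~\ref{thm:Newtondecomposition}, equals $h^{\ord_h(f|_v)}$, a polynomial of degree $\geq 2\deg h>\deg\phi|_v$. In particular $f_{\lambda_v,h}\neq\phi$ as polynomials, for otherwise their reductions would coincide, contradicting this degree count; this also confirms incidentally that $-\lambda_v$ appears as a slope of $N_{v,\phi}^-(f)$.

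Next, because $f$ is separable, $f_{\lambda_v,h}$ is a squarefree product of distinct monic irreducibles in $O_K[x]$. Being different from $\phi$, it must contain a monic irreducible factor $F\neq\phi$. By \cite[Theorem~5.3]{FGMN} every irreducible factor of $f_{\lambda_v,h}$ is $v$-divisible by $\phi$, so $\phi\vdiv_v F$, and this is the witness required.

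The main obstacle, I expect, is interpreting Theorem~\ref{thm:Newtondecomposition} in the right logical order: it ranges $(\lambda,h')$ a priori over slopes of $N_{v,\phi}^-(f)$ that are already present, yet the identity $f_{\lambda_v,h}|_v=h^{\ord_h(f|_v)}$ combined with $\ord_h(f|_v)\geq 2$ simultaneously forces $-\lambda_v$ to appear as such a slope and provides the required irreducible factor. Beyond that, the rest is straightforward bookkeeping.
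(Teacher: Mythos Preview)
Your overall strategy---reduce to finding a monic irreducible factor $F\neq\phi$ of $f$ with $\phi\vdiv_v F$, then extract $F$ from the factor $f_{\lambda_v,h}$ of the Newton decomposition---is exactly what the paper does. However, you have applied Theorem~\ref{thm:Newtondecomposition} with the wrong data, and this makes your object $f_{\lambda_v,h}$ ill-defined.

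You wrote the decomposition with respect to the pair $(v,\phi)$, so that $-\lambda$ ranges over slopes of $N_{v,\phi}^-(f)$ and $h'$ over irreducible factors of $f|_{w_\lambda}$ with $w_\lambda=[v,w_\lambda(\phi)=\lambda]$. But then you take the index $(\lambda_v,h)$ with $h=\phi|_v\in k_v[X]$. This does not fit: since $\phi|_v$ is irreducible, Lemma~\ref{lem:keypolyreduction} gives $\deg\phi\geq\deg v$, hence by Theorem~\ref{thm:minimality} $v(\phi)=\lambda_v\deg\phi/\deg v\geq\lambda_v$, so $-\lambda_v$ cannot be a slope of $N_{v,\phi}^-(f)$ (whose slopes are $<-v(\phi)$). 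Moreover $h$ lives in $k_v[X]$, not in $k_{w_\lambda}[X]$ for any augmentation $w_\lambda$ of $v$. Your ``logical order'' worry is therefore not merely a matter of bookkeeping---the index $(\lambda_v,h)$ simply does not occur in the decomposition you wrote down.

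The correct move, and the one the paper makes, is to use the decomposition~(\ref{eqn:ffactorisation}) with respect to $(v',\phi_v)$, where $\phi_v$ is a centre of $v$ and $v'$ is the valuation over which $\phi_v$ is a strong key polynomial, so that $v=[v',v(\phi_v)=\lambda_v]$. In that decomposition $w_{\lambda_v}=v$, so $h=\phi|_v$ is an irreducible factor of $f|_{w_{\lambda_v}}=f|_v$; and since $\ord_h(f|_v)\geq 2$ forces $\deg f|_v>0$, the segment $L_v(f)$ is a genuine edge of $N_{v',\phi_v}^-(f)$ of slope $-\lambda_v$. Thus $f_{\lambda_v,h}$ is well-defined, and from that point your argument (which coincides with the paper's) goes through unchanged.
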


\begin{lem}\label{lem:AugmentationMultipleFactor}
Let $w=[v,w(\phi)=\lambda]$ be an augmentation of $v$. Suppose $(\t,w)$ is a proper MacLane cluster. If $\phi|_v$ is irreducible\footnote{Note that $\phi|_v$ is irreducible if and only if $\phi$ is not a centre of $v$, by Lemma \ref{lem:keypolyreduction}.}, then $\ord_{\phi|_v}(f|_v)>1$.
\proof
Let $h=\phi|_v$. Lemma \ref{lem:keypolyreduction} implies $\phi\not\veq_{v}\phi_v$ and
\begin{equation}
\label{eqn:degvdegreduction}
    \deg \phi=b_v\deg h\deg \phi_v.
\end{equation}
Then by Theorem \ref{thm:Newtondecomposition} it suffices to show that 
$\deg f_{\lambda_v,h}>\deg \phi$. Since $\phi\not\veq_{v}\phi_v$ one has
$w(\phi_v)=\lambda_v$ by \cite[Lemmas 4.13,4.14]{Rut}.
Let $r\in\t$ and let $F\in O_K[x]$ be the minimal polynomial of $r$. Then 
\[v_K(\phi_v(r))=w(\phi_v)=v(\phi_v)=\lambda_v>v'(\phi_v),\]
where the first equality follows from Proposition \ref{prop:2.7OS2}.
Then either $F=\phi_v$ or $N_{v',\phi_v}(F)$ consists of one edge of slope $-\lambda_v$ by Theorem \ref{thm:monicirredpoly}. On the other hand
\[v_K(\phi(r))\geq w(\phi)=\lambda>v(\phi).\]
Again by Theorem \ref{thm:monicirredpoly} we have $F\veq_v\phi^l$, for some $l\in\Z_+$. In particular, $F\neq\phi_v$ and $F|_v=h^l$ by Propositions \ref{prop:v-equivalentResidual} and \ref{prop:ReductionDegreeProduct}\ref{item:ReductionProduct}. It follows from Theorem \ref{thm:Newtondecomposition} that $F\mid f_{\lambda_v,h}$.
Thus $|\t|\leq\deg f_{\lambda_v,h}$. Then Lemma \ref{lem:nonproperclusters} concludes the proof.
\endproof
\end{lem}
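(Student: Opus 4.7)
The plan is to reduce $\ord_{\phi|_v}(f|_v)>1$ to a lower bound on the degree of the Newton-decomposition factor $f_{\lambda_v,h}$ (where $h:=\phi|_v$), and then to force every element of $\t$ to contribute its minimal polynomial to that factor.

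First I would extract two degree identities. Since $\phi|_v$ is irreducible, Lemma \ref{lem:keypolyreduction}\ref{item:properkeypoly} applies to $\phi$, giving $\deg\phi=b_v\,\deg\phi_v\,\deg h$; in particular $\deg\phi>\deg\phi_v$, so $\phi\not\veq_v\phi_v$. The factorisation (\ref{eqn:ffactorisation}) coming from Theorem \ref{thm:Newtondecomposition} (applied to $f/c_f$ with respect to the parent valuation $v'$ of $v$ and its centre $\phi_v$) simultaneously gives $\deg f_{\lambda_v,h}=b_v\cdot\ord_h(f|_v)\cdot\deg h\cdot\deg\phi_v$. Dividing one formula by the other, the desired conclusion $\ord_h(f|_v)>1$ is equivalent to the degree inequality $\deg f_{\lambda_v,h}>\deg\phi$.

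Next I would bound $\deg f_{\lambda_v,h}$ from below by $|\t|$. Since $(\t,w)$ is proper, Lemma \ref{lem:nonproperclusters} yields $|\t|>\deg w=\deg\phi$, so it suffices to show that the minimal polynomial $F\in O_K[x]$ of every $r\in\t$ divides $f_{\lambda_v,h}$. By Theorem \ref{thm:Newtondecomposition} this amounts to verifying (a) $N_{v',\phi_v}(F)$ is one-sided of slope $-\lambda_v$, and (b) $F|_v$ is a positive power of $h$. Both come out of Theorem \ref{thm:monicirredpoly}. For (b): from $r\in D_w$ we get $v_K(\phi(r))\geq w(\phi)=\lambda>v(\phi)$, so $\phi\vdiv_v F$; Theorem \ref{thm:monicirredpoly}\ref{item:monicirredpoly2} then gives $F\veq_v\phi^{\ell}$ with $\ell=\deg F/\deg\phi\geq 1$, which by Propositions \ref{prop:v-equivalentResidual} and \ref{prop:ReductionDegreeProduct}\ref{item:ReductionProduct} translates to $F|_v=h^{\ell}$; along the way $F\neq\phi_v$, because $\deg F\geq\deg\phi>\deg\phi_v$.

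The delicate piece is (a), for which I need the sharp equality $v_K(\phi_v(r))=\lambda_v$ rather than the a priori inequality $\geq\lambda_v$ coming from $r\in D_v$. Here the identity $\phi\not\veq_v\phi_v$ recovered above is precisely the hypothesis of \cite[Lemmas 4.13, 4.14]{Rut}, which pins down $w(\phi_v)=v(\phi_v)=\lambda_v$. Proposition \ref{prop:2.7OS2} applied to $g=\phi_v$ and $r\in D_w$ then yields $v_K(\phi_v(r))=\lambda_v>v'(\phi_v)$, and Theorem \ref{thm:monicirredpoly}\ref{item:monicirredpoly1} gives (a). I expect this pinning of $w(\phi_v)$ to be the main obstacle: without it, some roots in $\t$ could slide onto steeper edges of $N_{v',\phi_v}(f)$ and so into different factors of the decomposition, breaking the inequality $\deg f_{\lambda_v,h}\geq|\t|$ that drives the whole argument.
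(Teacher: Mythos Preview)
Your proposal is correct and follows essentially the same route as the paper: reduce $\ord_h(f|_v)>1$ to $\deg f_{\lambda_v,h}>\deg\phi$ via Theorem~\ref{thm:Newtondecomposition}, then show every $r\in\t$ has its minimal polynomial dividing $f_{\lambda_v,h}$ by pinning down $w(\phi_v)=\lambda_v$ (using $\phi\not\veq_v\phi_v$ and \cite[Lemmas 4.13, 4.14]{Rut}) and applying Proposition~\ref{prop:2.7OS2} and Theorem~\ref{thm:monicirredpoly}. One small caveat: the strict inequality $\deg\phi>\deg\phi_v$ you extract from Lemma~\ref{lem:keypolyreduction}\ref{item:properkeypoly} can fail in the edge case $b_v=1$, $\deg h=1$ (then $\deg\phi=\deg\phi_v$), but the two conclusions you draw from it---$\phi\not\veq_v\phi_v$ and $F\neq\phi_v$---still hold because $\phi|_v$ and $F|_v=h^\ell$ are non-units whereas $\phi_v|_v$ is a unit (Proposition~\ref{prop:v-equivalentResidual}).
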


\begin{prop}\label{prop:degreeminimalreduction}
Suppose $-\lambda_v$ is the minimum slope of $N_{v',\phi_v}^-(f)$. Then $(\s,v)$ is not a degree-minimal MacLane cluster if and only if $b_v=1$ and
%it has a centre $\phi$ such that $\phi|_v$ is multiple irreducible factor $f|_v$. In that case, $N_{v,\phi}^-(f)$ has an edge of positive length.
$f|_v$ has a multiple factor $h\in k_v[X]$ of degree $1$. 
%In that case, if $\phi\in\KP v$ such that $\phi|_v=h$, then $N_{v,\phi}^-(f)$ has an edge.
\proof
Suppose $(\s,v)$ is a degree-minimal MacLane cluster. Suppose that $b_v=1$ and that $f|_v$ has a multiple irreducible factor $h\in k_v[X]$. Theorem \ref{thm:ChildrenReduction} implies that there exists a proper child $(\t,w)<(\s,v)$ with centre $\phi$ such that $\phi|_v=h$. Then $\deg\phi>\deg v$. Hence $\deg h>1$ by Lemma \ref{lem:keypolyreduction}. 

Now suppose $(\s,v)$ is not a degree-minimal MacLane cluster. Then there exists $w>v$ with $\deg v=\deg w$ such that $(\t,w)$ is a proper MacLane cluster, for some $\t\subseteq\roots$. Proposition \ref{prop:ValuationsInequality} implies that $w=[v,w(\phi)=\lambda]$ for some $\phi\in\KP v$ and $\lambda>v(\phi)$. In particular, $w$ is also an augmentation of $v'$. If $\phi\veq_v\phi_v$, then \[w(\phi_v)=\min\{\lambda,v(\phi_v-\phi)\}>\lambda_v.\] 
Hence $N_{v',\phi_v}^-(f)$ would have a slope $-w(\phi_v)$ smaller than $-\lambda_v$ by Theorem \ref{thm:NewtonClusters}\ref{item:ClustersToNewton}, contradicting our assumptions. Hence $\phi\not\veq_{v}\phi_v$. It follows that
\[\lambda_v=v(\phi_v)=v(\phi-\phi_v)=v'(\phi-\phi_v)\in\Gamma_{v'},\]
and so $b_v=1$. By Lemma \ref{lem:keypolyreduction} the polynomial $\phi|_v$ is irreducible and
$\deg\phi|_v=1$.
Then $\ord_{\phi|_v}(f|_v)>1$ by Lemma \ref{lem:AugmentationMultipleFactor}. 
\endproof
\end{prop}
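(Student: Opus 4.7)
The plan is to prove the biconditional by handling each direction separately; the backward direction is a short application of the children-vs-reductions dictionary (Theorem~\ref{thm:ChildrenReduction}), while the forward direction is the substantive content and requires a careful case analysis on the centre used to present the augmentation.

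For the backward direction I would invoke Theorem~\ref{thm:ChildrenReduction}: a multiple irreducible factor $h\in k_v[X]$ of $f|_v$ produces a proper child $(\t,w)<(\s,v)$ with centre $\phi$ satisfying $\phi|_v=h$. Lemma~\ref{lem:keypolyreduction}\ref{item:properkeypoly} then gives $\deg\phi=b_v\cdot\deg v\cdot\deg\phi|_v$; under the hypotheses $b_v=1$ and $\deg h=1$ this collapses to $\deg\phi=\deg v$, so $\deg w=\deg\phi=\deg v$, and $(\s,v)$ admits a proper child of the same degree, hence fails to be degree-minimal.

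For the forward direction, assuming $(\s,v)$ is not degree-minimal produces a proper child $(\t,w)<(\s,v)$ with $\deg w=\deg v$. Proposition~\ref{prop:ValuationsInequality} applied to $v<w$ lets me express $w=[v,w(\phi)=\lambda]$ with $\phi\in\KP v$ and $\lambda>v(\phi)$; necessarily $\deg\phi=\deg v$, so $\phi$ is itself a centre of $v$ and in particular $v'$-equivalent to $\phi_v$. The crux is the finer comparison of $\phi$ with $\phi_v$ in the relation $\veq_v$. If $\phi\veq_v\phi_v$ then $v(\phi-\phi_v)>\lambda_v$, and combined with $\lambda>\lambda_v$ this yields $w(\phi_v)\geq\min\{\lambda,v(\phi-\phi_v)\}>\lambda_v$; Theorem~\ref{thm:NewtonClusters}\ref{item:ClustersToNewton}, applied to the cluster $(\t,w)$ with base $v'$ and key polynomial $\phi_v$, would then exhibit an edge of $N^-_{v',\phi_v}(f)$ of slope $-w(\phi_v)<-\lambda_v$, contradicting the hypothesis that $-\lambda_v$ is the minimum slope. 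Therefore $\phi\not\veq_v\phi_v$; coupling $v(\phi-\phi_v)\leq\lambda_v$ with the ultrametric bound $v(\phi-\phi_v)\geq\lambda_v$ forces equality, and since $\deg(\phi-\phi_v)<\deg v$ this value coincides with $v'(\phi-\phi_v)\in\Gamma_{v'}$. Hence $\lambda_v\in\Gamma_{v'}$, $\Gamma_v=\Gamma_{v'}$, and $b_v=1$. Lemma~\ref{lem:keypolyreduction} applied to $\phi\in\KP v$ then forces $\phi|_v$ to be irreducible of degree $\deg\phi/(b_v\deg v)=1$, and Lemma~\ref{lem:AugmentationMultipleFactor}, applied to the augmentation $w=[v,w(\phi)=\lambda]$ defining the proper cluster $(\t,w)$, gives $\ord_{\phi|_v}(f|_v)>1$; thus $f|_v$ has the required degree-one multiple factor.

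The main obstacle is managing the distinction between the two natural centres of $v$ in play: the $\phi_v$ fixed so that $-\lambda_v$ is the minimum slope of $N^-_{v',\phi_v}(f)$, and the centre $\phi$ arising from Proposition~\ref{prop:ValuationsInequality}. The two are always $v'$-equivalent, but whether they are $v$-equivalent is precisely the dichotomy driving the argument: $v$-equivalence forces a violation of the minimum-slope hypothesis, while its failure pins down $\lambda_v$ inside $\Gamma_{v'}$ and yields $b_v=1$.
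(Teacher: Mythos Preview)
Your proposal is correct and follows essentially the same route as the paper: both directions invoke the same toolkit (Theorem~\ref{thm:ChildrenReduction}, Lemma~\ref{lem:keypolyreduction}, Proposition~\ref{prop:ValuationsInequality}, Theorem~\ref{thm:NewtonClusters}\ref{item:ClustersToNewton}, Lemma~\ref{lem:AugmentationMultipleFactor}) in the same order and with the same case split on $\phi\veq_v\phi_v$. The only cosmetic difference is that you argue the backward direction directly (degree-one multiple factor $\Rightarrow$ same-degree proper child), whereas the paper phrases it as the contrapositive (degree-minimal $\Rightarrow$ any multiple factor has degree $>1$); the content is identical.
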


\begin{thm}\label{thm:ChildrenReduction}
Suppose $(\s,v)$ is a proper MacLane cluster with $v\neq v_0$. 
\begin{enumerate}[label=(\roman*)]
    \item Let $h\in k_v[X]$ monic and irreducible such that $\ord_{h}(f|_v)>1$. There exists a proper child $(\t,w)<(\s,v)$ with centre $\phi$ such that $\phi|_v=h$. \label{item:MultipleFactor=>Child}
    \item Conversely, for any proper child $(\t,w)<(\s,v)$ with centre $\phi$ such that $\phi|_v$ is irreducible, one has $\ord_{\phi|_v}(f|_v)>1$. \label{item:Child=>MultipleFactor}
\end{enumerate}
In either case, $f_{\lambda_v,\phi|_v}=\prod_{r\in\t}(x-r)$ and $\ord_{\phi|_v}(f|_v)=|\t|/\deg w$.
\proof
Without loss of generality assume $f$ monic. 
%Let $v'$ and $\phi_v$ as above.
Let $v'\in\M$ and $\phi_v\in\KP v$ as above and consider the factorisation (\ref{eqn:ffactorisation}) of $f$.

\ref{item:MultipleFactor=>Child}. Suppose that the monic irreducible polynomial $h\in k_v[X]$ is a multiple factor of $f|_v$. Then
\[f_{\lambda_v,h}|_v=h^{\ord_h(f|_v)}\qquad\text{where }\ord_h(f|_v)>1.\]
By \cite[Theorem 5.7]{FGMN} there exists $\phi\in \KP v$ such that $\phi|_v=h$. 
Let $\roots_h$ be the set of roots of $f_{\lambda_v,h}$ and set \[\lambda=\min_{r\in\roots_h}v_K(\phi(r)).\] 
Now $\phi$ is a proper key polynomial over $v$ since $\phi|_v$ is irreducible. 
Then \cite[Theorem 5.13]{FGMN} implies that $\phi\vdiv_v F$ for any irreducible monic factor $F$ of $f_{\lambda_v,h}$.
%For every irreducible monic factor $F$ of $f_{\lambda_v,h}$, the argument after \cite[Corollary 6.3]{FGMN} shows that $\phi\vdiv_v F$. 
Hence $\lambda>v(\phi)$ by Theorem \ref{thm:monicirredpoly}. 
Therefore $w=[v,w(\phi)=\lambda]$ is an augmentation of $v$. 
Let $\t=D_w\cap \roots$. From the definition of $\lambda$ we have $\roots_h\subseteq\t$.
The pair $(\t,w)$ may not be a MacLane cluster. However, by Lemma \ref{lem:uniqueclustervs}, we can find a MacLane pseudo-valuation $w'\geq w$ with $\deg w'=\deg w$ such that $(\t,w')$ is an MacLane cluster. 
Let $\psi$ be a centre of $w'$. Then $\psi$ is a centre of $w$ by Lemma \ref{lem:centreparent}. It follows from Lemma \ref{lem:centre} that $\psi\in\KP v$ and $\psi\veq_v \phi$. Hence $\psi|_v=\phi|_v=h$ by Proposition \ref{prop:v-equivalentResidual}.
Therefore, by replacing $\phi$ with $\psi$ and $w$ with $w'$ if necessary, we can assume $(\t,w)$ is a MacLane cluster. 
Furthermore, 
\begin{equation*}
%\label{eqn:trootsh}
|\t|\geq|\roots_h|=\deg f_{\lambda_v,h}>b_v\deg h\deg v=\deg \phi
\end{equation*}
by Theorem \ref{thm:Newtondecomposition} and Lemma \ref{lem:keypolyreduction}. Lemma \ref{lem:nonproperclusters} implies that $(\t,w)$ is proper.

The MacLane cluster $(\t,w)$ may not be a child of $(\s,v)$.
Suppose there exists a (proper) MacLane cluster $(\t',w')$ such that $(\t,w)\subsetneq(\t',w')\subsetneq(\s,v)$. 
We want to show that $\phi$ is a centre of $w'$. Suppose $\deg w>\deg w'$. Then for any centre $\phi'$ of $w'$, $\deg \phi'<\deg \phi$ and so $w(\phi')=v(\phi')$. On the other hand, $w'>w$ and $w'(\phi')>v(\phi')$, so $w(\phi')\geq w'(\phi')>v(\phi')$, which gives a contradiction. Hence
Lemma \ref{lem:centreparent} implies that $\phi$ is also a centre of $(\t',w')$. 
% Let $r\in\t'\setminus\t$ and $F\in O_K[x]$ the minimal polynomial of $r$. 
% Then $F$ is not a factor of $f_{\lambda_v,h}$ as $r\notin\t$. 
% On the other hand, since $r\in\t'$ we have $v_K(\phi(r))\geq w'(\phi)>v(\phi)$. 
% But then Theorem \ref{thm:monicirredpoly} would imply that $F$ is a factor of $f_{\lambda_v,h}$, which gives a contradiction. 

\ref{item:Child=>MultipleFactor}. Let $(\t,w)<(\s,v)$ proper with centre $\phi$ such that $\phi|_v$ is irreducible. Then $w>v$. Proposition \ref{prop:ValuationsInequality} and Lemma \ref{lem:Clusterdescent} implies that $w=[v,w(\phi)=\lambda]$ for some $\lambda>v(\phi)$, since $(\t,w)$ is a child of $(\s,v)$. 
Lemma \ref{lem:AugmentationMultipleFactor} concludes the proof of \ref{item:Child=>MultipleFactor}.

In the proof of Lemma \ref{lem:AugmentationMultipleFactor} we showed that $|\t|\leq \deg f_{\lambda_v,\phi|_v}$. Then $\t=\roots_h$ from above. Finally, $\ord_{\phi|_v}(f|_v)=|\t|/\deg w$ by Theorem \ref{thm:Newtondecomposition} and (\ref{eqn:degvdegreduction}).
\endproof
\end{thm}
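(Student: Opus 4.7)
The plan is to address the two directions separately, and then read off the final identities $f_{\lambda_v,\phi|_v}=\prod_{r\in\t}(x-r)$ and $\ord_{\phi|_v}(f|_v)=|\t|/\deg w$ from the proofs. Throughout one may assume $f$ monic, and may fix a centre $\phi_v$ of $v$ and the unique $v'\in\M$ over which $\phi_v$ is a strong key polynomial (Proposition \ref{prop:uniqueMacLanevalStrongPoly}), so that $v=[v',v(\phi_v)=\lambda_v]$ and the factorisation (\ref{eqn:ffactorisation}) of $f$ is available from Theorem \ref{thm:Newtondecomposition}.

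For part \ref{item:MultipleFactor=>Child}, starting from the multiple irreducible factor $h$ of $f|_v$, I would first lift $h$ to a key polynomial $\phi\in\KP v$ with $\phi|_v=h$ using \cite[Theorem 5.7]{FGMN}, so $\phi$ is a proper key polynomial over $v$. The factor $f_{\lambda_v,h}$ in (\ref{eqn:ffactorisation}) reduces to $h^{\ord_h(f|_v)}$, and Theorem \ref{thm:Newtondecomposition} together with \cite[Theorem 5.13]{FGMN} shows that $\phi\vdiv_v F$ for every monic irreducible factor $F$ of $f_{\lambda_v,h}$; hence via Theorem \ref{thm:monicirredpoly} every root $r\in\roots_h$ of $f_{\lambda_v,h}$ satisfies $v_K(\phi(r))>v(\phi)$, so that $\lambda:=\min_{r\in\roots_h}v_K(\phi(r))$ gives a genuine augmentation $w=[v,w(\phi)=\lambda]$ of $v$. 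Setting $\t=D_w\cap\roots$ one has $\roots_h\subseteq\t$, and Lemma \ref{lem:uniqueclustervs} lets us replace $w$ by some $w'\geq w$ of the same degree so that $(\t,w')$ is a genuine MacLane cluster; by Lemmas \ref{lem:centreparent} and \ref{lem:centre} any centre of $w'$ is $v$-equivalent to $\phi$, so we can assume WLOG that $\phi$ itself is a centre. Using $|\t|\geq|\roots_h|=\deg f_{\lambda_v,h}=b_v\deg h\deg v=\deg\phi$ from Theorem \ref{thm:Newtondecomposition} and Lemma \ref{lem:keypolyreduction}, and the fact that $\ord_h(f|_v)>1$ so the inequality is strict, Lemma \ref{lem:nonproperclusters} yields that $(\t,w)$ is proper.

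The delicate step — and what I expect to be the main obstacle — is then to upgrade $(\t,w)$ to a child of $(\s,v)$. If $(\t',w')$ is a proper MacLane cluster with $(\t,w)\subsetneq(\t',w')\subsetneq(\s,v)$, I would argue that any centre $\phi'$ of $w'$ must in fact have degree $\geq\deg\phi$: if not, then $\deg\phi'<\deg\phi=\deg w$ would force $w(\phi')=v(\phi')$, while $w'>w$ gives $w(\phi')\geq w'(\phi')>v(\phi')$, a contradiction. Hence $\deg w'=\deg w$, Lemma \ref{lem:centreparent} identifies $\phi$ as a centre of $w'$ too, and the uniqueness part of Lemma \ref{lem:uniqueclustervs} (or a direct appeal to the radius) forces $w'=w$. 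Iterating, after finitely many such replacements (the containment chain between $(\t,w)$ and $(\s,v)$ is finite by Proposition \ref{prop:SigmaTree} plus degree considerations) we obtain a child of $(\s,v)$, and it still has centre $v$-equivalent to $\phi$, hence same reduction $h$.

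For part \ref{item:Child=>MultipleFactor}, given a proper child $(\t,w)<(\s,v)$ with centre $\phi$ whose reduction $\phi|_v$ is irreducible, Proposition \ref{prop:ValuationsInequality} and Lemma \ref{lem:Clusterdescent} write $w=[v,w(\phi)=\lambda]$ for some $\lambda>v(\phi)$, and then Lemma \ref{lem:AugmentationMultipleFactor} gives $\ord_{\phi|_v}(f|_v)>1$ directly. Finally, in the course of the proof of Lemma \ref{lem:AugmentationMultipleFactor} one shows that the minimal polynomial of each $r\in\t$ divides $f_{\lambda_v,\phi|_v}$, giving $|\t|\leq\deg f_{\lambda_v,\phi|_v}$; combined with the reverse inclusion $\roots_{\phi|_v}\subseteq\t$ established in (i) this forces the equality $\t=\roots_{\phi|_v}$ and $f_{\lambda_v,\phi|_v}=\prod_{r\in\t}(x-r)$. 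The numerical identity $\ord_{\phi|_v}(f|_v)=|\t|/\deg w$ then follows from Theorem \ref{thm:Newtondecomposition} and the degree identity $\deg\phi=b_v\deg(\phi|_v)\deg v$ of Lemma \ref{lem:keypolyreduction}.
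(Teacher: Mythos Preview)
Your proposal follows the paper's proof essentially step for step: the same lift of $h$ via \cite[Theorem 5.7]{FGMN}, the same construction of $\lambda$ and $w$, the same use of Lemma \ref{lem:uniqueclustervs} to pass to a genuine cluster, the same properness count, and the same reduction of part \ref{item:Child=>MultipleFactor} to Lemma \ref{lem:AugmentationMultipleFactor}. The final identities are also handled identically.

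The one wobble is in your treatment of the intermediate cluster $(\t',w')$. Having shown $\deg w'=\deg w$ and that $\phi$ is a centre of $w'$, you write that ``the uniqueness part of Lemma \ref{lem:uniqueclustervs} \ldots\ forces $w'=w$''. This is not correct: Lemma \ref{lem:uniqueclustervs} gives uniqueness of the cluster of a given degree \emph{with a fixed underlying set}, but here $\t'\supsetneq\t$, so nothing prevents $w'\neq w$ (indeed $\lambda_{w'}<\lambda_w$). The paper does not attempt to derive a contradiction at this point; it simply observes that $(\t',w')$ is again a proper cluster with centre $\phi$ (hence $\phi|_v=h$), so one may \emph{replace} $(\t,w)$ by $(\t',w')$ and repeat. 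Your very next sentence (``Iterating, after finitely many such replacements\ldots'') is exactly this, so the correct argument is already present in your write-up --- you should just delete the spurious ``forces $w'=w$'' claim.
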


\begin{rem}
In \S\ref{sec:Residualpoly} we showed how to compute the reduction $f|_v$ algorithmically for any $v\in\M$, knowing a MacLane chain for $v$ (see also \cite[\S3]{FGMN}). Assume $v_K(r)>0$ for any $r\in\roots$ (in the next section we will see that we can always require this condition for our purpose). Suppose we know how to factorise polynomials in $k[X]$, e.g.\ $k$ is finite. Then we can algorithmically find MacLane chains for all MacLane valuations defining MacLane clusters, starting from the Newton polygon $N_{v_0,x}(f)$ and using the results \ref{thm:NewtonClusters}, \ref{lem:reductionNewtonpolygon}, \ref{lem:AugmentationMultipleFactor}, \ref{prop:degreeminimalreduction}, \ref{thm:ChildrenReduction}.
\end{rem}
%\subsection{Rational clusters}\input{RationalClusters}

\section{Model construction}\label{sec:ModConstr}
Suppose $\mathrm{char}(k)\neq 2$. Let $C/K$ be a hyperelliptic curve of genus $g\geq 1$. We can find a separable polynomial $f=c_f\prod_{r\in\roots}(x-r)\in K[x]$, where $v_K(r)>0$ for any $r\in\roots$, such that $C/K:y^2=f(x)$. 
% Write all clusters in a rooted tree (where $\leq$ corresponds to edges). If $w< v$, write $v$ as an augmented valuation of $w$. Consider all polynomials $\psi_1,\dots,\psi_n$ used to write the valuations in clusters. 
Let $(\s_1,\mu_1),\dots,(\s_n,\mu_n)$ be all degree-minimal MacLane clusters for $f$. Note that if $r\in\s_i$ has minimal polynomial $F\in K[x]$ of degree $\deg\mu_i$, then $F$ is a centre of $\mu_i$ by Lemma \ref{lem:centreparent}, as $v_F\geq \mu_i$. Choose centres $\psi_1,\dots,\psi_n$ of $\mu_1,\dots,\mu_n$ respectively, with the following property:
\begin{equation}\label{eqn:centrechoice}
\text{\begin{tabular}{c}
If possible, choose $\psi_i$ equal to the minimal polynomial\\ 
of some root $r\in \s_i$ of $K$-degree $\deg \mu_i$.
\end{tabular}}
%\tag{$\ast$}
\end{equation}

Thanks to Lemma \ref{lem:centreparent}, for any proper MacLane cluster $(\s,v)\in\Sigma_f^M$ we inductively choose a centre $\phi_v$ as follows:
\begin{enumerate}[label=(\roman*)]
    \item If $(\s,v)$ is degree-minimal, that is $(\s,v)=(\s_i,\mu_i)$ for some $1\leq i\leq n$, fix $\phi_v=\psi_i$.
    \item If $(\s,v)$ has children of degree $\deg v$, choose one of them, say $(\t,w)$, and fix $\phi_v=\phi_w$.
\end{enumerate}
% \begin{lem}
% Let $(\s,v)$ be a cluster. Then there exists $i\in\{1,\dots,n\}$ such that $\psi_i$ is a centre of $(\s,v)$.
% \proof
% We start proving the existence. For some $i$, degree-minimal cluster $(\s_i,w_i)\subseteq (\s,v)$ where $\deg w_i=\deg v$. Since $\psi_i$ is a centre of $(\s_i,w_i)$, Lemma \ref{lem:centreparent} implies that $\psi_i$ is also a centre of $(\s,v)$.
% \endproof
% \end{lem}

\begin{defn}
Let $(\s,v)$ be a proper MacLane cluster. A \textit{cluster chain for $v$} is MacLane chain
\[[v_0, v_1(\phi_1)=\lambda_1,\dots,v_m(\phi_m)=\lambda_m]\]
%\[v=v_m=[v_0,v_1(\phi_1)=\lambda_1,\dots,v_{m-1}(\phi_{m-1})=\lambda_{\t_{m-1}},v_m(\phi_m)=\lambda_m]\]
for $v$, where $\{\phi_w \mid (\t,w)\supseteq(\s,v)\}=\{\phi_1,\dots,\phi_m\}$.
\end{defn}

\begin{lem}\label{lem:clusterchainuniqueness}
Let $(\s,v)\in\Sigma_f^M$ proper and let $[v_0,\dots,v_m(\phi_m)=\lambda_m]$
be a cluster chain for $v$. Consider the chain of proper MacLane clusters
\[(\t_1,w_1)\supsetneq(\t_2,w_2)\supsetneq\dots\supsetneq(\t_s,w_s)=(\s,v)\]
satisfying:
\begin{enumerate}[label=(\alph*)]
    \item $(\t_1,w_1)\supseteq(\t,w)$ for any proper MacLane cluster $(\t,w)\supseteq(\s,v)$. \label{item:clusterchaina}
    \item $\phi_{w_{i}}\neq \phi_{w_{i+1}}$ for all $1\leq i< s$.\label{item:clusterchainb}
    \item For any $1\leq i<s$, the MacLane cluster $(\t_i,w_i)$ is the smallest MacLane cluster containing $(\t_{i+1},w_{i+1})$ and satisfying \ref{item:clusterchainb}. \label{item:clusterchainc}
\end{enumerate}
Then $m=s$, $\phi_i=\phi_{w_i}$ and $v_i=w_i$.
\proof
Clearly $\{\phi_w \mid (\t,w)\supseteq(\s,v)\}=\{\phi_{w_1},\dots,\phi_{w_s}\}$, with the centres $\phi_{w_i}$ all distinct. By definition of cluster chain $m\geq s$. However, if $m>s$, then $\phi_i=\phi_j$ for some $i<j$. This is not possible, as $v(\phi_i)=\lambda_i<\lambda_j=v(\phi_j)$ by \cite[Lemmas 4.21,4.22]{Rut}. Hence $m=s$.

Clearly $v_m=w_m$. Suppose there exists $i<m$ such that $\phi_i=\phi_v$. It follows that 
\[\lambda_i=v_i(\phi_i)=v(\phi_i)=\lambda_v=\lambda_m,\]
a contradiction by \cite[Lemma 4.21]{Rut}. Therefore $\phi_m=\phi_{w_m}$.
Let $\sigma\in S_{m-1}$ be the permutation such that $\phi_{w_i}=\phi_{\sigma(i)}$.
For any $i=1,\dots,m-1$, either $(\t_i,w_i)$ is degree-minimal or there exists a child $(\s',v')<(\t_i,w_i)$ not containing $(\t_{i+1},w_{i+1})$ such that $\phi_{w_i}=\phi_{v'}$ by \ref{item:clusterchainc}. 

Suppose $(\t_i,w_i)$ is degree-minimal. Let 
\[j_i=\max\{j=1,\dots,m\mid \deg \phi_j=\deg\phi_{w_i}\}.\]
Lemma \ref{lem:Clusterdescent} implies that $v_{j_i}$ defines a proper MacLane cluster of degree $\deg w_i$ and so $v_{j_i}=w_i$. In fact, $\phi_{j_i}$ must equal $\phi_{w_i}$ since $(\t_i,w_i)$ is degree-minimal, for our choice of centres. Therefore $\sigma(i)=j_i$ and so $w_i=v_{\sigma(i)}$.

Suppose $(\t_i,w_i)$ is not degree-minimal and let $(\s',v')<(\t_i,w_i)$ as above. Note that $(\s',v')$ does not contain in $(\s,v)$ and $(\s'\wedge \s,v'\wedge v)=(\t_i,w_i)$. Hence $w_i(\phi_{w_i})=v(\phi_{\sigma(i)})=\lambda_{\sigma(i)}$ by Proposition \ref{prop:appendix}. It follows that \[D_{w_i}=D(\phi_{w_i},w_i(\phi_{w_i}))=D(\phi_{\sigma(i)},\lambda_{\sigma(i)})=D_{v_{\sigma(i)}},\] 
and so $w_i=v_{\sigma(i)}$ from Theorem \ref{thm:valdiscoid}. 

We showed that $w_i=v_{\sigma(i)}$ for any $i=1,\dots,m-1$. Since $v_1<\dots<v_m$ and $w_1<\dots<w_m$ the permutation $\sigma$ must be the identity. 
\endproof
\end{lem}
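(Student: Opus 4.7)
The plan is to show that the cluster chain for $v$ and the chain $(\t_1,w_1)\supsetneq\cdots\supsetneq(\t_s,w_s)$ encode the same data by constructing a bijection between their constituent valuations that is order-preserving.

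First I would establish $m=s$ by counting distinct centres. On the cluster chain side, $v(\phi_i)=\lambda_i$ in any MacLane chain, and the radii satisfy $\lambda_1<\lambda_2<\cdots<\lambda_m$ by \cite[Lemmas 4.21,4.22]{Rut}, so the $\phi_i$ are pairwise distinct; by the very definition of cluster chain they enumerate the set $\{\phi_w\mid (\t,w)\supseteq(\s,v)\}$. On the $(\t_i,w_i)$ side, condition \ref{item:clusterchainc} forces $(\t_i,w_i)$ to be the smallest cluster above $(\t_{i+1},w_{i+1})$ whose chosen centre differs from $\phi_{w_{i+1}}$, so consecutive clusters sharing a centre get collapsed, and the $\phi_{w_i}$ also enumerate the distinct centres in this set without repetition. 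Hence both $m$ and $s$ equal the number of distinct chosen centres above $(\s,v)$.

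Next I would identify the bottom step: $v_m=v=w_m$ is immediate, and I would argue $\phi_m=\phi_v=\phi_{w_m}$ by contradiction — if $\phi_i=\phi_v$ for some $i<m$, then $\lambda_i=v_i(\phi_i)=v(\phi_i)=\lambda_v=\lambda_m$, contradicting strict monotonicity of the $\lambda_i$. With the bottom step pinned down, I would define a permutation $\sigma\in S_{m-1}$ by $\phi_{w_i}=\phi_{\sigma(i)}$ and aim to prove $w_i=v_{\sigma(i)}$ in two cases.

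If $(\t_i,w_i)$ is degree-minimal, let $j_i$ be the largest index with $\deg\phi_{j_i}=\deg\phi_{w_i}$; Lemma \ref{lem:Clusterdescent} shows $v_{j_i}$ defines a proper MacLane cluster of degree $\deg w_i$, and the centre rule (\ref{eqn:centrechoice}) for degree-minimal clusters forces $\phi_{j_i}=\phi_{w_i}$, hence $w_i=v_{j_i}$ and $\sigma(i)=j_i$. If $(\t_i,w_i)$ is not degree-minimal, condition \ref{item:clusterchainc} produces a proper child $(\s',v')<(\t_i,w_i)$ not containing $(\t_{i+1},w_{i+1})$ with $\phi_{v'}=\phi_{w_i}$; then $(\s'\wedge\s,v'\wedge v)=(\t_i,w_i)$, so Proposition \ref{prop:appendix} applied to $\phi_{w_i}$ gives $w_i(\phi_{w_i})=v(\phi_{\sigma(i)})=\lambda_{\sigma(i)}$, whence $D_{w_i}=D(\phi_{w_i},w_i(\phi_{w_i}))=D(\phi_{\sigma(i)},\lambda_{\sigma(i)})=D_{v_{\sigma(i)}}$ and $w_i=v_{\sigma(i)}$ by Theorem \ref{thm:valdiscoid}. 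Finally, both chains $v_1<\cdots<v_m$ and $w_1<\cdots<w_m$ are strictly increasing, so the identification $w_i=v_{\sigma(i)}$ forces $\sigma$ to be the identity.

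The main obstacle is the non-degree-minimal case, where one must verify that $(\s'\wedge\s,v'\wedge v)$ collapses exactly to $(\t_i,w_i)$ so that Proposition \ref{prop:appendix} is applicable — this relies on the minimality in \ref{item:clusterchainc} guaranteeing that $v'$ and $v$ branch off at precisely $w_i$, together with the fact that the chosen centre of a non-degree-minimal cluster is inherited from one of its degree-preserving children.
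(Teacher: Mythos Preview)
Your proposal is correct and follows the paper's proof essentially step for step: the same counting argument for $m=s$ via distinctness of the $\phi_i$ (citing \cite[Lemmas 4.21,4.22]{Rut}), the same contradiction to pin down $\phi_m=\phi_{w_m}$, the same permutation $\sigma$, the same two-case split handled respectively by Lemma \ref{lem:Clusterdescent} and Proposition \ref{prop:appendix}, and the same monotonicity conclusion. The obstacle you flag in the non-degree-minimal case --- that $(\s'\wedge\s,v'\wedge v)=(\t_i,w_i)$ --- is exactly the point the paper also asserts with minimal justification.
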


\begin{nt}
Let $(\roots,w_\roots)$ denote the root of $(\Sigma_f^M,\supseteq)$ (Proposition \ref{prop:SigmaTree}). 
%Explicitly, \[(\roots,w_\roots)=\textstyle\bigwedge_{(\t,w)\in\Sigma_f^M}(\t,w).\] 
\end{nt}

\begin{lem}\label{lem:MaximalClusterDegree}
The pseudo-valuation $w_\roots$ is a degree $1$ MacLane valuation. Furthermore, $w_\roots>v_0$.
\proof
Let $w$ be the maximal element of \[\{w'\in\pM\mid D_{w'}\cap\roots=\roots,\,\deg w'=1\}.\] 
Note that the set is non-empty as $v_0$ belongs to it. If $w$ is not a valuation, then $|\roots|\leq 1$ by Lemma \ref{lem:nonproperclusters}, a contradiction. Hence $(\roots,w)$ is a proper MacLane cluster and so $w_\roots\leq w$. But then $w=w_\roots$ by definition of MacLane cluster since $\deg w=1$. Finally, 
\[\lambda_{w_\roots}\geq\min_{r\in\roots}v_K(r)>0,\]
by Lemma \ref{lem:ClusterEquivCond}, and so $w_\roots>v_0$.
\endproof
\end{lem}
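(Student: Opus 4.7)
Following the hint in the statement, the strategy is to realize $w_\roots$ as the maximum of
\[S = \{w' \in \pM \mid D_{w'} \cap \roots = \roots,\ \deg w' = 1\}.\]
Since $v_K(r) > 0$ for every $r \in \roots$, we have $\roots \subseteq D_{v_0}$, so $v_0 \in S$. Any two elements of $S$ produce discoids whose intersection contains $\roots \neq \varnothing$, so they are comparable by Remark \ref{rem:Galois1disjoint2discoids}(2); hence $S$ is totally ordered. Moreover, for any $w' \in S$ with centre $x - a$ the radius $\lambda_{w'}$ is bounded above by $v_K(r_1 - r_2)$ for any two distinct $r_1, r_2 \in \roots$ (by the ultrametric inequality applied to $r_i - a$) and takes values in the discrete subset $\frac{1}{e}\Z$ of $\Q$, where $e$ is the ramification index of $K(\roots)/K$. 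Consequently the supremum is attained; call the maximum $w$.

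I would next argue that $w$ is a genuine valuation and that $(\roots, w) \in \Sigma_f^M$. If $w$ were an infinite pseudo-valuation of degree $1$, then $D_w$ would be a singleton, forcing $|\roots| \leq 1$ and contradicting $|\roots| \geq 3$ (since $C$ has genus $\geq 1$); the same contradiction follows by applying Lemma \ref{lem:nonproperclusters} to the candidate cluster $(\roots, w)$. For condition (2) of the MacLane-cluster definition, any MacLane valuation $w' > w$ with $D_{w'} \cap \roots = \roots$ must satisfy $\deg w' > 1$, else $w' \in S$ would contradict the maximality of $w$. Hence $(\roots, w)$ is a proper MacLane cluster.

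Finally, I would compare $w$ and $w_\roots$. Since $(\roots, w_\roots)$ is the root of the tree $(\Sigma_f^M, \supseteq)$, it is the cluster containing every other, and so corresponds to the minimum valuation: this gives $w \geq w_\roots$. If $w > w_\roots$, then condition (2) applied to the cluster $(\roots, w_\roots)$ would yield $\deg w > \deg w_\roots \geq 1$, contradicting $\deg w = 1$. Therefore $w = w_\roots$, proving that $w_\roots$ is a degree $1$ MacLane valuation. The strict inequality $w_\roots > v_0$ follows from Lemma \ref{lem:ClusterEquivCond} with $g = x \in \m P_{\leq 1}$:
\[\lambda_{w_\roots} \;\geq\; \min_{r \in \roots} v_K(r) \;>\; 0 \;=\; \lambda_{v_0}.\]
The only delicate step is ensuring that the supremum defining $w$ is actually attained, but the chain structure of $S$ combined with the ultrametric upper bound and the discreteness of valuation values handles this uniformly.
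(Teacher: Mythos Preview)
Your argument follows the same route as the paper's: take the maximum $w$ of the set $S$, show it is a proper valuation defining a MacLane cluster, and conclude $w=w_\roots$ via condition~(2) of Definition~\ref{defn:Clusters}. You even fill in a point the paper glosses over, namely the existence of $\max S$; note however that not every $\lambda_{w'}$ with $w'\in S$ lies in $\tfrac{1}{e}\Z$ (e.g.\ $[v_0,w'(x)=\tfrac{1}{100}]$ may well belong to $S$) --- what is true, and sufficient, is that the cofinal values $\min_{r\in\roots} v_K(r-a)$, one for each centre $x-a$, lie in $\tfrac{1}{e}\Z$ and are bounded above by $v_K(r_1-r_2)$, so the supremum is attained.
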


\begin{lem}
Let $(\s,v)$ be a proper MacLane cluster. There exists a unique cluster chain for $v$. Furthermore, $v>v_0$.
\proof
The uniqueness follows by Lemma \ref{lem:clusterchainuniqueness}. Moreover, $v>v_0$ by Lemma \ref{lem:MaximalClusterDegree}.
We construct a cluster chain of $v$ recursively to prove the existence. First let $(\roots,w_\roots)$ as above.
Then $w_\roots=[v_0,w_\roots(\phi_\roots)=\lambda_\roots]$ is a cluster chain for $(\roots, w_\roots)$. Now let $(\s,v)$ be any MacLane cluster different from $(\roots,w_\roots)$ and consider its parent $(\t,w)$. By recursion we can assume that $w$ is equipped with a cluster chain
\[[v_0, \dots,v_{m-1}(\phi_{m-1})=\lambda_{m-1},v_m(\phi_m)=\lambda_m]\]
% \[w=w_{m}=[v_0,w_1(\phi_1)=\lambda_1,\dots,w_{m-1}(\phi_{m-1})=\lambda_{\t_{m-1}},w_m(\phi_m)=\lambda_m]\]
So $\phi_m=\phi_{w}$ from Lemma \ref{lem:clusterchainuniqueness}. If $\phi_{w}=\phi_v$, then
\[[v_0, \dots,v_{m-1}(\phi_{m-1})=\lambda_{m-1},v(\phi_m)=\lambda_v]\]
%\[v=v_m=[v_0,w_1,\dots,w_{m-1},v_m(\phi_\s)=\lambda_\s].\]
is a cluster chain for $v$. If $\phi_{w}\neq \phi_v$, Lemma \ref{lem:parentchildvaluation} implies that
\[[v_0, \dots,v_{m-1}(\phi_{m-1})=\lambda_{m-1},v_m(\phi_m)=\lambda_m, v(\phi_v)=\lambda_v]\]
%\[v=v_m=[v_0,w_1,\dots,w_m,v_m(\phi_\s)=\lambda_\s].\]
is an  the augmentation chain for $v$. Proving it is a MacLane chain would conclude the proof. Suppose by contradiction that $\phi_v\veq_w \phi_{w}$. Then $\deg \phi_v=\deg \phi_{w}$. In particular, $(\t,w)$ is not degree-minimal. As $\phi_v\neq \phi_{w}$,
there exists a child $(\s',v')<(\t,w)$ such that $\phi_{w}=\phi_{v'}$. 
Hence $(\s\wedge\s',v\wedge v')=(\t,w)$. 
Set 
\[w'=[w,w'(\phi_{w})=\min\{\lambda_{v'},\lambda_v,w(\phi_v-\phi_{w})\}].\]
Therefore $w<w'\leq v'$. Moreover $v(\phi_{w})=\min\{\lambda_v,w(\phi_{w}-\phi_v)\}$, and so $v\geq w'$. But then $w<w'\leq v\wedge v'$ which gives a contradiction.
\endproof
\end{lem}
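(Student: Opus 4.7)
The plan is to dispatch the easy parts immediately and then build the cluster chain inductively along the cluster tree. Uniqueness is immediate from Lemma~\ref{lem:clusterchainuniqueness}, which pins down each $(\phi_i,\lambda_i)$ in terms of the chain of ancestors of $(\s,v)$ together with the centres fixed by our choice convention (\ref{eqn:centrechoice}). For $v>v_0$, one just notes that $(\s,v)\subseteq(\roots,w_\roots)$ forces $v\geq w_\roots$, and Lemma~\ref{lem:MaximalClusterDegree} already provides $w_\roots>v_0$.

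For existence I would proceed by induction on the depth of $(\s,v)$ in the tree $(\Sigma_f^M,\supseteq)$. The base case is $(\s,v)=(\roots,w_\roots)$: since $\deg w_\roots=1$ and $w_\roots>v_0$, the tuple $[v_0, w_\roots(\phi_{w_\roots})=\lambda_{w_\roots}]$ is vacuously a MacLane chain and trivially satisfies the cluster-chain condition on centres. For the inductive step, let $(\t,w)$ be the parent of $(\s,v)$, and suppose $w$ has a cluster chain $[v_0, v_1(\phi_1)=\lambda_1,\dots,v_m(\phi_m)=\lambda_m]$, with $\phi_m=\phi_w$ by Lemma~\ref{lem:clusterchainuniqueness}. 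By Lemma~\ref{lem:parentchildvaluation}, $v=[w, v(\phi_v)=\lambda_v]$. I would split into two cases: if $\phi_v=\phi_w$, replace the last pair of the chain by $(\phi_v,\lambda_v)$; if $\phi_v\neq\phi_w$, append $(\phi_v,\lambda_v)$ as a new step. In either case the degrees of the centres remain monotone and the cluster-chain condition on the set of centres is clear, so the only non-trivial point is the MacLane-chain condition $\phi_v\not\veq_w \phi_w$ in the second case.

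The main obstacle is precisely this verification. Suppose for contradiction that $\phi_v\veq_w\phi_w$; then $\deg\phi_v=\deg\phi_w$, hence $\deg v=\deg w$, so $(\t,w)$ is not degree-minimal, and our choice rule forces $\phi_w=\phi_{v'}$ for some child $(\s',v')<(\t,w)$ distinct from $(\s,v)$. My plan is to manufacture an augmentation $w'=[w,w'(\phi_w)=\lambda]$ of $w$, with
\[
\lambda:=\min\{\lambda_v,\,\lambda_{v'},\,w(\phi_v-\phi_w)\},
\]
lying strictly between $w$ and $v\wedge v'$. Each of $\lambda_v,\lambda_{v'}$ exceeds $\lambda_w$ (Lemma~\ref{lem:centreparent}), and $w(\phi_v-\phi_w)>\lambda_w$ by the $\veq_w$ assumption, so $\lambda>\lambda_w$ and thus $w'>w$. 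Expanding $\phi_w=\phi_v-(\phi_v-\phi_w)$ and using $v\geq w$ one checks $v(\phi_w)\geq\lambda$, which upgrades via the augmentation formula to $v\geq w'$; the analogous inequality $v'\geq w'$ is immediate from $v'(\phi_w)=\lambda_{v'}\geq\lambda$. Thus $w<w'\leq v\wedge v'$. On the other hand, Lemma~\ref{lem:wedgecluster} together with the disjointness of $D_v$ and $D_{v'}$ (Remark~\ref{rem:Galois1disjoint2discoids}) identifies $v\wedge v'$ with $w$, giving the desired contradiction and completing the induction.
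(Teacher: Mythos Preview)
Your proposal is correct and follows essentially the same argument as the paper: uniqueness via Lemma~\ref{lem:clusterchainuniqueness}, $v>v_0$ via Lemma~\ref{lem:MaximalClusterDegree}, recursive construction along the cluster tree with the same two cases, and the identical contradiction argument using the auxiliary augmentation $w'$ to show $\phi_v\not\veq_w\phi_w$. The only cosmetic difference is that you justify $v\wedge v'=w$ via disjointness of the discoids and Lemma~\ref{lem:wedgecluster}, whereas the paper simply asserts $(\s\wedge\s',v\wedge v')=(\t,w)$ from the fact that $(\s,v)$ and $(\s',v')$ are distinct children of $(\t,w)$.
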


Let $h=1,\dots,n$ and consider the MacLane valuation
$\mu_h$ of the degree-minimal cluster $(\s_h,\mu_h)$. Let
\begin{equation}\label{eqn:muhclusterchain}
    [v_0,v_1(\phi_1)=\lambda_1,\dots, v_{m-1}(\phi_{m-1})=\lambda_{m-1},v_m(\phi_m)=\lambda_m]
\end{equation}
be a cluster chain for $\mu_h$. Then $\phi_m=\psi_h$. 
%\marginpar{\footnotesize In Tim's case: $\psi_h=x$, $v_{m-1}=v_0$.}  
Denote $\phi=\psi_h$ and $v=v_{m-1}$. Denote by $\epsilon_h$ the ramification index $e_v=\epsilon_{\mu_h}$.
%, i.e.\ the lower convex hull of points $(j,v(a_j))\in\Z\times\frac{1}{e_v}\Z$ in $\R^2$.
Let $g(x,y)=y^2-f(x)$ and expand
\[g=\sum_{i,j}a_{ij}\phi^iy^j,\qquad a_{ij}\in K[x],\,\deg a_{i,j}<\deg \phi.\]
Define the Newton polytopes 
\begin{align*}
    \Delta_h&=\text{convex hull}\lb\{(i,j):a_{ij}\neq 0 \}\rb\subset\R^2,\\
    \tilde\Delta_h&=\text{lower convex hull}\lb\{(i,j,v(a_{ij})): a_{ij}\neq 0\}\rb\subset\R^3.
\end{align*}
Consider the homeomorphic projection $s_h:\tilde\Delta_h\rightarrow\Delta_h$. Above every point $P\in\Delta_h$ there is a unique point $(P,\tilde \mu_h(P))\in\tilde\Delta_h$. This defines a piecewise affine function $\tilde \mu_h:\Delta_h\rightarrow\R$, and the pair $(\Delta_h,\tilde \mu_h)$ determines $\tilde\Delta_h$. Let $\tilde F$ be any $2$-dimensional (open) face of $\tilde\Delta_h$ and let $F=s_h(\tilde F)$. Define $\tilde v_{F,h}:\R^2\rightarrow\R$ to be the unique affine function coinciding with $\tilde \mu_h$ on $F$. Let $\lambda_F=\tilde v_{F,h}(0,0)-\tilde v_{F,h}(1,0)$. Define $\tilde\Delta_h^-\subseteq\tilde\Delta_h$ as the sub-polytope consisting of (the closure of) all $2$-dimensional faces $\tilde F$ of $\tilde\Delta_h$ with $\lambda_F>v(\phi)$. Clearly
% Note that every MacLane valuation $v$ induces a unique pseudo-valuation $v^g$ of the function field $K(x)[y]/(g)$ of $C$. In particular, $v^g(y)=v(f)/2$.
\[\tilde\Delta_h^-=\text{lower convex hull}\lb\{(i,0,u): (i,u)\in N_{v,\phi}^-(f)\}\cup\{(0,2,0)\}\rb\subset \R^3.\]
where $N_{v,\phi}^-(f)$ is the principal Newton polygon of $f$ with respect to $v,\phi$. 
% \begin{align*}
%     \Delta_m&=\text{convex hull}\lb\,(0,2),(j,0):(j,v(a_j))\in N_{v,\phi_m}^-(f) \,\rb\subset\R^2,\\
%     \tilde\Delta_m&=\text{lower convex hull}\lb\,(0,2,0),(j,0,v(a_j)):(j,v(a_j))\in N_{v,\phi_m}^-(f)\,\rb\subset\R^3.
% \end{align*}
The image of $\tilde\Delta_h^-$ under $s_h$ will be denoted by $\Delta_h^-$.
The images of the $0$-,$1$- and $2$-dimensional (open)
faces of the polytope $\tilde{\Delta}_h^-$ under $s_h$ are
called $h$-vertices, $h$-edges and $h$-faces. Finally, a $\ast$-vertex, $\ast$-edge, $\ast$-face is respectively an $h$-vertex, $h$-edge, $h$-face for some $h=1,\dots,n$.

%Above every point $P\in\Delta_m^-$ there is a unique point $(P,\tilde \mu_h(P))\in\tilde\Delta_h^-$. This defines a piecewise affine function $\tilde \mu_h:\Delta_h^-\rightarrow\R$, and the pair $(\Delta_h^-,\tilde \mu_h)$ determines $\tilde\Delta_h^-$. 

\begin{defn}
Let $G$ be a $h$-vertex, $h$-edge or $h$-face.
\begin{enumerate}[label=(\alph*)]
%\marginpar{\footnotesize Are they all useful?}
    \item Denote by $\tilde G$ the inverse image of $G$ under $s_h$.%useful
    \item Denote by $\bar G$ the closure of $G$ in $\R^2$.
    \item Denote by $G_\Z$ the set of points $P$ of $G$ with $\epsilon_h\tilde\mu_h(P)\in\Z$.
    \item Denote by $G_\Z(\Z)$ the intersection $G_\Z\cap\Z$.
\end{enumerate}
Finally, define the \textit{denominator} of $G$, denoted $\delta_G$, as the common denominator of $\epsilon_h \tilde \mu_h(P)$ for every $P\in\bar G(\Z)$.
\end{defn}

% \begin{defn}
% For any $h$-face $F$, define $v_{F,h}:\Z^2\rightarrow \Q$ to be the restriction of $\tilde v_F$ to $\Z^2$. Write \[v_{F,h}(x,y)=-\lambda_Fx-\tfrac{\epsilon_F}{2}y+\epsilon_F\] (note that $(0,2)\in F$). Since $\lambda_F>v(g)$, $F$ induces a Maclane valuation $v_F=[v,v_F(g)=\lambda_F]$. For all $l=1,\dots,n$, define $v_{F,l}:\Z^2\rightarrow \Q$ by \[v_{F,l}(x,y)=-v_F(\psi_l)x-\tfrac{\epsilon_F}{2}y+\epsilon_F\]
% and $\tilde v_F:\R^{n+1}\rightarrow\Q$ by \[\tilde v_F(x_1,\dots,x_n,y)=-(v_F(\phi_1)x_1+\dots+v_F(\phi_n)x_n)-\tfrac{\epsilon_F}{2} y+\epsilon_F.\]
% Finally define $e_F=(\Gamma_F:\Z)$, where $\Gamma_F=\tilde v_F(\Z^{n+1})$.
% \end{defn}

Let $(\s,w)$ be a proper MacLane cluster centre $\phi_w=\psi_h$. Lemma \ref{lem:clusterchainuniqueness} implies that the cluster chain for $w$ is
\[[v_0,v_1(\phi_1)=\lambda_1,\dots, v_{m-1}(\phi_{m-1})=\lambda_{m-1},w(\psi_h)=\lambda_w]\]
where $v_i,\phi_i,\lambda_i$ are as in (\ref{eqn:muhclusterchain}).
Theorem \ref{thm:NewtonClusters} implies that there is a $1$-to-$1$ correspondence between proper MacLane clusters and $\ast$-faces. Given a proper MacLane cluster $(\s,w)$ we will denote by $F_w$ the corresponding $\ast$-face. If $\phi_w=\psi_h$, then $F_w$ is an $h$-face. Then $F_w$ has $3$ edges:
\begin{enumerate}[label=(\arabic*)]
    \item An $h$-edge, denoted $L_w$, linking the points $(\io_w,0)$ and $(\i_w,0)$.
    \item An $h$-edge, denoted $V_w$, linking the points $(\i_w,0)$ and $(0,2)$.
    \item An $h$-edge, denoted $V_w^0$, linking the points $(\io_w,0)$ and $(0,2)$.
\end{enumerate}
%Then $v_{F_v}=v$. 
\begin{defn}
For any proper MacLane cluster $(\s,w)$ and any $l=1,\dots,n$, define $\tilde w_l:\R^2\rightarrow \R$ by \[\tilde w_l(x,y)=-w(\psi_l)x-\tfrac{w(f)}{2}y+w(f)\]
and $\tilde w:\R^{n+1}\rightarrow\R$ by \[\tilde w(x_1,\dots,x_n,y)=-(w(\psi_1)x_1+\dots+w(\psi_n)x_n)-\tfrac{w(f)}{2} y+w(f).\]
Finally define $e_{\tilde w}=(\tilde\Gamma_{w}:\Z)$, where $\tilde\Gamma_{w}=\tilde w(\Z^{n+1})$.
\end{defn}

Let $(\s,w)$ be a proper MacLane cluster with centre $\phi_w=\psi_h$. Then $\tilde w_h=\tilde v_{F_w,h}$. We will denote $(\s_{F_w},v_{F_w})=(\s,w)$.

% Let $E$ be an $h$-edge. By definition, there exists an $h$-face $F$ with edge $E$.
% Let $F'$ be an $o$-face for any $o=1,\dots,h$. We say that $E$ \textit{bounds} $F'$ if either
% \begin{enumerate}
%     \item $F'=F$ or
%     \item $v_{F',h}\neq v_{F,h}$ and $v_{F',h}|_E=v_{F,h}|_E$.
% \end{enumerate}

% \begin{defn}
% We say $E$ is \textit{inner} if for some $o_1,o_2=1,\dots,n$ there exist an $o_1$-face $F_1$ and an $o_2$-face $F_2$ bounded by $E$ such that $v_{F_1,m}\neq v_{F_2,m}$.
% Otherwise $E$ is said \textit{outer}.
% \end{defn}

\begin{defn}
Let $E$ be an $h$-edge. We say $E$ is \textit{inner} if $E=V_w$ for some proper MacLane cluster $(\s,w)\neq(\roots,w_\roots)$. In this case we say that $E$ \textit{bounds} $F_w$ and $F_{P(w)}$.
In all other cases $E$ is said \textit{outer} and \textit{bounds} only the $h$-face whose it is an edge.
\end{defn}

\subsection{Matrices}\label{subsec:Matrices}
Let $(\s,v)$ be a proper cluster with centre $\phi_v=\psi_h$. 
% Up to reordering the $\phi_j$'s we have
% \[v=v_m=[v_0,v_1(\phi_1)=\lambda_1,\dots,v_m(\phi_m)=\lambda_m]\]
% for some $m\leq n$. 
Let
\begin{equation} \label{eqn:clusterchainforv}
    [v_0,v_1(\phi_1)=\lambda_1,\dots,v_{m-1}(\phi_{m-1})=\lambda_{m-1},v_m(\phi_m)=\lambda_m]
\end{equation}
be the unique cluster chain for $v$. Construct the invariants and the rational functions attached to (\ref{eqn:clusterchainforv}) in \S\ref{sec:Residualpoly}.
% Note that Lemma \ref{lem:clusterchainuniqueness} implies that
% \[[v_0,v_1(\phi_1)=\lambda_1,\dots,v_{m-1}(\phi_{m-1})=\lambda_{m-1}, v_m(\phi_m)=\lambda_{\mu_h}]\]
% is a cluster chain for $\mu_h$. 
Denote $v_-=v_{m-1}$. Recall $e_{v_-}=\epsilon_h$.

% Let $E$ be an $h$-edge bounding $F_v$. 
% If $E=V_v^0$, let $v_E=[v_-,v_E(\psi_h)=\lambda_E]$ where $\lambda_E\in\hat\Q$ is given by \[\lambda_E=\min\{\lambda>\lambda_v\mid v_\lambda=[v_-,v_\lambda(\psi_h)=\lambda]\text{ defines a MacLane cluster}\}.\] 
% Otherwise set $v_E=v$.

Let $E$ be either $L_v$ or $V_v$ or $V_v^0$ if $(\s,v)$ is degree-minimal. 
Let $v_E=[v_-,v_E(\psi_h)=\infty]$ if $E=V_v^0$, and $v_E=v$ otherwise.

% \begin{defn}\label{defn:gammaoE}
% Let $o=1,\dots,n$, $o\neq h$. Define $\gamma_{o,E}=\gamma_j$ if $\psi_o=\phi_j$, while
% \[\gamma_{o,E}=\begin{cases}\psi_o\cdot \psi_h^{-v(\psi_o)/\lambda_v}&\mbox{if }\mu_o> v_E,\\
% \psi_o\cdot \pi_{m-1}^{-e_{v_{m-1}}v(\psi_o)}&\mbox{otherwise},\end{cases}\]
% if $\psi_o\neq\phi_j$, for all $j=1,\dots,m$.
% \end{defn}

\begin{defn}\label{defn:gammaoE}
Let $o=1,\dots,n$, $o\neq h$. Define $\gamma_{o,E}=\gamma_j$ if $\psi_o=\phi_j$, while
\[\gamma_{o,E}=\begin{cases}\psi_o\cdot \psi_h^{-\deg\psi_o/\deg\psi_h}&\mbox{if }\mu_o\geq v_E,\\
\psi_o\cdot \pi_{m-1}^{-e_{v_{m-1}}v(\psi_o)}&\mbox{otherwise},\end{cases}\]
if $\psi_o\neq\phi_j$, for all $j=1,\dots,m$.
\end{defn}

\begin{lem}\label{lem:gammaoEval0}
Let $o=1,\dots,n$, $o\neq h$. Then $\gamma_{o,E}$ is a well-defined element of $K(x)$ satisfying $v_F(\gamma_{o,E})=0$ for any $\ast$-face $F$ bounded by $E$.
\proof
%Let $(\s_F,v_F)$ be the proper cluster associated with $F$.
Let $F$ be any $\ast$-face bounded by $E$.
Then $(\s,v)\leq (\s_F,v_F)$.
% \[w=w_l=[v_0,w_1(\phi_1)=\mu_1,\dots,w_{l-1}(\phi_{l-1})=\mu_{l-1},w_l(\phi_o)=\mu_l].\]
Lemma \ref{lem:clusterchainuniqueness} implies that $v_F(\phi_{j})=v(\phi_j)$ for all $j<m$. So the statement is trivial if $\psi_o=\phi_j$, for some $j<m$. Suppose $\psi_o\neq\phi_j$, for all $j<m$. Then $\mu_o\not\leq v$.
%if $(\t,w)$ has centre $\phi_{w}=\psi_o$, then $w\not\leq v$. 
In particular, $\mu_o\not<v_F$ and so $v_F(\phi_o)=(v_F\wedge \mu_o)(\phi_o)$ by Proposition \ref{prop:appendix}. 

Suppose $v_E\leq\mu_o$. 
Then $v_F\leq\mu_o$ and so $\psi_o$ is $v_F$-minimal by Lemma \ref{lem:vminimal}. It follows that $\deg v\mid\deg\psi_o$ by Lemma \ref{lem:minimalityequivalence}. Theorem \ref{thm:minimality} implies that 
\[\frac{v(\psi_o)}{\deg \psi_o}=\frac{\lambda_v}{\deg v}\qquad\text{and}\qquad\frac{v_F(\psi_o)}{\deg \psi_o}=\frac{v_F(\psi_h)}{\deg v},\]
since $v_F=v$ when $F=F_v$. Therefore $v_F(\gamma_{o,L})=0$.

Suppose $v_E\not\leq\mu_o$. First we want to show that \begin{equation}\label{eqn:vEwedge}
    v_E\wedge \mu_o=v_F\wedge \mu_o.
\end{equation} 
Note that either $v_E\wedge \mu_0\leq v_F$ or $v_E\wedge \mu_0>v_F$ since $v_E\geq v_F$. 
If $E=V_v^0$ (and so $v$ is degree-minimal), then $v_F=v$. If $v_E\wedge \mu_o\leq v$, then (\ref{eqn:vEwedge}) follows. Suppose $v_E\wedge \mu_o>v$. Then $\mu_o>v$ and so $\mu_o(\psi_h)=v(\psi_h)$ by Lemma \ref{lem:clusterchainuniqueness}. 
Furthermore, $\psi_h$ is a centre of $v_E\wedge \mu_o\leq v_E$. 
But then Lemma \ref{lem:centreparent} and Proposition \ref{prop:appendix} imply that
\[\mu_o(\psi_h)=(v_E\wedge \mu_o)(\psi_h)=\lambda_{v_E\wedge\mu_o}>\lambda_{v}=v(\psi_h)=\mu_o(\psi_h),\]
a contradiction. If $E\neq V_v^0$, then $v_E=v$. Since $v\wedge\mu_o<v$ defines a MacLane cluster by Lemma \ref{lem:wedgecluster}, we have $v\wedge\mu_o\leq v_F$. Hence (\ref{eqn:vEwedge}). 

It follows from (\ref{eqn:vEwedge}) and Proposition \ref{prop:appendix} that
\begin{equation}\label{eqn:vEpsio}
    v_E(\psi_o)=(v_E\wedge \mu_o)(\psi_o)=(v_F\wedge \mu_o)(\psi_o)=v_F(\psi_o).
\end{equation}
Hence it suffices to show that $v(\psi_o)\in\Gamma_{v_{m-1}}$. 
By Proposition \ref{prop:ValuationsInequality} write
\[v\wedge\mu_o=[v_{a-1},(v\wedge\mu_o)(\phi_a)=\lambda_a'],\]
for some $a\leq m$ and $\lambda_a'\leq\lambda_a$.

If $v\leq \mu_o$, then $v$ is degree-minimal. It follows that $v\wedge\mu_o$ appears in the cluster chain for $\mu_o$ by Lemma \ref{lem:clusterchainuniqueness}. Therefore $v(\psi_o)\in\Gamma_{v_{a-1}}\subseteq\Gamma_{v_{m-1}}$ by Remark \ref{rem:degreeramificationindexChain}.

If $v\not\leq\mu_o$, then $v\wedge\mu_o<v$. By Lemma \ref{lem:wedgecluster}, the valuation $v\wedge \mu_o$ defines a proper MacLane cluster $(\s',v\wedge \mu_o)\supsetneq(\s,v)$. Let $(\t,w)\in\Sigma_f^M$ such that 
\[(\s,v)\subseteq(\t,w)<(\s',v\wedge \mu_o).\]
Since $\mu_o\not\geq w$, if $\psi_w=\psi_{v\wedge \mu_o}$, then $v\wedge\mu_o$ appears in the cluster chain for $\mu_o$ by Lemma \ref{lem:clusterchainuniqueness}. Therefore $v(\psi_o)\in\Gamma_{v_{m-1}}$ as above. Finally, if $\psi_w\neq\psi_{v\wedge \mu_o}$, then $v\wedge\mu_o$ appears in the cluster chain for $v$ again by Lemma \ref{lem:clusterchainuniqueness}. Since $v\wedge\mu_o<v$, one has $(v\wedge\mu_o)(g)\in\Gamma_{v_{m-1}}$ for any $g\in K[x]$. In particular, $v(\psi_o)\in\Gamma_{v_{m-1}}$ from (\ref{eqn:vEpsio}). 
\endproof
\end{lem}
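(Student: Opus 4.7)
The plan is to verify the two assertions (well-definedness in $K(x)$ and the valuation identity $v_F(\gamma_{o,E})=0$) separately in the three cases of the definition. A key preliminary observation is that for any $\ast$-face $F$ bounded by $E$, the associated valuation satisfies $(\s,v)\subseteq(\s_F,v_F)$, hence $v_F\leq v$; for outer edges one has $v_F=v$, and for the inner edge $E=V_v$ the only additional possibility is $v_F=P(v)$. Thus the problem reduces to controlling $v_F(\gamma_{o,E})$ for at most two explicit valuations bounded above by $v$.

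The case $\psi_o=\phi_j$ is immediate: $\gamma_j$ is already defined in $K(x)$, and since the cluster chain of $v_F$ passes through $v_j$ by Lemma \ref{lem:clusterchainuniqueness}, Lemma \ref{lem:gammapival} gives $v_F(\gamma_j)=0$. In the case $\mu_o\geq v_E$ with $\psi_o\neq\phi_j$ for all $j$, the chain $\mu_o\geq v_E\geq v_F$ combined with Lemma \ref{lem:vminimal} shows that $\psi_o$ is $v_F$-minimal; Lemma \ref{lem:minimalityequivalence}\ref{item:minimal3.10} then forces $\deg\psi_h\mid\deg\psi_o$, which handles well-definedness. Theorem \ref{thm:minimality} applied to both $\psi_o$ and $\psi_h$ yields $v_F(\psi_o)/\deg\psi_o=\lambda_{v_F}/\deg v_F=v_F(\psi_h)/\deg\psi_h$, from which the valuation identity drops out.

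The main obstacle is the third case, $\mu_o\not\geq v_E$ with $\psi_o\neq\phi_j$ for all $j$, where well-definedness requires $v(\psi_o)\in\Gamma_{v_{m-1}}$. I would analyze the meet $v\wedge\mu_o$ via Proposition \ref{prop:ValuationsInequality}, writing it as $[v_{a-1},(v\wedge\mu_o)(\phi_a)=\lambda_a']$ for some $a\leq m$. If $v\leq\mu_o$ (which, combined with $\mu_o\not\geq v_E$, pins down $E=V_v^0$ with $(\s,v)$ degree-minimal), Lemma \ref{lem:clusterchainuniqueness} places $v\wedge\mu_o$ in the cluster chain of $\mu_o$, and Remark \ref{rem:degreeramificationindexChain} yields $v(\psi_o)\in\Gamma_{v_{a-1}}\subseteq\Gamma_{v_{m-1}}$. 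Otherwise $v\wedge\mu_o<v$ defines a proper MacLane cluster strictly containing $(\s,v)$, and a subsidiary casework on whether the centre of an intermediate ancestor of $(\s,v)$ coincides with $\psi_{v\wedge\mu_o}$ routes the argument through either the cluster chain of $\mu_o$ (as above) or that of $v$ (in which case $(v\wedge\mu_o)(g)\in\Gamma_{v_{m-1}}$ for all $g\in K[x]$ automatically).

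For the valuation identity in this third case, the strategy is to establish $v_E\wedge\mu_o=v_F\wedge\mu_o$ and then apply Proposition \ref{prop:appendix} twice to conclude $v_F(\psi_o)=v_E(\psi_o)=v(\psi_o)$; the identity $v_F(\gamma_{o,E})=0$ then follows by direct computation using $v_F(\pi_{m-1})=1/e_{v_{m-1}}$. The delicate subcase is $E=V_v^0$, where $v_E>v$: here one must rule out $v_E\wedge\mu_o>v$, which would force $\psi_h$ to be a centre of a valuation $\leq v_E$ with radius strictly exceeding $\lambda_v$, contradicting Lemma \ref{lem:centreparent} together with Lemma \ref{lem:clusterchainuniqueness}. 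Navigating this cluster-tree casework is where the bulk of the difficulty of the proof lies.
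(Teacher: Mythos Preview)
Your proposal is correct and follows essentially the same approach as the paper's proof: the same three-way case split on the definition of $\gamma_{o,E}$, the same minimality argument via Theorem \ref{thm:minimality} in the second case, and in the third case the same two pillars (establishing $v_E\wedge\mu_o=v_F\wedge\mu_o$ to get $v_F(\psi_o)=v(\psi_o)$ via Proposition \ref{prop:appendix}, and showing $v(\psi_o)\in\Gamma_{v_{m-1}}$ by the sub-casework on $v\leq\mu_o$ versus $v\not\leq\mu_o$). The only difference is cosmetic: you treat well-definedness before the valuation identity in the third case, whereas the paper does the reverse.
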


Let $E_v^\ast$ be the unique affine function $\Z^2\rightarrow\Z$ with $E_v^\ast|_E=0$ and $E_v^\ast|_{F_v}\geq 0$. Choose $P_0,P_1\in\Z^2$ such that $E_v^\ast(P_0)=0$ and $E_v^\ast(P_1)=1$.

\begin{defn}
%Recall the denominator $\delta_E$ of $E$. 
Define the slopes $[s_1^E,s_2^E],$ at $E$ to be
\[s_1^E= \delta_E \epsilon_h\lb \tilde v_{h}(P_1)-\tilde v_{h}(P_0)\rb,\]
\[s_2^E=\begin{cases}\delta_E \epsilon_h\lb \tilde w_{h}(P_1)-\tilde w_{h}(P_0)\rb&\mbox{if }$E$\mbox{ inner, with $(\s,v)<(\t,w)$,}\\\lfloor s_1^E-1\rfloor&\mbox{if }$E$\mbox{ outer.}\end{cases}\]\normalsize
% \footnotesize\begin{align*}s_1^E&= \delta_Ee_{v_{m-1}}\lb v_F(P_1)-v_F(P_0)\rb,\\ s_2^E&=\begin{cases}\delta_E e_{v_{m-1}}\lb v_{F',m-1}(P_1)-v_{F',m-1}(P_0)\rb&\mbox{if }$E$\mbox{ inner,}\\\lfloor s_1^E-1\rfloor&\mbox{if }$E$\mbox{ outer.}\end{cases}\end{align*}
\end{defn}

Let $\delta=\delta_E$. Pick fractions $\frac{n_i}{d_i}\in\Q$ such that
\[s_1^E=\frac{n_0}{d_0}>\dots>\frac{n_{r_E+1}}{d_{r_E+1}}=s_2^E,\quad\text{with\scalebox{0.9}{$\quad\begin{vmatrix}n_i\!\!\!&n_{i+1}\cr d_i\!\!\!&d_{i+1}\cr\end{vmatrix}=1$}}.\]
Let $r=r_E$. Redefine $n_{r+1}=-1$, $d_{r+1}=0$ if $E$ is outer.

Write $\tilde E=\tilde P_0+\nu\R$, with $\delta\nu=(\delta a_x, \delta a_y, \delta a_z)\in\Z^2\times \frac{1}{\epsilon_h}\Z$ primitive and such that $(a_x,a_y)$ goes counterclockwise along $\partial F_v$.
Let $o\neq h$. By Definition \ref{defn:gammaoE} and Lemma \ref{lem:gammadecomposition}, we can uniquely write 
\[\gamma_{o,E}=\psi_1^{m_{1o}}\cdots \psi_n^{m_{no}}\cdot\pi^{m_{(n+2)o}}\] 
Define $\nu_o\in\R^{n+2}$ by $\nu_o=(m_{1o},\dots,m_{no},0,m_{(n+2)o})$.

Now consider the embedding $\iota_h:\R^3\hookrightarrow\R^{n+2}$ given by \[(x_h,y,z)\mapsto(0,\dots,0,x_h,0,\dots,0,y,z),\]
where $x_h$ is the $h$-th coordinate in $\R^{n+2}$. Define $\nu_h^\R=\iota_h(\delta\nu)$.
Write $P_1-P_0=(b_x,b_y)$ and define $\omega_i^\R=\iota_h\big(d_ib_x,d_ib_y,\frac{n_i}{\delta \epsilon_h}\big)\in\R^{n+2}$ for any $i=0,\dots,r+1$. 
The vectors above define hyperplanes in $\R^{n+2}$,
\[\mathcal{P}_{E,i}=\nu_1\R+\dots+\nu_n\R+\omega_i\R\qquad i=0,\dots,r+1.\]

Let $M_{E,i}^\R\in M_{n+2}(\R)$ be the matrix given by \[M_{E,i}^\R=(\nu_1,\dots,\nu_{h-1},\nu_h^\R,\nu_{h+1},\dots,\nu_n,\omega_i^\R,-\omega_{i+1}^\R)\] 
where the vectors represent the columns of $M_{E,i}^\R$.
Then\footnote{See Appendix \ref{appendix:Matrices} for more details.}
\[\det M_{E,i}^\R=\textstyle\prod_{o=1}^{m-1}e_o\cdot \frac{1}{e_{v_{m-1}}}=1.\] 
Moreover, all entries of $M_{E,i}^\R$ are integers except possibly $\delta a_z\in\tfrac{1}{\epsilon_h}\Z$, and $\tfrac{n_i}{\delta \epsilon_h}$, $-\tfrac{n_{i+1}}{\delta \epsilon_h}$, rational numbers in $\tfrac{1}{\delta\epsilon_h}\Z$.
Pick $k_i$ with
\[k_i\equiv -n_i(\delta \epsilon_ha_z)^{-1}\mod\delta.\]
This is possible as $\delta\nu$ is primitive in $\Z^2\times\frac{1}{\epsilon_h}\Z$. 
Let $\tau\in S_{n+2}$ be a permutation such that $\phi_o=\psi_{\tau(o)}$ for all $o=1,\dots,m$ and $\tau(n+1)=n+1$, $\tau(n+2)=n+2$.
Define the vectors 
\[
\nu_h=\nu_h^\R+\sum_{o=1}^{m-1}c_o\nu_{\tau(o)}\delta a_z,\quad
\omega_i=\omega_i^\R+k_i\frac{\nu_h^\R}{\delta}+\sum_{o=1}^{m-1}c_o\nu_{\tau(o)}(\tfrac{n_i}{\delta \epsilon_h}+k_ia_z),
\]
where $c_o=e_{v_{o-1}}\ell_o$. The next lemma shows that they belong to $\Z^{n+2}$.

\begin{lem}\label{lem:integralisation}
Write $\sum_{o=1}^{m-1}c_o\nu_{\tau(o)}=(a_1,\dots,a_{n+2})$. Then
\[a_{\tau(j)}=\begin{cases}
\epsilon_h\ell_j\ell_{j+1}'\cdots\ell_{m-1}'&\text{if }j<m,\\
0&\text{if }m\leq j\leq n+1,\\
\epsilon_h\ell_1'\cdots\ell_{m-1}'-1&\text{if }j=n+2.
\end{cases}\]
In particular, $\nu_h,\omega_i\in\Z^{n+2}$.
\proof
Recall $\gamma_{E,\tau(o)}=\gamma_o$ for any $o=1,\dots,m$. If $j<m$ Lemma \ref{lem:gammadecomposition} implies
\small\begin{align*}
    a_{\tau(j)}&=c_je_j-\textstyle\sum_{o=j+1}^{m-1}c_oh_o\ell_j\ell_{j+1}'\cdots\ell_{o-1}'\\
    &=e_{v_j}\ell_j-\textstyle\sum_{o=j+1}^{m-1}e_{v_{o-1}}(\ell_oh_o)\ell_j\ell_{j+1}'\cdots\ell_{o-1}'\\
    &=\ell_j\big( e_{v_j}+\textstyle\sum_{o=j+1}^{m-1}e_{v_{o}}\ell_{j+1}'\cdots\ell_{o}'-\textstyle\sum_{o=j+1}^{m-1}e_{v_{o-1}}\ell_{j+1}'\cdots\ell_{o-1}'\big)\\
    &=\ell_j\lb e_{v_j} +e_{v_{m-1}}\ell_{j+1}'\cdots\ell_{m-1}'-e_{v_{j}}\rb=e_{v_{m-1}}\ell_j\ell_{j+1}'\cdots\ell_{m-1}',
\end{align*}\normalsize
where we used $\ell_oh_o+\ell_o'e_o=1$. If $m\leq j\leq n+1$, then the $\tau(j)$-th coordinate of $\nu_{\tau(o)}$ is $0$ for all $o=1,\dots,m-1$; so $a_{\tau(j)}=0$. Finally
\begin{align*}a_{n+2}&=-\textstyle\sum_{o=1}^{m-1}c_oh_o\ell_1'\cdots\ell_{o-1}'=\textstyle\sum_{o=1}^{m-1}e_{v_{o}}\ell_{1}'\cdots\ell_{o}'-\textstyle\sum_{o=1}^{m-1}e_{v_{o-1}}\ell_{1}'\cdots\ell_{o-1}'\\
&=e_{v_{m-1}}\ell_1'\cdots\ell_{m-1}'-1,
\end{align*}
as required.
\endproof
\end{lem}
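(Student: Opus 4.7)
The plan is to reduce to an explicit coordinate computation and then collapse the resulting sums by the B\'ezout identity $\ell_o h_o+\ell_o'e_o=1$ together with the multiplicativity $e_{v_o}=e_{v_{o-1}}e_o$.

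First I would invoke Lemma~\ref{lem:gammadecomposition} to expand $\gamma_o=\phi_o^{e_o}\phi_{o-1}^{-h_om'_{o-1}}\cdots\phi_1^{-h_om'_1}\pi^{-h_om'_0}$ for each $1\leq o\leq m-1$, and translate via $\phi_i=\psi_{\tau(i)}$. This reads off the coordinates of $\nu_{\tau(o)}$: the $\tau(o)$-entry is $e_o$, the $\tau(j)$-entry for $0<j<o$ is $-h_o\ell_j\ell'_{j+1}\cdots\ell'_{o-1}$, the $(n+2)$-entry is $-h_o\ell'_1\cdots\ell'_{o-1}$, the $(n+1)$-entry is $0$ by definition of $\nu_{\cdot}$, and all remaining entries vanish. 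The case $m\leq j\leq n+1$ is then immediate: each $\nu_{\tau(o)}$ is supported on $\{\tau(1),\dots,\tau(m-1),n+2\}$, and $\tau(j)$ lies outside this set (as $\tau$ is a permutation), so $a_{\tau(j)}=0$.

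For the case $j<m$, I would isolate the $o=j$ contribution, giving $c_je_j=e_{v_j}\ell_j$, and treat the remainder $-\ell_j\sum_{o=j+1}^{m-1}c_oh_o\,\ell'_{j+1}\cdots\ell'_{o-1}$. Replacing $c_oh_o=e_{v_{o-1}}\ell_oh_o=e_{v_{o-1}}-e_{v_o}\ell'_o$ splits this into two telescoping sums in $o$ whose difference collapses to $e_{v_j}-e_{v_{m-1}}\ell'_{j+1}\cdots\ell'_{m-1}$. Multiplying by $-\ell_j$ and adding the isolated term $e_{v_j}\ell_j$ yields $\epsilon_h\ell_j\ell'_{j+1}\cdots\ell'_{m-1}$. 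The case $j=n+2$ is the same telescoping without the initial factor $\ell_j$ and with starting value $e_{v_0}=1$, which produces the extra $-1$.

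For the ``in particular'' claim, I would substitute the just-computed formula for $\sum_o c_o\nu_{\tau(o)}$ into the definitions of $\nu_h$ and $\omega_i$ and verify each coordinate. The only non-integer ingredients are $\delta a_z\in\tfrac{1}{\epsilon_h}\Z$ and $\tfrac{n_i}{\delta\epsilon_h}\in\tfrac{1}{\delta\epsilon_h}\Z$; wherever these appear, they are multiplied by an $\epsilon_h\ell_j\ell'_{j+1}\cdots\ell'_{m-1}$ (or $\epsilon_h\ell'_1\cdots\ell'_{m-1}$ in position $n+2$) coming from the sum, which kills the $\epsilon_h$ in the denominator; the remaining factor of $\delta$ in the $\omega_i$ coordinates is then cleared by the defining congruence $k_i\equiv-n_i(\delta\epsilon_ha_z)^{-1}\pmod{\delta}$, which is precisely designed to make $n_i+k_i\delta\epsilon_ha_z\in\delta\Z$.

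I expect the main obstacle to be purely bookkeeping: tracking the running products $\ell'_{j+1}\cdots\ell'_{o-1}$ across the permutation $\tau$ and aligning the two telescopes so that indices shift correctly. Once the shift $c_oh_o=e_{v_{o-1}}-e_{v_o}\ell'_o$ is in place, every cancellation is forced and no further input is needed.
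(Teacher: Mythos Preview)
Your proposal is correct and follows essentially the same route as the paper: both read off the coordinates of $\nu_{\tau(o)}$ from the expansion of $\gamma_o$ in Lemma~\ref{lem:gammadecomposition}, then collapse the resulting sums via the substitution $c_oh_o=e_{v_{o-1}}\ell_oh_o=e_{v_{o-1}}-e_{v_o}\ell_o'$ and a telescope. Your treatment is in fact slightly more complete than the paper's, which stops after computing the $a_{\tau(j)}$ and leaves the integrality of $\nu_h,\omega_i$ implicit; your sketch of how the factor $\epsilon_h$ in each $a_{\tau(j)}$ clears the $\epsilon_h$-denominator of $\delta a_z$, and how the congruence on $k_i$ then clears the remaining $\delta$, is exactly the intended verification.
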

Define $M_{E,i}=(\nu_1,\dots,\nu_n,\omega_i,-\omega_{i+1})\in M_{(n+2)}(\Z)$, where the vectors represent the columns of $M_{E,i}$.
Note that $\det M_{E,i}=\det M_{E,i}^\R=1$.
Let us describe $M_{E,i}$ as product of simpler matrices. Let $\varepsilon_1,\dots,\varepsilon_{n+2}\in \R^{n+2}$ be the standard basis of $\R^{n+2}$. Define $\kappa_i=\tfrac{k_i}{\delta}\varepsilon_{h}$ and $\xi=\sum_{o=1}^{m-1}c_o\varepsilon_{\tau(o)}$.
Define 
\[
\begin{array}{l}
T_h=(\varepsilon_1,\dots,\varepsilon_{h-1},\varepsilon_h+\delta a_z\cdot \xi,\varepsilon_{h+1},\dots,\varepsilon_n,\varepsilon_{n+1}+\tfrac{n_i}{\delta \epsilon_h}\xi,\varepsilon_{n+2}-\tfrac{n_{i+1}}{\delta \epsilon_h}\xi),\\
T=(\varepsilon_1,\dots,\varepsilon_n,\varepsilon_{n+1}+\kappa_i,\varepsilon_{n+2}-\kappa_{i+1}).
\end{array}
\]
Then $M_{E,i}=M_{E,i}^\R\cdot T_h\cdot T$. Now we want to describe $M_{E,i}^{-1}$. It follows from before that $M_{E,i}^{-1}=T^{-1}\cdot T_h^{-1} \cdot (M_{E,i}^\R)^{-1}$, where
\[
\begin{array}{l}
T_h^{-1}=(\varepsilon_1,\dots,\varepsilon_{h-1},\varepsilon_h-\delta a_z\cdot \xi,\varepsilon_{h+1},\dots,\varepsilon_n,\varepsilon_{n+1}-\tfrac{n_i}{\delta \epsilon_h}\xi,\varepsilon_{n+2}+\tfrac{n_{i+1}}{\delta \epsilon_h}\xi),\\
T^{-1}=(\varepsilon_1,\dots,\varepsilon_n,\varepsilon_{n+1}-\kappa_i,\varepsilon_{n+2}+\kappa_{i+1}),
\end{array}
\]
It remains to describe $(M_{E,i}^\R)^{-1}$. First note that the $h$-th, $(n+1)$-th and $(n+2)$-th columns of $(M_{E,i}^\R)^{-1}$ are respectively 
\begin{align*}
\iota_h&\big((b_y/\delta,n_{i+1}a_y-\delta \epsilon_hd_{i+1}a_zb_y, n_ia_y-\delta \epsilon_hd_ia_zb_y)\big),\\
\iota_h&\big((-b_x/\delta,-n_{i+1}a_x+\delta \epsilon_hd_{i+1}a_zb_x,-n_ia_x+\delta \epsilon_hd_ia_zb_x)\big),\\
\iota_h&\big((0,\delta \epsilon_hd_{i+1},\delta \epsilon_hd_i)\big).
\end{align*}
Let $o=1,\dots,n$. Lemma \ref{lem:gammadecomposition} and Definition \ref{defn:gammaoE} imply that we can write
\begin{equation}\label{eqn:psioingammas}
\psi_o^{\epsilon_h}=\gamma_{1,E}^{\alpha_{1o}}\cdots\gamma_{h-1,E}^{\alpha_{(h-1)o}}\cdot\psi_h^{\alpha_{ho}}\cdot\gamma_{h+1,E}^{\alpha_{(h+1)o}}\cdots\gamma_{n,E}^{\alpha_{no}}\cdot\pi^{\alpha_{\pi o}},
\end{equation}
for some unique $\alpha_{1o},\dots,\alpha_{no},\alpha_{\pi o}\in\Z$. Let $\tilde\alpha_{oj}=\alpha_{oj}/\epsilon_h$. Define 
\[
\tilde\nu_o=\begin{cases}(\tilde\alpha_{o1},\dots,\tilde\alpha_{on},0,0),&\mbox{if }o\neq h\\
\tfrac{1}{\delta}(\tilde\alpha_{h1}b_y,\dots,\tilde\alpha_{hn}b_y, b_x,0)&\mbox{if }o=h
\end{cases}
\]
Finally, 
%since the last column of the $(M_{L,i}^\R)^{-1}$ is $\iota_m(0,\delta e_{v_{m-1}}d_{i+1},\delta e_{v_{m-1}}d_i)$,
define 
\begin{align}
\tilde\omega_i=\delta \epsilon_hd_i\big(&\big(\tfrac{n_i}{\delta \epsilon_hd_i}a_y-a_zb_y\big)\tilde\alpha_{h1}+\tilde\alpha_{\pi 1},\dots\label{eqn:tildeomega}\\
&\dots,\big(\tfrac{n_i}{\delta \epsilon_hd_i}a_y-a_zb_y\big)\tilde\alpha_{hn}+\tilde\alpha_{\pi n},-\tfrac{n_i}{\delta \epsilon_hd_i}a_x+a_zb_x,1\big).\notag
\end{align}
From the definition of $M_{E,i}^\R$ it follows that
\[(M_{E,i}^\R)^{-1}=\begin{psmallmatrix}\tilde\nu_1\vspace{-5pt}\cr\vdots\cr\tilde\nu_n\cr\tilde\omega_{i+1}\cr\tilde\omega_i\end{psmallmatrix},\]
where the vectors are the rows of the matrix.
Lemma \ref{lem:gammadecomposition} gives an explicit of $(M_{E,i}^\R)^{-1}$. Note also that for the structure of $T^{-1}$ and $T_h^{-1}$ the $\tau(o)$-th row of $M_{E,i}^{-1}$ coincides with the $\tau(o)$-th row of $(M_{E,i}^\R)^{-1}$, when $o>m$. Define
\[\mathcal{P}_{E,i}^{\perp +}=\tilde\omega_i\R_+,\] 
ray perpendicular to the hyperplane $\mathcal{P}_{E,i}$.

\begin{rem}
Note that $\tilde\nu_{\tau(o)}=\varepsilon_{\tau(o)}$ for $m<o\leq n$.
\end{rem}

\begin{lem}\label{lem:inneredge}
Suppose $E$ is inner, with $(\s,v)<(\t,w)$. Then $\tilde v_h|_E=\tilde w_h|_E$.
\proof
Recall that $E=V_v$. If $F_w$ is an $h$-face, the result trivially follows, as $E=V_{w}^0$. 

Suppose $F_w$ is not an $h$-face. By definition of cluster chain we have $w=v_-$. The polynomial $\psi_h$ is $w$-minimal, hence $\tfrac{\lambda_{w}}{\deg w}=\tfrac{w(\psi_h)}{\deg v}$ by Theorem \ref{thm:minimality}. From Lemma \ref{lem:fvaluation} and Proposition \ref{prop:EdgesRoots} it follows that
\small\[\tilde v_h(\i_v,0)=v(f)-\tfrac{|\s|}{\deg v}\cdot\lambda_v=w(f)-\tfrac{|\s|}{\deg w}\cdot \lambda_{w}=w(f)-\tfrac{|\s|}{\deg v}\cdot w(\psi_h)=\tilde w_h(\i_v,0).\]\normalsize
This concludes the proof since $\tilde v_h(0,2)=0=\tilde w_h(0,2)$. 
\endproof
\end{lem}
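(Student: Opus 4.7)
The plan is to verify the equality $\tilde v_h|_E=\tilde w_h|_E$ at the two endpoints of the segment $E=V_v$, namely $(0,2)$ and $(\i_v,0)$; since both functions are affine on $\R^2$, this will suffice. At $(0,2)$ the equality is immediate, as $\tilde v_h(0,2)=-v(f)+v(f)=0=\tilde w_h(0,2)$. At $(\i_v,0)$ I would first use Proposition \ref{prop:EdgesRoots} to rewrite $\i_v=|\s|/\deg v$, so that $\tilde v_h(\i_v,0)=v(f)-\tfrac{|\s|}{\deg v}\lambda_v$ and $\tilde w_h(\i_v,0)=w(f)-\tfrac{|\s|}{\deg v}w(\psi_h)$, and then split the argument according to whether $F_w$ is an $h$-face.

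If $F_w$ is an $h$-face, i.e.\ $\phi_w=\psi_h$, then by the inner-edge hypothesis $E$ is a shared edge of the two $h$-faces $F_v$ and $F_w$. Since $E$ meets $(0,2)$ and $(\i_v,0)$ with $\i_v<\i_w$, it must coincide with $V_w^0$; in particular $\i_v=\io_w$. Both $\tilde v_h$ and $\tilde w_h$ coincide with the piecewise-affine function $\tilde\mu_h$ on $F_v$ and $F_w$ respectively, and continuity of $\tilde\mu_h$ along the shared edge $E$ gives $\tilde v_h|_E=\tilde\mu_h|_E=\tilde w_h|_E$.

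If $F_w$ is not an $h$-face, i.e.\ $\phi_w\neq\psi_h$, then Lemma \ref{lem:clusterchainuniqueness} forces $w=v_-=v_{m-1}$. Since $\psi_h=\phi_m$ is a key polynomial over $v_{m-1}=w$, it is $w$-minimal, so Theorem \ref{thm:minimality} yields $w(\psi_h)/\deg v=\lambda_w/\deg w$. The equality at $(\i_v,0)$ then reduces to the identity
\[v(f)-\tfrac{|\s|}{\deg v}\lambda_v=w(f)-\tfrac{|\s|}{\deg w}\lambda_w,\]
which I would prove by expanding both sides with Lemma \ref{lem:fvaluation} and comparing the contributions of roots in $\s$, $\t\setminus\s$, and $\roots\setminus\t$. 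The contributions from $r\in\s$ (where $v\wedge v_r=v$ and $w\wedge v_r=w$) cancel with the subtracted terms on each side; the contributions from $r\notin\t$ match pairwise because the meet $v\wedge v_r=w\wedge v_r$ is the common branching point, which lies below $w$ in both cases; and the contributions from $r\in\t\setminus\s$ both collapse to $\lambda_w/\deg w$.

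The hard part will be the last claim, which requires $v\wedge v_r=w$ for every $r\in\t\setminus\s$, equivalently $v_K(\psi_h(r))=w(\psi_h)$. I would argue by contradiction: if $v_K(\psi_h(r))>w(\psi_h)$ for some such $r$, then $u=[w,u(\psi_h)=v_K(\psi_h(r))]$ satisfies $\s\subsetneq D_u\cap\roots\subseteq\t$; applying Lemma \ref{lem:uniqueclustervs} at this intermediate discoid then produces a proper MacLane cluster (its size exceeds $\deg v=\deg u$ because $|\s|>\deg v$) that sits strictly between $(\s,v)$ and $(\t,w)$, contradicting $(\s,v)<(\t,w)$.
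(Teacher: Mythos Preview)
Your proof is correct and follows essentially the same route as the paper: check the two endpoints of the affine segment $E=V_v$, dispose of the $h$-face case by continuity of $\tilde\mu_h$ across the shared edge $V_v=V_w^0$, and in the remaining case invoke $w=v_-$, Theorem~\ref{thm:minimality}, Lemma~\ref{lem:fvaluation} and Proposition~\ref{prop:EdgesRoots}. The paper compresses the second case into a single displayed line; you unpack it into a root-by-root comparison and isolate the one nontrivial ingredient, namely that $v\wedge v_r=w$ for every $r\in\t\setminus\s$, which you then prove by the intermediate-cluster contradiction. That step is genuinely needed and is exactly what the paper's citation of Lemma~\ref{lem:fvaluation} is silently using, so your version is a faithful expansion rather than a different argument.
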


\begin{lem}\label{lem:fan}
We have
\[\tilde\omega_0=e_{\tilde v}(v(\psi_1),\dots,v(\psi_n),\tfrac{v(f)}{2},1).\]
Let $r=r_E$. Then
\[\tilde\omega_{r+1}=\begin{cases}
e_{\tilde w}(w(\psi_1),\dots,w(\psi_n),\tfrac{w(f)}{2},1)&\text{ if $E$ inner, with $(\s,v)<(\t,w)$,}\\
(-a_y\tilde\alpha_{h1},\dots,-a_y\tilde\alpha_{hn},a_x,0)&\text{ if $E$ outer.}
\end{cases}\]
\proof
Note that $\delta_{F_v}=\delta_Ed_0$ and $\delta_{F_v}\epsilon_h=e_{\tilde v}$. Recall $\tilde v_{F_v,h}=\tilde v_h$ and
\[\tilde v_h(x,y)=-\lambda_vx-\tfrac{v(f)}{2}y+v(f).\]
Then since $\nu$ and $(b_x,b_y,\tfrac{n_0}{\delta \epsilon_hd_0})$ generate $\tilde F_v$ (face of $\tilde\Delta_h$), we have
\begin{equation}\label{eqn:lambdaepsilon}
    \tfrac{n_0}{\delta \epsilon_hd_0}a_y-a_zb_y=\lambda_v\quad\text{and}\quad-\tfrac{n_0}{\delta \epsilon_hd_0}a_x+a_zb_x=\tfrac{v(f)}{2}.
\end{equation}
By (\ref{eqn:psioingammas}) and Lemmas \ref{lem:gammapival} and \ref{lem:gammaoEval0}, we have 
$v(\psi_o)=\lambda_v\tilde\alpha_{ho}+\tilde\alpha_{\pi o}$ for any $o=1,\dots,n$.
Hence the description of $\tilde\omega_0$ follows from (\ref{eqn:lambdaepsilon}).

Suppose that $E$ is inner, with $(\s,v)<(\t,w)$. Then either $w=v_-$ or $w=[v_-,w(\psi_h)=\lambda_w]$. In either case, $\delta_Ed_{r+1}\epsilon_h=e_{\tilde w}$. We have
\[\tilde w_h(x,y)=-w(\psi_h)x-\tfrac{w(f)}{2}y+w(f).\]
Since $\tilde w_h|_E=\tilde v_h|_E$ by Lemma \ref{lem:inneredge} and $\frac{n_{r+1}}{\delta \epsilon_hd_{r+1}}=\tilde w_h(P_1)-\tilde w_h(P_0)$, the vectors $\nu$ and $(b_x,b_y,\tfrac{n_{r+1}}{\delta \epsilon_hd_{r+1}})$ generate the plane $z=\tilde w_h(x,y)$ in $\R^3$. Hence
\[
    \tfrac{n_{r+1}}{\delta \epsilon_hd_{r+1}}a_y-a_zb_y=v_{F}(\psi_h)\quad\text{and}\quad\tfrac{n_{r+1}}{\delta \epsilon_hd_{r+1}}a_x-a_zb_x=\tfrac{v_F(f)}{2}.
\]
Similarly to before, by (\ref{eqn:psioingammas}) and Lemmas \ref{lem:gammapival} and \ref{lem:gammaoEval0}, we have 
$v_F(\psi_o)=v_F(\psi_h)\tilde\alpha_{ho}+\tilde\alpha_{\pi o}$ for any $o=1,\dots,n$. The description of $\tilde \omega_{r+1}$ follows, for $E$ inner.
%It remains to show that $\delta_Ld_{r+1}e_{v_{m-1}}=e_{F'}$. 
% The computation
% \begin{align*}
%     \delta_Ld_{r+1}e_{v_{m-1}}&=e_{v_{m-1}}\cdot\left(e_{v_{m-1}}v_{F',m}(\Z^2):\Z\right)\\
%     &=e_{v_{m-1}}\cdot\left(e_{v_{m-1}}(v_{F'}(\phi_m)\Z+\tfrac{\epsilon_{F'}}{2}\Z):\Z\right)\\
%     &=e_{v_{m-1}}\cdot\left(e_{v_{m-1}}(e_m\lambda_{m-1}\Z+\tfrac{\epsilon_{F'}}{2}\Z):\Z\right)\\
%     &=e_{v_{m-1}}\cdot\left(e_{v_{m-1}}\tfrac{\epsilon_{F'}}{2}\Z:\Z\right)=\left(\tfrac{1}{e_{v_{m-1}}}\Z+\tfrac{\epsilon_{F'}}{2}\Z:\Z\right)=e_{F'}\end{align*}
% concludes the proof.

Finally, suppose that $E$ is outer. Then $n_{r+1}=-1$ and $d_{r+1}=0$. The description of $\tilde\omega_{r+1}$ follows directly from the definition.
\endproof
\end{lem}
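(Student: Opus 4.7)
The plan is to derive all three formulas from the explicit description (\ref{eqn:tildeomega}) of $\tilde\omega_i$, by computing its scalar prefactor and its two geometric coefficients $\frac{n_i}{\delta\epsilon_hd_i}a_y-a_zb_y$ and $-\frac{n_i}{\delta\epsilon_hd_i}a_x+a_zb_x$ in terms of MacLane-valuation data.

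First I would identify the scalar prefactor. The denominator $\delta_{F_v}$ of the face $F_v$ is related to the denominator $\delta=\delta_E$ of the edge $E$ by $\delta_{F_v}=\delta\cdot d_0$, since $n_0/d_0=s_1^E$ is precisely the normalised ``face-side'' slope at $E$. Therefore $\delta_{F_v}\epsilon_h=\delta\epsilon_hd_0=e_{\tilde v}$, matching the prefactor in the claim for $\tilde\omega_0$. The analogous identity $\delta\epsilon_hd_{r+1}=e_{\tilde w}$ for an inner edge follows symmetrically, whether $w=v_-$ or $w=[v_-,w(\psi_h)=\lambda_w]$.

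Next I would compute the two geometric coefficients. The face $\tilde F_v$ lies in the graph of $\tilde v_h(x,y)=-\lambda_vx-\tfrac{v(f)}{2}y+v(f)$, whose direction plane has normal $(\lambda_v,v(f)/2,1)$. Both vectors $\nu=(a_x,a_y,a_z)$ and $(b_x,b_y,\tfrac{n_0}{\delta\epsilon_hd_0})$ lie in this direction plane: the first because $\tilde E\subseteq\tilde F_v$, and the second because $\tilde v_h(P_1)-\tilde v_h(P_0)=s_1^E/(\delta\epsilon_h)=\tfrac{n_0}{\delta\epsilon_hd_0}$. They are linearly independent because the $2\times 2$ determinant $a_xb_y-a_yb_x$ equals $1$, which I expect to follow from the choice of $P_0,P_1$ satisfying $E_v^\ast(P_0)=0$, $E_v^\ast(P_1)=1$ together with the primitivity of $\delta\nu$ in $\Z^2\times\tfrac{1}{\epsilon_h}\Z$. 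Solving the two linear equations expressing ``$\nu$ and $(b_x,b_y,\tfrac{n_0}{\delta\epsilon_hd_0})$ lie in the direction plane'' then yields the identities $\tfrac{n_0}{\delta\epsilon_hd_0}a_y-a_zb_y=\lambda_v$ and $-\tfrac{n_0}{\delta\epsilon_hd_0}a_x+a_zb_x=\tfrac{v(f)}{2}$.

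With these substitutions I would read off the entries of $\tilde\omega_0$ from (\ref{eqn:tildeomega}). The first $n$ entries take the form $\lambda_v\tilde\alpha_{ho}+\tilde\alpha_{\pi o}$; by the factorisation (\ref{eqn:psioingammas}) of $\psi_o^{\epsilon_h}$ into $\gamma$'s and $\psi_h$, together with Lemmas \ref{lem:gammapival} and \ref{lem:gammaoEval0} (which give $v(\gamma_i)=0$, $v(\pi_i)=1/e_{v_i}$, and $v_F(\gamma_{o,E})=0$ for $o\neq h$), each such entry equals $v(\psi_o)$. The $(n+1)$-th and last entries read $v(f)/2$ and $1$, as required. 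For the inner edge case, the same plane-spanning argument applies at the other side of $E$, with $\tilde v_h$ replaced by $\tilde w_h$; by Lemma \ref{lem:inneredge} the two functions agree on $E$, so $\nu$ and $(b_x,b_y,\tfrac{n_{r+1}}{\delta\epsilon_hd_{r+1}})$ still lie in the direction plane of the graph of $\tilde w_h$, and the derivation produces $w(\psi_o)$ in place of $v(\psi_o)$. For the outer case one substitutes $n_{r+1}=-1$ and $d_{r+1}=0$ directly into (\ref{eqn:tildeomega}); the prefactor $\delta\epsilon_hd_{r+1}$ collapses the $\tilde\alpha_{\pi o}$ contributions to zero and produces exactly $(-a_y\tilde\alpha_{h1},\dots,-a_y\tilde\alpha_{hn},a_x,0)$. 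The main obstacle is establishing the geometric normalisation $a_xb_y-a_yb_x=1$; once this is in hand, everything else is an unpacking of definitions.
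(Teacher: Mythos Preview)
Your proposal is correct and follows essentially the same route as the paper: identify the prefactor $\delta\epsilon_hd_0=e_{\tilde v}$ (resp.\ $\delta\epsilon_hd_{r+1}=e_{\tilde w}$), derive the two ``geometric'' identities for $\lambda_v$ and $v(f)/2$ from the fact that $\nu$ and $(b_x,b_y,\tfrac{n_0}{\delta\epsilon_hd_0})$ span the direction plane of $\tilde F_v$, convert $\lambda_v\tilde\alpha_{ho}+\tilde\alpha_{\pi o}$ into $v(\psi_o)$ via (\ref{eqn:psioingammas}) and Lemmas \ref{lem:gammapival}, \ref{lem:gammaoEval0}, invoke Lemma \ref{lem:inneredge} for the inner case, and substitute $n_{r+1}=-1$, $d_{r+1}=0$ for the outer case. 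The one point you single out --- the normalisation $a_xb_y-a_yb_x=1$ needed so that the cross-product of the two spanning vectors equals $(\lambda_v,v(f)/2,1)$ rather than a scalar multiple --- is implicit in the paper as well (it simply asserts the identities follow from ``generate $\tilde F_v$''); so your proof is at least as complete, and more candid about where the normalisation enters.
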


\subsection{Toroidal embedding}\label{subsec:TorEmbedding}
Let us start this subsection with the following notation.
\begin{nt}
Let $A$ be a ring and let $a_1,\dots,a_n\in A^\times$, for some $n\in\Z_+$. For any matrix $M=(m_{ij})\in\SL_n(\Z)$ denote by $(a_1,\dots,a_n)\bullet M$ the vector
\[(a_1^{m_{11}}\cdots a_n^{m_{n1}},\dots,a_1^{m_{1n}}\cdots a_n^{m_{nn}}).\]
\end{nt}

Denote by $m_{\ast\ast}$ and $\tilde m_{\ast\ast}$ the entries of $M_{E,i}$ and $M_{E,i}^{-1}$ respectively. Note that $\tilde m_{(n+1)(n+2)},\tilde m_{(n+2)(n+2)}\geq 0$. Then the coordinate transformation
\begin{align*}
(X_1,\dots,X_n,Y,Z)&= (x_1,\dots,x_n,y,\pi)\bullet M_{E,i},\\
(x_1,\dots,x_n,y,\pi)&=(X_1,\dots,X_n,Y,Z)\bullet M_{E,i}^{-1}
\end{align*}
gives the ring isomorphism
\small\[K[x_1^{\pm 1},\dots,x_n^{\pm 1},y^{\pm 1}]\stackrel{M_{E,i}}{\simeq}\frac{O_K[X_1^{\pm 1},\dots, X_n^{\pm 1},Y^{\pm 1}, Z^{\pm 1}]}{(\pi-X_1^{\tilde m_{1(n+2)}}\cdots X_n^{\tilde m_{n(n+2)}}Y^{\tilde m_{(n+1)(n+2)}}Z^{\tilde m_{(n+2)(n+2)}})}.\]\normalsize
Define
\[R=\frac{O_K[X_1^{\pm 1},\dots, X_n^{\pm 1},Y, Z]}{(\pi-X_1^{\tilde m_{1(n+2)}}\cdots X_n^{\tilde m_{n(n+2)}}Y^{\tilde m_{(n+1)(n+2)}}Z^{\tilde m_{(n+2)(n+2)}})}.\]

For any $h$-edge $E$, define cones in $\R^{n+1}\times\R_+$
\begin{center}
\begin{tabular}{lll}
    $0$-dimensional cone & $\sigma_0=\{0\},$&  \\
    $1$-dimensional cones & $\sigma_{E,i}=\mathcal{P}_{E,i}^{\perp +}$ & $(0\leq i\leq r+1)$,\\
    $2$-dimensional cones & $\sigma_{E,i,i+1}=\mathcal{P}_{E,i}^{\perp +}+\mathcal{P}_{E,i+1}^{\perp +}$ & $(0\leq i\leq r)$.
\end{tabular}
\end{center}
The set of all such cones from all $E$ is a fan $\Sigma$ from Appendix \ref{sec:appendix:fan}.
Recall
\begin{align*}
    &\mathcal{P}_{E,i}=\nu_1\R+\dots+\nu_n\R+\omega_i\R&&=m_{\ast 1}\R+\dots+m_{\ast n}\R+m_{\ast (n+1)}\R,\\
    &\mathcal{P}_{E,i+1}=\nu_1\R+\dots+\nu_n\R+\omega_{i+1}\R&&=m_{\ast 1}\R+\dots+m_{\ast n}\R+m_{\ast (n+2)}\R,\\
    &\mathcal{P}_{E,i}\cap\mathcal{P}_{E,i+1}=\nu_1\R+\dots+\nu_n\R&&=m_{\ast 1}\R+\dots+m_{\ast n}\R,
\end{align*}
\[\sigma_{E,i}=\tilde m_{(n+2)\ast}\R_+,\quad\sigma_{E,i+1}=\tilde m_{(n+1)\ast}\R_+,\]
\[\sigma_{E,i,i+1}=\tilde m_{(n+1)\ast}\R_++\tilde m_{(n+2)\ast}\R_+.\]
The monomial exponents from the dual cone are 
\begin{align*}
    \sigma_{E,i}^\vee\cap\Z^{n+2}&=m_{\ast 1}\Z+\dots+m_{\ast n}\Z+m_{\ast (n+1)}\Z+m_{\ast (n+2)}\Z_+,\\
    \sigma_{E,i+1}^\vee\cap\Z^{n+2}&=m_{\ast 1}\Z+\dots+m_{\ast n}\Z+m_{\ast (n+1)}\Z_++m_{\ast (n+2)}\Z,\\
    \sigma_{E,i,i+1}^\vee\cap\Z^{n+2}&=m_{\ast 1}\Z+\dots+m_{\ast n}\Z+m_{\ast (n+1)}\Z_++m_{\ast (n+2)}\Z_+.
\end{align*}
The toric scheme
\[T_\Sigma=\bigcup_{\sigma\in\Sigma}T_\sigma,\qquad T_\sigma=\Spec O_K[\sigma^\vee\cap\Z^{n+2}],\]
associated with $\Sigma$ (\cite{K2MS}) is then obtained by glueing $T_{\sigma_{E,i,i+1}}=\Spec R$ for varying $E$ and $i$, along their common opens. Note that
\[T_{\sigma_0}=\Spec R[Y^{-1},Z^{-1}],\quad T_{\sigma_{E,i}}=\Spec R[Y^{-1}], \quad T_{\sigma_{E,i+1}}=\Spec R[Z^{-1}].\]

Note that $\deg \psi_{\tau(1)}=1$ by Lemmas \ref{lem:MaximalClusterDegree} and \ref{lem:clusterchainuniqueness}. Let 
\[C_0=\Spec\frac{K[x][x_1^{\pm 1},\dots,x_n^{\pm 1},y^{\pm 1}]}{(y^2-f(x),x_1-\psi_1(x),\dots,x_n-\psi_n(x))}.\]
Then $C_0\subseteq C$. Furthermore it canonically embeds in $T_{\sigma_0}$ via the isomorphism given by $M_{E,i}$ and the isomorphism given by \[\frac{K[x]\big[x_{\tau(1)}^{\pm1}\big]}{\big(x_{\tau(1)}-\psi_{\tau(1)}(x)\big)}\simeq K\big[x_{\tau(1)}^{\pm 1}\big].\]
We define $\m C$ as the closure of $C_0$ in $T_\Sigma$. Then $\m C$ is integral and also separated since so is $T_\Sigma$. Furthermore, $\m C$ is flat by \cite[Corollary 3.10]{Liu}. We will explicitly describe $\m C$ and show it is a proper regular model of $C$ with strict normal crossing.

%We conclude the subsection with the following result.
% \begin{lem}
% The special fibre of $X_\Sigma$ is (geometrically) connected.
% \proof
% We have to prove the following: 
% given two $2$-dimensional cones $\sigma,\sigma'$ of $\Sigma$ there exist $2$-dimensional cones $\sigma=\sigma_1,\sigma_2,\dots,\sigma_{t},\sigma_{t+1}=\sigma'$ such that for any $s=1,\dots,t$, $\sigma_s\cap\sigma_{s+1}$ is a $1$-dimensional cone of $\Sigma$.
% The only non-trivial step is proving that $\sigma_{E,r+1}=\sigma_{E',0}$ when $E$ is an $m$-edge between an $m$-face $F$ and an $(m-1)$-face $F'$, and $E'$ is any $(m-1)$-edge of $F'$. 

% Consider $M_{E,r+1}^{-1}$. Note that $\delta_Ed_{r+1}e_{v_{m-1}}=\mathrm{lcm}(e_{v_{m-1}}$  
% \endproof
% \end{lem}

\subsection{Charts}\label{subsec:Charts}
Keep the notation of \S\ref{subsec:Matrices}. From now on we suppose without loss of generality that the permutation $\tau$ is the identity.

Let $1\leq o\leq h$. By \cite[Theorem 16.1]{Mac} every polynomial $g\in K[x]$ can be uniquely written as a sum
\[g=\sum_s a_s\cdot \psi_1^{n_{1s}}\cdots\psi_o^{n_{os}},\]
where $a_s\in K$ and $n_{js}< \deg \psi_{j+1}/ \deg \psi_{j}$ for any $j<o$. Let $u_s\in O_K^\times$ such that $a_s=u_s\cdot\pi^{v_K(a_s)}$. Then we denote by $g^{(o)}$, the polynomial
\[g^{(o)}=\sum_s u_s\cdot \pi^{v_K(a_s)}\cdot x_1^{n_{1s}}\cdots x_o^{n_{os}}\in K[x_1,\dots,x_o].\]
% \begin{defn}
% % For any $1\leq j\leq n$ define $p_j\in O_K[x_1^{\pm 1},\dots,x_j^{\pm 1}]$ by
% % \[p_j=x_j^{n_j}\cdots x_1^{n_1}\cdot \pi^{n_0},\]
% % where the $n_i$'s are defined in Lemma \ref{lem:gammadecomposition}. Then $p_j(\phi_1,\dots,\phi_j)=\pi_j$.
% Let $g\in O_K[x]$ and for any $1\leq m\leq n$ expand
% \[g=\sum_{j=0}^da_j\phi_m^j,\qquad \deg a_j<\deg \phi_m.\]
% We write $g(x_1,\dots,x_m)\in O_K[x_1,\dots,x_m]\hookrightarrow O_K[x_1,\dots,x_n]$ for the polynomial inductively constructed by the following rule
% \[g(x_1,\dots,x_m)=\sum_{j=0}^da_j(x_1,\dots,x_{m-1})x_m^j.\]
% \end{defn}

Consider $M_{E,i}$. 
Recall $\tilde m_{(n+1)(n+2)},\tilde m_{(n+2)(n+2)}\geq 0$. Define 
\[\Pi(X_1,\dots,X_n,Y,Z)=\pi-X_1^{\tilde m_{1(n+2)}}\dots X_n^{\tilde m_{n(n+2)}}Y^{\tilde m_{(n+1)(n+2)}}Z^{\tilde m_{(n+2)(n+2)}},\]
Via $M_{E,i}$ we have the following isomorphism
\[\frac{K[x][x_1^{\pm 1},\dots,x_n^{\pm 1},y^{\pm 1}]}{(y^2-f(x),x_1-\psi_1(x),\dots,x_n-\psi_n(x))}\stackrel{M_{E,i}}{\simeq}\frac{O_K[X_1^{\pm 1},\dots, X_n^{\pm 1},Y^{\pm 1}, Z^{\pm 1}]}{(\Pi,\mathcal{F}_1,\dots,\mathcal{F}_n)},\]
where $\mathcal{F}_1,\dots,\mathcal{F}_n\in O_K^\times[X_1^{\pm 1},\dots,X_n^{\pm 1},Y,Z]$ satisfying $Y\nmid \mathcal{F}_j$, $Z\nmid \mathcal{F}_j$, and
\footnotesize\begin{align*}
    y^2-f^{(h)}(x_1,\dots,x_h)&\stackrel{M_{E,i}}{=}Y^{n_{Y,1}}Z^{n_{Z,1}}\mathcal{F}_1(X_1,\dots,X_n,Y,Z),\\
    x_j-\psi_j^{(j-1)}(x_1,\dots,x_{j-1})&\stackrel{M_{E,i}}{=}Y^{n_{Y,j}}Z^{n_{Z,j}}\mathcal{F}_j(X_1,\dots,X_n,Y,Z)\quad\text{for }2\leq j\leq h,\\
    x_j-\psi_j^{(h)}(x_1,\dots,x_h)&\stackrel{M_{E,i}}{=}Y^{n_{Y,j}}Z^{n_{Z,j}}\mathcal{F}_j(X_1,\dots,X_n,Y,Z)\quad\text{for }h< j\leq n,
\end{align*}\normalsize
for some $n_{Y,j},n_{Z,j}\in\Z$.
Then we define the affine $O_K$-scheme
\[U_{E,i}=\Spec \frac{O_K[X_1^{\pm 1},\dots, X_n^{\pm 1},Y, Z]}{(\Pi,\mathcal{F}_1,\dots,\mathcal{F}_n)}.\]
In the next lemma we will describe the special fibre of $U_{E,i}$. In particular, we will show that it has dimension $1$. Then the next lemma implies that $U_{E,i}=\mathcal{C}\cap T_{\sigma_{E,i,i+1}}$.

\begin{lem}
If the special fibre of $U_{E,i}$ is of dimension $\leq 1$, then $U_{E,i}=\mathcal{C}\cap T_{\sigma_{E,i,i+1}}$.
\proof
By construction the generic fibre of $U_{E,i}$ is isomorphic to $C_\eta\cap T_{\sigma_{E,i,i+1}}$. Then it suffices to show that $U_{E,i}$ is the closure of its generic fibre in $T_{\sigma_{E,i,i+1}}$. Suppose not. Then $U_{E,i}$ has an irreducible component $U$ entirely contained in its special fibre. Since $O_K[X_1^{\pm 1},\dots, X_n^{\pm 1},Y,Z]$ is regular, $\dim U\geq 2$ by Krull's height theorem.
\endproof
\end{lem}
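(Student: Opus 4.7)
The plan is to reduce the lemma to the standard fact that a closed subscheme of a flat $O_K$-scheme is the closure of its generic fibre if and only if it has no irreducible component supported on the special fibre. First I would verify that the generic fibre of $U_{E,i}$ agrees with $C_0\cap T_{\sigma_{E,i,i+1}}$: on the generic fibre the variables $Y,Z$ are units, so dividing out the $Y^{n_{Y,j}}Z^{n_{Z,j}}$ factors that appear in the toric rewriting of $y^2-f(x)$ and $x_j-\psi_j(x)$ is harmless, and the isomorphism induced by $M_{E,i}$ identifies the two generic fibres. Since $\mathcal{C}$ is by construction the closure of $C_0$ in $T_\Sigma$, and $U_{E,i}$ is closed in $T_{\sigma_{E,i,i+1}}$, this already gives the inclusion $\mathcal{C}\cap T_{\sigma_{E,i,i+1}}\subseteq U_{E,i}$.

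For the reverse inclusion I would argue by contradiction, showing $U_{E,i}$ has no superfluous component living in the special fibre. Suppose $Z$ is an irreducible component of $U_{E,i}$ with $Z\subseteq U_{E,i,s}$. The hypothesis $\dim U_{E,i,s}\leq 1$ then forces $\dim Z\leq 1$. For the opposite bound I would invoke Krull's height theorem in the regular ambient ring $O_K[X_1^{\pm 1},\dots,X_n^{\pm 1},Y,Z]$, which has Krull dimension $n+3$ (one from $O_K$, plus $n+2$ from the $X_i,Y,Z$, unaffected by inverting the $X_i$). Since $U_{E,i}$ is cut out by exactly the $n+1$ equations $\Pi,\mathcal{F}_1,\dots,\mathcal{F}_n$, every irreducible component has dimension at least $(n+3)-(n+1)=2$, contradicting $\dim Z\leq 1$.

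Thus no such $Z$ exists, so $U_{E,i}$ equals the scheme-theoretic closure of its generic fibre in $T_{\sigma_{E,i,i+1}}$, which is precisely $\mathcal{C}\cap T_{\sigma_{E,i,i+1}}$. The only delicate step is the bookkeeping in the dimension count, namely making sure that $\Pi$ really does contribute a single codimension-one condition (which it does because $\Pi$ is a regular element: it has nonzero image modulo any height-zero prime in the Laurent polynomial ambient), and that the $\mathcal{F}_j$ give $n$ genuinely distinct cutting conditions. The latter is ensured by the built-in normalisation $Y\nmid\mathcal{F}_j$, $Z\nmid\mathcal{F}_j$, so no $\mathcal{F}_j$ is absorbed into the monomial factors of the toric change of coordinates. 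Once these checks are in place, Krull's theorem does the entire remaining work.
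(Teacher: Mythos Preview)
Your argument is correct and follows essentially the same route as the paper: identify the generic fibres, then rule out components supported on the special fibre via Krull's height theorem in the regular ambient $O_K[X_1^{\pm 1},\dots,X_n^{\pm 1},Y,Z]$ of dimension $n+3$ cut by the $n+1$ relations $\Pi,\mathcal{F}_1,\dots,\mathcal{F}_n$. Your final paragraph is overly cautious---Krull's theorem only needs the \emph{number} of generators of the ideal, not that they form a regular sequence or are ``genuinely distinct''---but the core reasoning matches the paper's proof exactly.
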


\subsection{Special Fibre}
In this section we want to study the special fibre of $U_{E,i}$. Now, $U_{E,i}\subset T_{\sigma_{E,i,i+1}}$ and the special fibre of the latter has underlying reduced subscheme $Z=0$ if $E$ is outer and $i=r$, or $YZ=0$ otherwise.
\begin{nt}
Let $g\in K[x_1^{\pm 1},\dots,x_n^{\pm 1},y^{\pm 1}]$. Consider the Laurent polynomial $\m G\in O_K^\times[X_1^{\pm 1},\dots, X_n^{\pm 1},Y^{\pm 1},Z^{\pm 1}]$ given by
\[g\big((X_1,\dots,X_n,Y,Z)\bullet M_{E,i}^{-1}\big)=\m G(X_1,\dots,X_n,Y,Z).\] 
Denote by $\ord_Z(g)$ [resp.\ $\ord_Y(g)$] the integer $\ord_Z(\m G)$ [resp.\ $\ord_Y(\m G)$].
\end{nt}

We want to study $U_{E,i}\cap\{Z=0\}$. Let $w_{E,i}:K[x]\rightarrow \hat\Q$ be the valuation given in (\ref{eqn:wEi}). Then $\ord_Z(x_j)=w_{E,i}(\psi_j)\ord_Z(\pi)$ for all $1\leq j\leq n$. Let $w_j=v_j$ for all $j<h$ and $w_h=w_{E,i}$.
\begin{lem}\label{lem:ordZg(j)}
Let $g\in K[x]$. For all $1\leq j\leq h$,
\begin{equation*}
\ord_Z\big(g^{(j)}\big)=w_j(g)\ord_Z(\pi)
\end{equation*}
\proof
If $w_{E,i}$ is MacLane then the equality follows from \cite[Theorem 16.1]{Mac}. Suppose $w_{E,i}$ is not MacLane. Then $(\s,v)$ is maximal, $E=V_v$ and $1\leq i\leq r$. But then $h=1$ and $\deg \psi_1=1$. Expand $g=\sum_{\i}a_\i\psi_1^\i$, where $a_\i\in K$. Then $g^{(1)}=\sum_{\i}a_\i x_1^\i$. It follows that
\[\ord_Z\big(g^{(1)}\big)=\min_{\i}\big(v_K(a_\i)\ord_Z(\pi)+\i\cdot \ord_Z(x_1)\big)=
%\big(\min_{\i}\big(v_K(a_\i)+\i w_h(\psi_1)\big)\big)\ord_Z(\pi)=
w_{E,i}(g)\ord_Z(\pi)\]
as $\ord_Z(x_1)=w_{E,i}(\psi_1)\ord_Z(\pi)$.
%In this subsection let $e_j, h_j,\ell_j,\ell_j'$, $0\leq j\leq h$, be the invariants of \S\ref{sec:Residualpoly} attached to the MacLane chain
% \[[v_0,w_1(\psi_1)=\lambda_1,\dots,w_{h-1}(\psi_{h-1})=\lambda_{h-1},w_h(\psi_h)=w_{E,i}(\psi_h)]\]
% for $w_{E,i}$. Similarly for 
% %$\i_j(\alpha),u_j(\alpha)$, 
% $c_j(\alpha)$ and the map $H_{j,\alpha}$, when $\alpha\in\Gamma_{w_j}$.
\endproof
\end{lem}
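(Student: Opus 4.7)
The plan is to apply MacLane's unique representation theorem and then track $\ord_Z$ of each monomial under the toric change of coordinates $M_{E,i}$. Using \cite[Theorem 16.1]{Mac} I would expand
\[
g=\sum_{s}a_{s}\,\psi_{1}^{n_{1s}}\cdots\psi_{j}^{n_{js}},\qquad a_{s}\in K,\quad n_{ls}<\deg\psi_{l+1}/\deg\psi_{l}\text{ for }l<j,
\]
which gives $g^{(j)}=\sum_{s}u_{s}\pi^{v_{K}(a_{s})}x_{1}^{n_{1s}}\cdots x_{j}^{n_{js}}$ with $u_{s}\in O_{K}^{\times}$. Since $M_{E,i}\in\SL_{n+2}(\Z)$ is an integer matrix of determinant one, distinct exponent tuples $(n_{1s},\dots,n_{js})$ are carried to distinct monomials in $(X_{1},\dots,X_{n},Y,Z)$, so no cancellation occurs among the transformed summands and
\[
\ord_{Z}(g^{(j)})=\min_{s}\ord_{Z}\bigl(\pi^{v_{K}(a_{s})}x_{1}^{n_{1s}}\cdots x_{j}^{n_{js}}\bigr).
\]

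Next I would evaluate each contribution. Using the hypothesis $\ord_{Z}(x_{l})=w_{E,i}(\psi_{l})\ord_{Z}(\pi)$ together with the identity $w_{E,i}(\psi_{l})=\lambda_{l}=v_{l}(\psi_{l})$ for $l<h$ -- a consequence of the preservation of values on lower-degree key polynomials along a MacLane chain (cf.\ Lemma \ref{lem:gammapival}) -- the $s$-th term contributes
\[
\Bigl(v_{K}(a_{s})+\sum_{l=1}^{j-1}n_{ls}\lambda_{l}+n_{js}w_{j}(\psi_{j})\Bigr)\ord_{Z}(\pi)=w_{j}\bigl(a_{s}\psi_{1}^{n_{1s}}\cdots\psi_{j}^{n_{js}}\bigr)\ord_{Z}(\pi).
\]
Whenever $w_{j}$ is a MacLane valuation -- automatic for $j<h$ where $w_{j}=v_{j}$, and also for $j=h$ when $w_{E,i}\in\M$ -- \cite[Theorem 16.1]{Mac} identifies $\min_{s}w_{j}(\mathrm{term}_{s})$ with $w_{j}(g)$, completing the proof in these cases.

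It remains to handle the degenerate case $j=h$ with $w_{h}=w_{E,i}\notin\M$. Here I would invoke the author's observation that $w_{E,i}$ fails to be MacLane only when $(\s,v)$ is maximal in $\Sigma_{f}^{M}$, $E=V_{v}$, and $1\le i\le r$, which forces $h=1$ and $\deg\psi_{1}=1$. In that configuration the expansion collapses to $g=\sum_{\i}a_{\i}\psi_{1}^{\i}$ with $a_{\i}\in K$, so $g^{(1)}=\sum_{\i}a_{\i}x_{1}^{\i}$ has distinct $x_{1}$-monomials and the identity is immediate from $\ord_{Z}(x_{1})=w_{E,i}(\psi_{1})\ord_{Z}(\pi)$ and the definition of $w_{E,i}$. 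The only real subtlety in the argument is the no-cancellation claim, which is clean thanks to the integrality and invertibility of $M_{E,i}$; once that is in hand, everything reduces to the MacLane formula for augmentations plus bookkeeping of valuations of monomials.
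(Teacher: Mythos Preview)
Your proof is correct and follows essentially the same route as the paper: invoke MacLane's Theorem 16.1 for the expansion and the identification $w_j(g)=\min_s w_j(\text{term}_s)$, and handle the degenerate non-MacLane case by the same direct computation. The paper compresses steps (1)--(3) of your argument into the single sentence ``follows from \cite[Theorem~16.1]{Mac}''; your version makes explicit the no-cancellation step (distinct exponent tuples stay distinct under $M_{E,i}^{-1}\in\SL_{n+2}(\Z)$) and the bookkeeping $w_{E,i}(\psi_l)=\lambda_l$ for $l<h$, which the paper leaves implicit.
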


% Let $g\in K[x]$. For all $1\leq j\leq h$,
% \begin{equation}\label{eqn:ordZg(j)}
% \ord_Z\big(g^{(j)}\big)=w_j(g)\ord_Z(\pi)
% \end{equation}
% by \cite[Theorem 16.1]{Mac}. 

% Furthermore, if $w_{E,i}, w_{E,i+1}$ are the MacLane pseudo-valuations given in (\ref{eqn:wEi}), then
% \begin{equation}\label{eqn:ordYZg(h)}
% \begin{array}{ll}
%  \ord_Z\big(g^{(h)}\big)=w_{E,i}(g)\cdot \ord_Z(\pi),&\\[0.2em]
% \ord_Y\big(g^{(h)}\big)=w_{E,i+1}(g)\cdot \ord_Y(\pi),& \mbox{if }\ord_Y(\pi)>0.
% \end{array}
% \end{equation}

\begin{nt}
For any $\m G\in O_K[X_1^{\pm 1},\dots, X_n^{\pm 1},Y,Z]$ denote
\[\bar{\m G}_Y=\m G(X_1,\dots,X_n,0,Z),\qquad\bar{\m G}_Z=\m G(X_1,\dots,X_n,Y,0),\]
and $\bar{\m G}=\m G(X_1,\dots,X_n,0,0)$.
\end{nt}

% Let $\bar{\m F}_j\in O_K[X_1^{\pm 1},\dots,X_n^{\pm 1}]$ given by $\bar{\m F}_j=\m F_j(X_1,\dots,X_n,0,0)$ for any $j=1,\dots,n$.

\begin{defn}
Define $p_0=\pi\in O_K$. Let $1\leq j\leq h$ and recursively define $p_j\in K[x_1^{\pm 1},\dots,x_j^{\pm 1}]$ by
% \[p_j=x_j^{m_j'}\cdots x_1^{m_1'}\cdot \pi^{m_0'},\]
% where the exponents are defined in Lemma \ref{lem:gammadecomposition}. 
$p_j=x_j^{\ell_j}p_{j-1}^{\ell_j'}$.
Then $p_j(\psi_1,\dots,\psi_j)=\pi_j$.

%Define $\Pi_j\in O_K[X_1^{\pm 1},\dots, X_n^{\pm 1}]$ by

% Let $\alpha\in\Gamma_{v_j}$. Define $\Phi_{j,\alpha}\in O_K[X_1^{\pm 1},\dots, X_n^{\pm 1}]$ by
% \[Y^{n_Y}Z^{n_Z}\cdot \Phi_{j,\alpha}\stackrel{M_{E,i}}{=}x_j^{\i_j(\alpha)}p_{j-1}^{u_j(\alpha)},\]
% for some $n_Y,n_Z\in\Z$.

% Let $\alpha\in\Gamma_{w_j}$. Define $\Pi_j,\m X_j,\Phi_{j,\alpha}\in O_K[X_1^{\pm 1},\dots, X_n^{\pm 1}]$ by
% \[Y^{\ast}Z^{\ast}\cdot \Pi_{j}\stackrel{M_{E,i}}{=}p_j, \qquad Y^{\ast}Z^{\ast}\cdot \Phi_{j,\alpha}\stackrel{M_{E,i}}{=}x_j^{\i_j(\alpha)}p_{j-1}^{u_j(\alpha)}.\]
Define $\Pi_j\in O_K[X_1^{\pm 1},\dots, X_n^{\pm 1}]$ by
\[Y^{\ast}Z^{\ast}\cdot \Pi_{j}\stackrel{M_{E,i}}{=}p_j.\]
\end{defn}

% Note that $\ord_Z(x_j^{e_j}p_{j-1}^{-h_j})=0$. We give the following definition.

% \begin{defn}
% For any $1\leq j\leq h$ define $\m X_j\in O_K[X_1^{\pm 1},\dots, X_n^{\pm 1},Y^{\pm 1}]$ by
% \[\m X_{j}\stackrel{M_{E,i}}{=}x_j^{e_j}p_{j-1}^{-h_j}.\]
% \end{defn}

% \begin{lem}\label{lem:PiPhiX}
% Let $1\leq j\leq h$. 
% \begin{enumerate}[label=(\roman*)]
%     \item $\Pi_j\in O_K[X_1^{\pm 1},\dots,X_h^{\pm 1}]$. %\item If $\alpha\in\Gamma_{w_j}$ then $\Phi_{j,\alpha}\in O_K[X_1^{\pm 1},\dots,X_h^{\pm 1}]$.
%     \item $\m X_j\in O_K[X_1^{\pm 1},\dots,X_h^{\pm 1},Y^{\pm 1}]$.
% \end{enumerate}
% \proof
% If $o>h$ then $\tilde m_{jo}=0$ for $j\neq o$. The lemma follows.
% \endproof
% \end{lem}

Note that
\begin{equation}\label{eqn:MEiChangepsigammaGeneral}
(\psi_1,\dots,\psi_n,y,\pi)\bullet M_{E,i}=(\gamma_1,\dots,\gamma_{h-1},\psi_h^{\delta a_x}y^{\delta a_y}\pi_{h-1}^{\delta a_z},\dots),
\end{equation}
and that $\tilde \alpha_{hj}=0$, $\tilde\alpha_{\pi j}=\lambda_j$ for any $j<h$.
%$\tilde m_{jo}$ equals $0$ if $j<h<o\leq n$.

\begin{lem}\label{lem:Xj}
Let $1\leq j\leq h$. 
Then $X_j\stackrel{M_{E,i}}{=}x_j^{e_j}p_{j-1}^{-h_j}$ if $j<h$ or $E=L_v$.
\proof
When $j<h$, then $X_j=x_j^{e_j}p_{j-1}^{-h_j}$ from (\ref{eqn:MEiChangepsigammaGeneral}). If $j=h$, then $X_j=x_j^{\delta a_x}y^{\delta a_y}p_{j-1}^{\delta a_z}$. If $E=L_v$, then $w_{E,i}=v$. Since $L_v$ corresponds to the edge $L_v(f)$ of $N_{v_-,\psi_h}^-(f)$, one has $\delta=e_h$, $a_x=1$, $a_y=0$, $a_z=-\lambda_h$. It follows that $X_j=x_j^{e_j}p_{j-1}^{-h_j}$, as required.
\endproof
\end{lem}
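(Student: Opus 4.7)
The plan is to split into two cases: $j<h$, where $X_j$ is essentially $\gamma_j$ read through the coordinate change; and $j=h$ with $E=L_v$, where $X_h$ must be extracted from the defining formula of $\nu_h$ combined with Lemma~\ref{lem:integralisation}.

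For $j<h$, Definition~\ref{defn:gammaoE} gives $\gamma_{j,E}=\gamma_j=\phi_j^{e_j}\pi_{j-1}^{-h_j}$, so the $j$-th column $\nu_j$ of $M_{E,i}$ is exactly the exponent vector of $\gamma_j$ as a monomial in $\phi_1,\dots,\phi_n,\pi$ (after expanding $\pi_{j-1}$ via Lemma~\ref{lem:gammadecomposition}). The substitution $\phi_i=\psi_i\mapsto x_i$, $\pi\mapsto\pi$ underlying $M_{E,i}$ preserves the recursions $\pi_i=\phi_i^{\ell_i}\pi_{i-1}^{\ell_i'}$ and $p_i=x_i^{\ell_i}p_{i-1}^{\ell_i'}$ with common base $\pi_0=p_0=\pi$, so it sends $\pi_{j-1}$ to $p_{j-1}$. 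Hence $X_j=x_j^{e_j}p_{j-1}^{-h_j}$.

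For $j=h$ and $E=L_v$, the valuation $w_{E,i}$ coincides with $v$, and $L_v$ is the image of the edge of $N_{v_-,\psi_h}^-(f)$ of slope $-\lambda_h$, running horizontally between $(\io_v,0)$ and $(\i_v,0)$ in $F_v$. The primitive direction of its lift $\tilde L_v$ in $\Z^2\times\tfrac{1}{\epsilon_h}\Z$, oriented counter-clockwise along $\partial F_v$, is $(1,0,-\lambda_h)$, giving $\delta=e_h$, $a_x=1$, $a_y=0$, $a_z=-\lambda_h$, and thus $\delta a_z=-e_h\lambda_h=-h_h/\epsilon_h$. Writing $M_i'=\ell_i\ell_{i+1}'\cdots\ell_{h-1}'$ for $i\geq 1$ and $M_0'=\ell_1'\cdots\ell_{h-1}'$ for the exponents of $p_{h-1}$ (obtained from Lemma~\ref{lem:gammadecomposition} applied to the $p$-recursion), Lemma~\ref{lem:integralisation} with $\tau=\mathrm{id}$ and $m=h$ evaluates $\sum_{o=1}^{h-1}c_o\nu_o$ with entries $\epsilon_h M_i'$ for $1\leq i<h$, $0$ for $h\leq i\leq n+1$, and $\epsilon_h M_0'-1$ for $i=n+2$. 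Substituting into $\nu_h=\nu_h^{\R}+(\delta a_z)\sum_{o=1}^{h-1}c_o\nu_o$ slot by slot then yields $-h_h M_i'$ for $1\leq i<h$, $e_h$ in slot $h$, $0$ in slot $n+1$, and $-h_h M_0'$ in slot $n+2$. These are precisely the exponents of $x_h^{e_h}p_{h-1}^{-h_h}$.

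The main obstacle is the cancellation in the second case: since $\delta a_z\in\tfrac{1}{\epsilon_h}\Z$ while $\nu_h\in\Z^{n+2}$, the factor $\epsilon_h$ arising in Lemma~\ref{lem:integralisation} must exactly kill the denominator of $\delta a_z$. This relies on the identity $e_h\lambda_h\,\epsilon_h=h_h$ together with the telescoping relation $\ell_ih_i+\ell_i'e_i=1$ that is used inside the proof of Lemma~\ref{lem:integralisation}. Once this arithmetic bookkeeping is in hand, the explicit exponents of $\nu_h$ match those of $p_{h-1}^{-h_h}$ and the conclusion $X_h=x_h^{e_h}p_{h-1}^{-h_h}$ follows.
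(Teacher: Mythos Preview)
Your argument is correct and follows essentially the same route as the paper. For $j<h$ both you and the paper read $X_j$ off as the image of $\gamma_j=\phi_j^{e_j}\pi_{j-1}^{-h_j}$ under the substitution $\psi_i\mapsto x_i$, $\pi\mapsto\pi$; for $j=h$ with $E=L_v$ the paper simply quotes the general identity \eqref{eqn:MEiChangepsigammaGeneral} for the $h$-th output and then plugs in $\delta=e_h$, $(a_x,a_y,a_z)=(1,0,-\lambda_h)$, whereas you unpack that identity by computing the column $\nu_h=\nu_h^{\R}+(\delta a_z)\sum_{o<h}c_o\nu_o$ entrywise via Lemma~\ref{lem:integralisation}, which is precisely the content behind \eqref{eqn:MEiChangepsigammaGeneral} in this case.
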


\begin{lem}\label{lem:Pi}
Let $1\leq j\leq h$. Then $\Pi_j\in O_K[X_1^{\pm 1},\dots,X_h^{\pm 1}]$.
\proof
If $o>h$ then $\tilde m_{jo}=0$ for $j\neq o$. The lemma follows.
\endproof
\end{lem}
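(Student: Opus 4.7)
The plan is to unpack the definition of $\Pi_j$ and track how the coordinate change $M_{E,i}^{-1}$ acts on the monomial $p_j=x_j^{\ell_j}p_{j-1}^{\ell_j'}=\prod_{s\le j}x_s^{n_s}\cdot\pi^{n_\pi}$, so as to show that in the new variables it becomes a monomial in $X_1,\dots,X_h,Y,Z$ only. Once that is established, dividing by the appropriate $Y^{\ast}Z^{\ast}$ factor (as in the definition of $\Pi_j$) will automatically produce an element of $O_K[X_1^{\pm1},\dots,X_h^{\pm1}]$.

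The crucial input is the remark immediately preceding the lemma: $\tilde\nu_{\tau(o)}=\varepsilon_{\tau(o)}$ for $m<o\le n$. With $\tau$ taken to be the identity and with $m=h$ (since $\phi_m=\psi_h$), this says that for every $o$ with $h<o\le n$ the $o$-th row of $(M_{E,i}^{\R})^{-1}$ equals the standard basis vector $\varepsilon_o$. I would next read off from the explicit descriptions of $T^{-1}$ and $T_h^{-1}$ given in \S\ref{subsec:Matrices} that both matrices act as the identity on rows indexed by integers outside $\{h,n+1,n+2\}$: the only columns they perturb are the $h$-th, $(n+1)$-th and $(n+2)$-th, and each perturbation involves $\xi=\sum_{o<h}c_o\varepsilon_o$ or $\varepsilon_h$, neither of which has support in row $o$ when $h<o\le n$. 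Consequently, left-multiplying by $T^{-1}T_h^{-1}$ preserves the row $\varepsilon_o^T$, so $\tilde m_{oa}=\delta_{oa}$ for every $h<o\le n$ and every $a$.

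With this in hand the rest of the argument is bookkeeping. The coordinate change gives $x_s=\prod_a X_a^{\tilde m_{as}}$ and $\pi=\prod_a X_a^{\tilde m_{a(n+2)}}$, so the exponent of $X_o$ in the image of $p_j$ equals
\[\sum_{s\le j}n_s\,\tilde m_{os}+n_\pi\,\tilde m_{o(n+2)}.\]
Fixing $o$ with $h<o\le n$, every index appearing in that expression (the $s\le j\le h$ and $n+2$) is different from $o$, so by the step above each $\tilde m_{os}$ and $\tilde m_{o(n+2)}$ vanishes. Therefore the image of $p_j$ involves only $X_1,\dots,X_h$ (together with $Y$ and $Z$), and the lemma follows after absorbing the $Y^{\ast}Z^{\ast}$ factor.

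The only step that really requires care is the propagation of the row identity from $(M_{E,i}^{\R})^{-1}$ to $M_{E,i}^{-1}$ through $T^{-1}$ and $T_h^{-1}$; once that is carried out, the vanishing of the $X_{h+1},\dots,X_n$-exponents in the image of $p_j$ is an immediate computation.
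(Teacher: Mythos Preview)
Your proposal is correct and follows essentially the same line as the paper's one-sentence proof: both rest on the fact that for $h<o\le n$ the $o$-th row of $M_{E,i}^{-1}$ is the standard basis vector $\varepsilon_o$, so that none of $x_1,\dots,x_h,\pi$ picks up an $X_o$-exponent under the coordinate change. You simply unpack this more carefully, deriving it from the remark $\tilde\nu_o=\varepsilon_o$ together with the explicit shape of $T^{-1}$ and $T_h^{-1}$, whereas the paper just asserts the vanishing of the relevant entries of $M_{E,i}^{-1}$.
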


% \begin{lem}\label{lem:PiXPhi}
% For any $1\leq j\leq h$ and $\alpha\in\Gamma_{w_j}$ we have
% \[\Pi_{j}^{e_{w_{j}}\alpha}\cdot \m X_{j}^{c_j(\alpha)}=Y^\ast\cdot \Phi_{j,\alpha}.\]
% \proof
% From the equality
% \[p_{j}^{e_{w_{j}}\alpha}\cdot \big(x_j^{e_j}p_{j-1}^{-h_j}\big)^{c_j(\alpha)}= x_j^{\i_j(\alpha)}p_{j-1}^{u_j(\alpha)}\]
% % From (\ref{eqn:MEiChangepsigammaGeneral}) it follows that $X_j=x_j^{e_j}p_{j-1}^{-h_j}$ and so the lemma follows.
% the lemma follows.
% \endproof
% \end{lem}

Recall the definition of the fields $k_j$, $j=1,\dots,h$, given in \S\ref{sec:Residualpoly}. Note that $k_1=k_0$ since $\deg\psi_1=1$ (Remark \ref{rem:ki}). The ring homomorphisms $k_o[X_o^{\pm 1}]\rightarrow k_{o+1}$, $1\leq o<j$, taking $X_o$ to the generator of $k_{o+1}$ over $k_o$, induce a surjective homomorphism
\begin{equation*}
%\label{eqn:mRimap}
\m R_{j}:O_K[X_1^{\pm 1},\dots, X_{n}^{\pm 1},Y,Z]\rightarrow k_{j}[X_j^{\pm 1},\dots, X_n^{\pm 1},Y,Z].
\end{equation*}

\begin{lem}\label{lem:mGreduction}
Let $1\leq j \leq h$
%\begin{enumerate}[label=(\roman*), leftmargin=25pt]
    %\item $\bar{\m F}_{j+1}\in O_K[X_1^{\pm 1},\dots, X_{j-1}^{\pm 1}, X_j]$, 
     %and $\bar{\m F}_{j+1}=\bar{\m F}_{j+1,Y}$ if $\tilde m_{(n+1)(n+2)}>0$, and $\bar{\m F}_{j+1}=\bar{\m F}_{j+1,Z}$ if $\tilde m_{(n+2)(n+2)}>0$.
    % \item There is a canonical isomorphism \
    % \begin{equation*}
    % \hspace{25pt}\frac{O_K[X_1^{\pm 1},\dots, X_{j}^{\pm 1}]}{(\bar{\m F}_2,\dots,\bar{\m F}_{j+1},\pi)}\rightarrow \F_j
    % \end{equation*}
    % that induces $\m R_{j}:O_K[X_1^{\pm 1},\dots, X_{j}^{\pm 1},X_{j+1}]\rightarrow \F_{j}[X_{j+1}]$.
    %\item 
    %Recall the homomorphism $\m R_j$ from (\ref{eqn:mRimap}). 
   % \label{item:mRjkernel}
   % \item     \label{item:mGreduction}
       and let $g\in K[x]$. Fix a polynomial $\mathcal{G}\in O_K^\times[X_1^{\pm 1},\dots,X_n^{\pm 1},Y,Z]$, $Y\nmid \mathcal{G}$, $Z\nmid \mathcal{G}$, such that
    \[g^{(j)}(x_1,\dots,x_{j})\stackrel{M_{E,i}}{=}Y^{n_Y}Z^{n_Z}\mathcal{G}(X_1,\dots,X_n,Y,Z),\]
    for some $n_Y,n_Z\in\Z$. If either $E=L_v$ or $j<h$, then 
    % \[\hspace{25pt}\m R_{j}(\bar{\m G})=X_1^\ast\dots X_h^\ast\cdot R_{j,\alpha_j}\lb g\rb,\]
    % where $\alpha_j=w_j(g)$.
    \[\m R_{j}(\bar{\m G}_Z)=Y^\ast\cdot \m R_j(\Pi_j)^{e_{v_j}\alpha}\cdot H_{j, \alpha}(g)(X_{j})\]
    where $\alpha=v_j(g)$.
    
    %\item $\m R_j(\bar{\m F}_{j+1})=R_{j,w_j(\psi_{j+1})}(\psi_{j+1})(X_j)$.
%\end{enumerate}
\proof
We prove the lemma by induction on $j$. Suppose either $E=L_v$ or $j<h$, so that $w_j=v_j$. Let $j=1$. 
Expand $g=\sum_{s}a_s\phi_1^j$, where $a_s\in K$.
Then $g^{(1)}(x_1)=\sum_sa_sx_1^s$. Lemma \ref{lem:ordZg(j)} implies that $\ord_Z(a_sx_1^s)=n_{Z}$ if and only if $(s,v_K(a_s))$ is a point of the edge $L_{v_1}(g)$ of the Newton polygon $N_{v_0,\psi_1}(g)$. 
Therefore we can assume $g^{(1)}(x_1)=\sum_{s\geq 0}a_{\i_1+se_1}x_1^{\i_1+se_1}$, where $\i_1=\i_1(\alpha_1)$ and $\alpha_1=v_1(g)$. Then
\[\frac{g^{(1)}}{p_1^{e_{v_1}\alpha_1}}=\sum_{s\geq 0}\lb\frac{a_{\i_1+se_1}}{\pi^{u_1-sh_1}}\rb (x_1^{e_1}\pi^{-h_1})^{s+c_1(\alpha_1)},\]
where $u_1=u_1(\alpha_1)$.
%Note that $X_1=x_1^{e_1}\pi^{-h_1}$ from (\ref{eqn:MEiChangepsigammaGeneral}) and $\tilde m_{1o}=0$ for all $o>h$. 
Then we obtain the required equality by Lemma \ref{lem:Xj}.

%Since $\ord_Z(x_2)>\ord_Z(x_1)$ and

% Expand
% \[\psi_2=\sum_{s\geq 0}a_s\psi_1^s,\qquad a_j\in K.\]
% Then $\psi_2^{(1)}(x_1)=\sum_sa_sx_1^s$. Now, \[\ord_Z(x_1)=\tilde m_{(n+1)1}=\lambda_1\tilde m_{(n+1)(n+2)}=\lambda_1\ord_Z(\pi)\] 
% (and similarly $\ord_Y(x_1)=\lambda_1\ord_Y(\pi)$). Furthermore, $\ord_Z(x_2)>\ord_Z(x_1)$ (and $\ord_Y(x_2)>\ord_Y(x_1)$) by Lemmas \ref{lem:tildeomegainner} and \ref{lem:tildeomegaouter}. So $\ord_Z(a_sx_1^s)=n_{Z,2}$ (or $\ord_Y(a_sx_1^s)=n_{Y,2}$) if and only if $(s,v_K(a_s))$ is a point of the edge $L_{w_1}(\psi_2)$ of the Newton polygon $N_{v_0,\psi_1}(\psi_2)$. 
% Therefore we can assume $\psi_2^{(1)}(x_1)=\sum_{j\geq 0}a_{j_0+je_1}x_1^{j_0+je_1}$, where $j_0=0$ by Lemma \ref{lem:keypolyreduction}. From the definition of $M_{E,i}$ we have that $X_1=x_1^{e_1}\pi^{-h_1}$, so
% \[\{\bar{\m F}_2=\sum_{j\geq 0}\lb\frac{a_{je_1}}{\pi^{w_1(\psi_2)-jh_1}}\rb X_1^j,\]
% as required (see Lemma \ref{lem:reduction}).

Now suppose $j>1$. 
Expand 
\[g=\sum_{s\geq 0}a_s \psi_j^s,\qquad\text{where }\deg a_s<\deg \psi_j.\]
Note that $g^{(j)}=\sum_{s}a_s^{(j-1)} x_j^s$ by definition.
Similarly to before, Lemma \ref{lem:ordZg(j)} implies that $\ord_Z\big(a_s^{(j-1)}x_j^s\big)=n_{Z}$ if and only if $(s,v_{j-1}(a_s))$ is a point of the edge $L_{v_j}(g)$ of the Newton polygon $N_{v_{j-1},\psi_j}(g)$. 
Therefore we can assume \[g^{(j)}=\textstyle\sum_{s}a_{\i_{j,s}}^{(j-1)}x_j^{\i_{j,s}},\] 
where $\i_{j,s}=\i_j(\alpha_j)+se_j$ and $\alpha_j=v_j(g)$. Then
\[\frac{g^{(j)}}{p_j^{e_{v_j}\alpha_j}}=\sum_{s}\frac{a_{\i_{j,s}}^{(j-1)}}{p_{j-1}^{u_{j,s}}}\lb x_j^{e_j}p_{j-1}^{-h_j}\rb^{s+c_j(\alpha_j)},\]
where $u_{j,s}=u_j(\alpha_j)-sh_j$. Lemma \ref{lem:Xj} and the inductive hypothesis conclude the proof.
% Let $s$ such that $w_j(a_{\i_{j,s}})=u_{j,s}/e_{w_{j-1}}$. 
% Equivalently, 
% \[\ord_Y\lb a_{\i_{j,s}}^{(j-1)}/p_{j-1}^{u_{j,s}}\rb=\ord_Z\lb a_{\i_{j,s}}^{(j-1)}/p_{j-1}^{u_{j,s}}\rb=0.\]
% Let $\mathcal{A}_{j,s}\in O_K[X_1^{\pm 1},\dots,X_n^{\pm 1},Y,Z]$, $Y\nmid \mathcal{A}_{j,s}$, $Z\nmid \mathcal{A}_{j,s}$, such that
% \[a_{\i_{j,s}}^{(j-1)}(x_1,\dots,x_{j-1})=Y^{\ast}Z^{\ast}\mathcal{A}_{j,s}(X_1,\dots,X_n,Y,Z).\]
% Lemmas \ref{lem:mXjXj}, \ref{lem:PiXPhi} and the inductive hypothesis implies
% \[\m A_{j,s}/\Pi_{j-1}^{u_{j,s}}=X_{j-1}^{\ell_{j-1}'\i_{j-1}(\alpha_{j,s})-\ell_{j-1}u_{j-1}(\alpha_{j,s})}\cdot a_{\i_{j,s}}|_{w_{j-1},\alpha_{j,s}}(X_{j-1}).\]
% This concludes the proof by definition of $R_{w_j,w_j(g)}$.
\endproof
\end{lem}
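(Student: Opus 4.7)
The plan is to proceed by induction on $j$, mirroring the construction of MacLane chains: at each stage we expand $g$ in a $\psi_j$-adic expansion, use the Newton-polygon structure to pick out the surviving monomials modulo $Z$, and then use the matrix $M_{E,i}$ to recognise the resulting expression as $H_{j,\alpha}(g)(X_j)$ up to the prescribed factor.

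For the base case $j=1$, I would expand $g=\sum_s a_s \psi_1^s$ with $a_s\in K$, so that by definition $g^{(1)}(x_1)=\sum_s u_s\pi^{v_K(a_s)}x_1^s$. By Lemma \ref{lem:ordZg(j)}, each term $a_sx_1^s$ has $Z$-order $(v_K(a_s)+s\lambda_1)\operatorname{ord}_Z(\pi)$, so it survives reduction modulo $Z$ exactly when $(s,v_K(a_s))$ lies on the edge $L_{v_1}(g)$ of $N_{v_0,\psi_1}(g)$; these indices are $s=\iota_1(\alpha_1)+te_1$ for $t\geq 0$. Factoring out $p_1^{e_{v_1}\alpha_1}$ using $e_{v_1}\alpha_1=\iota_1(\alpha_1)h_1+u_1(\alpha_1)e_1$ rearranges the surviving part of $g^{(1)}/p_1^{e_{v_1}\alpha_1}$ into $\sum_t(a_{\iota_1+te_1}/\pi^{u_1-th_1})(x_1^{e_1}\pi^{-h_1})^{t+c_1(\alpha_1)}$, and Lemma \ref{lem:Xj} identifies $x_1^{e_1}\pi^{-h_1}$ with $X_1$ under $M_{E,i}$, provided either $j<h$ or $E=L_v$. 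Applying $\mathcal{R}_1$ and comparing with Definition \ref{defn:redialphamap} then yields the claimed identity, absorbing the shift $c_1(\alpha_1)$ into the leading $Y^{\ast}$-factor.

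For $j>1$, I would expand $g=\sum_s a_s\psi_j^s$ with $\deg a_s<\deg\psi_j$, so $g^{(j)}=\sum_s a_s^{(j-1)}x_j^s$. Lemma \ref{lem:ordZg(j)} again pinpoints the surviving monomials as those indexed by $s=\iota_j(\alpha_j)+te_j$ lying on $L_{v_j}(g)\subseteq N_{v_{j-1},\psi_j}(g)$, and the same algebraic rearrangement gives
\[
\frac{g^{(j)}}{p_j^{e_{v_j}\alpha_j}}\ \equiv\ \sum_{t\geq 0}\frac{a^{(j-1)}_{\iota_j+te_j}}{p_{j-1}^{u_j-th_j}}\bigl(x_j^{e_j}p_{j-1}^{-h_j}\bigr)^{t+c_j(\alpha_j)}\pmod{Z}.
\]
Lemma \ref{lem:Xj} (using the hypothesis on $E$ and $j$) turns the bracketed quantity into $X_j$. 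For each $t$ the coefficient $a_{\iota_j+te_j}^{(j-1)}/p_{j-1}^{u_j-th_j}$ has $v_{j-1}$-value zero, and by the inductive hypothesis its image under $\mathcal{R}_{j-1}$ equals $H_{j-1,\alpha_{j,t}}(a_{\iota_j+te_j})$ with $\alpha_{j,t}=\alpha_j-(\iota_j+te_j)\lambda_j$. Summing these contributions and comparing with the formula for $f|_v$ in Lemma \ref{lem:reduction}, together with the relation $H_{j,\alpha}(g)=X^{\ast}\cdot R_{j,\alpha}(g)$ from equation (\ref{HandR}), recovers exactly $H_{j,\alpha}(g)(X_j)$; the remaining $X_j$-shift from $c_j(\alpha_j)$ and any residual powers of $Y$ collapse into the $Y^{\ast}$ factor of the statement, while $\mathcal{R}_j(\Pi_j)^{e_{v_j}\alpha}$ accounts for having divided by $p_j^{e_{v_j}\alpha}$ (which, via $M_{E,i}$, corresponds to $Y^{\ast}Z^{\ast}\Pi_j$ raised to that exponent).

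The main obstacle is the bookkeeping of $Y$-powers and of the exponent shift $c_j(\alpha_j)$: one must verify that the factorisation $g^{(j)}=Y^{n_Y}Z^{n_Z}\mathcal{G}$ with $Y\nmid\mathcal{G}$, $Z\nmid\mathcal{G}$ is compatible with the explicit rearrangement and that passing to $\bar{\mathcal{G}}_Z$ really corresponds to retaining only the terms on the edge $L_{v_j}(g)$. The hypothesis \emph{$E=L_v$ or $j<h$} is used in exactly one place, namely to invoke the clean description $X_j\stackrel{M_{E,i}}{=}x_j^{e_j}p_{j-1}^{-h_j}$ of Lemma \ref{lem:Xj}; without it the exponents involve $(a_x,a_y,a_z)$ and $X_h$ would pick up extra $Y$-factors that obstruct the direct identification with $H_{j,\alpha}(g)(X_j)$.
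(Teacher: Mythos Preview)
Your proposal is correct and follows essentially the same approach as the paper's proof: induction on $j$, $\psi_j$-adic expansion of $g$, use of Lemma~\ref{lem:ordZg(j)} to isolate the terms on the edge $L_{v_j}(g)$, division by $p_j^{e_{v_j}\alpha_j}$ and rewriting via $x_j^{e_j}p_{j-1}^{-h_j}$, then Lemma~\ref{lem:Xj} and the inductive hypothesis. One small slip: the $X_j$-shift by $c_j(\alpha_j)$ is not absorbed into the $Y^\ast$-factor but is already part of $H_{j,\alpha}(g)(X_j)$ by equation~(\ref{HandR}), so your final bookkeeping remark should be adjusted accordingly.
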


\begin{lem}\label{lem:mRkernel}
Let $1\leq j\leq h$. Then $\ker(\m R_j)=(\bar{\m F}_{2,Z},\dots,\bar{\m F}_{j,Z},\pi)$.
\proof
We prove the lemma by induction on $j$. Suppose $j=1$. Since 
%$k_1\simeq k[X_0]/(R_{0,0}(\psi_1))$, and
$\deg\psi_1=1$, we have $k_1=k$, and so $\ker(\m R_1)=(\pi)$. Let $j>1$. It follows from Lemma \ref{lem:ordZg(j)} that \[\ord_Z(x_j)=\ord_Z\big(\psi_j^{(j)}\big)>\ord_Z\big(\psi_j^{(j-1)}\big).\] 
Then Lemma \ref{lem:mGreduction} implies that 
\[\m R_{j-1}(\bar{\m F}_{j,Z})=\m R_{j-1}(\Pi_{j-1})^{e_{v_{j-1}}\alpha}\cdot H_{j-1,\alpha}(\psi_j),\]
where $\alpha=v_{j-1}(\psi_j)$. Since $k_j\simeq k_{j-1}[X_{j-1}]/(H_{j-1,\alpha}(\psi_j))$ by Remark \ref{rem:ki} and $\m R_{j-1}(\Pi_{j-1})$ is invertible by Lemma \ref{lem:Pi}, we have 
\[\ker(\m R_j)=\ker(\m R_{j-1})+(\bar{\m F}_{j,Z}).\] 
The inductive hypothesis concludes the proof.
\endproof
\end{lem}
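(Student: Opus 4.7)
The plan is to proceed by induction on $j$. For the base case $j=1$, I note that the centre $\psi_1$ of the root MacLane cluster has degree $1$ by Lemmas \ref{lem:MaximalClusterDegree} and \ref{lem:clusterchainuniqueness}; hence, by Remark \ref{rem:ki}, $k_1=k_0=k$. Consequently $\m R_1$ is simply the reduction modulo $\pi$ applied to coefficients, with kernel $(\pi)$, exactly as required.

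For the inductive step $j>1$, I would factor $\m R_j$ through $\m R_{j-1}$: first reduce according to $\m R_{j-1}$, landing in $k_{j-1}[X_{j-1}^{\pm 1},X_j^{\pm 1},\dots,X_n^{\pm 1},Y,Z]$; then absorb $X_{j-1}$ into $k_j$ by quotienting by the minimal polynomial of the generator of $k_j$ over $k_{j-1}$, which by Remark \ref{rem:ki} is $H_{j-1,\alpha}(\psi_j)$ with $\alpha=v_{j-1}(\psi_j)$. This reduces the task to showing that $\m R_{j-1}(\bar{\m F}_{j,Z})$ generates the ideal $(H_{j-1,\alpha}(\psi_j))$ in $k_{j-1}[X_{j-1}^{\pm 1},X_j^{\pm 1},\dots,Y,Z]$.

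The crux is the inequality $\ord_Z(x_j)>\ord_Z(\psi_j^{(j-1)})$, which follows from Lemma \ref{lem:ordZg(j)} applied to $g=\psi_j$ together with the fact that $v_j$ is a strict augmentation of $v_{j-1}$, so $v_j(\psi_j)=\lambda_j>v_{j-1}(\psi_j)=\alpha$. Since $x_j=\psi_j^{(j)}$, the $Z$-leading term of $x_j-\psi_j^{(j-1)}$ is governed entirely by $-\psi_j^{(j-1)}$, so after extracting $Y^{n_{Y,j}}Z^{n_{Z,j}}$ and setting $Z=0$, the resulting polynomial $\bar{\m F}_{j,Z}$ is (up to sign) the $Z=0$ slice produced by applying Lemma \ref{lem:mGreduction} to $g=\psi_j$ with index $j-1$ (valid since $j-1<h$). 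That lemma yields $\m R_{j-1}(\bar{\m F}_{j,Z})=\m R_{j-1}(\Pi_{j-1})^{e_{v_{j-1}}\alpha}\cdot H_{j-1,\alpha}(\psi_j)$; by Lemma \ref{lem:Pi}, $\Pi_{j-1}$ is a unit, so the two sides generate the same ideal. Combined with the inductive hypothesis, this gives $\ker(\m R_j)=\ker(\m R_{j-1})+(\bar{\m F}_{j,Z})=(\bar{\m F}_{2,Z},\dots,\bar{\m F}_{j,Z},\pi)$.

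The main obstacle will be verifying that the spurious $Y^\ast$ factor appearing in Lemma \ref{lem:mGreduction} is trivial for this specific $g=\psi_j$. Since $Y$ is not invertible in the target of $\m R_{j-1}$, a nonzero $Y$-power would prevent $\m R_{j-1}(\bar{\m F}_{j,Z})$ from generating the required ideal. Dispensing with the $Y^\ast$ amounts to showing that the $Y$-contribution in the monomial decomposition of $\psi_j^{(j-1)}$ under $M_{E,i}$ is precisely matched by the $Y$-factor extracted in forming $\m F_j$ from $x_j-\psi_j^{(j-1)}$; this should follow from the explicit expressions for the entries $\tilde m_{(n+1)\ast}$ of $M_{E,i}^{-1}$ worked out in \S\ref{subsec:Matrices}, since $\psi_j^{(j-1)}\in K[x_1,\dots,x_{j-1}]$ and for $j\leq h$ the relevant columns in those rows vanish on the first $j-1$ variables.
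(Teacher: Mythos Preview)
Your approach is the same as the paper's: induction on $j$, base case from $\deg\psi_1=1$, inductive step via Lemma~\ref{lem:mGreduction} (with $g=\psi_j$ at level $j-1$) together with Remark~\ref{rem:ki} and Lemma~\ref{lem:Pi}. You are right to flag the $Y^\ast$ factor, which the paper's proof silently drops; but your proposed resolution is not quite correct. The entries $\tilde m_{(n+1),o}$ for $o<h$ do \emph{not} vanish: by~(\ref{eqn:tildeomega}) and the remark following~(\ref{eqn:MEiChangepsigammaGeneral}) one has $\tilde\alpha_{ho}=0$, $\tilde\alpha_{\pi o}=\lambda_o$, giving $(\tilde\omega_{i+1})_o=\delta\epsilon_h d_{i+1}\lambda_o$, generally nonzero. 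What saves the argument is that the ratio $\tilde m_{(n+1),o}/\tilde m_{(n+2),o}=d_{i+1}/d_i$ is \emph{constant} for $o\in\{1,\dots,h-1,n+2\}$, so every monomial of $\psi_j^{(j-1)}\in K[x_1,\dots,x_{j-1}]$ has $\ord_Y$ and $\ord_Z$ in this fixed ratio. Hence the terms realising minimal $\ord_Z$ also realise minimal $\ord_Y$: after extracting $Y^{n_Y}Z^{n_Z}$ and setting $Z=0$, no power of $Y$ survives in $\bar{\m G}_Z$ (so $Y^\ast=Y^0$), and the same proportionality gives $\ord_Y(x_j)=\delta\epsilon_hd_{i+1}\lambda_j\ge\delta\epsilon_hd_{i+1}v_{j-1}(\psi_j)=n_Y$, whence $n_{Y,j}=n_Y$ and $\bar{\m F}_{j,Z}=-\bar{\m G}_Z$ as you wanted.
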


% \begin{rem}
% Note that $k_v=k_h$ since $w_j=v_j$ for all $1\leq j<h$ from Remark \ref{rem:ki}.
% \end{rem}

Let $h<j\leq n$. Then \[\ord_Z(x_j)=\ord_Z\big(\psi_j^{(h)}\big)\] by Lemma \ref{lem:ordZg(j)}. Since $\tilde m_{jj}=1$ and $\tilde m_{oj}=0$ for all $1\leq o\leq n$, $o\neq j$, there exists $\m T_j\in O_K[X_1^{\pm 1},\dots,X_h^{\pm 1},Y,Z]$ such that $\m F_j$ equals $X_j-\m T_j$ up to some unit.
Let $\m R=\m R_h$ and $\m T=\prod_{h<j\leq n}\m T_j$. Denote $\m F=\m F_1$.
Lemma \ref{lem:mRkernel} implies that $U_{E,i}\cap \{Z=0\}$ is isomorphic to
\[\Spec \frac{k_v[X_h^{\pm 1},Y,\m R(\bar{\m T}_Z)^{-1}]}{(\m R(\bar{\m F}_Z))}.\]
Similar computations (using $w_{E,i+1}$ instead of $w_{E,i}$) show that if $i<r$ or $E$ is inner, then $U_{E,i}\cap \{Y=0\}$ is isomorphic to
\[\Spec \frac{k_v[X_h^{\pm 1},Z,\m R(\bar{\m T}_Y)^{-1}]}{(\m R(\bar{\m F}_Y))}.\]
Let $g(x,y)=y^2-f(x)$ and expand
\[g=\sum_{j,o}a_{jo}\psi_h^jy^o,\qquad a_{jo}\in K[x],\,\deg a_{jo}<\deg \psi_h.\]
Then $y^2-f^{(h)}=\sum_{j,o}a_{jo}^{(h-1)}x_h^jy^o$. Recall the notation $w_{E,i}(y)$ from Appendix \ref{sec:appendix:fan}. Let $\xi_i$ be the plane with normal vector $(w_{E,i}(\psi_h),w_{E,i}(y),1)$ 
and on which $\tilde E$ lies. We have 
\[\ord_Z(a_{jo}^{(h-1)}x_h^jy^o)=\ord_Z(y^2-f^{(h)})\quad\text{if and only if}\quad (j,o,v_-(a_{jo}))\in\xi_i.\]
% Suppose $(j,o,v_-(a_{jo}))\in\xi_i$. We can write
% \[a_{jo}^{(h-1)}x_h^jy^o=\frac{a_{jo}^{(h-1)}}{p_{h-1}^{\epsilon_hv_-(a_{jo})}}p_{h-1}^{\epsilon_hv_-(a_{jo})}x_h^j y^o.\]
More precisely, $(X_h,Y,Z)=(x_h,y,p_{h-1})\bullet M$ with
\begin{equation}\label{eqn:3dimMatrix}
M=\begin{psmallmatrix}
\!\delta a_x&d_ib_x+k_ia_x&-d_{i+1}b_x-k_{i+1}a_x\!\\
\!\delta a_y&d_ib_y+k_ia_y&-d_{i+1}b_y-k_{i+1}a_y\!\\
\!\delta \epsilon_ha_z&\frac{n_i}{\delta}+\epsilon_hk_ia_z&-\frac{n_{i+1}}{\delta}-\epsilon_hk_{i+1}a_z\!
\end{psmallmatrix}\in\SL_3(\Z).
\end{equation}
%\[X_m=x_m^{\delta a_x}y^{\delta a_y}p_{h-1}^{\delta \epsilon_ha_z},\qquad Y=x_m^{b_x+k_ia_x}y^{b_y+k_ia_y}p_{h-1}^{\nicefrac{n_i}{\delta}+\epsilon_hd_ik_ia_z}.\]
Let $\phi:\Z^2\rightarrow \Z^2$ given by 
\[\phi(s,t)=P_0+(\delta a_x,\delta a_y)s+(d_ib_x+k_ia_x,d_ib_y+k_ia_y)t.\]
Lemma \ref{lem:mGreduction} implies that, up to units, $\m R(\bar{\m F}_Z)$ equals
\[\sum_{(s,t)\in\Z^2}H_{h-1,\alpha_{s,t}}\big(a_{\phi(s,t)}\big)X_h^sY^t,\]
where $\alpha_{s,t}=\delta a_z s+\frac{n_i}{\delta \epsilon_h}t+k_ia_zt$. In particular, $U_{E,i}\cap \{Z=0\}$ is of dimension $1$, and similarly $U_{E,i}\cap \{Y=0\}$ when $i<r$ or $E$ is inner. It follows that the special fibre of $U_{E,i}$ is $1$-dimensional.

\subsection{Components}\label{subsec:Components}

We want to describe $U_{E,i}\cap \{Z=0\}$ and $U_{E,i}\cap \{Y=0\}$ explicitly.

\begin{rem}\label{rem:resydualcentres}
Let $h<j\leq n$ such that $v=\mu_j\wedge\mu_h$. Let $(\s_j,\mu_j)\subseteq(\t,w)<(\s,v)$. Lemma \ref{lem:minimalityequivalence} implies that $\psi_j$ is $v$-equivalent to $\phi_w^d$, where $d=\deg\mu_j/\deg w$. Thus $\psi_j|_v$ is a power of $\phi_w|_v$ by Proposition \ref{prop:v-equivalentResidual}.
\end{rem}

\begin{lem}\label{lem:psijunit}
%Let $w$ be a MacLane valuation with centre $\psi_h$. 
Let $h<j\leq n$. If $v\neq \mu_h\wedge\mu_j$ then $\psi_j|_v$ is a unit. 
\proof
The lemma follows from Lemma \ref{lem:phiunit}.
% Let $w=v\wedge \mu_j$. First note that $v\not>\mu_j$, since $j>h$. Then $v(\psi_j)=w(\psi_j)$ by Proposition \ref{prop:appendix}.
% If $\mu_j\not>v$, then $w<v$. Proposition \ref{prop:ValuationsInequality} implies that there exists $w'\geq w$ such that
% $v=[w',v(\psi_h)=\lambda_h]$.
% In particular, $w'(\psi_j)=v(\psi_j)$.
% From \cite[Lemma 2.9]{FGMN} it follows that $\psi_j$ is $v$-equivalent to some polynomial of degree $<\deg v$. Then $\psi_j|_v$ is a unit by Proposition \ref{prop:v-equivalentResidual}. 
% If $\mu_j>v$ then $w\geq v$. Suppose the inequality is strict. The polynomial $\psi_h$ is a centre of $w$; hence 
% \[\psi_j\veq_v\psi_h^{\deg \psi_j/\deg\psi_h}\]
% by Lemmas \ref{lem:vminimal} and \ref{lem:minimalityequivalence}. Then $\psi_j|_v$ is a unit in this case as well.
\endproof
\end{lem}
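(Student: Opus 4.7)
The plan is a direct reduction to Lemma \ref{lem:phiunit}, applied with $\mu_1 = \mu_h$, $\mu_2 = \mu_j$, and with centres $\phi_n = \psi_h$ of $\mu_1$ and $\phi = \psi_j$ of $\mu_2$. The hypothesis $v \neq \mu_h \wedge \mu_j$ is given verbatim.

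The essential step I would carry out is to verify that $\mu_h \geq v$. This follows from the inductive choice of centres in \S\ref{sec:ModConstr}: the identity $\phi_v = \psi_h$ forces $(\s,v)$ to lie on the chain of MacLane clusters obtained from $(\s_h,\mu_h)$ by repeatedly passing to a parent that inherits $\psi_h$ as its chosen centre (if $(\s,v)$ itself is degree-minimal then $(\s,v)=(\s_h,\mu_h)$). In either case $(\s_h,\mu_h) \subseteq (\s,v)$, equivalently $\mu_h \geq v$.

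Once $\mu_h \geq v$ is in hand, the mechanism of Lemma \ref{lem:phiunit} runs uniformly: setting $w = \mu_h \wedge \mu_j$ one has $w \leq \mu_h$ and $v \leq \mu_h$, so either $w < v$ or $w > v$ by Theorem \ref{thm:valdiscoid} and Remark \ref{rem:Galois1disjoint2discoids}(2), with the equality $w = v$ excluded by the standing hypothesis. In the first branch, Proposition \ref{prop:ValuationsInequality} expresses $v$ as an augmentation of some $w' \geq w$ via $\psi_h$, giving $w'(\psi_j) = v(\psi_j)$, so $\psi_j$ is $v$-equivalent to a polynomial of degree $< \deg v$ and its reduction $\psi_j|_v$ is a unit by Proposition \ref{prop:v-equivalentResidual}. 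In the second branch, $\psi_h$ is a centre of $w$ by Lemma \ref{lem:centreparent}, so Lemmas \ref{lem:vminimal} and \ref{lem:minimalityequivalence} give $\psi_j \equiv_v \psi_h^{\deg \psi_j/\deg \psi_h}$, and again Proposition \ref{prop:v-equivalentResidual} yields $\psi_j|_v \in k_v[X]^\times$.

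The main obstacle is nothing more than the correct interpretation of the step $\mu_h \geq v$, which relies on tracing the recursive centre-assignment back to the degree-minimal cluster $(\s_h,\mu_h)$. A minor edge case to keep in mind is whether the situation $v > \mu_j$ can occur (degree-minimal clusters may a priori nest, forcing $\deg v = \deg \mu_h > \deg \mu_j$); but in that event $\psi_j$ has degree strictly less than $\deg v$, so its Newton polygon reduces to a single vertex and Proposition \ref{prop:ReductionDegreeProduct}\ref{item:ReductionDegree} gives $\deg \psi_j|_v = 0$ with $\psi_j|_v(0) \neq 0$, i.e., $\psi_j|_v$ is a nonzero constant — still a unit in $k_v[X^{\pm 1}]$ — so the statement is unaffected.
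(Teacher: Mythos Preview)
Your proposal is correct and follows the same route as the paper: a direct application of Lemma~\ref{lem:phiunit} with $\mu_1=\mu_h$, $\mu_2=\mu_j$, after checking $\mu_h\geq v$. One remark: the edge case $v>\mu_j$ you treat separately cannot actually arise, since $\mu_j<v$ would make $(\s_j,\mu_j)\supsetneq(\s,v)$, whence $\psi_j=\phi_{\mu_j}$ lies in $\{\phi_w:(\t,w)\supseteq(\s,v)\}=\{\psi_1,\dots,\psi_h\}$ by Lemma~\ref{lem:clusterchainuniqueness}, contradicting $j>h$; so the hypothesis $v\not>\mu_2$ of Lemma~\ref{lem:phiunit} is automatic here and your extra paragraph is harmless but unnecessary.
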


\begin{lem}\label{lem:mTj}
Let $h<j\leq n$.
\begin{enumerate}
\item Suppose $E=L_v$. 
Then, up to units, 
% \[\m R(\bar{\m T}_{j,Z})=\begin{cases}
% \psi_j|_v(X_h)&\text{if }(\s,v)=(\s_j,\mu_j)\wedge(\s_h,\mu_h),\\
% 1&\text{otherwise};
% \end{cases}\]
$\m R(\bar{\m T}_{j,Z})$ equals $\psi_j|_v(X_h)$,
and, similarly, $\m R(\bar{\m T}_{j,Y})$ equals $\psi_j|_v(X_h)$ when $i<r$.
\item Suppose $E=V_v$ or $E=V_v^0$. Then $\bar{\m T}_{j}$ is a unit. Furthermore, $\bar{\m T}_{j}=\bar{\m T}_{j,Z}$ if $i>0$ and $\bar{\m T}_{j}=\bar{\m T}_{j,Y}$ if $i<r$.
\end{enumerate}
\proof
% Denote $w=w_{E,i}$. Expand
% \[\psi_j=\sum_s a_s\psi_h^s,\qquad a_s\in K[x],\,\deg a_s<\deg \psi_h.\]
% Then $\psi_j^{(h)}=\sum_{s}a_s^{(h-1)} x_h^s$ by definition. Let $n_Z=\ord_Z(\psi_j^{(h)})$.
% It follows from (\ref{eqn:ordZg(j)}) that $\ord_Z(x_j)=n_Z$ and that $\ord_Z\big(a_s^{(h-1)}x_h^s\big)=n_{Z}$ if and only if $w(a_s\psi_h^s)=w(\psi_j)$. Since $\tilde m_{jj}=1$ and $\tilde m_{oj}=0$ for all $h<o\leq n$, $o\neq j$, we have that $\m F_j$ equals $X_j-\m T_j$ up to some unit, with 
% \[X_1^\ast\cdots X_h^\ast\cdot Y^\ast\cdot \m T_j(X_1,\dots, X_h,Y,Z)\stackrel{M_{E,i}}{=}\psi_j^{(h)}\]

% If $\mu_j\not\leq w$, then $\mu_j\wedge w<w$ and 
% \[w(\psi_j)=(\mu_j\wedge w)(\psi_j)\] 
% by Proposition \ref{prop:appendix}. Hence $w(\psi_j)=w(a_0)$ since $w(\psi_h)>(\mu_j\wedge w)(\psi_h)$. Then $R_h(\bar{\m T}_{j,Z})$ is invertible by Lemma \ref{lem:mGreduction}.
Suppose $E=L_v$. Then Lemma \ref{lem:mGreduction} implies that $\m R(\bar{\m T}_{j,Z})$ equals $\psi_j|_v(X_h)$ up to units. Similarly for $\m R(\bar{\m T}_{j,Y})$ when $i<r$.

Expand 
\[\psi_j=\sum_{\i=0}^d a_\i\psi_h^\i,\qquad a_\i\in K[x],\, a_d\neq 0,\,\deg a_\i<\deg \psi_h.\]
Then $\psi_j^{(h)}=\sum_\i a_\i^{(h-1)}x_h^\i$. 

Suppose $(\s,v)$ maximal and $E=V_v$. 
Then $h=1$ and $\deg \psi_h=1$. Lemma \ref{lem:vhc} implies that $w_{E,i}(\psi_j)=d\cdot w_{E,i}(\psi_1)$ for any $i=0,\dots,r$. In fact, for all $i=1,\dots,r$ we have
\begin{equation}\label{eqn:wEipsij}
w_{E,i}(\psi_j-\psi_1^d)>w_{E,i}(\psi_j)
\end{equation}
since $w_{E,i}(\psi_h)<w_{E,0}(\psi_h)$. Recall \[\ord_Z(\psi_j^{(1)})=w_{E,i}(\psi_j)\ord_Z(\pi)\]
from Lemma \ref{lem:ordZg(j)}, and similarly, $\ord_Y(\psi_j^{(1)})=w_{E,i+1}(\psi_j)\ord_Y(\pi)$ when $i<r$. The inequality (\ref{eqn:wEipsij} implies that $\bar{\m T}_{j}$ is a unit, $\bar{\m T}_{j}=\bar{\m T}_{j,Z}$ when $i>0$ and $\bar{\m T}_{j}=\bar{\m T}_{j,Y}$ when $i<r$.

Suppose $E=V_v$ inner. Then $w_{E,i}$ is a MacLane valuation with centre $\psi_h$ and satisfying $v\geq w_{E,i}\geq w$. In particular, $w_{E,i}(\psi_j)=w_{E,i}(a_{\i_v}\psi_h^{\i_v})$. Lemma \ref{lem:ordZg(j)} implies that $\bar{\m T}_{j,Z}$ is a unit if and only if $\psi_j|_{w_{E,i}}$ is a unit. But then $\bar{\m T}_{j,Z}=\bar{\m T}_{j}$ is a unit when $i>0$ by Lemma \ref{lem:phiunit}. Similarly $\bar{\m T}_{j,Y}=\bar{\m T}_{j}$ is a unit when $i<r$.

Suppose $(\s,v)$ degree-minimal and $E=V_v^0$. Then $w_{E,i}$ is a MacLane valuation with centre $\psi_h$ and satisfying $v\leq w_{E,i}$. In particular, $w_{E,i}(\psi_j)=w_{E,i}(a_{\io_v}\psi_h^{\io_v})$.
Similarly to the previous case, Lemmas \ref{lem:ordZg(j)}, \ref{lem:phiunit} conclude the proof.
\endproof
\end{lem}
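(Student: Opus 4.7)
The plan is to split along the two cases of the lemma, in each subcase identifying which monomial in the expansion $\psi_j = \sum_{\i \geq 0} a_\i \psi_h^\i$ (with $\deg a_\i < \deg \psi_h$) is dominant for $\ord_Z$ and $\ord_Y$, and then applying Lemma \ref{lem:phiunit} and Lemma \ref{lem:psijunit} to force the remaining residual polynomial to be a unit. Throughout, I would use that $\m F_j = X_j - \m T_j$ up to a unit, so $\m T_j$ is essentially the image of $\psi_j^{(h)}(x_1,\dots,x_h)$ under $M_{E,i}$ after factoring out the appropriate $Y^\ast Z^\ast$.

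For case (1), when $E = L_v$, the valuation $w_{E,i}$ coincides with $v$ by the construction of the fan in \S\ref{subsec:Matrices}. I would apply Lemma \ref{lem:mGreduction} directly to $g = \psi_j$ (the hypothesis ``$E = L_v$ or $j' < h$'' is satisfied), obtaining
\[\m R(\bar{\m T}_{j,Z}) = Y^\ast \cdot \m R(\Pi_h)^{e_{v_h}\alpha} \cdot H_{h,\alpha}(\psi_j)(X_h),\qquad \alpha = v(\psi_j).\]
Since $\m R(\Pi_h)$ is a unit by Lemma \ref{lem:Pi} and $H_{h,\alpha}(\psi_j)$ differs from $\psi_j|_v(X_h)$ only by a monomial in $X_h$ (equation (\ref{eqn:Hvto|v})), which is a unit in $k_v[X_h^{\pm 1}]$, the claim for $\bar{\m T}_{j,Z}$ follows. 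The $\bar{\m T}_{j,Y}$ statement when $i < r$ follows by the symmetric argument using $w_{E,i+1} = v$.

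For case (2), I would split by the shape of $(\s,v)$ and $E$. When $E = V_v$ is inner with parent $(\t,w)$, the valuation $w_{E,i}$ is a MacLane valuation with centre $\psi_h$ satisfying $w \leq w_{E,i} \leq v$; the relevant Newton polygon argument singles out the index $\i_v$ as the one attaining $\ord_Z$, so $\m R(\bar{\m T}_{j,Z})$ is (up to a unit) the residual polynomial $\psi_j|_{w_{E,i}}(X_h)$. Since $v \neq \mu_h \wedge \mu_j$ forces $w_{E,i} \neq \mu_h \wedge \mu_j$, Lemma \ref{lem:phiunit} (applied with $\mu_1 \in \{\mu_h, \mu_j\}$ and the other as $\mu_2$) shows $\psi_j|_{w_{E,i}}$ is a unit, and the restriction $i > 0$ (resp.\ $i < r$) ensures no spurious $Z$- (resp.\ $Y$-) factor appears, so $\bar{\m T}_j = \bar{\m T}_{j,Z}$ (resp.\ $\bar{\m T}_{j,Y}$). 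When $(\s,v)$ is maximal with $E = V_v$, so $h = 1$ and $\deg \psi_1 = 1$, a preliminary computation (akin to Lemma \ref{lem:vhc}) gives $w_{E,i}(\psi_j) = d \cdot w_{E,i}(\psi_1)$ for $d = \deg \psi_j$; combined with the strict inequality $w_{E,i}(\psi_j - \psi_1^d) > w_{E,i}(\psi_j)$ when $i \geq 1$, this again forces $\bar{\m T}_{j,Z}$ to be a unit, with the $Y$-side analogous. For $(\s,v)$ degree-minimal with $E = V_v^0$, the valuation $w_{E,i}$ satisfies $v \leq w_{E,i}$ with centre $\psi_h$, and a direct application of Lemma \ref{lem:phiunit} with $\mu_1 = w_{E,i}$, $\mu_2 = \mu_j$ yields the unit conclusion.

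The main obstacle I anticipate is pinning down, in each subcase of (2), exactly which monomial $a_\i^{(h-1)} x_h^\i$ dominates the $\ord_Z$- and $\ord_Y$-valuations, and confirming that the ``extremal'' monomial maps under $M_{E,i}$ to a term with no $Z$ or $Y$ factor. This is a careful but not deep book-keeping exercise with $w_{E,i}$, the Newton polygon $N_{v_-,\psi_h}(\psi_j)$, and Lemma \ref{lem:ordZg(j)}; once the dominant monomial is identified, the reduction to a statement about $\psi_j|_{w_{E,i}}$ being a unit is formal, and Lemma \ref{lem:phiunit} delivers the result in each case.
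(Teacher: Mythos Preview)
Your approach matches the paper's proof essentially step for step: Lemma~\ref{lem:mGreduction} directly handles case~(1), and case~(2) is split into the same three subcases (maximal $V_v$, inner $V_v$, degree-minimal $V_v^0$), each resolved by identifying the dominant monomial via Lemma~\ref{lem:ordZg(j)} and then invoking Lemma~\ref{lem:phiunit} (or the explicit inequality $w_{E,i}(\psi_j-\psi_1^d)>w_{E,i}(\psi_j)$ in the maximal case). One small imprecision: in the inner $V_v$ subcase, your clause ``$v\neq\mu_h\wedge\mu_j$ forces $w_{E,i}\neq\mu_h\wedge\mu_j$'' is not the right justification (indeed $v=\mu_h\wedge\mu_j$ is possible); the correct reason is that for $i>0$ the valuation $w_{E,i}$ lies strictly between the adjacent cluster valuations $w$ and $v$, whereas $\mu_h\wedge\mu_j$ always defines a cluster by Lemma~\ref{lem:wedgecluster}, so they cannot coincide.
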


Suppose $E=L_v$. Fix $P_0=(\i_v,0)$, $P_1=\big(\left\lfloor\tfrac{\i_v-1}{2}\right\rfloor,1\big)$. Then
\begin{equation}\label{eqn:s1L}
    s_1^E=e_v\big(\lambda_v(\lfloor\i_v/2\rfloor+1)-\tfrac{v(f)}{2}\big),
\end{equation}
and $s_2^E=\lfloor s_1^E-1\rfloor$.
The $h$-edge $L_v$ corresponds to the edge $L_v(f)$ of $N_{v_-,\psi_h}(f)$. In particular, $\delta_E=e_h$ and $\nu=(1,0,-\lambda_h)$. It follows that $\m X_h=X_h$. 
Therefore, up to units,
% \begin{align*}
% \m R(\bar{\m F}_Z)&=f|_v(X_h)\quad\text{for }0<i\leq r\quad \text{(as $\ord_Z(y^2)>\ord_Z(f^{(h)})$)},\\
% \m R(\bar{\m F}_Y)&=f|_v(X_h)\quad\text{for }0\leq i<r \quad \text{(as $\ord_Y(y^2)>\ord_Y(f^{(h)})$)}.
% \end{align*}
\begin{align*}
\m R(\bar{\m F}_Z)&=f|_v(X_h)\quad\text{for }0<i\leq r,\\
\m R(\bar{\m F}_Y)&=f|_v(X_h)\quad\text{for }0\leq i<r.
\end{align*}
Fix 
\[k_j=\ell_hn_j+\ell_h'e_h d_j(\lfloor\i_v/2\rfloor+1),\quad \text{for }j=0,\dots,r+1.\]
Then $k_j\equiv n_j(\delta_E \epsilon_ha_z)^{-1}\mod\delta_E$, as required. 
Let $i=0$ and let $M$ be the matrix of (\ref{eqn:3dimMatrix}). Then 
\[
% M=\begin{psmallmatrix}
% e_h & -\ell_hd_ie_v\lb\frac{v(f)}{2}+\xi_i\rb & \ell_hd_{i+1}e_v\lb\frac{v(f)}{2}+\xi_{i+1}\rb\\\
% 0 & d_i & -d_{i+1}\\
% -e_v\lambda_v & -\ell_h'd_ie_v\lb\frac{v(f)}{2}+\xi_i\rb & \ell_h'd_{i+1}e_v\lb\frac{v(f)}{2}+\xi_{i+1}\rb
% \end{psmallmatrix},
% M^{-1}=\begin{psmallmatrix}
% \ell_h' & 0 & -\ell_h\\
% d_{i+1}e_v\lambda_v & d_{i+1}e_v\lb\frac{v(f)}{2}+\xi_{i+1}\rb & d_{i+1}e_h\\
% d_{i}e_v\lambda_v & d_{i}e_v\lb\frac{v(f)}{2}+\xi_{i}\rb & d_{i}e_h
% \end{psmallmatrix},
M^{-1}=\begin{psmallmatrix}
\ell_h' & 0 & -\ell_h\\
d_1e_v\lambda_v & d_1e_v\frac{v(f)}{2}+\frac{1}{d_0} & d_1e_h\\
d_0e_v\lambda_v & d_0e_v\frac{v(f)}{2} & d_0e_h
\end{psmallmatrix}.
\]
%where $\xi_j$ is a placeholder for $\tfrac{n_0}{e_vd_0}-\tfrac{n_j}{e_vd_j}$.
Hence $y^2p_h^{-e_vv(f)}=Y^{2/d_0}$. Lemma \ref{lem:mGreduction} then implies that $R_h(\bar{\m F}_Z)$ equals 
\[Y^{2/d_0}-H_{h,v(f)}(f)(X_h)\] 
up to units. The quantity $n_v:=2/d_0$ equals $1$ if $e_vv(f)$ is odd and $2$ if $e_vv(f)$ is even. Recall 
\[H_{h,v(f)}(f)(X)=X^{\io_v/e_h-\ell_h \epsilon_hv(f)}f|_v,\] 
from (\ref{eqn:Hvto|v}). Note that $\io_v=\i_w$ if $(\s,v)$ has a child $(\t,w)$ with centre $\psi_h$ and $\io_v=0$ otherwise.

Suppose $E=V_v$. We can choose $P_0=(\i_v,0)$, $P_1=\big(\left\lfloor\tfrac{\i_v-1}{2}\right\rfloor,1\big)$ so that 
\begin{equation*}
%\label{eqn:s1V}
    s_1^E=\delta_E\epsilon_h\big(\lambda_v(\lfloor\i_v/2\rfloor+1)-\tfrac{v(f)}{2}\big),
\end{equation*}
If $(\s,v)\neq(\roots,w_\roots)$ and $(\s,v)<(\t,w)$, then 
\begin{equation*}
%\label{eqn:s2V}
    s_2^E=\delta_E\epsilon_h\big(\lambda_{w}(\lfloor\i_v/2\rfloor+1)-\tfrac{w(f)}{2}\big),
\end{equation*}
while $s_2^E=\lfloor s_1^E-1\rfloor$ otherwise.
Up to units
\begin{align*}
\m R(\bar{\m F}_Z)&=X_h^b-H_{h-1,\alpha}(a_{\i_v})\quad\text{for }0<i\leq r,\\
\m R(\bar{\m F}_Y)&=X_h^b-H_{h-1,\alpha}(a_{\i_v})\quad\text{for }0\leq i<r,
\end{align*}
where $b=E_{\Z}(\Z)+1$ and $\alpha=v_-(a_{\i_v})$. Let $u=c_f\prod_{r'\notin\s}(x-r')\in K[x]$ and let $u_h=u-\psi_hq$ for some $q\in K[x]$ such that $\deg u_h<\deg\psi_h$. From Theorem \ref{thm:Newtondecomposition}, one has $H_{h-1,\alpha}(a_{\i_v})=H_{h-1,v_-(u_h)}(u_h)$.

Suppose $v=\mu_h$ and $E=V_{v}^0$. Fix $P_0=(0,2)$, $P_1=(1,1)$, so
\begin{equation*}
%\label{eqn:s1V0}
    s_1^E=-\delta_E\epsilon_h\big(\lambda_v-\tfrac{v(f)}{2}\big),
\end{equation*}
and $s_2^E=\lfloor s_1^E-1\rfloor$.
Then up to units
\begin{align*}
\m R(\bar{\m F}_Z)&=X_h^{-b}-H_{h-1,\alpha}(a_{\io_v})\quad\text{for }0<i\leq r,\\
\m R(\bar{\m F}_Y)&=X_h^{-b}-H_{h-1,\alpha}(a_{\io_v})\quad\text{for }0\leq i<r,
\end{align*}
where $b=E_{\Z}(\Z)+1$ and $\alpha=v_-(a_{\io_v})$. Let $\roots_h$ be the set of roots of $\psi_h$. Let $u^0=c_f\prod_{r'\in\roots\setminus\roots_h}(x-r')\in K[x]$ and let $u_h^0=u^0-\psi_hq$ for some $q\in K[x]$ such that $\deg u_h^0<\deg\psi_h$. One has $H_{h-1,\alpha}(a_{\io_v})=H_{h-1,v_-(u_h^0)}(u_h^0)$.

% This concludes our description of the special fibre of $\m C$. Its reduced subscheme consists of $1$-dimensional closed subschemes $\Gamma_v$ associated to each cluster $(\s,v)$ and chains of $\P^1$'s. 

\subsection{Regularity}
%Let $(\s,v)$ as above. 
If $(\s,v)$ has a proper child with centre $\phi_w\neq\phi_v$, then $\phi_w|_v$ is irreducible by Lemmas \ref{lem:clusterchainuniqueness} and \ref{lem:keypolyreduction}. Let $E=L_v$. By Remark \ref{rem:resydualcentres} and Lemmas \ref{lem:psijunit}, \ref{lem:mTj}, the subscheme $U_{E,i}\cap\{Z=0\}$ is isomorphic to
\begin{equation}\label{eqn:mathringGammav}
\Spec \frac{k_v\big[X_h^{\pm 1},Y,\prod_{(\t,w)<(\s,v)}(\phi_w|_v(X_h))^{-1}\big]}{(\m R(\bar{\m F}_Z))},
\end{equation}
where the product runs through all proper children of $(\s,v)$.
Similarly for $U_{E,i}\cap\{Y=0\}$ when $i<r$. 

\begin{nt}\label{nt:mathringGammav}
We denote by $\mathring{\Gamma}_v$ the scheme $U_{L_v,0}\cap\{Z=0\}$.
\end{nt}

\begin{thm}\label{thm:regularmodel}
The model $\mathcal{C}/O_K$ is regular.
\proof
We want to prove that $U_{E,i}$ is regular, for any $h$-edge $E$, $h=1,\dots,n$, and any $i=0,\dots,r_E$. In fact, for the definition of $\Pi$, it suffices to show that the subschemes $U_{E,i}\cap\{Z=0\}$ and $U_{E,i}\cap\{Y=0\}$ are regular, where the latter is considered only if $i<r$. From the description given in \S\ref{subsec:Components} we only need to consider the case $E=L_v$, for some proper MacLane cluster $(\s,v)$ for $f$. Let $r=r_E$. For the explicit description of $\m R(\bar{\m F}_Z)$ and $\m R(\bar{\m F}_Y)$ it suffices to prove that all multiple irreducible factors of $f|_v$ are of the form $\phi_w|_v$ for some proper child $(\t,w)$ of $(\s,v)$. But this follows from Theorem \ref{thm:ChildrenReduction}.
\endproof
\end{thm}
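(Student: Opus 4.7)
The plan is to verify regularity chart-by-chart on the affine pieces $U_{E,i}$ constructed in \S\ref{subsec:Charts}, using the explicit local equations from \S\ref{subsec:Components}. Since the generic fibre is the smooth curve $C$, it suffices to check regularity at points of the special fibre, which is supported on $\{YZ = 0\}$ (or $\{Z=0\}$ when $E$ is outer and $i=r$). By the shape of the defining relation $\Pi = \pi - X_1^{\tilde m_{1(n+2)}}\cdots Z^{\tilde m_{(n+2)(n+2)}}$, the regularity of $U_{E,i}$ reduces to showing that each of $U_{E,i} \cap \{Z = 0\}$ and, when applicable, $U_{E,i} \cap \{Y = 0\}$ is regular, since away from these loci the chart is smooth over $K$.

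I would then split according to the type of edge $E$. For $E$ of type $V_v$ or $V_v^0$, the descriptions at the end of \S\ref{subsec:Components} express the slices as $\Spec$ of a quotient of $k_v[X_h^{\pm 1}, Y]$ or $k_v[X_h^{\pm 1}, Z]$ by a polynomial of the shape $X_h^b - c$ with $c \in k_v^\times$ (a reduction of a Hensel-liftable unit $H_{h-1,\alpha}(u_h)$ or $H_{h-1,\alpha}(u_h^0)$). Such a slice is étale over a torus, hence smooth, and regularity is automatic. Thus only the case $E = L_v$ requires real argument.

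For $E = L_v$, the slice $\mathring\Gamma_v = U_{L_v,i}\cap\{Z=0\}$ is, by (\ref{eqn:mathringGammav}), isomorphic to
\[
\Spec \frac{k_v\big[X_h^{\pm 1},\, Y,\, \prod_{(\t,w)<(\s,v)}(\phi_w|_v(X_h))^{-1}\big]}{\bigl(Y^{n_v} - H_{h,v(f)}(f)(X_h)\bigr)},
\]
and the $\{Y=0\}$ slice has an analogous shape when $i < r$. This is a double cover (when $n_v = 2$) or étale cover (when $n_v = 1$) of an open subset of $\A^1_{k_v}$, and the Jacobian criterion reduces regularity to the statement that $H_{h,v(f)}(f)(X_h)$, viewed in the localised ring, is separable; equivalently, that every multiple irreducible factor of $f|_v$ in $k_v[X]$ is inverted by the product $\prod_{(\t,w)<(\s,v)} \phi_w|_v(X_h)$.

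The key step, which I expect to be the main obstacle, is precisely this multiplicity statement, and it is exactly what Theorem \ref{thm:ChildrenReduction}\ref{item:MultipleFactor=>Child} delivers: every monic irreducible $h \in k_v[X]$ with $\ord_h(f|_v) > 1$ is of the form $\phi_w|_v$ for some proper child $(\t,w) < (\s,v)$. By Lemma \ref{lem:clusterchainuniqueness} the centres $\phi_w$ attached to children are compatible with the chosen cluster chain, and by Lemma \ref{lem:keypolyreduction} each $\phi_w|_v$ is irreducible in $k_v[X]$, so the product $\prod_{(\t,w)<(\s,v)} \phi_w|_v$ inverts precisely the multiple factors of $f|_v$ (up to units), leaving a separable polynomial in the localised ring. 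The Jacobian criterion then yields regularity of both slices, completing the proof.
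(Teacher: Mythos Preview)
Your proposal is correct and follows essentially the same approach as the paper's proof: reduce to regularity of the slices $\{Z=0\}$ and $\{Y=0\}$ via the shape of $\Pi$, observe that the $V_v$ and $V_v^0$ cases are automatically smooth from the explicit descriptions in \S\ref{subsec:Components}, and for $E=L_v$ invoke Theorem~\ref{thm:ChildrenReduction} to see that all multiple irreducible factors of $f|_v$ are inverted in the ring~(\ref{eqn:mathringGammav}). You have simply fleshed out the Jacobian-criterion reasoning and the handling of the non-$L_v$ edges more explicitly than the paper does.
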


% In the special case $E=L_v$ then $E$ corresponds to the edge of $N_{v_-,\psi_h}^-(f)$ of slope $-\lambda_h$.
% Then $\delta_E=e_h$ and $\nu=(e_h,0,-e_h\lambda_h)$. Let $\nu_h=(a_1,\dots,a_{n+2})$. Then Lemma \ref{lem:integralisation} shows that 
% \begin{align*}
% a_j&=-e_h\lambda_he_{v_{h-1}}\ell_j\ell_{j+1}'\cdots\ell_{h-1}'=-h_h\ell_j\ell_{j+1}'\cdots\ell_{h-1}',\quad\text{for any $j<h$};\\
% a_h&=e_h;\\
% a_j&=0,\quad\text{for any $h<j\leq n+1$};\\
% a_{n+2}&=-e_h\lambda_h-e_h\lambda_h(e_{v_{h-1}}\ell_1'\cdots\ell_{h-1}'-1)=-h_h\ell_1'\cdots\ell_{h-1}'.
% \end{align*}
% Therefore by Lemma \ref{lem:gammadecomposition}, $a_j=\tilde\alpha_{jh}$ for any $j$; in other words
% \[(\phi_1,\dots,\phi_n,y,\pi)\bullet M_{E,i}=(\gamma_1,\dots,\gamma_h,\gamma_{h+1,E},\dots,\gamma_{h,E},\dots).\]

\subsection{Properness}
Let $\m C_s^\mathrm{red}$ be the underlying reduced subscheme of the special fibre of $\m C$. In the previous subsections we showed that $\m C_s^\mathrm{red}$ consist of $1$-dimensional subschemes $\Gamma_v$ for each proper MacLane cluster $(\s,v)$, closures of $\mathring{\Gamma}_v$ (Notation \ref{nt:mathringGammav}) in $\m C$, and chains of $\P_1$. In this subsection we will show that $\Gamma_v$ is projective for any proper $(\s,v)\in\Sigma_f^M$.
By \cite[Remark 3.28]{Liu} the properness of $\m C$ will follow.

Let $(\s,v)$ be a proper MacLane cluster and recall the notation introduced in previous subsections. Let $C_v$ be the regular projective curve with ring of rational functions
\[k_v(X)[Y]/\big(Y^{n_v}-X^{\io_v/e_h-\ell_h \epsilon_hv(f)}f|_v\big).\]
From (\ref{eqn:mathringGammav}) we have a natural birational map $\Gamma_v\--> C_v$ defined on the dense open $\mathring{\Gamma}_v$.
It extends to a morphism $\iota:\Gamma_v\rightarrow C_v$ by \cite[II.7.4.9]{EGA}. Zariski's Main Theorem implies that $\iota$ is an open immersion, since $\Gamma_v$ is separated and regular. By point counting we can prove that $\iota$ is an isomorphism.

% Define
% \begin{equation}\label{eqn:tildev}
% \tilde v=\big\{(\t,w)\in\Sigma_f^M\mid (\t,w)<(\s,v)\text{ and }\tfrac{|\t|}{b_v\deg w}-\ell_v\epsilon_v\deg w\notin 2\Z\big\}.
% \end{equation}
% Let $c_v^0=1$ if $\delta_vp_v^0=2$ and $\ell_v\epsilon_v\deg v\notin 2\Z$, $0$ otherwise.
Let $\mathring C_v=\iota(\mathring \Gamma_v)$. By Theorem \ref{thm:ChildrenReduction} we have
\[\ord_{\phi_w|_v}(f|_v)=|\t|/\deg w,\]
for any proper child $(\t,w)<(\s,v)$ with $\phi_w\neq\phi_v$.
The set $C_v(\bar k)\setminus\mathring C_v(\bar k)$ is finite and consists of:
\begin{enumerate}[label=(\arabic*)]
    \item $\gcd(n_v, \io_v/e_h-\ell_h \epsilon_hv(f)+\deg(f|_v))$ points at infinity;\label{item:(1)pointsatinfinity}
    \item $\gcd(n_v, \io_v/e_h-\ell_h \epsilon_hv(f))$ points on $X=0$;\label{item:(2)pointsatX=0}
    \item $\gcd(n_v,|\t|/\deg w)$ points on $Y=0$ ($X\neq 0$) for each proper child $(\t,w)<(\s,v)$ with $\phi_w\neq\phi_v$.\label{item:(3)pointsatY=0}
\end{enumerate}

\ref{item:(1)pointsatinfinity} Let $E=V_v$. The scheme $\Gamma_v$ has $(|E_{\Z}(\Z)|+1)$ $\bar k$-points in \[U_{E,0}\cap\{Y=Z=0\}\] not contained in $\mathring{\Gamma}_v$. Note that $|E_{\Z}(\Z)|$ equals $1$ if $\i_v$ and $\epsilon_h(v(f)-\i_v\lambda_v)$ are both even, while it equals $0$ otherwise. 
In fact,
% one can see
% \begin{align*}
%     \epsilon_h(v(f)-\i_v\lambda_v)&=\ell'_h\lb e_vv(f)\rb -h_h\lb\i_v/e_h-\ell_h \epsilon_hv(f)\rb\\
%     \i_v&=\ell_h\lb e_vv(f)\rb+e_h\lb\i_v/e_h-\ell_h \epsilon_hv(f)\rb
% \end{align*}
\[\big(\epsilon_h(v(f)-\i_v\lambda_v),\i_v\big)=\big( e_vv(f), \i_v/e_h-\ell_h \epsilon_hv(f)\big) \cdot
\begin{psmallmatrix}
\ell'_h & \ell_h\\
-h_h& e_h
\end{psmallmatrix},\]
and so
\[|E_{\Z}(\Z)|+1=\gcd(n_v, \io_v/e_h-\ell_h \epsilon_hv(f)+\deg(f|_v)),\]
since $\deg(f|_v)=(\i_v-\io_v)/e_h$.
% from $\deg(f|_v)=(\i_v-\io_v)/e_h$ and $\ell_h h_h+\ell_h'e_h=1$.

\ref{item:(2)pointsatX=0} Let $E=V_v^0$. Let $(\t,w)<(\s,v)$ such that $E=V_w$ if $(\s,v)$ is not degree-minimal. Let $U=U_{V_v^0,0}$ if $(\s,v)$ is degree-minimal and $U=U_{V_w,r_E+1}$ otherwise. The scheme $\Gamma_v$ has $(|E_{\Z}(\Z)|+1)$ $\bar k$-points in 
\[U\cap\{Y=Z=0\}\] 
not visible on $\mathring{\Gamma}_v$. Note that $|E_{\Z}(\Z)|$ equals $1$ if $\io_v$ and $\epsilon_h(v(f)-\io_v\lambda_v)$ are both even, while equals $0$ otherwise. Similarly to the case above, 
%one can show
\[\big(\epsilon_h(v(f)-\io_v\lambda_v),\io_v\big)=\big( e_vv(f), \io_v/e_h-\ell_h \epsilon_hv(f)\big) \cdot
\begin{psmallmatrix}
\ell'_h & \ell_h\\
-h_h& e_h
\end{psmallmatrix},\]
and so
\[|E_{\Z}(\Z)|+1=\gcd(n_v, \io_v/e_h-\ell_h \epsilon_hv(f)).\]

\ref{item:(3)pointsatY=0} Let $(\t,w)<(\s,v)$ be a proper child such that $\phi_w\neq\phi_v$. Let $E=V_w$. The scheme $\Gamma_v$ has $(|E_{\Z}(\Z)|+1)$ $\bar k$-points in 
\[U_{E,0}\cap\{Y=Z=0\}\] 
not visible on $\mathring{\Gamma}_v$. Note that $|E_{\Z}(\Z)|$ equals $1$ if $\i_w$ and $e_v(v(f)-\i_w\lambda_v)$ are both even, while it equals $0$ otherwise. Since $\i_w=|\t|/\deg w$ by Proposition \ref{prop:EdgesRoots}, we can compute
\[|E_{\Z}(\Z)|+1=\gcd(n_v,|\t|/\deg w).\]

Thus $|\Gamma_v(\bar k)\setminus\mathring{\Gamma}_v(\bar k)|=|C_v(\bar k)\setminus\mathring{C}_v(\bar k)|$, and so $\Gamma_v\simeq C_v$.

\begin{rem}
If $k_v$ is perfect, $\Gamma_v$ is a generalised Baker's model of the curve $\mathring{\Gamma}_v\cap\G_{m,k_v}^2$ according to \cite{Mus2}.
\end{rem}

\section{Main result}\label{sec:MainResult}
Let $C/K$ be a hyperelliptic curve of genus $g\geq 1$. Choose a separable polynomial $f\in K[x]$ as in the previous section so that $C/K:y^2=f(x)$. Then $v_K(r)>0$ for every root $r\in\bar K$ of $f$. Denote by $\roots$ the set of roots of $f$ as before. Consider the MacLane cluster picture of $f$ and fix a centre $\phi_v$ for all proper MacLane clusters $(\s,v)\in\Sigma_f^M$ as we did at the beginning of \S\ref{sec:ModConstr}. Denote by $\Sigma$ the set of proper MacLane clusters for $f$.

\begin{defn}\label{defn:QuantitiesForTheoremsOnModelsDefinition}
Let $(\s,v)\in\Sigma$. 
Consider its cluster chain
\[[v_0,v_1(\phi_1)=\lambda_1,\dots,v_{m-1}(\phi_{m-1})=\lambda_{m-1},v_{m}(\phi_{m})=\lambda_{m}].\]
Define the following quantities:
\begin{center}
% \small\begin{tabular}{cc}
\begin{tabular}{|l@{$=\>\,$}l|}
\hline
    $\epsilon_v$          & $e_{v_{m-1}}$\cr
    $b_v$       & $e_v/\epsilon_v$\cr
    $\ell_v$    & $\ell_m$\cr
    $k_v$       & $k_m$\cr
    $f_v$       & $[k_v:k]$\cr
    $\nu_v$     & $v(f)$\cr
    $n_v$            & $1$ if $e_v\nu_v$ odd, $2$ if $e_v\nu_v$ even\cr
    $m_v$            & $2e_v/n_v$\cr
    $p_v$            & $1$ if $\i_v$ is odd, $2$ if $\i_v$ is even\cr
    $s_v$            & $\frac 12(\i_v\lambda_v+p_v\lambda_v-\nu_v)$\cr
    $\gamma_v$       & $2$ if $\i_v$ is even and $\epsilon_v(\nu_v\!-\!\i_v\lambda_v)$ is odd, $1$ otherwise\cr
    $\delta_v$          & $1$ if $(\s,v)$ is degree-minimal, $0$ otherwise\cr
    $p_v^0$          & $1$ if $\delta_v=1$ and $\deg v=\min_{r\in\s}[K(r):K]$, $2$ otherwise\cr
    $s_v^0$          & $-\nu_v/2+\lambda_v$\cr
    $\gamma_v^0$     & $2$ if $p_v^0=2$ and $\epsilon_v\nu_v$ is an odd integer, $1$ otherwise\cr
\hline
\end{tabular} 
\end{center}

Define
\[\tilde v=\big\{(\t,w)\in\Sigma\mid (\t,w)<(\s,v)\text{ and }\tfrac{f_v|\t|}{f_wb_v\deg v}-\ell_v\nu_v\epsilon_w\notin 2\Z\big\}.\]
Let $c_v^0=1$ if 
%$\delta_vp_v^0=2$ and 
$\tfrac{2-p_v^0}{b_v}-\ell_v\nu_v\epsilon_v\notin 2\Z$, and $c_v^0=0$ otherwise. Define
\[u_v=\smaller{\dfrac{|\s|-\sum_{(\t,w)<(\s,v)}|\t|-(2-p_v^0)\deg v}{e_v}+\sum_{(\t,w)\in\tilde v}\frac{f_w}{f_v}}+ \delta_vc_v^0.\]
The \textit{genus $g(v)$} of $(\s,v)$ is defined as follows:
\begin{itemize}
    \item if $n_v=1$, then $g(v)=0$;
    \item if $n_v=2$, then $g(v)=\max\{\lfloor(u_v-1)/2\rfloor,0\}$.

% If $(\s,v)$ has two non-proper children $(\t,w), (\t',w')$ of degree $\deg v$, then $b_v=1$ and so $\ell_v=0$. In particular, (\t,w)\in\tilde v.
% For any non-proper children $(\t,w)$ of degree $>\deg v$, $\ell_v\nu_v\deg w\in 2\Z$ since $n_v=2$. But then $1/b_v\notin 2\Z$, so $(\t,w)\in\tilde v$.
\end{itemize}
We say that $(\s,v)$ is \textit{\"{u}bereven} if $u_v=0$.

Recall the definition of $H_{m-1,\alpha}$, for $\alpha\in\Gamma_{v_{m-1}}$, from Definition \ref{defn:Loc(i)H(ii)}(ii). Define $\overline{g_v}\in k_v[y]$, and $\overline{g_v^0}\in k_v[y]$ if $\delta_v=1$, by
\begin{align*}
  \overline{g_v}(y) &=  y^{p_v/\gamma_v} - H_{m-1,v_{m-1}(u)}(u), &  u&=c_f\tprod_{r\in\roots\setminus\s} (x-r) \mod \phi_v,\\
  \overline{g_v^0}(y) &=  y^{p_v^0/\gamma_v^0} - H_{m-1,v_{m-1}(u)}(u), & u&=c_f\tprod_{r\in\roots\setminus\roots_{v}} (x-r) \mod \phi_v,
\end{align*}
where $\roots_{v}$ is the set of roots of $\phi_v$. 

Define $f'_v\in K[x]$ by
\[
  \phi_v^{2-p_v^0}f'_v(x) = \prod_{r\in\s\setminus\bigcup_{(\t,w)<(\s,v)}\t} (x-r),
\]
where the union runs through all proper children $(\t,w)<(\s,v)$.

Define $\overline{f_v}, \tilde f_v\in k_v[x]$ by
\begin{align*}
\overline{f_v}(x)&=H_{m-1,v_{m-1}(u)}(u)\cdot f'_v|_v(x),&u&=c_f\tprod_{r\in\roots\setminus\s} (x-r)\mod \phi_v,\\
\tilde{f}_v(x)&=\ch{f_v}(x)\cdot x^{\delta_vc_v^0}\cdot\tprod_{(\t,w)\in\tilde v}\phi_w|_v(x).&&
\end{align*}

Finally, define the $k_v$-schemes
% \begin{enumerate}[label=$\bullet$]
\begin{itemize}
    \item $X_v:\{\ch{f_v}=0\}\subset\G_{m,k_v}$;
    \item $Y_v:\{\ch{g_v}=0\}\subset\G_{m,k_v}$;
    \item $Y_v^0:\{\ch{g_v^0}=0\}\subset\G_{m,k_v}$ if $(\s,v)$ is degree-minimal.
\end{itemize}
\end{defn}

% \begin{rem}
% Note that by Lemma \ref{DescriptionNewtonPolytopeClustersLemma} and the choice of the rational centres in $W$, the number of $\bar k$-points (counted with moltiplicity) of $X_\s$ is $\nicefrac{p_\s}{\lambda_\s}$, of $X_\s^W$ is $\nicefrac{\lb|\s|-\sum_{\s'}|\s'|-v_\s\rb}{b_\s}$ and of $X_\s^0$ is $\nicefrac{p_\s^0}{\gamma_\s^0}$ if $\roots\cap W=\varnothing$.
% \end{rem}

\begin{nt}
If $C=C_1\cup\dots C_r$ is a chain of $\P^1_{k}$s (meeting transversely) of length $r$, then $\infty\in C_i$ is identified with $0\in C_{i+1}$, and $0,\infty\in C$ are respectively $0\in C_{1}$ and $\infty\in C_{r}$. If $r=0$, then $C=\Spec k$ and $0=\infty$.
\end{nt}

\begin{nt}
Let $\alpha,a,b\in\Z$, with $\alpha>0$, $a>b$, and fix $\frac{n_i}{d_i}\in\Q$ so that
    \[\alpha a=\frac{n_0}{d_0}>\frac{n_1}{d_1}>\ldots>\frac{n_r}{d_r}>\frac{n_{r+1}}{d_{r+1}}=\alpha b,\quad\text{with\scalebox{0.9}{$\quad\begin{vmatrix}n_i\!\!\!&n_{i+1}\cr d_i\!\!\!&d_{i+1}\cr\end{vmatrix}=1$}},\]
    and $r$ minimal. We write $\P^1(\alpha,a,b)$ for a chain of $\P^1_{k}$s of length $r$ and multiplicities $\alpha d_1,\dots,\alpha d_r$. Furthermore, we denote by $\P^1(\alpha,a)$ the chain $\P^1(\alpha,a,\lfloor\alpha a-1\rfloor/\alpha)$.
\end{nt}

\begin{nt}
Let $a,b\in K[x]$, $b\neq 0$. We denote by $a\mod b$ the remainder of the division of $a$ by $b$.
\end{nt}

% \begin{defn}\label{PolynomialsDefinition}
% Let $(\s,v)\in\Sigma$. Consider a cluster chain
% \[[v_0,v_1(\phi_1)=\lambda_1,\dots, v_m(\phi_m)=\lambda_m]\]
% for $v$. Define $\overline{g_v}\in k_v[y]$, and $\overline{g_v^0}\in k_v[y]$ if $\delta_v=1$, by
% \begin{align*}
%   \overline{g_v}(y) &=  y^{p_v/\gamma_v} - H_{m-1,v_{m-1}(u)}(u), &  u&=c_f\tprod_{r\in\roots\setminus\s} (x-r) \mod \phi_v,\\
%   \overline{g_v^0}(y) &=  y^{p_v^0/\gamma_v^0} - H_{m-1,v_{m-1}(u)}(u), & u&=c_f\tprod_{r\in\roots\setminus\roots_{\phi_v}} (x-r) \mod \phi_v.
% \end{align*}
% where $\roots_{\phi_v}$ is the set of roots of $\phi_v$. Define $f'_v\in K[x]$ by
% \[
%   \phi_v^{2-p_v^0}f'_v(x) = \prod_{r\in\s\setminus\bigcup_{(\t,w)<(\s,v)}\t} (x-r),
% \]
% where the union runs through all proper children $(\t,w)<(\s,v)$. 
% Define $\overline{f_v}, \tilde f_v\in k_v[x]$ by
% \begin{align*}
% \overline{f_v}(x)&=H_{m-1,v_{m-1}(u)}(u)\cdot f'_v|_v(x),&u&=c_f\tprod_{r\in\roots\setminus\s} (x-r)\mod \phi_v,\\
% \tilde{f}_v(x)&=\ch{f_v}(x)\cdot x^{\delta_vc_v^0}\cdot\tprod_{(\t,w)\in\mathring v}\phi_w|_v(x).&&
% \end{align*}
% Finally define the $k_v$-schemes
% % \begin{enumerate}[label=$\bullet$]
% \begin{itemize}
%     \item $X_v:\{\ch{f_v}=0\}\subset\G_{m,k_v}$;
%     \item $Y_v:\{\ch{g_v}=0\}\subset\G_{m,k_v}$;
%     \item $Y_v^0:\{\ch{g_v^0}=0\}\subset\G_{m,k_v}$ if $(\s,v)$ is degree-minimal.
% \end{itemize}
% %\end{enumerate}
% \end{defn}

In the next theorem we describe the special fibre of the scheme $\m C$ constructed in \S\ref{sec:ModConstr}. 
%Denote by $\Sigma$ the set of proper MacLane clusters for $f$.

\begin{thm}[Regular SNC model]\label{thm:SNCModel}
%Let $C/K$ be a hyperelliptic curve of genus $\geq 1$. 
The scheme $\m C\rightarrow O_K$ constructed in \S\ref{sec:ModConstr} is a regular model of $C$ with strict normal crossings; its special fibre $\mathcal{C}_s/k$ is described as follows:
\begin{enumerate}[label=(\arabic*)]
\item 
  Every $(\s,v)\in\Sigma$ gives a $1$-dimensional closed subscheme $\Gamma_v$ of multiplicity 
  $m_v$. The ring of rational functions of $\Gamma_v$ is isomorphic to $k_v(x)[y]/(y^{n_v}-\tilde{f_v}(x))$. If $n_v=2$, $u_v=0$, and $\tilde{f_v}\in k_v^2$, then $\Gamma_v\simeq\P_{k_v}^1\sqcup\P_{k_v}^1$, otherwise 
  $\Gamma_v$ is irreducible of genus $g(v)$.\label{regSNC1}
  \item Every $(\s,v)\in\Sigma$ with $n_v=1$ gives the closed subscheme $X_v\times_k\P^1_k$, of multiplicity $e_v$, where $X_v\times_k\{0\}\subset\Gamma_v$ (the $\P_k^1$s are open-ended).\label{regSNC2}
    \item Every non-maximal $(\s,v)\in\Sigma$, with $(\s,v)<(\t,w)$,
    gives the closed subscheme $Y_v\times_k \P^1\big(\epsilon_v\gamma_v,s_v,s_v-\tfrac{p_v}{2}(\lambda_v-\tfrac{\deg v}{\deg w}\lambda_{w})\big)$ where $Y_v\times_k\{0\}\subset\Gamma_v$ and $Y_v\times_k\{\infty\}\subset\Gamma_w$.\label{regSNC3}
    \item \label{regSNC4} Every degree-minimal $(\s,v)\in\Sigma$ 
    gives the closed subscheme $Y_v^0\times_k \P^1(\epsilon_v\gamma_v^0,-s_v^0)$ where $Y_v^0\times_k\{0\}\subset\Gamma_v$ (the chains are open-ended). 
    \item \label{regSNC5} Finally, 
    the maximal element $(\s,v)\in\Sigma$ gives the closed subscheme $Y_v\times_k \P^1(\epsilon_v\gamma_v,s_v)$ where $Y_v\times_k\{0\}\subset\Gamma_v$ (the chains are open-ended). 
\end{enumerate}
If $\Gamma_v$ is reducible, the two points in $Y_v\times_k\{0\}$ (and $Y_v^0\times_k\{0\}$ if $(\s,v)$ is degree-minimal) belong to different irreducible components of $\Gamma_v$. Similarly, if $(\s,v)$ is not maximal with $(\s,v)<(\t,w)$, and $\Gamma_w$ is reducible, then the two points of $Y_v\times_k\{\infty\}$ belong to different irreducible components of $\Gamma_w$.
\proof
The description of the special fibre of $\m C$ follows from its explicit construction developed in \S\ref{sec:ModConstr} (see especially \S\ref{subsec:Components}). We highlight the key points.

\ref{regSNC1} Each proper MacLane cluster $(\s,v)$ gives the $1$-dimensional closed subscheme $\Gamma_v$ of $\m C_s$, coming from the $\ast$-face $F_v$. The open subscheme $\mathring\Gamma_v$ (Notation \ref{nt:mathringGammav}) of $\Gamma_v$ is isomorphic to
\[\Spec \frac{k_v\big[X^{\pm 1},Y,\prod_{(\t,w)<(\s,v)}(\phi_w|_v)^{-1}\big]}{(Y^{n_v}-X^{\io_v/b_v-\ell_v \epsilon_v\nu_v}f|_v)},\]
where the product runs through all proper children of $(\s,v)$. The multiplicity of $\Gamma_v$ in $\m C_s$ is given by $e_v d_0$, where $d_0$ is the denominator of the slope $s_1^{L_v}$. We noticed in \S\ref{subsec:Components} that $n_v=2/d_0$. 
If $n_v=1$, then $\Gamma_v\simeq\P^1_{k_v}$. Suppose $n_v=2$. We want to show that the ring of rational functions of $\Gamma_v$ is 
\begin{equation}\label{eqn:Gammavfunctionring}
    k_v(X)[Y]/(Y^{n_v}-\tilde f_v(X)).
\end{equation}

If $(\s,v)$ is degree-minimal, then $\io_v=2-p_v^0$ from (\ref{eqn:centrechoice}). If $(\s,v)$ is not degree-minimal, then there exists a child $(\t,w)<(\s,v)$ with $\phi_w=\phi_v$; in particular, $\io_v=\i_w$, $f_w=f_v$ and so
\[\io_v/b_v-\ell_v \epsilon_v\nu_v=\tfrac{f_v|\t|}{f_wb_v\deg v}-\ell_v\nu_v\epsilon_w.\]

Now let $(\t,w)<(\s,v)$ with $\phi_w\neq\phi_v$. Theorem \ref{thm:ChildrenReduction} implies that \[\ord_{\phi_w|_v}(f|_v)=|\t|/\deg w.\]
Note that $\epsilon_w=e_v$ and $f_v\deg w=f_wb_v\deg v$ by Lemma \ref{lem:keypolyreduction}. Then 
\[\tfrac{|\t|}{\deg w}\notin 2\Z\quad\text{ if and only if }\quad\tfrac{f_v|\t|}{f_wb_v\deg v}-\ell_v\nu_v\epsilon_w\notin2\Z.\]

Let $[v_0,\dots, v_{h}(\phi_h)=\lambda_h]$ be the cluster chain for $v$. Let $f_\s=\prod_{r\in\roots\setminus\s}(x-r)$. The Newton polygon $N_{v_{h-1},\phi_v}(f_\s)$ has only slopes $>-\lambda_v$. Then $f_\s|_v=u|_v$, where $u=f_\s\mod\phi_v$.

The observations above, together with Proposition \ref{prop:ReductionDegreeProduct}\ref{item:ReductionProduct}, imply that (\ref{eqn:Gammavfunctionring}) is the ring of rational functions of $\Gamma_v$.

The subscheme given by $(\s,v)\in\Sigma$ in \ref{regSNC2} is the closure of 
\begin{equation}\label{eqn:Chains}
\textstyle\bigcup_{i=1}^{r_E}\big(U_{E,i}\cap\{Z=0\}\big),
\end{equation}
when $E=L_v$.
The subscheme given by $(\s,v)\in\Sigma$ in \ref{regSNC3} or \ref{regSNC5} is the closure of 
(\ref{eqn:Chains}) when $E=V_v$. Note that $(V_v)_\Z(\Z)+1=p_v/\gamma_v$. The subscheme given by a degree-minimal $(\s,v)\in\Sigma$ in \ref{regSNC4} is the closure of
(\ref{eqn:Chains}) when $E=V_v^0$. Note that $(V_v^0)_\Z(\Z)+1=p_v^0/\gamma_v^0$.
\endproof
\end{thm}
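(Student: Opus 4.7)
The plan is to assemble the theorem from the construction and local analyses already carried out in \S\ref{sec:ModConstr}. Regularity is Theorem \ref{thm:regularmodel}, and properness follows from the discussion after Notation \ref{nt:mathringGammav} once one shows every irreducible component of $\m C_s^{\mathrm{red}}$ is projective: the $\Gamma_v$ are identified with the smooth projective curves $C_v$ by the point-counting argument for \ref{item:(1)pointsatinfinity}--\ref{item:(3)pointsatY=0}, and the remaining components are chains of $\P^1$s by construction. What remains is to (a) match each local datum from the toroidal construction with the invariants of Definition \ref{defn:QuantitiesForTheoremsOnModelsDefinition}, (b) verify that all component intersections are transversal with smooth components, so that the model has strict normal crossings.

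For \ref{regSNC1}, the component $\Gamma_v$ arises from the $\ast$-face $F_v$ and its open subscheme $\mathring\Gamma_v$ is given by (\ref{eqn:mathringGammav}). The multiplicity is $e_v d_0 = m_v$ since $d_0 = 2/n_v$ by the explicit computation after (\ref{eqn:s1L}). To get the function field in the form $k_v(X)[Y]/(Y^{n_v}-\tilde f_v(X))$ I would rewrite $X^{\io_v/b_v-\ell_v\epsilon_v\nu_v}f|_v$ using: (i) Theorem \ref{thm:ChildrenReduction} to identify $\ord_{\phi_w|_v}(f|_v)=|\t|/\deg w$ for each proper child $(\t,w)<(\s,v)$ with $\phi_w\neq\phi_v$; (ii) Proposition \ref{prop:ReductionDegreeProduct}\ref{item:ReductionProduct} to split $f|_v = u|_v\cdot f'_v|_v$ where $u = c_f\prod_{r\in\roots\setminus\s}(x-r)\bmod \phi_v$; and (iii) the choice (\ref{eqn:centrechoice}) of centres, which forces $\io_v = 2-p_v^0$ when $(\s,v)$ is degree-minimal, and $\io_v=\i_w,\, f_v=f_w$ when $(\s,v)$ has a child with $\phi_w=\phi_v$. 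After cancelling factors corresponding to children via the units in (\ref{eqn:mathringGammav}), only the factors indexed by $\tilde v$ and the monomial $x^{\delta_v c_v^0}$ survive; this yields $\tilde f_v$. Reducibility of $\Gamma_v$ is then controlled by whether $Y^2 - \tilde f_v$ factors in $k_v(X)$, which for a smooth affine curve happens exactly when $\tilde f_v\in k_v^2$ and $u_v=0$ (so that the degree is $0$); the genus is then given by Riemann--Hurwitz applied to $(X,Y)\mapsto X$, with ramification data $u_v$ encoding the odd-order vanishing of $\tilde f_v$.

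For \ref{regSNC2}--\ref{regSNC5}, the chains of $\P^1$s come from the two-dimensional cones $\sigma_{E,i,i+1}$ of the fan $\Sigma$ with $1\leq i\leq r_E$ (plus the endpoints), i.e.\ from the Farey-style resolution of the edge $E$. Taking $E=L_v$ (resp.\ $E=V_v$, $E=V_v^0$) produces the chains in \ref{regSNC2} (resp.\ \ref{regSNC3} and \ref{regSNC5}, \ref{regSNC4}). The slopes $s_1^E,s_2^E$ that determine the chain lengths and multiplicities $\P^1(\alpha,a,b)$ are precisely those computed at the end of \S\ref{subsec:Components}: for $E=V_v$ with parent $(\t,w)$ one reads off $s_1^E=\epsilon_v\gamma_v\cdot s_v$ and $s_2^E = \epsilon_v\gamma_v(s_v-\tfrac{p_v}{2}(\lambda_v-\tfrac{\deg v}{\deg w}\lambda_w))$ after the rescaling $\delta_E = \gamma_v$; and similarly for $L_v$ and $V_v^0$ using $s_v^0$. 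The schemes $X_v$, $Y_v$, $Y_v^0$ attached to the starting point of each chain come directly from the equations $\m R(\bar{\m F}_Z)=0$, $\m R(\bar{\m F}_Y)=0$ described in \S\ref{subsec:Components}: the factor $u$ (resp.\ $u^0$) identifies with $H_{h-1,\alpha}(a_{\i_v})$ (resp.\ $a_{\io_v}$) via Theorem \ref{thm:Newtondecomposition}, yielding the equations $\overline{g_v}=0$ and $\overline{g_v^0}=0$ after clearing units.

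The strict normal crossings condition is the main obstacle: I would check it chart by chart. In each $U_{E,i}$ the special fibre is cut out by $YZ=0$ (or $Z=0$ at an outer end) inside a regular two-dimensional piece, so I need to verify that the two branches $\{Y=0\}$ and $\{Z=0\}$ inside $U_{E,i}$ are smooth curves meeting transversally in a single reduced $k$-point. Smoothness was established during the regularity proof (after units are cleared, $\m R(\bar{\m F}_Z)$ has the form $Y^b - g(X_h)$ or $X_h^b - c$ with $g,c$ squarefree away from the marked points). Transversality at each node is automatic from the toric description once one checks that the fan $\Sigma$ is a non-singular fan, which in turn follows because each matrix $M_{E,i}$ has determinant $1$ by Lemma \ref{lem:integralisation} and the computation of $\det M_{E,i}^\R$. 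Finally, the assertion that the two points in $Y_v\times\{0\}$ (and $Y_v^0\times\{0\}$) lie on distinct components of $\Gamma_v$ when $\Gamma_v$ is reducible reduces to the observation that in the reducible case $\tilde f_v$ is a square, so $Y^2-\tilde f_v$ splits as $(Y-h)(Y+h)$ and the two points $Y=\pm h(0)$ lie on the two separate components, with characteristic $\neq 2$ ensuring they are distinct.
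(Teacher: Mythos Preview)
Your proposal is correct and follows essentially the same route as the paper's own proof: both derive the statement by unpacking the explicit toroidal construction of \S\ref{sec:ModConstr}, identifying $\Gamma_v$ with the $\ast$-face $F_v$ via Notation \ref{nt:mathringGammav}, computing the multiplicity as $e_vd_0=m_v$ from the slope data at $L_v$, rewriting $X^{\io_v/b_v-\ell_v\epsilon_v\nu_v}f|_v$ as $\tilde f_v$ using Theorem \ref{thm:ChildrenReduction}, Proposition \ref{prop:ReductionDegreeProduct}\ref{item:ReductionProduct}, and the centre convention (\ref{eqn:centrechoice}), and reading off the chains from the edges $L_v$, $V_v$, $V_v^0$ with the slope formulas of \S\ref{subsec:Components}. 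Your verification that $\delta_E=\gamma_v$ for $E=V_v$ and your added detail on the SNC check (transversality from $\det M_{E,i}=1$ together with the local form $YZ=0$ of the special fibre in each chart) and on the splitting of $Y^2-\tilde f_v$ in the reducible case are all sound, and simply make explicit what the paper leaves to the reader.
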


\begin{rem}
Let $(\s,v)\in\Sigma$. Note that 
\begin{enumerate}[label=(\roman*)]
    \item if $\Gamma_v$ is reducible then $p_v/\gamma_v=2$.
    \item if $(\s,v)<(\t,w)$ and $\Gamma_{w}$ is reducible, then $p_v/\gamma_v=2$.
    \item if $(\s,v)$ is degree-minimal and $\Gamma_v$ is reducible then $p_v^0/\gamma_v^0=2$.
\end{enumerate}
\end{rem}

\appendix
\section{Pseudo-valuations}\label{appendix:pseudo-valuations}

Let $A$ be an integral domain (with identity). Let $\hat\Q=\Q\cup\{\infty\}$. The ordering and the group law on $\Q$ are canonically extended to the set $\hat\Q$.

\begin{defn}
A map $v:A\rightarrow \hat\Q$ is called \textit{pseudo-valuation} (\textit{of $A$}) if
\begin{enumerate}[label=(\alph*)]
    \item $v(ab)=v(a)+v(b)$,
    \item $v(a+b)\geq\min\{v(a),v(b)\}$,
    %, with equality if $v(a)\neq v(b)$.
%    \item $v^{-1}(\infty)=A\setminus A^\times$. 
\end{enumerate}
for any $a,b\in A$.
A pseudo-valuation $v$ is said \textit{valuation} if it also satisfies
\begin{enumerate}
    \item[(c)] $v(a)=\infty$ if and only if $a=0$;
\end{enumerate}
we call it \textit{infinite pseudo-valuation} otherwise. %Finally, we say that $v$ is trivial if $v(a)=\infty$ for all $a\in A$.
\end{defn}

\begin{defn}
Let $v:A\rightarrow \hat\Q$ be a pseudo-valuation.
%\begin{enumerate}[label=\bullet]
\begin{itemize}
    \item The \textit{valuation group} of a pseudo-valuation $v:A\rightarrow \hat\Q$, denoted $\Gamma_v$, is the subgroup generated by the subset $v(A)\cap \Q$ of $\Q$. Note that if $\Z\subseteq v(A)$, then $\Gamma_v=v(A)\cap \Q$.
    \item $v$ is \textit{discrete} if there exists $e\in\Z_+$ such that $e\Gamma_v=\Z$. If that happens, then $e_v=e$ is said ramification index of $v$. 
    \item The \textit{valuation ring} $O_v$ of a pseudo-valuation $v:A\rightarrow \hat\Q$ is the set of $a\in A$ with $v(a)\geq 0$. 
    \item The \textit{residue ring} of $v$ is the quotient of $O_v$ by the prime ideal $O_v^+$ consisting of the elements $a\in A$ with $v(a)>0$. 
    \item If $v$ is a valuation, the \textit{residue field} of $v$ is the residue ring of the valuation of $\mathrm{Frac}(A)$ that $v$ induces. 
\end{itemize}
%\end{enumerate}
\end{defn}
\section{Explicit matrices}\label{appendix:Matrices}
In this section we explicitly describe the matrices introduced in \S\ref{subsec:Matrices}.
Recall the notation of \S\ref{subsec:Matrices}.
Suppose the permutation $\tau$ equals the identity. 
Let $m_j'$, for $j=0,\dots,h$, be the quantities defined in Lemma \ref{lem:gammadecomposition}.
Then 
%we can explicitly present the matrices $M_{E,i}^\R$, $T_h$ and $T$ as follows:
\[M_{E,i}^\R=\scriptsize\lb\!\!\!\!\!\begin{array}{ccccccccccc}
     e_1 \!\!\!\!\!\!& -h_2m_1' \!\!\!\!\!\!& \dots \!\!\!\!\!\!& -h_{h-1}m_1' \!\!\!\!\!\!& 0 \!\!\!\!\!\!& -\beta_{h+1}m'_1 \!\!\!\!\!\!& \dots \!\!\!\!\!\!& \dots \!\!\!\!\!\!& -\beta_{n}m'_1 \!\!\!\!\!\!& 0 \!\!\!\!\!\!& 0\cr
     0 \!\!\!\!\!\!& e_2 \!\!\!\!\!\!& \ddots \!\!\!\!\!\!& -h_{h-1}m_2' \!\!\!\!\!\!& 0 \!\!\!\!\!\!& -\beta_{h+1}m'_2 \!\!\!\!\!\!&\dots \!\!\!\!\!\!& \dots \!\!\!\!\!\!& -\beta_{n}m'_2 \!\!\!\!\!\!& 0 \!\!\!\!\!\!& 0 \cr
     \vdots \!\!\!\!\!\!& \ddots \!\!\!\!\!\!& \ddots \!\!\!\!\!\!& \vdots \!\!\!\!\!\!& \vdots \!\!\!\!\!\!& \vdots \!\!\!\!\!\!&\ddots \!\!\!\!\!\!&\ddots \!\!\!\!\!\!& \vdots \!\!\!\!\!\!& \vdots \!\!\!\!\!\!& \vdots \cr
        \vdots \!\!\!\!\!\!& \vdots \!\!\!\!\!\!& \ddots \!\!\!\!\!\!& e_{h-1} \!\!\!\!\!\!& 0 \!\!\!\!\!\!& -\beta_{h+1}m'_{h-1} \!\!\!\!\!\!& \dots \!\!\!\!\!\!& \dots \!\!\!\!\!\!& -\beta_{n}m'_{h-1} \!\!\!\!\!\!& 0 \!\!\!\!\!\!& 0  \cr
        \vdots \!\!\!\!\!\!& \vdots \!\!\!\!\!\!& \ddots \!\!\!\!\!\!& 0 \!\!\!\!\!\!& \delta a_x \!\!\!\!\!\!& -\beta_{h+1}' \!\!\!\!\!\!& \dots \!\!\!\!\!\!& \dots \!\!\!\!\!\!& -\beta_{n}' \!\!\!\!\!\!& d_ib_x \!\!\!\!\!\!& -d_{i+1}b_x\cr
        \vdots \!\!\!\!\!\!& \vdots \!\!\!\!\!\!& \ddots \!\!\!\!\!\!& \vdots \!\!\!\!\!\!& 0 \!\!\!\!\!\!& 1 \!\!\!\!\!\!& 0 \!\!\!\!\!\!& \dots \!\!\!\!\!\!& 0\!\!\!\!\!\!& 0 \!\!\!\!\!\!& 0\cr
        \vdots \!\!\!\!\!\!& \vdots \!\!\!\!\!\!& \ddots \!\!\!\!\!\!& \vdots \!\!\!\!\!\!& \vdots \!\!\!\!\!\!& 0 \!\!\!\!\!\!& 1 \!\!\!\!\!\!& \ddots \!\!\!\!\!\!& 0 \!\!\!\!\!\!& 0 \!\!\!\!\!\!& 0\cr
        \vdots \!\!\!\!\!\!& \vdots \!\!\!\!\!\!& \ddots \!\!\!\!\!\!& \vdots \!\!\!\!\!\!& \vdots \!\!\!\!\!\!& \vdots \!\!\!\!\!\!& \ddots \!\!\!\!\!\!& \ddots \!\!\!\!\!\!& \ddots \!\!\!\!\!\!& \vdots \!\!\!\!\!\!& \vdots\cr
        0 \!\!\!\!\!\!& 0 \!\!\!\!\!\!& \dots \!\!\!\!\!\!& 0 \!\!\!\!\!\!& 0 \!\!\!\!\!\!& 0 \!\!\!\!\!\!& 0 \!\!\!\!\!\!& \ddots \!\!\!\!\!\!& 1 \!\!\!\!\!\!& 0 \!\!\!\!\!\!& 0 \cr
        0 \!\!\!\!\!\!& 0 \!\!\!\!\!\!& \dots \!\!\!\!\!\!& 0 \!\!\!\!\!\!& \delta a_y \!\!\!\!\!\!& 0 \!\!\!\!\!\!& 0 \!\!\!\!\!\!& \ddots \!\!\!\!\!\!& 0 \!\!\!\!\!\!& d_ib_y \!\!\!\!\!\!& -d_{i+1}b_y \cr
        -h_1m_0' \!\!\!\!\!\!&  -h_2m_0' \!\!\!\!\!\!& \dots \!\!\!\!\!\!& -h_{h-1}m_0' \!\!\!\!\!\!& \delta a_z \!\!\!\!\!\!& -\beta_{h+1}m'_0 \!\!\!\!\!\!& \dots \!\!\!\!\!\!& \dots \!\!\!\!\!\!& -\beta_{n}m'_0 \!\!\!\!\!\!& \tfrac{n_i}{\delta \epsilon_v} \!\!\!\!\!\!& -\tfrac{n_{i+1}}{\delta \epsilon_v}
\end{array}\!\!\!\!\!\rb\]\normalsize
where for any $o=h+1,\dots,n$ we have
\[\beta_o=\begin{cases}
0&\text{if }\mu_o>v_E,\\
\epsilon_vv(\psi_o)&\text{otherwise,}
\end{cases}\qquad
\beta_o'=\begin{cases}
v(\psi_{o})/\lambda_v& \text{if }\mu_o>v_E,\\ 
0&\text{otherwise.}
\end{cases}\]
Therefore
\[\det M_{E,i}^\R=\footnotesize
% \left|\!\!\!\begin{array}{cccccccc}
%      e_1 \!\!& \ast \!\!\!& \dots \!\!\!& \ast \!\!\!& \ast \!\!\!& \dots \!\!\!& \dots \!\!\!& \ast \cr
%      0 \!\!& e_2 \!\!\!& \ddots \!\!\!& \ast \!\!\!& \ast \!\!\!&\dots \!\!\!& \dots \!\!\!& \ast \cr
%      \vdots \!\!& \ddots \!\!\!& \ddots \!\!\!& \vdots \!\!\!& \vdots \!\!\!&\ddots \!\!\!&\ddots \!\!\!& \vdots \cr
%         \vdots \!\!& \vdots \!\!\!& \ddots \!\!\!& e_{h-1} \!\!& \ast \!\!\!& \dots \!\!\!& \dots \!\!\!& \ast \cr
%         \vdots \!\!& \vdots \!\!\!& \ddots \!\!\!& 0 \!\!\!& 1 \!\!\!& 0 \!\!\!& \dots \!\!\!& 0\cr
%         \vdots \!\!& \vdots \!\!\!& \ddots \!\!\!& \vdots \!\!\!& 0 \!\!\!& 1 \!\!\!& \ddots \!\!\!& 0 \cr
%         \vdots \!\!& \vdots \!\!\!& \ddots \!\!\!& \vdots \!\!\!& \vdots \!\!\!& \ddots \!\!\!& \ddots \!\!\!& \ddots \cr
%         0 \!\!& 0 \!\!\!& \dots \!\!\!& 0 \!\!\!& 0 \!\!\!& 0 \!\!\!& \dots \!\!\!& 1
% \end{array}\!\!\!\right|
\left|\!\!\!\begin{array}{ccccccc}
     e_1 \!\!&  \!\!\!&  \!\!\!&  \!\!\!&  \!\!\!&  \!\!\!&    \cr
      \!\!& e_2 \!\!\!&  \!\!\!&  \!\!\!&  \text{\Huge $\ast$\footnotesize}\!\!\!& \!\!\!&    \cr
      \!\!&  \!\!\!& \ddots \!\!\!&  \!\!\!&  \!\!\!& \!\!\!&   \cr
         \!\!&  \!\!\!&  \!\!\!& e_{h-1} \!\!&  \!\!\!&  \!\!\!&    \\[0.8em]
         \!\!&  \!\!\!& \!\!\!&  \!\!\!& 1 \!\!\!&  \!\!\!&  \\[-0.3em]
         \!\!&  \text{\Huge $0$\footnotesize}\!\!\!&  \!\!\!&  \!\!\!& \!\!\!&  \ddots \!\!\!& \\[-0.2em]
         \!\!&  \!\!\!&  \!\!\!&  \!\!\!&  \!\!\!&  \!\!\!&  1
\end{array}\!\!\!\right|
\cdot\left|\!\!\!\begin{array}{ccc}
    \delta v_x \!\!\!\!\!& d_i w_x \!\!\!\!\!& -d_{i+1}w_x  \cr
    \delta v_y \!\!\!\!\!& d_i w_y \!\!\!\!\!& -d_{i+1}w_y \cr
    \delta v_z \!\!\!\!\!& \tfrac{n_i}{\delta \epsilon_v} \!\!\!\!\!& -\tfrac{n_{i+1}}{\delta \epsilon_v} 
\end{array}\!\!\!\right|\normalsize=1\]

Furthermore, $T_h$ and $T$ equal respectively
\scriptsize\[
\begingroup
\begin{pmatrix}
     1 \!\!\!& \!\!\!& \!\!\!& \delta a_z c_1 \!\!\!& \!\!\!\!\!\!\! \!\!\!\!\!\!\!\!\!\!\!\!\!\!& \tfrac{n_ic_1}{\delta \epsilon_v} \!\!\!& -\tfrac{n_{i+1}c_1}{\delta \epsilon_v}\\[-1em]
      \!\!\!& \ddots \!\!\!& \!\!\!& \vdots \!\!\!& & \vdots \!\!\!& \vdots \\
     \!\!\!& \!\!\!& 1 \!\!\!& \delta a_z c_{h-1} \!\!\!& \!\!\!\!\!\!\! \!\!\!\!\!\!\!\!\!\!\!\!\!\!& \tfrac{n_ic_{h-1}}{\delta \epsilon_v} \!\!\!& -\tfrac{n_{i+1}c_{h-1}}{\delta \epsilon_v}  \cr
     \!\!\!& \!\!\!& \!\!\!& 1 \!\!\!&\!\!\!\!\!\!\! \!\!\!\!\!\!\!\!\!\!\!\!\!\! & \!\!\!& \cr
     \!\!\!& \!\!\!& \!\!\!&\!\!\! & \!\!\!\!\!\!\! \!\!\!\ddots\!\!\!\!\!\!\!\!\!\!\! & \!\!\!&\cr
    \!\!\!& \!\!\!& \!\!\!& \!\!\!& \!\!\!\!\!\!\! \!\!\!\!\!\!\!\!\!\!\!\!\!\!& 1 \!\!\!& \\[1em]
    \!\!\!& \!\!\!& \!\!\!& \!\!\!& \!\!\!\!\!\!\! \!\!\!\!\!\!\!\!\!\!\!\!\!\!& \!\!\!& 1 
\end{pmatrix},\,
\begin{pmatrix}
     1 \!\!\!& \!\!\!& \!\!\!& \!\!\!&\!\!\!& \\[-0.5em]
      \!\!\!& \ddots \!\!\!& \!\!\!& \!\!\!& \!\!\!&\\
     \!\!\!& \!\!\!&1 \!\!\!& \!\!\!&\!\!\!&\\ 
    \!\!\!&\!\!\!& \!\!\!& 1 \!\!\!& \!\!\!& \tfrac{k_i}{\delta} \!\!\!& -\tfrac{k_{i+1}}{\delta}  \\[-0.7em]
     \!\!\!&\!\!\!& \!\!\!& \!\!\!& \ddots \!\!\!& \\[0.3em]
    \!\!\!&\!\!\!& \!\!\!& \!\!\!& \!\!\!& 1 \!\!\!& \\[0.7em]
    \!\!\!&\!\!\!& \!\!\!& \!\!\!& \!\!\!& \!\!\!& 1 
\end{pmatrix},
\endgroup
\]\normalsize
and $T_h^{-1}$ and $T^{-1}$ are respectively
\scriptsize\[
\begingroup
\begin{pmatrix}
     1 \!\!\!& \!\!\!& \!\!\!& -\delta a_z c_1 \!\!\!& \!\!\!\!\!\!\!\!\!\!\!\!\!\!\!\!\!\!\!\!\!& -\tfrac{n_ic_1}{\delta \epsilon_v} \!\!\!& \tfrac{n_{i+1}c_1}{\delta \epsilon_v}\\[-1em]
      \!\!\!& \ddots \!\!\!& \!\!\!& \vdots \!\!\!& \!\!\!& \vdots \!\!\!& \vdots \cr
     \!\!\!& \!\!\!& 1 \!\!\!& -\delta a_z c_{h-1} \!\!\!& \!\!\!\!\!\!\!\!\!\!\!\!\!\!\!\!\!\!\!\!\!& -\tfrac{n_ic_{h-1}}{\delta \epsilon_v} \!\!\!& \tfrac{n_{i+1}c_{h-1}}{\delta \epsilon_v}  \cr
     \!\!\!& \!\!\!& \!\!\!& 1 \!\!\!& \!\!\!\!\!\!\! \!\!\!\!\!\!\!\!\!\!\!\!\!\!& \!\!\!& \cr
     \!\!\!& \!\!\!& \!\!\!& \!\!\!& \!\!\!\!\!\!\!\ddots \!\!\!\!\!\!\!\!& \!\!\!&\cr
    \!\!\!& \!\!\!& \!\!\!& \!\!\!& \!\!\!\!\!\!\! \!\!\!\!\!\!\!\!\!\!\!\!\!\!& 1 \!\!\!& \\[1em]
    \!\!\!& \!\!\!& \!\!\!& \!\!\!& \!\!\!\!\!\!\! \!\!\!\!\!\!\!\!\!\!\!\!\!\!& \!\!\!& 1 
\end{pmatrix},\,
\begin{pmatrix}
     1 \!\!\!& \!\!\!& \!\!\!& \!\!\!&\!\!\!& \\[-0.5em]
      \!\!\!& \ddots \!\!\!& \!\!\!& \!\!\!& \!\!\!&\\
     \!\!\!& \!\!\!&1 \!\!\!& \!\!\!&\!\!\!&\\ 
    \!\!\!&\!\!\!& \!\!\!& 1 \!\!\!& \!\!\!& -\tfrac{k_i}{\delta} \!\!\!& \tfrac{k_{i+1}}{\delta}  \\[-0.7em]
     \!\!\!&\!\!\!& \!\!\!& \!\!\!& \ddots \!\!\!& \\[0.3em]
    \!\!\!&\!\!\!& \!\!\!& \!\!\!& \!\!\!& 1 \!\!\!& \\[0.7em]
    \!\!\!&\!\!\!& \!\!\!& \!\!\!& \!\!\!& \!\!\!& 1 
\end{pmatrix},
\endgroup
\]\normalsize
where all missing entries are $0$s.

Finally, the vectors $\tilde\nu_o$, for $1\leq o\leq n$, first $n$ rows of the matrix $M_{E,i}^\R$, are
\small\[\tilde\nu_{o}=
\begin{cases}
\big(0,\dots,0,\tfrac{1}{e_o},\tfrac{h_{o+1}m_{o}}{e_{v_{o+1}}},\dots, \tfrac{h_{h-1}m_{o}}{e_{v_{h-1}}},0,\tfrac{\beta_{h+1}m_{o}}{\epsilon_v}\dots,\tfrac{\beta_{n}m_{o}}{\epsilon_v},0,0\big)&\mbox{if }1\leq o<h,\\
\tfrac{1}{\delta}(0,\dots,0,b_y,\beta_{h+1}'b_y,\dots,\beta_{n}'b_y,b_x,0)&\mbox{if }o=h,\\
(0,\dots,0,1,0,\dots,0)=\varepsilon_o&\mbox{if }h<o\leq n,\\
\end{cases}\]\normalsize
\section{MacLane clusters fan}\label{sec:appendix:fan}
We want to show that the cones constructed in \S\ref{subsec:TorEmbedding} form a fan. Let $h=1,\dots,n$. Recall the degree-minimal MacLane cluster $(\s_h,\mu_h)$. Let 
\[[v_0,v_1(\phi_1)=\lambda_1,\dots,v_{m-1}(\phi_{m-1})=\lambda_{m-1},v_m(\phi_m)=\lambda_m]\]
be the cluster chain for $\mu_h$. Let $c\in\R$, with $c>\lambda_{m-1}$ if $m>1$. Define the valuation $v_{h,c}:K(x)\rightarrow \hat\R$ given on $K[x]^\ast$ by
\[v_{h,c}\big(\textstyle\sum_j c_j\psi_h^j\big)=\min_j\big(v_{m-1}(c_j)+jc\big),\quad c_j\in K[x],\,\deg(c_j)<\deg(\psi_h).\]
Note that when $m=1$, then $\deg \psi_h=1$ and so $v_0(c_j)=v_K(c_j)$.

Let $(\s,v)$ be a proper MacLane cluster with centre $\phi_v=\psi_h$, and let $E$ be the $h$-edge $L_v$, $V_v$, or $V_v^0$ if $(\s,v)$ is degree-minimal. 
%Let \[[v_0,\dots,v_{m-1}(\phi_{m-1})=\lambda_{m-1},v_m(\phi_m)=\lambda_m]\]
% be a cluster chain for $v$ and $v_-=v_{m-1}$ as in \S\ref{subsec:Charts}. 
Recall the notation of \S\ref{subsec:Matrices}. 
%Without loss of generality we can assume that $m=h$ and $\phi_j=\psi_j$ for all $j=1,\dots,m$.
% Recall from (\ref{eqn:tildeomega}) the definition of $\tilde\omega_i$. 
%Let $\tilde\omega_{i,j}$ be the $j$-th coordinate of $\tilde\omega_i$. 
Let $r=r_E$ and $\delta=\delta_E$.

\begin{lem}\label{lem:tildeomegalinearcomb}
For any $i=0,\dots,r+1$, there exist $\alpha,\beta\in\Q_{\geq 0}$, such that
\[\tilde\omega_i=\alpha\tilde\omega_0+\beta\tilde\omega_{r+1}.\]
\proof
If $i=0$ or $i=r+1$, the statement is trivial. Then assume $1\leq i\leq r$.
Since $n_0 d_i> n_id_0$ and $n_id_{r+1}>n_{r+1}d_{i}$, there exist $\alpha_i,\beta_i\in\Q_+$ such that
\[\alpha_i n_id_0+\beta_i n_id_{r+1}=\alpha_i n_0d_i+\beta_i n_{r+1}d_{i}.\]
Define $e=\tfrac{d_i}{d_0\alpha_i+d_{r+1}\beta_i}$, $\alpha=e\alpha_i$, $\beta=e\beta_i$. The lemma follows from (\ref{eqn:tildeomega}).
\endproof
\end{lem}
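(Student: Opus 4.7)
The plan is to reduce the claim to an elementary $2\times 2$ linear-algebra computation by exploiting the fact that the explicit formula (\ref{eqn:tildeomega}) expresses $\tilde\omega_i$ as an affine function of the pair $(n_i,d_i)$. Distributing the factor $\delta\epsilon_h d_i$ in (\ref{eqn:tildeomega}), the $k$-th coordinate (for $1\le k\le n$) becomes
\[
(\tilde\omega_i)_k = n_i\, a_y\tilde\alpha_{hk} \;+\; d_i\bigl(-\delta\epsilon_h a_z b_y\tilde\alpha_{hk}+\delta\epsilon_h\tilde\alpha_{\pi k}\bigr),
\]
and likewise $(\tilde\omega_i)_{n+1} = -n_i a_x + d_i\,\delta\epsilon_h a_z b_x$, $(\tilde\omega_i)_{n+2}=d_i\,\delta\epsilon_h$. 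Hence there exist vectors $\mathbf{A},\mathbf{B}\in\R^{n+2}$, independent of $i$, with $\tilde\omega_i = n_i\mathbf{A}+d_i\mathbf{B}$ for every $i=0,\dots,r+1$.

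Consequently, to produce $\alpha,\beta\in\Q_{\geq 0}$ with $\tilde\omega_i=\alpha\tilde\omega_0+\beta\tilde\omega_{r+1}$, it suffices to solve the system
\[
\alpha n_0+\beta n_{r+1}=n_i,\qquad \alpha d_0+\beta d_{r+1}=d_i.
\]
The determinant of its coefficient matrix is $n_0d_{r+1}-n_{r+1}d_0$, which is strictly positive: when $E$ is inner this is just the numerical form of $\tfrac{n_0}{d_0}>\tfrac{n_{r+1}}{d_{r+1}}$ with $d_0,d_{r+1}>0$, and when $E$ is outer the conventions $n_{r+1}=-1$, $d_{r+1}=0$ give determinant $d_0>0$. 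Cramer's rule therefore yields
\[
\alpha=\frac{n_id_{r+1}-n_{r+1}d_i}{n_0d_{r+1}-n_{r+1}d_0},\qquad \beta=\frac{n_0d_i-n_id_0}{n_0d_{r+1}-n_{r+1}d_0},
\]
and both numerators are non-negative because $\tfrac{n_0}{d_0}\geq\tfrac{n_i}{d_i}\geq\tfrac{n_{r+1}}{d_{r+1}}$ (with equality at the endpoints). In the outer case one checks directly that the numerator of $\alpha$ equals $d_i\geq 0$ and the numerator of $\beta$ equals $n_0 d_i-n_i d_0$, still non-negative for $0\le i\le r+1$.

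The cases $i=0$ and $i=r+1$ give $(\alpha,\beta)=(1,0)$ and $(0,1)$ respectively, which is why we need $\Q_{\geq 0}$ rather than $\Q_+$; for $1\le i\le r$ the strict monotonicity of the sequence $n_i/d_i$ gives $\alpha,\beta\in\Q_+$, matching the sketch in the paper which finds positive rationals first and then rescales. The only genuinely delicate point is the bookkeeping when $E$ is outer, where one of the boundary vectors $\tilde\omega_{r+1}$ is degenerate ($d_{r+1}=0$) but the linear relation continues to hold thanks to the positive determinant; this is the main sign-check to be careful about, but once (\ref{eqn:tildeomega}) is seen as affine in $(n_i,d_i)$ the argument is essentially a one-line application of Cramer's rule.
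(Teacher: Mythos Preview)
Your proof is correct and follows essentially the same idea as the paper's: both arguments rest on the observation, implicit in the paper's appeal to (\ref{eqn:tildeomega}), that $\tilde\omega_i$ is linear in the pair $(n_i,d_i)$, so the problem reduces to solving a $2\times 2$ system with positive determinant. The paper first picks any positive solution $(\alpha_i,\beta_i)$ of the single homogeneous equation $\alpha_i(n_0d_i-n_id_0)=\beta_i(n_id_{r+1}-n_{r+1}d_i)$ and then rescales to hit $d_i$; you instead go straight to the unique solution via Cramer's rule and verify the signs. Your treatment of the outer case $d_{r+1}=0$ is also fine and matches what the paper's argument gives implicitly.
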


\begin{lem}\label{lem:vhc}
Let $c\in\R$ and $v_{h,c}:K[x]\rightarrow \hat\R$ as above. If
\begin{enumerate}[label=(\roman*)]
    \item \label{item:vhcinner}$(\s,v)<(\t,w)$, $E=V_v$, and $w(\psi_h)<c<\lambda_v$, or
    \item \label{item:vhcmaximal}$(\s,v)$ maximal, $E=V_v$, and $c<\lambda_v$, or
    \item \label{item:vhcminimal}$(\s,v)$ degree-minimal, $E=V_v^0$, and $c>\lambda_v$, 
\end{enumerate}
then $v_{h,c}(\gamma_{j,E})=0$
for any $j=1,\dots,n$, $j\neq h$.
\proof
Let $j=1,\dots,n$, $j\neq h$. Expand
\[\psi_j=\sum_{\i=1}^d c_\i\psi_h^\i,\quad c_\i\in K[x],\,c_d\neq 0,\,\deg c_\i<\deg\psi_h.\]
If $j=\tau(o)$ for some $o<m$, then $v_{h,c}(\psi_j)=v_{m-1}(\psi_j)$. It follows from Lemma \ref{lem:gammapival} that $v_{h,c}(\gamma_{j,E})=0$. Hence assume $j\neq\tau(o)$ for all $o<m$.

\ref{item:vhcinner} 
Assume $(\s,v)<(\t,w)$, $E=V_v$, and $w(\psi_h)<c<\lambda_v$.
Suppose $\mu_j\geq v$. Lemma \ref{lem:gammaoEval0} implies that $c_d=1$ and $v(\psi_j)=v(\psi_h^d)=d\lambda_v$. Since $c<\lambda_v$ we have $v_{h,c}(\psi_j)=dc$, by definition. Then $v_{h,c}(\gamma_{j,E})=0$. Suppose $\mu_j\not\geq v$. Therefore 
\[v(\psi_j)=w(\psi_j)\leq v_{h,c}(\psi_j)\leq v(\psi_j).\] 
where the first equality follows from  Lemma \ref{lem:gammaoEval0}. Hence $v_{h,c}(\gamma_{j,E})=0$. 

\ref{item:vhcmaximal} 
Assume $(\s,v)$ maximal, $E=V_v$, and $c<\lambda_v$. Then $\mu_j\geq v$. Lemma \ref{lem:gammaoEval0} implies that $c_d=1$ and $v(\psi_j)=v(\psi_h^d)=d\lambda_v$. It follows that $v_{h,c}(\psi_j)=dc$ as $c<\lambda_v$. Therefore $v_{h,c}(\gamma_{j,E})=0$.

\ref{item:vhcminimal}
Assume $(\s,v)$ degree-minimal, $E=V_v^0$, and $c>\lambda_v$. Recall the definition of $v_E$. Then $\mu_j\not\geq v_E$ and $v_E(\psi_j)\geq v_{h,c}(\psi_j)\geq v(\psi_j)$. It follows from (\ref{eqn:vEpsio}) that $v_{h,c}(\psi_j)= v(\psi_j)$. Thus $v_{h,c}(\gamma_{j,E})=0$. 
\endproof
\end{lem}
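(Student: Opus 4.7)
The plan is to mirror the proof of Lemma \ref{lem:gammaoEval0} and reduce the computation of $v_{h,c}(\gamma_{j,E})$ to checking a single dominant term in the $\psi_h$-expansion of $\psi_j$, exploiting the fact that $v_{h,c}$ is built from $v_{m-1}$ by declaring $v_{h,c}(\psi_h)=c$.

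First I would dispose of the easy case in which $\psi_j=\phi_o$ for some $o\leq m-1$ appearing in the cluster chain for $\mu_h$. In this case $\gamma_{j,E}=\gamma_o$ and $v_{h,c}(\psi_j)=v_{m-1}(\psi_j)=v_o(\phi_o)$ because $\deg\psi_j<\deg\psi_h$, so $v_{h,c}$ and $v_{m-1}$ agree on $\psi_j$; then $v_{h,c}(\gamma_o)=0$ by direct calculation from the defining formula $\gamma_o=\phi_o^{e_o}\pi_{o-1}^{-h_o}$ combined with Lemma \ref{lem:gammapival}, since $v_{h,c}(\pi_{o-1})=v_{m-1}(\pi_{o-1})=1/e_{v_{o-1}}$. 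So I may assume $\psi_j\neq\phi_o$ for any $o<m$, and expand
\[
\psi_j=\sum_{\iota=0}^d c_\iota\psi_h^\iota,\qquad c_\iota\in K[x],\ \deg c_\iota<\deg\psi_h,\ c_d\neq 0.
\]
By definition $v_{h,c}(\psi_j)=\min_\iota\bigl(v_{m-1}(c_\iota)+\iota c\bigr)$.

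For case \ref{item:vhcinner}, I split on whether $\mu_j\geq v$. If $\mu_j\geq v$, then $\psi_j$ is $v$-minimal by Lemma \ref{lem:vminimal}, so $\deg v\mid\deg\psi_j$ and $\psi_j\veq_{v_{m-1}}\psi_h^d$ by Lemma \ref{lem:minimalityequivalence}, forcing $c_d=1$ and $v_{m-1}(c_d)=0$. Theorem \ref{thm:minimality} gives $v(\psi_j)=d\lambda_v$; since $v(c_\iota)+\iota\lambda_v\geq d\lambda_v$ for every $\iota$, the assumption $c<\lambda_v$ implies the minimum of $v_{m-1}(c_\iota)+\iota c$ is attained at $\iota=d$ and equals $dc$. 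Comparing with the formula $\gamma_{j,E}=\psi_j\cdot\psi_h^{-d}$ from Definition \ref{defn:gammaoE} gives $v_{h,c}(\gamma_{j,E})=0$. If instead $\mu_j\not\geq v$, then by Lemma \ref{lem:gammaoEval0} we have $v(\psi_j)=v_E(\psi_j)$; the same minimisation argument shows that the chain inequality $w(\psi_h)<c<\lambda_v=v(\psi_h)$ squeezes $v_{h,c}(\psi_j)$ between $w(\psi_j)$ and $v(\psi_j)$, both of which equal $v(\psi_j)$ as in the proof of Lemma \ref{lem:gammaoEval0}; combined with the formula $\gamma_{j,E}=\psi_j\cdot\pi_{m-1}^{-e_{v_{m-1}}v(\psi_j)}$ this yields $v_{h,c}(\gamma_{j,E})=0$.

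Case \ref{item:vhcmaximal} is a simplification of the first subcase of \ref{item:vhcinner}: if $(\s,v)$ is maximal then every $\mu_j\geq v$ by Lemma \ref{lem:wedgecluster}, and the condition $c<\lambda_v$ combined with $c_d=1$ and $v(\psi_j)=d\lambda_v$ again gives $v_{h,c}(\psi_j)=dc$. Case \ref{item:vhcminimal} is dual: $v_E$ is the infinite augmentation $[v_{m-1},v_E(\psi_h)=\infty]$, so $\mu_j\not\geq v_E$ automatically, and the proof of Lemma \ref{lem:gammaoEval0} gives $v_E(\psi_j)=v(\psi_j)$; since $c>\lambda_v$ pushes the infimum down onto the $\iota=0$ term, I get $v_{h,c}(\psi_j)=v_{m-1}(c_0)=v(\psi_j)\in\Gamma_{v_{m-1}}$, from which the definition of $\gamma_{j,E}$ finishes the computation. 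The main obstacle will be verifying that the minimum over $\iota$ is indeed attained at the claimed endpoint in each case; this amounts to checking monotonicity of $v_{m-1}(c_\iota)+\iota c$ as a function of $\iota$ under the given inequality on $c$, which follows from the fact that the Newton polygon of $\psi_j$ with respect to $(v_{m-1},\psi_h)$ has all slopes in the range $[w(\psi_h),\lambda_v]$ (case \ref{item:vhcinner}), $[-\infty,\lambda_v]$ (case \ref{item:vhcmaximal}), or $[\lambda_v,\infty]$ (case \ref{item:vhcminimal}), as can be read off from Theorem \ref{thm:NewtonClusters}.
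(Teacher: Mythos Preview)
Your approach is essentially the same as the paper's: first dispose of the case $\psi_j=\phi_o$ for $o<m$ via Lemma~\ref{lem:gammapival}, then in each remaining case compute $v_{h,c}(\psi_j)$ by a squeeze argument using the monotonicity of $c\mapsto\min_\iota(v_{m-1}(c_\iota)+\iota c)$, invoking the equalities $v(\psi_j)=w(\psi_j)$ (case~\ref{item:vhcinner}, $\mu_j\not\geq v$) and $v_E(\psi_j)=v(\psi_j)$ (case~\ref{item:vhcminimal}) established in the proof of Lemma~\ref{lem:gammaoEval0}. The only wrinkle is your final paragraph: Theorem~\ref{thm:NewtonClusters} concerns the Newton polygon of $f$, not of $\psi_j$, so it cannot be cited for the slope ranges you claim. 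Fortunately this reference is superfluous, since your squeeze arguments in the body (together with the direct computation $v_{m-1}(c_\iota)\geq(d-\iota)\lambda_v$ when $\mu_j\geq v$) already establish where the minimum is attained without appealing to any Newton polygon statement about $\psi_j$.
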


\begin{lem}\label{lem:tildeomegavaluation}
For any $\tilde\omega\in\sigma_{E,i,i+1}\setminus\sigma_{E,i+1}$, there exists $c\in\R$, with $c>\lambda_{m-1}$ if $m>1$, so that
\[\tilde\omega=e(v_{h,c}(\psi_1),\dots,v_{h,c}(\psi_n),C,1),\]
for some $e\in \R_+$, $C\in\R$.
%where $C=v_{h,c}(f)/2+c'$ if $E=L_v$, and $C=v_{h,c}(f)/2$ otherwise.
In particular,
\begin{enumerate}[label=(\roman*)]
    \item \label{item:omegainner}if $(\s,v)<(\t,w)$ and $E=V_v$, then $w(\psi_h)<c<\lambda_v$;
    \item \label{item:omegamaximal}if $(\s,v)$ maximal and $E=V_v$, then $c<\lambda_v$;
    \item \label{item:omegaminimal}if $(\s,v)$ degree-minimal and $E=V_v^0$, then $c>\lambda_v$; 
    \item \label{item:omegaLv}if $E=L_v$, then $c=\lambda_v$.
\end{enumerate}
\proof
From Lemma \ref{lem:fan}, the statement is true for $\tilde\omega=\tilde\omega_0$. So suppose $\tilde \omega\neq\tilde\omega_0$. Lemma \ref{lem:tildeomegalinearcomb} implies that $\tilde\omega=\alpha\tilde\omega_0+\beta\tilde\omega_{r+1}$ for some $\alpha,\beta\in\R_{+}$. Let $e\in\R_+$, $c\in\R$ as follows
\[e=\alpha\delta\epsilon_h d_0+\beta\delta\epsilon_h d_{r+1},\qquad c=\frac{\alpha n_0a_y+\beta n_{r+1}a_y}{e}-a_zb_y.\]
From the definition of $\tilde\omega_0$ and $\tilde\omega_{r+1}$ in (\ref{eqn:tildeomega}) we have 
\[\tilde\omega=e(c\tilde\alpha_{h1}+\tilde\alpha_{\pi1},\dots,c\tilde\alpha_{hn}+\tilde\alpha_{\pi n},C,1),\]
for some $C\in\R$. Furthermore, $c$ satisfies the inequalities of cases \ref{item:omegainner}-\ref{item:omegaLv} by Lemma \ref{lem:fan}. In particular, $c>\lambda_{m-1}$ if $m>1$.
From (\ref{eqn:psioingammas}), Lemma \ref{lem:vhc} concludes the proof.
\endproof
\end{lem}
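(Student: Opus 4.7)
The plan is to reduce to the two extremal rays $\tilde\omega_0$ and $\tilde\omega_{r+1}$ of the cone $\sigma_{E,i,i+1}$ (since all intermediate $\tilde\omega_i$ are non-negative combinations of these by Lemma~\ref{lem:tildeomegalinearcomb}) and then convert the linear-algebra formula into a valuation-theoretic identity via (\ref{eqn:psioingammas}). For $\tilde\omega=\tilde\omega_0$, Lemma~\ref{lem:fan} gives the formula directly with $e=e_{\tilde v}$, $c=\lambda_v$, which is exactly what the conclusion demands. So I only need to treat $\tilde\omega\neq \tilde\omega_0$, and since $\tilde\omega\in\sigma_{E,i,i+1}\setminus\sigma_{E,i+1}$ and Lemma~\ref{lem:tildeomegalinearcomb} expresses each $\tilde\omega_j$ as a combination with strictly positive weights (for $1\le j\le r$), I can write $\tilde\omega=\alpha\tilde\omega_0+\beta\tilde\omega_{r+1}$ with $\alpha,\beta\in\R_{\ge 0}$, $\alpha>0$.

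Given such $\alpha,\beta$, set
\[
  e=\alpha\delta\epsilon_h d_0+\beta\delta\epsilon_h d_{r+1},\qquad
  c=\frac{\alpha n_0 a_y+\beta n_{r+1} a_y}{e}-a_z b_y.
\]
These are designed so that the last coordinate of $\tilde\omega/e$ becomes $1$. Substituting the explicit expressions (\ref{eqn:tildeomega}) for $\tilde\omega_0$ and $\tilde\omega_{r+1}$, a direct computation shows that the $j$-th coordinate of $\tilde\omega/e$, for $1\le j\le n$, simplifies to $c\tilde\alpha_{hj}+\tilde\alpha_{\pi j}$, while the $(n+1)$-th coordinate is some $C\in\R$ depending on $\alpha,\beta$. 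This step is purely formal since $\tilde\alpha_{hj}$ and $\tilde\alpha_{\pi j}$ are the same for all indices $i$.

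Now I translate $c\tilde\alpha_{hj}+\tilde\alpha_{\pi j}$ into $v_{h,c}(\psi_j)$. Taking $v_{h,c}$ of the defining identity (\ref{eqn:psioingammas}) and using: (a) $v_{h,c}(\psi_h)=c$ by definition of $v_{h,c}$; (b) $v_{h,c}(\pi)=v_K(\pi)=1$; and (c) $v_{h,c}(\gamma_{j,E})=0$ for $j\neq h$ (which is Lemma~\ref{lem:vhc}, valid precisely in the ranges for $c$ listed in cases \ref{item:vhcinner}--\ref{item:vhcminimal}), one gets
\[
  \epsilon_h\,v_{h,c}(\psi_o) = \alpha_{ho}\,c + \alpha_{\pi o},
\]
hence $v_{h,c}(\psi_o)=c\tilde\alpha_{ho}+\tilde\alpha_{\pi o}$. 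This produces the required vectorial identity $\tilde\omega=e(v_{h,c}(\psi_1),\dots,v_{h,c}(\psi_n),C,1)$.

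It remains to verify the range of $c$ in each case. The key observation is that $c$ is a \emph{mediant-style} combination of the values $c_0=\frac{n_0a_y}{\delta\epsilon_hd_0}-a_zb_y$ and $c_{r+1}=\frac{n_{r+1}a_y}{\delta\epsilon_hd_{r+1}}-a_zb_y$ (the latter only when $d_{r+1}\neq 0$), and by Lemma~\ref{lem:fan} these equal $\lambda_v$ and $w(\psi_h)$ respectively in the inner $V_v$ case, giving case~\ref{item:omegainner}. For $E$ outer ($d_{r+1}=0$), I handle the three subcases separately: when $E=L_v$ one has $a_y=0$ (the edge is horizontal), forcing $c=-a_zb_y=\lambda_v$, which is case~\ref{item:omegaLv}; when $E=V_v$ with $(\s,v)$ maximal, the counterclockwise orientation around $\partial F_v$ gives $a_y>0$ and an explicit sign computation forces $c<\lambda_v$, which is case~\ref{item:omegamaximal}; when $E=V_v^0$ the orientation is reversed and $a_y<0$, forcing $c>\lambda_v$, which is case~\ref{item:omegaminimal}. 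The condition $c>\lambda_{m-1}$ when $m>1$ follows uniformly because $c$ lies on the same side of $\lambda_{m-1}$ as $\lambda_v>\lambda_{m-1}$ in every case, ensuring $v_{h,c}$ is well defined. The main bookkeeping obstacle is tracking the orientations $a_x,a_y,a_z$ of $\nu$ consistently in each of the four cases to pin down the inequalities; everything else is essentially a formal consequence of Lemma~\ref{lem:fan} and Lemma~\ref{lem:vhc}.
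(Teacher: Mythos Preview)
Your proposal is correct and follows essentially the same route as the paper. Both arguments handle $\tilde\omega_0$ via Lemma~\ref{lem:fan}, decompose a general $\tilde\omega$ as $\alpha\tilde\omega_0+\beta\tilde\omega_{r+1}$ via Lemma~\ref{lem:tildeomegalinearcomb}, define $e$ and $c$ by the same formulas, read off the coordinates from (\ref{eqn:tildeomega}), and then convert $c\tilde\alpha_{hj}+\tilde\alpha_{\pi j}$ into $v_{h,c}(\psi_j)$ using (\ref{eqn:psioingammas}) together with Lemma~\ref{lem:vhc}. Your explicit orientation analysis for the outer cases (tracking the sign of $a_y$ along $\partial F_v$) is exactly what the paper compresses into the single phrase ``by Lemma~\ref{lem:fan}''; it is more detailed but not a different idea. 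One small remark: in case~\ref{item:omegaLv} you correctly note $a_y=0$ forces $c=\lambda_v$, so that $v_{h,c}=v$; here the vanishing $v(\gamma_{j,E})=0$ comes from Lemma~\ref{lem:gammaoEval0} rather than Lemma~\ref{lem:vhc}, which only treats the $V_v$ and $V_v^0$ edges --- this is implicit in the paper's proof as well.
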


\begin{rem}
Note that the element $c\in\R$ in Lemma \ref{lem:tildeomegavaluation} is uniquely determined by the vector $\tilde\omega$. Indeed, $c$ equals the division of the $h$-th coordinate of $\tilde\omega$ by its last coordinate.
\end{rem}

Let $i=0,\dots,r+1$, with $i\leq r$ if $E$ is outer. Let $c_i=\tfrac{n_i}{\delta e_{v_-} d_i}a_y-a_zb_y$. We define the valuation $w_{E,i}:K[x]\rightarrow \hat\Q$ by $w_{E,i}(g)=v_{h,c_i}(g)$ for any $g\in K[x]^\ast$. In other words, $w_{E,i}$ is given on $K[x]^\ast$ by
\begin{equation}\label{eqn:wEi}
w_{E,i}\big(\textstyle\sum_j a_j\psi_h^j\big)=\min_j\big(v_-(a_j)+jc_i\big),
\end{equation}
where $a_j\in K[x]$, $\deg(a_j)<\deg(\psi_h)$.
In fact, $w_{E,i}$ is the MacLane valuation 
\[w_{E,i}=\big[v_-,w_{E,i}(\psi_h)=\tfrac{n_i}{\delta e_{v_-} d_i}a_y-a_zb_y\big],\]
except possibly when $(\s,v)$ is maximal, $E=V_v$ and $1\leq i \leq r$. 
%However, in this exceptional case, $\deg\psi_h=1$.
Lemma \ref{lem:tildeomegavaluation} implies that
\[\tilde\omega_i=\delta e_{v_-} d_i(w_{E,i}(\psi_1),\dots, w_{E,i}(\psi_n),C,1),\]
for some $C\in\Q$. We denote $C$ by $w_{E,i}(y)$.
%where $C\geq w_{E,i}(f)/2$. In fact, $C= w_{E,i}(f)/2$ if $i=0$ or $E\neq L_v$. We will denote $C$ by $w_{E,i}(y)$.

\begin{thm}
The set of cones $\Sigma$ defined in \S\ref{subsec:TorEmbedding} is a fan.
\proof
For any $\ast$-edge $E$ let 
\[\sigma_{E,0,r_E+1}=\bigcup_{i=0}^{r_E}\sigma_{E,i,i+1}.\]
By Lemma \ref{lem:tildeomegalinearcomb} it suffices to show that the set
\[\Sigma'=\{\sigma_0\}\cup\bigcup_{E\text{ $\ast$-edge}}(\sigma_{E,0}\cup\sigma_{E,r_E+1}\cup\sigma_{E,0,r_E+1})\]
is a fan. But this follows from Lemma \ref{lem:tildeomegavaluation}.
\endproof
\end{thm}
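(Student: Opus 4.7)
The plan is to verify the three axioms of a fan for $\Sigma$: (a) each element is a rational strongly convex polyhedral cone, (b) every face of a cone in $\Sigma$ lies in $\Sigma$, and (c) the intersection of any two cones in $\Sigma$ is a common face of both. Axioms (a) and (b) are essentially immediate from the construction in \S\ref{subsec:TorEmbedding}: the $\sigma_{E,i,i+1}$ are generated by the last two rows of $M_{E,i}^{-1}$, and since $M_{E,i}\in\SL_{n+2}(\Z)$ these generators are linearly independent primitive integer vectors; the rays $\sigma_{E,i}$ and $\sigma_{E,i+1}$ are the two one-dimensional faces, and $\sigma_0=\{0\}$ is the origin. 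So all the work lies in verifying (c).

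To organize the intersection check, I would first follow the reduction suggested by Lemma~\ref{lem:tildeomegalinearcomb}: for fixed $\ast$-edge $E$, every $\tilde\omega_i$ with $1\le i\le r_E$ is a non-negative rational combination of $\tilde\omega_0$ and $\tilde\omega_{r_E+1}$, so the cone $\sigma_{E,0,r_E+1}:=\bigcup_{i=0}^{r_E}\sigma_{E,i,i+1}$ is convex, with only $\sigma_{E,0}$ and $\sigma_{E,r_E+1}$ as extremal rays. The subdivision $\{\sigma_{E,i,i+1}\}_{i=0}^{r_E}$ is itself a fan by the standard continued-fraction construction (the $\frac{n_i}{d_i}$ form a Farey/Hirzebruch–Jung chain, and the $\SL_2$-determinant condition ensures regularity of the subdivision). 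Thus it suffices to show that the coarser collection $\Sigma'=\{\sigma_0\}\cup\bigcup_{E}\{\sigma_{E,0},\sigma_{E,r_E+1},\sigma_{E,0,r_E+1}\}$ is a fan, i.e.\ that for distinct $\ast$-edges $E,E'$, the intersection $\sigma_{E,0,r_E+1}\cap\sigma_{E',0,r_{E'}+1}$ is either $\{0\}$ or a common ray.

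The key tool for this is Lemma~\ref{lem:tildeomegavaluation}, which parametrizes every nonzero ray in $\sigma_{E,0,r_E+1}\setminus\sigma_{E,r_E+1}$ by a pair $(h,c)$, where $h$ is fixed by $E$ (the index with $\phi_v=\psi_h$) and $c$ lies in an explicit interval determined by the type of $E$:  $c=\lambda_v$ when $E=L_v$; $c\in(w(\psi_h),\lambda_v)$ when $E=V_v$ is inner with parent $(\t,w)$; $c\in(\lambda_{m-1},\lambda_v)$ when $E=V_v$ and $(\s,v)$ is maximal; and $c\in(\lambda_v,+\infty)$ when $E=V_v^0$. The pair $(h,c)$ is recovered from $\tilde\omega$ intrinsically as the $h$-th coordinate divided by the last, so no two distinct edges $E,E'$ with $h\ne h'$ can share an interior ray. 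For edges sharing the same $h$, the $c$-intervals are pairwise disjoint (they are glued across the edges $L_v, V_v, V_v^0$ of a given face $F_v$ at the endpoints $c=\lambda_v$), so interior rays of different cones never coincide.

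The main obstacle, and the most delicate part, is showing that the boundary rays actually match up combinatorially across adjacent faces: one must check that for inner $E=V_v$ bounding $F_v$ and $F_w$ (with $(\s,v)<(\t,w)$), the ray $\sigma_{E,r_E+1}=\R_+\tilde\omega_{r_E+1}$ equals $\sigma_{E',0}$ for every $\ast$-edge $E'$ bounding $F_w$, and similarly for outer edges and degree-minimal clusters. Lemma~\ref{lem:fan} handles this computation: it shows $\tilde\omega_0=e_{\tilde v}(v(\psi_1),\dots,v(\psi_n),v(f)/2,1)$ depends only on $F_v$, not on the particular edge of $F_v$, and $\tilde\omega_{r_E+1}=e_{\tilde w}(w(\psi_1),\dots,w(\psi_n),w(f)/2,1)$ in the inner case, which then equals $\tilde\omega_0$ attached to $F_w$. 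Thus adjacent cones share exactly one ray, and non-adjacent cones meet only at $\sigma_0$, establishing axiom~(c) and completing the proof that $\Sigma$ is a fan.
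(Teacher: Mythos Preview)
Your proof follows the same route as the paper's: reduce via Lemma~\ref{lem:tildeomegalinearcomb} to the coarser collection $\Sigma'$, then invoke Lemma~\ref{lem:tildeomegavaluation} (with Lemma~\ref{lem:fan} for the boundary-ray matching) to check axiom~(c). The expansion is useful and the overall structure is sound.

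There is one imprecise step worth fixing. The sentence ``The pair $(h,c)$ is recovered from $\tilde\omega$ intrinsically as the $h$-th coordinate divided by the last, so no two distinct edges $E,E'$ with $h\ne h'$ can share an interior ray'' is circular: you cannot select the $h$-th coordinate without already knowing $h$. What Lemma~\ref{lem:tildeomegavaluation} and the remark following it actually give is that, after normalizing the last coordinate to $1$, the first $n$ coordinates of $\tilde\omega$ are the values $v_{h,c}(\psi_1),\dots,v_{h,c}(\psi_n)$. Since the key polynomials $\phi_1,\dots,\phi_m$ of the cluster chain for $\mu_h$ are among the $\psi_j$, these values determine the full MacLane valuation $v_{h,c}$ on $K[x]$. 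The correct argument for $h\ne h'$ is then that if an interior ray were shared, the resulting equality $v_{h,c}=v_{h',c'}$ of valuations on $K[x]$, with $c,c'$ in the \emph{open} intervals prescribed by \ref{item:omegainner}--\ref{item:omegaLv}, contradicts the uniqueness of minimal MacLane chains (Proposition~\ref{prop:ValuationsInequality}); equality can occur only at the endpoint values of $c$, which are exactly the boundary rays you already match using Lemma~\ref{lem:fan}.
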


\end{document}